\documentclass[final,notitlepage,12pt,reqno]{amsart}
\usepackage{graphicx}
\usepackage{CJK}
\usepackage{url}

\makeatletter
\let\I\@undefined
\makeatother
\setlength{\emergencystretch}{0em}

\usepackage{geometry}
\geometry{body={17.5cm,24.5cm}}

\usepackage{indentfirst}
\usepackage[normalem]{ulem}
\usepackage{float}
\usepackage{amsthm}

\usepackage{enumitem}[2011/09/28]
\setenumerate{align=left, leftmargin=0pt,labelsep=.5em, labelindent=0\parindent,listparindent=\parindent,itemindent=*}
\usepackage{array}
\usepackage{empheq}
\usepackage{natbib}
\setlength{\bibsep}{0.0pt}
\usepackage[all]{xy}

\usepackage{mathrsfs}

\usepackage{caption}[2012/02/19]

\usepackage{longtable,lscape}

\captionsetup[figure]{font=small}
\captionsetup[table]{font=small}
\setlength{\LTcapwidth}{\textwidth}

\usepackage{fouriernc}
\usepackage{fourier}
\usepackage{amssymb,bm,amsmath}
\usepackage{amsfonts}

\usepackage[OT2,T1,T2A]{fontenc}

\usepackage[english,french,german,russian]{babel}
\usepackage{appendix}

\DeclareMathOperator{\sn}{sn}
\DeclareMathOperator{\cn}{cn}
\DeclareMathOperator{\dn}{dn}

\DeclareMathOperator{\Int}{Int}

\DeclareMathOperator{\D}{d}
\DeclareMathOperator{\I}{Im}
\DeclareMathOperator{\R}{Re}

\def\XXint#1#2#3{{\setbox0=\hbox{$#1{#2#3}{\int}$}
     \vcenter{\hbox{$#2#3$}}\kern-.5\wd0}}

\bibpunct{[}{]}{,}{n}{}{;}

\def\qed{\hfill$ \blacksquare$}
\def\eor{\hfill$ \square$}

\theoremstyle{plain}
\newtheorem{theorem}{Theorem}[subsection]
\newtheorem{proposition}[theorem]{Proposition}
\newtheorem{lemma}[theorem]{Lemma}

\newtheorem{conjecture}[theorem]{Conjecture}

\theoremstyle{definition}
\newtheorem{definition}[theorem]{Definition}

\theoremstyle{remark}
\newtheorem{remark}{Remark}[theorem]

\numberwithin{equation}{subsection}

\setcounter{tocdepth}{4}
\hyphenation{Kon-tse-vich}
\hyphenation{Zagier}
\hyphenation{Fuchs-ian}

\usepackage{color}
\begin{document}

\selectlanguage{english}
\pagenumbering{roman}
\title[Kontsevich--Zagier Integrals for  Automorphic Green's Functions.\ II]{Kontsevich--Zagier Integrals for  Automorphic Green's Functions. II
}
\author{Yajun Zhou}
\address{\hspace{2em}}
\email{yajunz@math.princeton.edu}
\date{\today}



\begin{abstract}We introduce interaction entropies, which can be represented as logarithmic couplings of certain cycles on a class of algebraic curves of arithmetic interest. In particular, via interaction entropies for  Legendre--Ramanujan curves $ Y^n=(1-X)^{n-1}X(1-\alpha X)$ ($ n\in\{6,4,3,2\}$), we reformulate the Kontsevich--Zagier integral representations of  weight-4 automorphic Green's functions $ G_2^{\mathfrak H/\overline{\varGamma}_0(N)}(z_1,z_2)$ ($N=4\sin^2(\pi/n )\in\{1,2,3,4\}$), in a geometric  context. These geometric entropies allow us to establish algebraic relations between  certain weight-4 automorphic self-energies and special values of weight-6 automorphic Green's functions. \\\\\textit{Keywords}: Kontsevich--Zagier periods, automorphic Green's functions,  interaction entropies, Gross--Zagier renormalization \\\\\textit{Subject Classification (AMS 2010)}: 11F03 (Primary),    14E05  (Secondary)   \end{abstract}
\maketitle
\tableofcontents
\clearpage
\pagenumbering{arabic}
\section{Introduction}
\subsection{Statement of results}
In Part~I of this series \cite{AGF_PartI}, we constructed integral representations (in the spirit of Kontsevich and Zagier \cite[][\S3.4]{KontsevichZagier}) for automorphic Green's functions $ G_{k/2}^{\mathfrak H/\overline{\varGamma}_0(N)}(z_{1},z_{2})$ satisfying the cusp-form-free condition $ \dim\mathscr S_k(\varGamma_0(N))=0$ for even weights $k\geq4$ and positive integer levels $N$. Here, $ \overline{\varGamma}_0(N)=\varGamma_0(N)/\{\hat I,-\hat I\}$ is the projective version for the Hecke congruence group of level $N$, \textit{i.e.}~$ \varGamma_0(N):=\left\{ \left.\left(\begin{smallmatrix}a&b\\ c&d\end{smallmatrix}\right)\right| a,b,c,d\in\mathbb Z;ad-bc=1;c\equiv 0\pmod N\right\}$, and $ \overline{\varGamma}_0(1)= PSL(2,\mathbb Z)$ is the full modular group. As in Part~I, we define the automorphic Green's functions  according to the conventions set in \cite[][p.~207]{GrossZagier1985}, \cite[][pp.~238--239]{GrossZagierI} and \cite[][p.~544]{GrossZagierII}:
\begin{align}G_{k/2}^{\mathfrak H/\overline{\varGamma}_0(N)}(z_{1},z_{2}):={}&-2\sum_{\hat  \gamma\in\overline{\varGamma}_0(N)}Q_{\frac{k}{2}-1}
\left( 1+\frac{\vert z_{1} -\hat  \gamma z_2\vert ^{2}}{2\I z_1\I(\hat\gamma z_2)} \right)\notag\\={}&-\sum_{\substack{a,b,c,d\in\mathbb Z\\ N\mid c,ad-bc=1}}Q_{\frac{k}{2}-1}
\left( 1+\frac{\left\vert z_{1} -\frac{a z_2+b}{cz_{2}+d}\right\vert ^{2}}{2\I z_1\I\frac{a z_2+b}{cz_{2}+d}} \right),\quad z_1\notin\varGamma_0(N)z_2,\end{align}with   $Q_\nu $ being the Legendre function of the second kind $ Q_\nu(t):=\int_0^{\infty}(t+\sqrt{t^2-1}\cosh u)^{-\nu-1}\D u,t>1,\nu>-1$.

In the current instalment (Part~II) and its sequel (Part III), we will focus on the weight-4 automorphic Green's functions, where the solutions to the cusp-form-free condition $ \dim\mathscr S_4(\varGamma_0(N))=0$ are exhausted by $ N\in\{1,2,3,4\}$ \cite[][Appendix B]{AGF_PartI}. An outstanding problem concerning these automorphic Green's functions is a conjecture  about the nature of their values at CM points (quadratic irrationals in the upper half-plane), as recapitulated below.\begin{conjecture}[Gross--Zagier  {\cite[][p.~317]{GrossZagierI}} and Gross--Kohnen--Zagier {\cite[][p.~556]{GrossZagierII}}]\label{conj:wt4_GKZ}The CM values for weight-4 automorphic Green's functions of levels $1$, $2$, $3$ and $4$ are always expressible as  logarithms of algebraic numbers:\begin{empheq}[box=\fbox]{align*}\exp \left[\I z\I z'G^{\mathfrak H/\overline {\varGamma}_0(N)}_{2}(z,z')\right]\in\overline{\mathbb Q}\quad \text{if }[\mathbb Q(z):\mathbb Q]=[\mathbb Q(z'):\mathbb Q]=2 \text{ and }N\in\{1,2,3,4\}.\end{empheq}Here, it is understood that the pair of points in question are not equivalent per modular transformations: $ z\notin\varGamma_0(N)z'$.\end{conjecture}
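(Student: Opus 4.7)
The plan is to exploit the Kontsevich--Zagier integral representations constructed in Part~I, which express $G_2^{\mathfrak H/\overline{\varGamma}_0(N)}(z_1,z_2)$ as periods attached to the Legendre--Ramanujan curves $Y^n=(1-X)^{n-1}X(1-\alpha X)$ with $n\in\{6,4,3,2\}$ matched to $N=4\sin^2(\pi/n)\in\{1,2,3,4\}$, the parameter $\alpha=\alpha(z_1,z_2)$ being a birational invariant of the unordered pair on a cover of $X_0(N)\times X_0(N)$.

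First I would recast the Legendre-$Q_1$ sum defining $G_2$ as the integral of a differential form paired with a homology cycle on this curve. This is the \emph{interaction entropy} construction foreshadowed in the abstract: one seeks an identity of the shape
\begin{equation*}
\I z_1\,\I z_2\,G_2^{\mathfrak H/\overline{\varGamma}_0(N)}(z_1,z_2) = \log|F(\alpha)|+\mathcal P(\alpha),
\end{equation*}
in which $F$ is an explicit algebraic function of $\alpha$ and $\mathcal P(\alpha)$ is a logarithmic pairing of two distinguished cycles on $Y^n=(1-X)^{n-1}X(1-\alpha X)$.

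Next I would specialize to CM points. For quadratic irrationals $z_1,z_2$ with $z_1\notin\varGamma_0(N)z_2$, classical complex multiplication guarantees $\alpha\in\overline{\mathbb Q}$, since $\alpha$ is a modular function on a cover of $X_0(N)\times X_0(N)$, evaluated at a point of CM type. This handles the $\log|F(\alpha)|$ term. What remains is to show that $\mathcal P(\alpha)$ also lies in the $\overline{\mathbb Q}$-span of $\log\overline{\mathbb Q}$. For algebraic $\alpha$ the Legendre--Ramanujan curve carries an action of $\mathbb Z[\zeta_n]$ on its Jacobian; combined with the CM provenance of $\alpha$, the Jacobian decomposes into abelian factors whose periods are constrained by Deligne-style period identities in the explicit Chowla--Selberg shape developed by Colmez and Yang. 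Applying these constraints should reduce $\mathcal P(\alpha)$ to a $\overline{\mathbb Q}$-linear combination of logarithms of algebraic numbers, together with possible residues involving $\pi$ and $\log\Gamma$-values at rational arguments.

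The main obstacle lies in this last cancellation. Generically, such period identities yield only a statement modulo those extra transcendental terms; extracting the pure $\log\overline{\mathbb Q}$ content demanded by Conjecture~\ref{conj:wt4_GKZ} rests on symmetries specific to weight~$4$ and levels $N\in\{1,2,3,4\}$ that should force the unwanted residues to cancel in pairs, presumably via a functional equation of $\mathcal P(\alpha)$ under the involution $\alpha\mapsto 1-\alpha$ and under the Atkin--Lehner operators on $X_0(N)\times X_0(N)$. Reducing the problem to \emph{weight-6} special values, as the abstract advertises, looks like the cleanest intermediate move: if a weight-6 analogue can be verified on independent grounds for the relevant CM configurations, the weight-4 statement then follows via the algebraic relations established by the interaction-entropy calculus.
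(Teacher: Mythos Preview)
The statement you have attempted to prove is Conjecture~\ref{conj:wt4_GKZ}, and the paper does \emph{not} prove it. Immediately after stating the conjecture, the paper says explicitly: ``To prepare for a verification of this conjecture in Part III, we develop some analytic tools in this article (Part~II).'' The main results of this installment are Theorems~\ref{thm:HC_wt4_Green} and~\ref{thm:app_HC}, which recast the Kontsevich--Zagier integrals as entropy couplings and derive some identities between weight-4 self-energies and weight-6 Green's functions. No proof of the conjecture appears here, so there is no ``paper's own proof'' against which your proposal can be compared.

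As for your proposal on its own merits: it is a strategy sketch, not a proof. You yourself flag the essential gap when you write that the reduction of $\mathcal P(\alpha)$ to $\log\overline{\mathbb Q}$ ``rests on symmetries\ldots that should force the unwanted residues to cancel in pairs, presumably via a functional equation''. The words ``should'' and ``presumably'' mark precisely the place where the argument is missing. Moreover, your invocation of Deligne--Colmez--Yang period relations for the Jacobian of $Y^n=(1-X)^{n-1}X(1-\alpha X)$ does not by itself control the logarithmic coupling $\mathcal P(\alpha)$: that object is not a period of an abelian variety but a regulator-type integral (a double integral of $\log$ against algebraic differentials), and Chowla--Selberg style identities govern periods, not such logarithmic pairings. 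The paper's programme (deferred to Part~III) is instead to exploit the entropy-coupling formula of Theorem~\ref{thm:HC_wt4_Green} together with complex multiplication of theta functions to decompose $H_\nu(\alpha\Vert\beta)$ at CM moduli into finitely many theta-function terms; your route via abstract period conjectures does not obviously converge to the same mechanism.
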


To prepare for a verification of this conjecture in Part III, we develop some analytic tools in this article (Part II). The main result of Part II is a new set of  integral representations for the automorphic Green's functions $ G^{\mathfrak H/\overline {\varGamma}_0(N)}_{2}(z,z')$ ($ N\in\{1,2,3,4\}$), as logarithmic couplings of certain algebraic curves.

Before stating the main result of this article in Theorem \ref{thm:HC_wt4_Green},
we recall from Part I some terminologies and notations pertaining to automorphic functions on the compact Riemann surfaces\footnote{Hereafter, as in Part~I \cite{AGF_PartI}, we maintain the distinction between lowercase backslash (``$\smallsetminus$'' for set minus operations) and uppercase backslash (``$ \backslash$'' for orbit spaces). } $ X_0(N)(\mathbb C)=\varGamma_0(N)\backslash\mathfrak H^*=\varGamma_0(N)\backslash(\mathfrak H\cup\mathbb Q\cup\{i\infty\})$, for $ N\in\{1,2,3,4\}$. One writes \begin{align} \eta(z):=e^{\pi iz/12}\prod_{n=1}^\infty(1-e^{2\pi inz}),\quad z\in\mathfrak H\label{eq:Dedekind_eta_defn}\end{align} for the Dedekind eta function, through which the modular lambda function \begin{align} \lambda(z):=\frac{16\eta^{8}(z/2)\eta^{16}(2z)}{\eta^{24}(z)},\quad z\in\mathfrak H\label{eq:mod_lambda_defn}\end{align} is defined.  The higher-level invariants\footnote{We write ``a.e.'' for ``almost every'' point in question, so as to accommodate to possible exceptions that form a set of zero measure. } \begin{align}
\alpha_N(z):=
\left\{1+\frac{1}{N^{6/(N-1)}}\left[ \frac{\eta(z)}{\eta(Nz)} \right]^{24/(N-1)}\right\}^{-1}=1-\alpha_N\left( -\frac{1}{N z} \right),\quad \text{a.e. }z\in\mathfrak H\label{eq:alpha_N_defn}
\end{align}are tailored for the modular elliptic curves $ X_0(N)(\mathbb C)=\varGamma_0(N)\backslash\mathfrak H^*$ where the positive integer $ N-1$ divides $24$  \cite[][Eq.~2.1.3]{AGF_PartI}. Especially, one has $ \alpha_4(z)=\lambda(2z)$.
In what follows, we denote by
\begin{align}
\Int\mathfrak D_N={}&\left\{ z\in\mathfrak H\left| -\frac{1}{2}<\R z<\frac{1}{2},\left|z+\frac{1}{N}\right|>\frac{1}{N},\left|z-\frac{1}{N}\right|>\frac{1}{N} \label{eq:Int_D_Hecke234}\right. \right\}\intertext{the interior of the fundamental domain $ \mathfrak D_N$ for $ \varGamma_0(N)$, $ N\in\{2,3,4\}$ \cite[][Fig.~1\textit{c}--\textit{e}]{AGF_PartI}.
We retroactively introduce}
\Int\mathfrak D_1={}&\left\{ z\in\mathfrak H\left| -\frac{1}{2}<\R z<\frac{1}{2},|z+1|>1,|z-1|>1\right.\right\}. \label{eq:Int_D_1}\end{align}We normalize the principal modular invariant $ j(z)$  on $ X_0(1)(\mathbb C)=SL(2,\mathbb Z)\backslash\mathfrak H^*$ as\begin{align}
j(z):={}&\frac{256\{1-\lambda(z)+[\lambda(z)]^{2}\}^{3}}{[\lambda(z)]^{2}[1-\lambda(z)]^2},\quad z\in\mathfrak H,\label{eq:Klein_j_defn}
\end{align} and define \begin{align}
\alpha_1(z):=\frac{1}{2}-\frac{4[1-2\lambda(z)][2-\lambda(z)][1+\lambda(z)]}{\sqrt{j(z)}\lambda(z)[1-\lambda(z)]}=1-\alpha_{1}\left( -\frac{1}{ z} \right),\quad z\in\mathfrak \Int\mathfrak D_1\label{eq:alpha_1_defn}
\end{align}using the principal branch of square root.

\begin{theorem}[Entropy Formulae for Automorphic Green's Functions of Weight 4]\label{thm:HC_wt4_Green} For  $ \nu\in\{-1/6,-1/4,-1/3,-1/2\}$, we define  the entropy coupling\begin{align}
H_\nu(\alpha\Vert\beta):=\mathbb E^U_{\nu,\alpha}\mathbb E^V_{\nu,\beta}\log\frac{1-\beta UV}{1-\beta V}-\mathbb E^U_{\nu,1-\alpha}\mathbb E^V_{-\nu-1,\beta}\log\left[ 1-\frac{1-\alpha}{\alpha}\frac{\beta}{1-\beta}(1-U)(1-V)\right]\label{eq:HC_defn_intro}
\end{align}for \emph{a.e.}~$ (\alpha,\beta)\in\mathbb C^{2}$, via the Legendre expectations\begin{align}
\mathbb E^u_{\nu,t}f(u):=\frac{\displaystyle\int_0^1\frac{f(u)\D u}{u^{-\nu}(1-u)^{\nu+1}(1-t u)^{-\nu}}}{\displaystyle\int_0^1\frac{\D u}{u^{-\nu}(1-u)^{\nu+1}(1-t u)^{-\nu}}},\quad \mathbb E^u_{-\nu-1,t}f(u):=\frac{\displaystyle\int_0^1\frac{f(u)\D u}{u^{\nu+1}(1-u)^{-\nu}(1-t u)^{\nu+1}}}{\displaystyle\int_0^1\frac{\D u}{u^{\nu+1}(1-u)^{-\nu}(1-t u)^{\nu+1}}},\label{eq:Legendre_expt_ratio_defn}
\end{align}where the integrations are carried out along the  open unit interval $ u\in(0,1)$. Then, the weight-4 automorphic Green's functions $ G_2^{\mathfrak H/\overline{\varGamma}_0(N)}(z_{1},z_{2})$, \emph{a.e.}~$ z_{1},z_{2}\in\Int\mathfrak D_N$ admit the following integral representations: \begin{align}G_2^{\mathfrak H/PSL(2,\mathbb Z)}(z_{1},z_{2})=G_2^{\mathfrak H/\overline{\varGamma}_0(1)}(z_{1},z_{2})={}&\mathscr I_{1}(z_{1},z_{2})+\mathscr I_{1}(z_{1},-1/z_{2}),\label{eq:G2_PSL2Z_via_I}\\
G_2^{\mathfrak H/\overline{\varGamma}_0(N)}(z_{1},z_{2})={}&\mathscr I_{N}(z_{1},z_{2}),\quad N\in\{2,3,4\}\label{eq:G2_Hecke234_via_I}
\end{align}where\begin{align}
\mathscr I_{N}(z,z')={}&\R\left\{\I z\I z'
\frac{\partial}{\partial\I z}\frac{\partial}{\partial\I z'}\frac{zz'[H_{\nu}(\alpha_N(z)\Vert\alpha_N(z'))+H_{\nu}(1-\alpha_N(z)\Vert1-\alpha_N(z'))]}{\I z\I z'}\right\}\label{eq:I_N_via_H_nu}
\end{align}for  degrees $ \nu\in\{-1/6,-1/4,-1/3,-1/2\}$ and the corresponding levels  $ N=4\sin^2(\nu\pi)\in\{1,2,3,4\}$.
 \qed\end{theorem}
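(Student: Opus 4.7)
The plan is to derive the entropy formulae by transforming the Kontsevich--Zagier integral representations of Part~I into ratios of hypergeometric periods coupled by explicit logarithmic kernels, and then to verify that the differential operator in \eqref{eq:I_N_via_H_nu} reconstructs the weight-4 automorphic Green's function.

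First, I would invoke the Part~I representation of $G_2^{\mathfrak H/\overline{\varGamma}_0(N)}(z_1,z_2)$ as an iterated Kontsevich--Zagier integral on $(0,1)^2$ whose integrand is built from the Legendre--Ramanujan weight $u^{-\nu}(1-u)^{\nu+1}(1-\alpha u)^{-\nu}$ evaluated at $\alpha=\alpha_N(z_j)$. These are precisely the measures normalising the Legendre expectations $\mathbb E^u_{\nu,t}$ in \eqref{eq:Legendre_expt_ratio_defn}, so the Part~I integrand factors as (hypergeometric period) $\times$ (logarithmic kernel), and one may read off the quotient that produces an expectation. The two terms inside $H_\nu(\alpha\Vert\beta)$ in \eqref{eq:HC_defn_intro} are then matched to two cycle pairings on the curve $Y^n=(1-X)^{n-1}X(1-\alpha X)$ with $n=-1/\nu$: the piece $\log\tfrac{1-\beta UV}{1-\beta V}$ corresponds to the pairing that keeps both cycles on the ``natural'' chart, while the companion term $-\mathbb E^U_{\nu,1-\alpha}\mathbb E^V_{-\nu-1,\beta}\log[1-\tfrac{1-\alpha}{\alpha}\tfrac{\beta}{1-\beta}(1-U)(1-V)]$ reflects the Atkin--Lehner flip $\alpha\mapsto 1-\alpha$ combined with the Kummer duality $\nu\leftrightarrow-\nu-1$ between the two standard hypergeometric solutions. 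The symmetrization $H_\nu(\alpha\Vert\beta)+H_\nu(1-\alpha\Vert1-\beta)$ further absorbs the Fricke involution $z\mapsto-1/(Nz)$, which acts on \eqref{eq:alpha_N_defn} by $\alpha_N\mapsto1-\alpha_N$.

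Second, once the entropic primitive $h(z,z'):=H_\nu(\alpha_N(z)\Vert\alpha_N(z'))+H_\nu(1-\alpha_N(z)\Vert1-\alpha_N(z'))$ is in place, the operator
\begin{equation*}
\R\left\{\I z\,\I z'\,\frac{\partial}{\partial\I z}\frac{\partial}{\partial\I z'}\frac{zz'}{\I z\,\I z'}(\,\cdot\,)\right\}
\end{equation*}
has to be shown to raise the weight from $0$ (the entropic primitive) to $4$ (the Green's kernel). Since $\alpha_N$ is holomorphic, differentiation in the imaginary part reduces on $h$ to $i\partial_z$, while the conjugate information is supplied by the algebraic prefactor $zz'/(\I z\,\I z')=(\bar z/\I z+2i)(\bar z'/\I z'+2i)/$(adjustments). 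The combination is precisely the Gross--Zagier renormalization that turns a holomorphic bilinear primitive into the real-analytic kernel $Q_1(1+|z-\hat\gamma z'|^2/(2\I z\,\I(\hat\gamma z')))$, summed over cosets.

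Third, for $N\in\{2,3,4\}$ the fundamental domain $\Int\mathfrak D_N$ is a single Hecke triangle and the single-chart identity \eqref{eq:G2_Hecke234_via_I} follows directly from the two preceding steps. For $N=1$ the $PSL(2,\mathbb Z)$ fundamental domain is a union of two Hecke-triangle domains glued along $z\mapsto-1/z$ (the signature-$(2,3,\infty)$ subgroup generated with the Fricke involution), so summing the single-chart formula over the two coset representatives yields \eqref{eq:G2_PSL2Z_via_I}; consistency requires the identity $\alpha_1(-1/z)=1-\alpha_1(z)$ recorded in \eqref{eq:alpha_1_defn}.

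The main obstacle will be the second step: one must certify that writing the \emph{holomorphic} primitive $h(z,z')$ and then applying the real-analytic operator $\R\{\I z\I z'\partial_{\I z}\partial_{\I z'}(zz'\cdot/\I z\I z')\}$ does not introduce spurious harmonic defects on top of the desired $G_2$. Equivalently, one needs to check that a specific real-analytic identity between two $\varGamma_0(N)$-automorphic functions holds; by the Cauchy--Riemann equations this reduces to a holomorphic identity in each of $z,z'$, which is furnished precisely by the Part~I Kontsevich--Zagier formula after the change of variable to the Legendre expectation normalization.
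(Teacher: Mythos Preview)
Your proposal has a genuine gap: you treat the passage from the Part~I Kontsevich--Zagier integrals to the entropy coupling $H_\nu(\alpha\Vert\beta)$ as a matter of ``reading off'' a factorization and ``matching'' cycle pairings, but this is precisely where all the work lies. The Part~I representation (Theorem~\ref{thm:Eichler_KZ_wt4_Green} here) does not present the Green's function as a double integral on $(0,1)^2$ with a logarithmic kernel; it gives Eichler-type integrals of the form $\int_0^\beta [P_\nu(1-2t)]^2(t-\alpha)^{-1}\,\D t$ and $\int_0^\beta P_\nu(1-2t)P_\nu(2t-1)(t-\alpha)^{-1}\,\D t$, assembled into the function $S_\nu(\alpha\Vert\beta)$ of Proposition~\ref{prop:S_nu_alpha_beta_recip}. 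The identity you need is
\[
S_\nu(\alpha\Vert\beta)\equiv P_\nu(1-2\alpha)P_\nu(2\alpha-1)P_\nu(1-2\beta)P_\nu(2\beta-1)\bigl[H_\nu(\alpha\Vert\beta)+H_\nu(1-\alpha\Vert1-\beta)\bigr]\pmod{2\pi i\Lambda_\nu(\alpha,\beta)},
\]
and nothing in your outline explains how to obtain it. In the paper this requires a long chain of explicit manipulations: the EFS parametrization to unfold $[P_\nu]^2$ into triple integrals (Proposition~\ref{prop:triple_integrals_PnuPnu}), Jacobi involutions and other birational changes of variable (Propositions~\ref{prop:multi_integrals_Jacobi_involution}--\ref{prop:multi_integrals_Jacobi_involution_again}), Legendre-type addition formulae to collapse triple integrals to double integrals (Lemma~\ref{lm:Legendre_addition}, Proposition~\ref{prop:Legendre_add_rel_intn_ent}), and finally the Ramanujan reciprocity and $\nu$-reflection symmetry (Lemmata~\ref{lm:Ramanujan_recip}--\ref{lm:nu_reflection_symm}) to reach the entropy-coupling form (Proposition~\ref{prop:HC_int_Ent}). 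None of these steps is a mere bookkeeping exercise; each introduces or cancels specific transcendental terms, and the final logarithmic kernels in \eqref{eq:HC_defn_intro} emerge only after all of them.

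Your second step is also misdirected. Once $S_\nu$ is tied to $H_\nu$, the differential operator in \eqref{eq:I_N_via_H_nu} does not need a separate ``weight-raising'' argument: the relation $G_2^{\mathfrak H/\overline{\varGamma}_0(N)}=\mathscr I_N$ (or the sum of two $\mathscr I_1$ for $N=1$) with $\mathscr I_N$ written via $S_\nu$ is already established in Part~I (recalled in the remark after Proposition~\ref{prop:S_nu_alpha_beta_recip}, Eq.~\ref{eq:I_N_z_z'_defn}). The differential operator annihilates every element of $\Lambda_\nu(\alpha,\beta)$ by the Ramanujan relations~\eqref{eq:Pnu_ratio_to_z}, so the congruence modulo $2\pi i\Lambda_\nu$ passes to an equality for $\mathscr I_N$. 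Thus the real content is entirely in the first step, and your proposal does not supply it.
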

We point out that the entropy formulae in the theorem above are analytic reformulations of the Kontsevich--Zagier integral representations for weight-4 automorphic Green's functions appearing in Part~I
\cite{AGF_PartI}, which were modeled after the integrations of modular forms in the Eichler--Shimura theory. These entropy formulae also serve as bridges towards the arithmetic analysis of the CM values of automorphic Green's functions in Part III, which will mainly deal with the complex multiplication of theta functions.

In this article, we will present a few simple applications of the entropy formulae to renormalized weight-4 automorphic Green's functions, also known as automorphic self-energies. \begin{theorem}[Weight-4 Automorphic Self-Energies]\label{thm:app_HC}\begin{enumerate}[label=\emph{(\alph*)}, ref=(\alph*), widest=a] \item \label{itm:app_HC_a}The weight-4 level-4 automorphic self-energy is defined through the procedure of Gross--Zagier renormalization ~(see \cite[][Chap.~II,  Eq.~5.7]{GrossZagierI} or \cite[][Eq.~3.2.1]{AGF_PartI}): \begin{align}
G_2^{\mathfrak H/\overline{\varGamma}_0(4)}(z):={}&-2\sum_{\hat  \gamma\in\overline {\varGamma}_0(4),\hat \gamma z\neq z}Q_{1}
\left( 1+\frac{\vert z -\hat  \gamma z\vert ^{2}}{2\I z\I(\hat\gamma z)} \right)-2\left\{ \log\left\vert 4\pi\eta^{4}(z)  \I z \right|-1 \right\},\quad z\in\mathfrak H,
\end{align} and admits a closed-form evaluation:\begin{align}
G_2^{\mathfrak H/\overline{\varGamma}_0(4)}(z)=-\frac{1}{3}\log\frac{2^4|1-\alpha_4(z)|^{2}}{|\alpha_4(z)|}=-\frac{1}{3}\log\frac{2^4|1-\lambda(2z)|^{2}}{|\lambda(2z)|}
,\quad \forall z\in\mathfrak H.
\end{align}\item\label{itm:app_HC_c} As long as $z$ is not an elliptic point on $ \varGamma_0(N)$ for $ N\in\{1,2,3\}$, we have\begin{align}
G_2^{\mathfrak H/\overline{\varGamma}_0(N)}(z):={}&-2\sum_{\hat  \gamma\in\overline {\varGamma}_0(N),\hat \gamma z\neq z}Q_{1}
\left( 1+\frac{\vert z -\hat  \gamma z\vert ^{2}}{2\I z\I(\hat\gamma z)} \right)-2\left\{ \log\left\vert 4\pi\eta^{4}(z)  \I z \right|-1 \right\},
\end{align} and \begin{align}G_{2}^{\mathfrak H/\overline{\varGamma}_0(1)}(z)+\frac{\log|j(z)-1728|}{3}={}&\frac{2}{3}G_3^{\mathfrak H/\overline{\varGamma}_0(1)}(i,z)+\frac{2}{3}G_3^{\mathfrak H/\overline{\varGamma}_0(1)}\left(\frac{1+i\sqrt{3}}{2},z\right),\\
G_{2}^{\mathfrak H/\overline{\varGamma}_0(2)}(z)+G_{2}^{\mathfrak H/\overline{\varGamma}_0(2)}\left( -\frac{1}{2z} \right)-\frac{1}{3}\log\frac{|\alpha_{2}(z)[1-\alpha_2(z)]|}{2^{12}}={}&\frac{4}{3}G_3^{\mathfrak H/\overline{\varGamma}_{0}(2)}\left(\frac{1+i}{2},z\right),\\G_{2}^{\mathfrak H/\overline{\varGamma}_0(3)}(z)+G_{2}^{\mathfrak H/\overline{\varGamma}_0(3)}\left( -\frac{1}{3z} \right)+2\log 3={}&\frac{4}{3}G_3^{\mathfrak H/\overline{\varGamma}_{0}(3)}\left(\frac{3+i\sqrt{3}}{6},z\right),
\end{align}where the expressions on the right-hand side are weight-6 automorphic Green's functions.

In addition, both $ G_3^{\mathfrak H/\overline{\varGamma}_0(1)}(i,z)$ and $ G_3^{\mathfrak H/\overline{\varGamma}_0(1)}\left(\frac{1+i\sqrt{3}}{2},z\right)$ can be expressed in terms of weight-4 automorphic Green's functions:\begin{align}G_3^{\mathfrak H/\overline{\varGamma}_0(1)}(i,z)={}&\frac{3}{2}\left[ G_{2}^{\mathfrak H/\overline{\varGamma}_0(4)}\left( -\frac{1}{2z} ,\frac{z}{2}\right) +G_{2}^{\mathfrak H/\overline{\varGamma}_0(4)}\left( \frac{z+1}{2} ,\frac{z}{2}\right)+ G_{2}^{\mathfrak H/\overline{\varGamma}_0(4)}\left( \frac{z}{2(z+1)} ,\frac{z}{2}\right)\right]\notag\\{}&+\frac{\log|j(z)-1728|}{2}-3\log2,\\
G_3^{\mathfrak H/\overline{\varGamma}_0(1)}\left(\frac{1+i\sqrt{3}}{2},z\right) ={}&3G_{2}^{\mathfrak H/\overline{\varGamma}_0(4)}\left( -\frac{1}{2(z+1)} ,\frac{z}{2}\right),
\end{align} so long as both sides of the equations above assume finite values. \qed\end{enumerate}\end{theorem}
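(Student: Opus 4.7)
Both parts of Theorem~\ref{thm:app_HC} arise by specialising the entropy formulae of Theorem~\ref{thm:HC_wt4_Green} to diagonal or near-diagonal configurations and applying the Gross--Zagier renormalisation to extract a finite part. For part~\ref{itm:app_HC_a}, I would begin from $\mathscr I_4(z_1,z_2)=G_2^{\mathfrak H/\overline{\varGamma}_0(4)}(z_1,z_2)$ supplied by~\eqref{eq:G2_Hecke234_via_I} and study the coincidence limit $z_2\to z_1=z$. Because $\overline{\varGamma}_0(4)$ is torsion-free, the only singularity of the original $Q_1$-sum comes from the identity coset, producing the logarithmic divergence $-2\log|z_1-z_2|^2$ that the counterterm $-2\log|4\pi\eta^4(z)\I z|$ is engineered to cancel. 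On the entropy side the matching singularity shows up as a coincidence divergence of $H_{-1/2}(\alpha_4(z_1)\Vert\alpha_4(z_2))$ when $\alpha_4(z_2)\to\alpha_4(z_1)$; after isolating this singular part from the explicit product structure of~\eqref{eq:HC_defn_intro} at $\nu=-1/2$, the finite remainder is converted to the Gross--Zagier counterterm by the classical eta-product identity expressing $|\eta(z)|^8\I z$ as an explicit algebraic function of $|\lambda(2z)|$ and $|1-\lambda(2z)|$, yielding the claimed closed form in $\alpha_4(z)=\lambda(2z)$.

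For part~\ref{itm:app_HC_c}, I would run the same diagonal limit at levels $N\in\{1,2,3\}$, where $\overline{\varGamma}_0(N)$ now carries elliptic elements fixing the points $i$ and $(1+i\sqrt 3)/2$ (orders $2$ and $3$ in level $1$), $(1+i)/2$ (order $2$ in level $2$), and $(3+i\sqrt 3)/6$ (order $3$ in level $3$). Each elliptic stabiliser contributes a distinguished finite correction to the diagonal limit whose dependence on $z$ matches, after rearrangement of the Legendre expectations in $H_\nu$, the integral representation of a weight-$6$ Green's function evaluated at the pair $(\omega_0,z)$; this is the structural origin of the prefactors $2/3$ and $4/3$ appearing on the right-hand sides of the three asserted identities. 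The additive constants $\tfrac13\log|j(z)-1728|$, $-\tfrac13\log(|\alpha_2(z)(1-\alpha_2(z))|/2^{12})$, and $+2\log 3$ arise as the difference between the Gross--Zagier counterterm for the weight-$4$ self-energy and the normalisation convention for the weight-$6$ Green's function at $\omega_0$, each being evaluable through the hypergeometric Legendre expectations $\mathbb E^U_{\nu,\alpha_N(\omega_0)}$ at the algebraic value $\alpha_N(\omega_0)$.

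The last two displayed identities, expressing $G_3^{\mathfrak H/\overline{\varGamma}_0(1)}$ at $i$ and $(1+i\sqrt 3)/2$ through level-$4$ weight-$4$ Green's functions, follow by pulling the level-$1$ weight-$6$ entropy representation back along the degree-$6$ covering $X_0(4)\to X_0(1)$, implemented at the Hauptmodul level by~\eqref{eq:Klein_j_defn}. A choice of $\varGamma_0(1)/\varGamma_0(4)$ coset representatives sends $z/2$ to the six points forming the fibre over $[z]\in X_0(1)$; at the order-$2$ elliptic point $i$ the stabiliser action identifies these into the three $\overline{\varGamma}_0(4)$-orbits $-1/(2z)$, $(z+1)/2$, $z/(2(z+1))$ appearing in the first formula, while at the order-$3$ elliptic point $(1+i\sqrt 3)/2$ they collapse to the single orbit $-1/(2(z+1))$ appearing in the second, the constants $-3\log 2$ and $\tfrac12\log|j(z)-1728|$ recording the ramification contributions of the cover at $j=0$ and $j=1728$ respectively.

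The main obstacle will be the meticulous bookkeeping of these additive constants. Each of $-3\log 2$, $+2\log 3$, $\tfrac12\log|j(z)-1728|$, and $\tfrac13\log(|\alpha_2(z)(1-\alpha_2(z))|/2^{12})$ is assembled from a combination of a regularisation counterterm, an eta-product asymptotic at the relevant elliptic point, and a ramification residue of the modular covering, and verifying each one requires evaluating the Legendre expectations of Theorem~\ref{thm:HC_wt4_Green} in closed form at the specific algebraic values $\alpha_N(\omega_0)$ via classical hypergeometric identities of Gauss--Ramanujan type. The structural skeleton --- which combinations of $\mathscr I_N$'s appear and at which elliptic points --- is dictated uniquely by the geometry of the coverings $X_0(4)\to X_0(N)$ together with the catalogue of elliptic fixed points in $\mathfrak D_N$; the quantitative matching of the additive constants is the delicate computational core of the proof.
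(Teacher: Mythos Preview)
Your outline for part~(a) is close in spirit to the paper's argument, but you are missing the decisive computational step. After the diagonal limit and Gross--Zagier subtraction, the paper arrives at an integral representation (Lemma~\ref{lm:G_2_Hecke234_int_repn}) involving the triple integral $\mathfrak Q_\nu(\alpha)$ of Eq.~\ref{eq:frakQ_nu_alpha_defn}. For general $\nu$ this term does not simplify, but at $\nu=-1/2$ one has the exact evaluation $\mathfrak Q_{-1/2}(\alpha)=\tfrac{2\pi^3}{3}[P_{-1/2}(1-2\alpha)]^3$, which kills the integral contribution outright and leaves only the elementary logarithmic pieces. Your proposal to read the closed form directly off an eta-product identity skips this cancellation, and without it you would still have an unevaluated entropy integral sitting in the answer.

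For part~(b) there is a genuine gap. Your mechanism---that each elliptic stabiliser contributes a correction whose $z$-dependence ``matches'' a weight-6 Green's function---is not substantiated, and I do not see how to make it work: the stabiliser terms in the $Q_1$-sum are individual Legendre-$Q_1$ evaluations, and there is no direct way to reassemble them into a $Q_2$-series. The paper's route is entirely different and rests on a spectral/uniqueness argument. Lemmata~\ref{lm:G_2_Hecke234_int_repn} and~\ref{lm:G_2_PSL2Z_int_repn} show that, after subtracting the stated logarithmic terms, the left-hand side of each identity can be written in the form $(\I z)^2\bigl(\tfrac{\partial}{\partial\I z}\tfrac{1}{\I z}\bigr)^2\R F(z)$ for some holomorphic $F$. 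A direct check shows any such expression is annihilated by $\Delta_{\mathfrak H}-6$; weight-6 automorphic Green's functions are annihilated by the same operator. One then verifies that both sides share the same asymptotics at cusps and elliptic points (this is where Lemma~\ref{lm:cusp_limits} and Eqs.~\ref{eq:J_lambda_0}--\ref{eq:J_lambda_1} enter), so their difference is a bounded $(\Delta_{\mathfrak H}-6)$-eigenfunction on $X_0(N)(\mathbb C)$ vanishing at the cusps, hence identically zero. The additive constants are not computed term-by-term from hypergeometric evaluations at $\alpha_N(\omega_0)$ as you suggest; they are fixed by matching the cusp asymptotics. The same eigenfunction-uniqueness argument, applied to Eq.~\ref{eq:frakq_deriv}, yields the final two identities; your covering-fibre picture is suggestive but is not the engine of the proof.
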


\subsection{Notational conventions\label{subsec:notations}}

The following rules regarding complex-valued functions and their integrations apply to this article:\begin{itemize}
\item
The complex logarithms of non-zero numbers are prescribed as \begin{align} \R\log \xi=\log|\xi|, \quad \I \log \xi=\arg \xi\in(-\pi,\pi].\label{eq:Re_Im_log}\end{align} Fractional powers are defined using the aforementioned univalent branch of the logarithm:  \begin{align} \xi^\beta\equiv\sqrt[1/\beta]{\xi}:=e^{\beta\log \xi} \quad\text{for} \quad \xi\in\mathbb C\smallsetminus\{0\}.\label{eq:frac_pow}\end{align}\item For $ a,b\in\mathbb C$, the integration path for $ \int_{a}^{b}f(\xi)\D \xi$ is a straight-line segment starting from $a$ and ending in $ b$, unless explicitly designated otherwise. When we say that ``the integration paths for a certain multiple integral $ \int_{a_1}^{b_1}\D\xi_1\cdots\int_{a_n}^{b_n}\D \xi_n\,f(\xi_1,\dots,\xi_n)$ are straight-line segments'', we are referring to the integration paths for the iterated integrals that constitute the multiple integral.
\item At times, we write $ \infty$ as a shorthand for the positive infinity $+\infty $. We always denote the infinite cusp by $ i\infty$. For any real number $a$, the integration $ \int_a^\infty(\cdots)\D x:=\lim_{M\to+\infty}\int_a^M(\cdots)\D x$ is always carried out along the real axis.\item Whereas  slant $ \varGamma$ will be used for congruence subgroups, the upright $ \Gamma$ is reserved for the Euler integral of the second kind $ \Gamma(\xi):=\int_0^\infty t^{\xi-1}e^{-t}\D t,\R \xi>0$ and its analytic continuation. The digamma function is denoted by $ \psi^{(0)}(\xi):=\partial\log\Gamma(\xi)/\partial\xi $. The Euler--Mascheroni constant is defined by $ \gamma_0:=-\psi^{(0)}(1)$.  \item Via  Euler's integral representation for hypergeometric functions\begin{align}&
_2F_1\left( \left.\begin{array}{c}
a,b \\
c \\
\end{array}\right|t \right)\notag\\={}&\frac{\Gamma(c)}{\Gamma(b)\Gamma(c-b)}\int_0^1\frac{u^{b-1}(1-u)^{c-b-1}}{(1-tu)^{a}}\D u,\quad \R c>\R b>0,-\pi<\arg(1-t)<\pi,\label{eq:Euler_int}\end{align}
one   defines the Legendre function of the first kind: \begin{align}&
P_\nu(1-2t):={_2}F_1\left(  \left.\begin{array}{c}
-\nu,\nu+1 \\
1 \\
\end{array}\right| t\right)\equiv P_{-\nu-1}(1-2t)\notag\\={}&-\frac{\sin(\nu\pi)}{\pi}\int_0^{1}\left[ \frac{u(1-tu)}{1-u} \right]^\nu\frac{\D u}{1-u},\quad -1<\nu<0,t\in\mathbb C\smallsetminus[1,+\infty).\label{eq:Euler_int_Pnu}
\end{align}In particular, the complete elliptic integral of the first kind \begin{align} \mathbf K(\sqrt{t}):=\int_0^{\pi/2}\frac{\D \varphi}{\sqrt{1-t\sin^2\smash[b]{\varphi}}}=\frac{1}{2}\int_0^{1}\frac{\D u}{\sqrt{u(1-u)(1-tu)}}\end{align} is equal to $\frac{\pi}{2}P_{-1/2}(1-2t)$. Powers of the Legendre functions are always written in bracketed form, such as $[P_\nu(1-2t)]^{2} $, in order to avoid confusion with the standard notations for associated Legendre functions.
We write \begin{align}
F(\theta,\lambda):=\int_0^{\theta}\frac{\D \varphi}{\sqrt{1-\lambda\sin^2\smash[b]{\varphi}}}\label{eq:inc_F_defn}
\end{align}for the incomplete elliptic integrals of the first kind.
\item We introduce an \textit{ad hoc} notation for an elementary function (adhering to Eq.~\ref{eq:frac_pow} for the definition of fractional powers) \begin{align}
\mathbb Y_{\nu,\alpha}(u):=\frac{(1-u)^{\nu+1}}{u^\nu(1-\alpha u)^\nu}, \label{eq:Y_nu_alpha_defn}
\end{align}where $ \nu \in (-1,0);\alpha \in (\mathbb C\smallsetminus\mathbb R)\cup(0,1);u,1-u,1-\alpha u\in \mathbb C\smallsetminus(-\infty,0]$, so that we can sometimes use an abbreviation\begin{align}
\int_a^bf(u)\frac{u^\nu(1-\alpha u)^\nu}{(1-u)^{\nu+1}}\D u=\int_a^bf(u)\frac{\D u}{\mathbb Y_{\nu,\alpha}(u)},
\end{align} for integrations over a suitably regular function $f$. For the particular case where $a=0$, $b=1$, we introduce another short-hand notation \begin{align}
\int f(u)\mathbb D_{\nu,\alpha} u:=\int_0^1f(u)\frac{\D u}{\mathbb Y_{\nu,\alpha}(u)}=\int_0^1f(u)\frac{u^\nu(1-\alpha u)^\nu}{(1-u)^{\nu+1}}\D u.\label{eq:mathbb_D_nu_alpha_nu_defn}
\end{align} If $ P_\nu(1-2\alpha)\neq0$, then the symbol $ \mathbb E_{\nu,\alpha}^u$ (the Legendre expectation of degree $\nu$ and parameter $\alpha$, being  compatible with Eq.~\ref{eq:Legendre_expt_ratio_defn}) is defined as\begin{align}
\mathbb E_{\nu,\alpha}^uf(u):=-\frac{\sin(\nu \pi)}{\pi P_\nu(1-2\alpha)}\int f(u)\mathbb D_{\nu,\alpha} u.\label{eq:Legendre_expectation_defn}
\end{align}Whenever writing $ \mathbb D_{\nu,\alpha}u$ or $ \mathbb E^u_{\nu,\alpha}$, we tacitly assume that the integration path coincides with the open unit interval $ (0,1)\ni u$.
 \item As in \cite[][\S3.3]{AGF_PartI}, we use the Jacobi $ \Theta$-function:\begin{align}
\Theta(u|\lambda):={}&\prod_{n=1}^\infty\Bigg\{\Bigg[ 1-e^{-2n{\pi\mathbf K(\sqrt{1-\lambda})}/{\mathbf K(\sqrt{\lambda})}} \Bigg]\times\notag\\{}&\times\Bigg[1-2e^{-(2n-1){\pi\mathbf K(\sqrt{1-\lambda})}/{\mathbf K(\sqrt{\lambda})}}\cos\frac{\pi u}{{\mathbf K(\sqrt{\lambda})}}+e^{-2(2n-1){\pi\mathbf K(\sqrt{1-\lambda})}/{\mathbf K(\sqrt{\lambda})}}\Bigg]\Bigg\}\label{eq:Jacobi_Theta_defn}
\end{align} to define the Jacobi elliptic functions $ \sn(u|\lambda)$, $\cn(u|\lambda)$, $\dn(u|\lambda)$: {\allowdisplaybreaks\begin{align}
\sn(u|\lambda):={}&-\frac{i}{\sqrt[4]{\lambda}}\frac{\Theta(u+i\mathbf K(\sqrt{1-\lambda})|\lambda)}{\Theta(u|\lambda)}\exp\left\{{\frac{\pi i}{2\mathbf K(\sqrt{\lambda})}}\left[ u+\frac{i\mathbf K(\sqrt{1-\lambda})}{2} \right]\right\},\label{eq:Jacobi_sn_defn}\\\cn(u|\lambda):={}&\sqrt[^4\!\!]{\frac{1-\lambda}{\lambda}}\frac{\Theta(u+\mathbf K(\sqrt{\lambda})+i\mathbf K(\sqrt{1-\lambda})|\lambda)}{\Theta(u|\lambda)}\exp\left\{{\frac{\pi i}{2\mathbf K(\sqrt{\lambda})}}\left[ u+\frac{i\mathbf K(\sqrt{1-\lambda})}{2} \right]\right\},\label{eq:Jacobi_cn_defn}\\\dn(u|\lambda):={}&\sqrt[4]{1-\lambda}\frac{\Theta(u+\mathbf K(\sqrt{\lambda})|\lambda)}{\Theta(u|\lambda)},\label{eq:Jacobi_dn_defn}
\end{align}}for $\lambda\in(\mathbb C\smallsetminus\mathbb R)\cup(0,1)$ and a.e.~$ u\in\mathbb C$. \end{itemize}

\subsection{Plan of the proof}
This article is organized as follows. In \S\ref{sec:Interaction_Entropies_Transformations}, we recall from \cite[][\S2.2]{AGF_PartI} the Kontsevich--Zagier integral representations of weight-4 automorphic Green's functions on $ \varGamma_0(N),N\in\{1,2,3,4\}$, and reformulate them into various multiple integrals whose integrands are elementary functions.  In \S\ref{sec:Green_HC}, we prove all the statements in Theorems \ref{thm:HC_wt4_Green} and \ref{thm:app_HC},
drawing on materials from  \S\ref{sec:Interaction_Entropies_Transformations}.

In Part I (see \cite[][\S2.2]{AGF_PartI}, as well as Theorem~\ref{thm:Eichler_KZ_wt4_Green} of this article), weight-4 automorphic Green's functions were given as meromorphic analogs of Eichler integrals over modular forms. In Theorem \ref{thm:HC_wt4_Green} of this paper, the same Green's functions are described by integrations over abelian varieties  $ Y^n=(1-X)^{n-1}X(1-\alpha X)$ (the so-called Legendre--Ramanujan curves for $ n\in\{6,4,3,2\}$). The  modular representation will be bridged to its geometric counterpart through a set of integral identities involving the  Legendre functions and the Legendre--Ramanujan curves. These bridging identities are essentially algebraic relations among members in the ring $ (\mathscr P_{\mathrm{KZ}},+,\cdot)$ of Kontsevich--Zagier periods \cite[][\S1.1]{KontsevichZagier}, which consists of absolutely convergent integrals\begin{align}
\idotsint_{D^{n}_{A}\subset\mathbb R^n}f(\bm x)\D^n\bm x
\end{align} of algebraic functions $ f(\bm x)$ over algebraic domains $ D^{n}_A$ in Euclidean spaces $ \mathbb R^n$. Here, $ f(\bm x)=f(x_1,\dots, x_n)$ is an algebraic function in $n$ real variables, with algebraic coefficients; $ \D^n\bm x=\D x_1\wedge\cdots\wedge\D x_n$ is the Euclidean volume element in  $ \mathbb R^n$; and the algebraic domain  $D^{n}_A$ is a subset in  $ \mathbb R^n$ specified by polynomial inequalities with algebraic coefficients.

The key ingredient of our proof is to convert among various integral representations of automorphic Green's functions using an assortment of  analytic techniques.
Our computations will be  based on  the set of transformations permitted in the Kontsevich--Zagier program \cite[][\S1.2]{KontsevichZagier}, namely, we will rely on nothing else than the following three principles:\begin{enumerate}[leftmargin=*,  label=(KZ\arabic*),ref=(KZ\arabic*),
widest=a, align=left]\item \label{itm:KZ-1}\textbf{(Linear Additivity)} For two algebraic functions $ f(\bm x)$ and $ g(\bm x)$ defined over an  algebraic domain $ D=D_1\cup D_2$, where $ D_1$ and $ D_2$ are two disjoint algebraic sub-domains of $ D\subset\mathbb R^n$, one has \begin{align}
\idotsint_{D}[f(\bm x)+g(\bm x)]\D^n\bm x=\idotsint_{D}f(\bm x)\D^n\bm x+\idotsint_{D}g(\bm x)\D^n\bm x
\end{align}and \begin{align}
\idotsint_{D=D_1\cup D_2}f(\bm x)\D^n\bm x=\idotsint_{D_1}f(\bm x)\D^n\bm x+\idotsint_{D_2}f(\bm x)\D^n\bm x
\end{align}whenever all the integrals involved are absolutely convergent.\item  \label{itm:KZ-2}\textbf{(Algebraic Transformations of Variables)}  If $ \bm y=h(\bm x),\bm x\in D$ is an algebraic and invertible mapping, then \begin{align}
\idotsint_{h(D)}f(\bm y)\D ^n\bm  y=\pm\idotsint_{D}f(h(\bm x))\det h'(\bm x)\D^n\bm x,
\end{align}where $\det h'(\bm x)$ is the Jacobian determinant.\item  \label{itm:KZ-3}\textbf{(Newton--Leibniz--Stokes Formula)} For an algebraic  differential form $ \omega(\bm x)$, one has\begin{align}
\idotsint_D\D\omega (\bm x)=\idotsint_{\partial D}\omega (\bm x),
\end{align} where $ \partial D$ is the boundary of the algebraic domain $D$.  \end{enumerate}

In our derivations, we recast the Kontsevich--Zagier integral representations of certain weight-4 automorphic Green's functions \cite[][\S\S2.1 and 2.2]{AGF_PartI}  into certain  multiple integrals of elementary functions, which will be referred to as ``interaction entropies''.
These multiple integrals of interest are named as such, because they resemble the  entropy functional  defined in information theory. For example, for the probability density
(cf.~Eq.~\ref{eq:Pnu_sqr_F_trip_int})\begin{align}
\rho_\lambda(\phi,\theta ,\psi)=\frac{2}{\pi[\mathbf K(\sqrt{\lambda})]^{2}}\frac{1}{1-\lambda\sin^2\phi\sin^2\theta-\lambda\cos^2\phi\sin^2\psi},\quad \lambda\in(0,1);(\phi,\theta,\psi)\in[0,\pi/2]^3,\label{eq:rho_lambda}
\end{align}
 a ``Jacobi self-interaction formula'' (cf.~Eq.~\ref{eq:log_self_sqr_K}) evaluates its  entropy $ \mathscr E[\rho_\lambda]$ in closed form:  \begin{align} \mathscr E[\rho_\lambda]:={}&-\iiint_{(\phi,\theta,\psi)\in[0,\pi/2]^3}\rho_\lambda(\phi,\theta ,\psi)\log\rho_\lambda(\phi,\theta ,\psi)\D\phi\D\theta \D\psi\notag\\={}&-\frac{\pi}{3}\frac{\mathbf K(\sqrt{1-\lambda})}{\mathbf K(\sqrt{\lambda})}+\frac{2}{3}\log\frac{\sqrt{2\pi^3}(1-\lambda)[\mathbf K(\sqrt{\lambda})]^3}{\sqrt{\lambda}}.\label{eq:Ent_rho_lambda}\end{align}Meanwhile, a special case of the  ``relative interaction entropy'' in Eq.~\ref{eq:Pnu_sqr_intn_bir} is just the relative entropy (Kullback--Leibler divergence)\begin{align}
\mathscr D[\rho_\lambda\Vert\rho_\mu]:=\iiint_{(\phi,\theta,\psi)\in[0,\pi/2]^3}\rho_\lambda(\phi,\theta ,\psi)\log\frac{\rho_\lambda(\phi,\theta ,\psi)}{\rho_\mu(\phi,\theta ,\psi)}\D\phi\D\theta \D\psi\label{eq:Kullback-Leibler}
\end{align}in disguise.

To evaluate the entropy in Eq.~\ref{eq:Ent_rho_lambda}, we shall resort to  Principle \ref{itm:KZ-2}, and use a birational substitution of variables  to relate the triple integral defining    $  \mathscr E[\rho_\lambda]$ to a double integral (see  Eqs.~\ref{eq:log_self_sqr} and \ref{eq:log_self_sqr_K}):\begin{align}
\int_0^{\pi/2}\int_0^{\pi/2}\frac{\log(1-\lambda\sin^2\theta\sin^2\varphi)\D\theta\D\varphi}{\sqrt{1-\lambda\sin^2\theta}\sqrt{1-\lambda\sin^2\smash[b]{\varphi}}}=\int_{0}^{\mathbf K(\sqrt{\lambda})}\int_{0}^{\mathbf K(\sqrt{\lambda})}\log[1-\lambda\sn^2(u|\lambda)\sn^2(v|\lambda)]\D u\D v,
\end{align}which is effectively  an integration\begin{align}
\int_{\gamma_1}\frac{\D X_1}{Y_1}\int_{\gamma_2}\frac{\D X_2}{Y_2}\log(1-\lambda X_1X_2)
\end{align} over two cycles $ \gamma_1,\gamma_2$ on the elliptic curve $ E_{\lambda}(\mathbb C):Y^2=X(1-X)(1-\lambda X)$. Similarly, in the framework of    \ref{itm:KZ-1}--\ref{itm:KZ-3}, one may also transform the sum of  entropies $ \mathscr E[\rho_\lambda]+\mathscr D[\rho_\lambda\Vert\rho_\mu]$ into  integrations over certain cycles on two elliptic curves $ E_{\lambda}(\mathbb C):Y^2=X(1-X)(1-\lambda X)$ and $ E_{\mu}(\mathbb C):Y^2=X(1-X)(1-\mu X)$.
The result is the following ``entropy formula'':{\allowdisplaybreaks\begin{align}
{}&[\mathbf K(\sqrt{\vphantom1\smash[b]{1-\mu}})]^2\int_0^\mu\frac{[\mathbf K(\sqrt{t})]^2\D t}{t-\lambda}-\mathbf K(\sqrt{\vphantom1\smash[b]{\mu }})\mathbf K(\sqrt{1-\smash[b]{\mu}})\int_0^\mu\frac{\mathbf K(\sqrt{t})\mathbf K(\sqrt{1-t})\D t}{t-\lambda}\notag\\={}&\{[\mathbf{K}(\sqrt{\lambda})]^2[\mathbf K(\sqrt{\vphantom1\smash[b]{1-\mu}})]^2-\mathbf{K}(\sqrt{\lambda})\mathbf{K}(\sqrt{1-\lambda})\mathbf K(\sqrt{\vphantom1\smash[b]{\mu }})\mathbf K(\sqrt{1-\smash[b]{\mu}})\}\log\left( 1-\frac{\mu}{\lambda} \right)\notag\\{}&-\frac{2[\mathbf K(\sqrt{\vphantom1\smash[b]{1-\mu}})]^2}{\pi} \int_0^{\pi/2}\int_0^{\pi/2}\int_0^{\pi/2}\frac{\log(1-\mu\sin^2\phi\sin^2\theta-\mu\cos^2\phi\sin^2\psi)\D\phi\D\theta\D\psi}{1-\lambda\sin^2\phi\sin^2\theta-\lambda\cos^2\phi\sin^2\psi}\notag\\{}&+\frac{2\mathbf K(\sqrt{\vphantom1\smash[b]{\mu }})\mathbf K(\sqrt{1-\smash[b]{\mu}})}{\pi} \int_0^{\pi/2}\int_0^{\pi/2}\int_0^{\pi/2}\frac{\log\frac{1-\mu\sin^2\phi\sin^2\theta-(1-\mu)\cos^2\phi\sin^2\psi}{1-\cos^2\phi\sin^2\psi}\D\phi\D\theta\D\psi}{1-\lambda\sin^2\phi\sin^2\theta-(1-\lambda)\cos^2\phi\sin^2\psi}
\notag\\={}&\mathbf K(\sqrt{1-\lambda} ) \mathbf K(\sqrt{1-\smash[b]{\mu}})\int_{0}^{\mathbf K(\sqrt{\lambda})}\D u\int_{0}^{ \mathbf K(\sqrt{\vphantom1\smash[b]{\mu }})}\D v\,\log[1-\mu\sn^2(u|\lambda)\sn^2(v|\mu)]\notag\\{}&-\mathbf K(\sqrt{\lambda} ) \mathbf K(\sqrt{1-\smash[b]{\mu}})\int_{\mathbf K(\sqrt{\lambda})}^{\mathbf K(\sqrt{\lambda})+i\mathbf K(\sqrt{1-\lambda})}\frac{\D u}{i}\int_{0}^{ \mathbf K(\sqrt{\vphantom1\smash[b]{\mu }})}\D v\,\log[1-\mu\sn^2(u|\lambda)\sn^2(v|\mu)],\label{eq:Eichler_ent_HC}\end{align}}for $ 0<\mu<\lambda<1$, which translates into a special case ($ N=4$) of Theorem \ref{thm:HC_wt4_Green}. In the ``entropy formula'' above, the cycle integrals defined on $ E_\lambda(\mathbb C)\times E_\mu(\mathbb C)$ are modest extensions of the standard abelian integrals on Jacobian varieties.
Unlike the self-interaction formula in Eq.~\ref{eq:Ent_rho_lambda}, the entropic coupling on  $ E_\lambda(\mathbb C)\times E_\mu(\mathbb C)$  (where $ \lambda\neq\mu$) does not admit simple closed-form evaluations. Nonetheless, we will show in Part III that for CM\ moduli parameters, the aforementioned entropy formula still decomposes into finite terms of theta function expressions, thereby proving Conjecture~\ref{conj:wt4_GKZ}.

We originally conceived the entropy formulae for automorphic Green's functions (Theorem \ref{thm:HC_wt4_Green}) by analogy to the renormalization group expansion in quantum field theory. In lieu of the usual loop Feynman diagrams in the momentum space, one encounters, in the ``renormalization group expansion'' on moduli spaces $X_0(N)(\mathbb C),N\in\{1,2,3,4\} $,  multiple integrals such as (cf.~Eq.~\ref{eq:rel_intn_self_intn}) \begin{align}
\int_0^1\left\{\int_0^U\left[\int_W^1\frac{\D V}{\mathbb Y_{\nu,\alpha}(V)}\right]\frac{\D W}{\mathbb Y_{-\nu-1,\alpha}(W)}\right\}\frac{\D U}{\mathbb Y_{\nu,\beta}(U)},
\end{align}where the addition law of abelian integrals on abelian varieties plays the r\^ole of ``momentum sum rule''. Such physical heuristics was later abandoned in favor of a mathematically rigorous formulation in the framework of Kontsevich--Zagier periods, as will be presented in  \S\ref{sec:Interaction_Entropies_Transformations}.

\section{Interaction entropies and their transformations\label{sec:Interaction_Entropies_Transformations}}
\setcounter{equation}{0}
\setcounter{theorem}{0}
In this section, we prepare some analytic transformations for integrations over Legendre functions $ P_\nu=P_{-\nu-1}$ of fractional degrees $ \nu\in(-1,0)$.
To illustrate the arithmetic r\^oles of Legendre functions, we recall from \cite[][Proposition~2.2.2]{AGF_PartI} the Kontsevich--Zagier integral representations for certain weight-4 automorphic Green's functions, in the theorem below. \begin{theorem}[Integral Representations for $ G_2^{\mathfrak H/\overline{\varGamma}_0(N)},N\in\{1$, $2$, $3$, $4\}$]\label{thm:Eichler_KZ_wt4_Green} We have the following Kontsevich--Zagier integral representation for the weight-4 automorphic Green's function on the full modular group $ PSL(2,\mathbb Z)$:\begin{align}
G_2^{\mathfrak H/PSL(2,\mathbb Z)}(z,z')={}&\frac{\pi}{\I \frac{iP_{-1/6}(2\alpha_{1}(z')-1)}{P_{-1/6}(1-2\alpha_{1}(z'))}}\R\int_{1-2\alpha_{1}(z')}^1[P_{-1/6}(\xi)]^2\rho_{2,-1/6}(\xi|z)\times\notag\\&\times\left[\frac{iP_{-1/6}(-\xi)}{P_{-1/6}(\xi)}-\vphantom{\overline{\frac{1}{}}}\frac{iP_{-1/6}(2\alpha_{1}(z')-1)}{P_{-1/6}(1-2\alpha_{1}(z'))}\right]\left[\frac{iP_{-1/6}(-\xi)}{P_{-1/6}(\xi)}-\overline{\left(\frac{iP_{-1/6}(2\alpha_{1}(z')-1)}{P_{-1/6}(1-2\alpha_{1}(z'))}\right)}\right]\D \xi\notag\\&+\frac{\pi}{\I \frac{iP_{-1/6}(2\alpha_{1}(z')-1)}{P_{-1/6}(1-2\alpha_{1}(z'))}
}\R\int_{-1}^1[P_{-1/6}(\xi)]^2\rho_{2,-1/6}(\xi|z)\D \xi,\quad \emph{a.e. }z,z'\in\Int \mathfrak  D_{1}\label{eq:G2_z_z'_arb1}
\intertext{and the following integral representations for degrees $ \nu\in\{-1/4,-1/3,-1/2\}$  with corresponding  levels  $ N=4\sin^2(\nu\pi)\in\{2,3,4\}$:}G_2^{\mathfrak H/\overline{\varGamma}_0(N)}(z,z')={}&\frac{\pi}{\sqrt{N}\I\frac{iP_\nu(2\alpha_{N}(z')-1)}{P_\nu(1-2\alpha_{N}(z'))}}\R\int_{1-2\alpha_{N}(z')}^1[P_\nu(\xi)]^2\varrho_{2,\nu}(\xi|z)\times\notag\\&\times\left[\frac{iP_\nu(-\xi)}{P_\nu(\xi)}-\vphantom{\overline{\frac{1}{}}}\frac{iP_\nu(2\alpha_{N}(z')-1)}{P_\nu(1-2\alpha_{N}(z'))}\right]\left[\frac{iP_\nu(-\xi)}{P_\nu(\xi)}-\overline{\left(\frac{iP_\nu(2\alpha_{N}(z')-1)}{P_\nu(1-2\alpha_{N}(z'))}\right)}\right]\D \xi\notag\\&+\frac{\pi}{\sqrt{N}\I \frac{iP_{\nu}(2\alpha_{N}(z')-1)}{P_\nu(1-2\alpha_{N}(z'))}
}\R\int_{-1}^1[P_{\nu}(-\xi)]^2\varrho_{2,\nu}(\xi|z)\D \xi,\quad \emph{a.e. }z,z'\in\mathfrak \Int\mathfrak D_N.\label{eq:G2Hecke234_Pnu1}\end{align}Here,  in the integral representations, we have, for \emph{a.e.}~$ \zeta,z\in\Int\mathfrak D_N$,  \begin{align}&
\varrho_{2,\nu}(1-2\alpha_N(\zeta)|z)\notag\\:={}&-\frac{\I z}{2\pi\ }\frac{\partial}{\partial \I z}\left\{ \frac{1}{\I z} \frac{1}{\alpha_{N}(\zeta)-\alpha_{N}(z)}\frac{1}{[P_\nu(1-2\alpha_N(z))]^2}\right\},\quad \notag\\{}&\text{with }N=4\sin^2(\nu\pi)\in\{1,2,3,4\},-\frac{1}{2}\leq\nu<0,
\end{align}and $ \rho_{2,-1/6}(\xi|z)=\varrho_{2,-1/6}(\xi|z)+\varrho_{2,-1/6}(\xi|-1/z)=\varrho_{2,-1/6}(\xi|z)+\varrho_{2,-1/6}(-\xi|z)$.\qed\end{theorem}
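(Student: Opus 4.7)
The plan is to derive the stated integral representation by starting from the Poincar\'e series definition of $G_2^{\mathfrak H/\overline{\varGamma}_0(N)}(z,z')$ and converting it, via the Hauptmodul $\alpha_N$ that uniformises $X_0(N)(\mathbb C)$ for $N\in\{1,2,3,4\}$, into an Eichler-type contour integral along the $\xi$-axis weighted by the density $\varrho_{2,\nu}$.

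First I would invoke the Legendre integral representation $Q_1(t)=\int_0^\infty(t+\sqrt{t^2-1}\cosh u)^{-2}\D u$ together with a Poisson-type resummation, so as to recast the Poincar\'e sum as the evaluation at $z$ of the resolvent kernel for the hyperbolic Laplacian (shifted by the constant $-2$) on $\varGamma_0(N)\backslash\mathfrak H$. This kernel, regarded as a function of $z$ with $z'$ held fixed, is meromorphic on $X_0(N)(\mathbb C)$ with a controlled singularity at $z=z'$ and prescribed boundary behaviour at the cusps of $\varGamma_0(N)$.

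Next I would pull everything back through $\alpha_N$. The essential analytic input is that, for $\nu\in\{-1/6,-1/4,-1/3,-1/2\}$ matched to $N=4\sin^2(\nu\pi)\in\{1,2,3,4\}$, the hypergeometric equation relating $z$ to $\alpha_N(z)$ admits $P_\nu(1-2\alpha_N(z))$ and $P_\nu(2\alpha_N(z)-1)$ as its two basic solutions. Consequently $[P_\nu(1-2\alpha_N(z))]^2\,\D\alpha_N/\D z$ spans the relevant one-dimensional space of weight-$2$ forms on $\varGamma_0(N)$, and its square spans the corresponding weight-$4$ space. Expanding the resolvent kernel in this hypergeometric basis converts the Poincar\'e sum into a contour integral in the variable $\xi=1-2\alpha_N(\zeta)$; collapsing the contour onto $[-1,1]$ and identifying the $\partial/\partial\,\I z$ derivative with the Wronskian of $P_\nu(\xi)$ and $P_\nu(-\xi)$ then produces the density $\varrho_{2,\nu}(\xi|z)$. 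The two pieces of the final formula match naturally: the integral from $1-2\alpha_N(z')$ to $1$ realises the Eichler iterated integral between $z'$ and the cusp $i\infty$ (where $\alpha_N=0$, hence $\xi=1$), whereas the integral from $-1$ to $1$ supplies the period contribution at the cusp. The bracketed factor
\begin{align*}
\left[\frac{iP_\nu(-\xi)}{P_\nu(\xi)}-\frac{iP_\nu(2\alpha_N(z')-1)}{P_\nu(1-2\alpha_N(z'))}\right]\left[\frac{iP_\nu(-\xi)}{P_\nu(\xi)}-\overline{\left(\frac{iP_\nu(2\alpha_N(z')-1)}{P_\nu(1-2\alpha_N(z'))}\right)}\right]
\end{align*}
then encodes the discrepancy between the hyperbolic half-periods at $\xi$ and at $1-2\alpha_N(z')$, and the outer real part enforces $\overline{\varGamma}_0(N)$-invariance in $z'$.

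I expect the main obstacle to be two-fold. First, one must carefully track the branches of $P_\nu$ and of the ratios $iP_\nu(-\xi)/P_\nu(\xi)$ as $\xi$ traverses $[-1,1]$, and justify all exchanges of summation, integration, and $\partial/\partial\,\I z$; these manipulations inevitably fail on a set of measure zero (elliptic fixed points of $\overline{\varGamma}_0(N)$ and singular fibres of $\alpha_N$), producing the ``a.e.''~qualifier in the statement. Second, the level-one case is subtler because $\Int\mathfrak D_1$ in \eqref{eq:Int_D_1} is modelled on an $\overline{\varGamma}_0(2)$-type region rather than the standard $PSL(2,\mathbb Z)$ fundamental domain: one must then symmetrise via $\rho_{2,-1/6}(\xi|z)=\varrho_{2,-1/6}(\xi|z)+\varrho_{2,-1/6}(-\xi|z)$ and invoke the involution $\alpha_1(z)=1-\alpha_1(-1/z)$ from \eqref{eq:alpha_1_defn} in order to recover the $PSL(2,\mathbb Z)$-invariant Green's function from the two $\overline{\varGamma}_0(2)$-flavoured contributions.
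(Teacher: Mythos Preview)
This theorem is not proved in the present paper at all: it is stated with a \qed\ box and introduced explicitly as a result ``recall[ed] from \cite[][Proposition~2.2.2]{AGF_PartI}''. So there is no proof here to compare against; the paper treats these integral representations as established input from Part~I and devotes its effort to transforming them into the entropy-coupling formulae of Theorem~\ref{thm:HC_wt4_Green}.

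Your sketch is a plausible outline of how such a result might be derived, and the ingredients you name (resolvent kernel characterisation of $G_2$, the hypergeometric/Legendre uniformisation $z\mapsto\alpha_N(z)$ via Ramanujan's relations, Eichler-integral bookkeeping between $1-2\alpha_N(z')$ and the cusp value $\xi=1$, and the $z\mapsto-1/z$ symmetrisation for $N=1$) are indeed the ones invoked in Part~I. But as written it is a high-level roadmap rather than a proof: the step ``Expanding the resolvent kernel in this hypergeometric basis converts the Poincar\'e sum into a contour integral'' hides the entire analytic content, and your description of $[P_\nu(1-2\alpha_N(z))]^2\,\D\alpha_N/\D z$ as spanning a space of weight-$2$ forms is not quite right (for $N\in\{1,2,3,4\}$ there are no holomorphic weight-$2$ cusp forms; what matters is that this expression is essentially $\D z$ under the change of variables, cf.\ the relation $\partial\alpha_N/\partial z=2\pi i\,\alpha_N(1-\alpha_N)[P_\nu(1-2\alpha_N)]^2$ used later in the paper). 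If you want to supply an actual proof, you should consult \cite[\S\S2.1--2.2]{AGF_PartI} directly.
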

\begin{remark}
According to  the residue analysis of the Kontsevich--Zagier integrals   \cite[][\S\S2.1--2.2]{AGF_PartI}, in Eqs.~\ref{eq:G2_z_z'_arb1} and \ref{eq:G2Hecke234_Pnu1}, one may choose the paths of integration  as any curves in the double slit plane $ \xi\in \mathbb C\smallsetminus((-\infty,-1]\cup[1,+\infty))$ that circumvent the singularities of the integrands.
 By setting the conditions ``a.e.~$ z,z'$'', we are excluding the scenarios where $ \alpha_N(z)=\alpha_N(z')$ or $ 1-2\alpha_N(z)\in(-\infty,-1]\cup[1,+\infty)$ or  $ 1-2\alpha_N(z')\in(-\infty,-1]\cup[1,+\infty)$, so as to evade the non-integrable singularities in the integrands and the branch cuts of the Legendre functions.\eor\end{remark}\begin{remark}
It is also worth pointing out that one may alleviate the notations   in  Eqs.~\ref{eq:G2_z_z'_arb1} and \ref{eq:G2Hecke234_Pnu1}   with the help of  Ramanujan's relations \cite[][Eq.~2.2.22]{AGF_PartI}\begin{align}
z={}&\frac{i P_\nu(2\alpha_N(z)-1)}{\sqrt{N}P_\nu(1-2\alpha_N(z))},&&
\forall z\in\Int\mathfrak D_N,\label{eq:Pnu_ratio_to_z}
\end{align}where the degrees $ \nu\in\{-1/6,-1/4,-1/3,-1/2\}$ correspond to levels $ N=4\sin^2(\nu\pi)\in\{1,2,3,4\}$.\eor\end{remark}

In this section, we focus our energy on a precursor to the Kontsevich--Zagier integrals in   Eqs.~\ref{eq:G2_z_z'_arb1}--\ref{eq:G2Hecke234_Pnu1}:\begin{align}
[P_{\nu }(2\beta-1)]^2\int_0^{\beta}\frac{[P_\nu(1-2t)]^2\D t}{t-\alpha}-P_\nu(1-2\beta)P_\nu(2\beta-1)\int_0^{\beta}\frac{P_\nu(1-2t)P_\nu(2t-1)\D t}{t-\alpha},\label{eq:KZ_int_precursor}
\end{align} and reformulate it as multiple integrals amenable to further  analysis. \begin{theorem}[An Integral Identity Involving Legendre Functions]\label{thm:KZ_precursor}For $ -1<\nu<0$ and $ 0<\beta<\alpha<1$, we have the following identity:{\allowdisplaybreaks\begin{align}&[P_{\nu }(2\beta-1)]^2\int_0^{\beta}\frac{[P_\nu(1-2t)]^2\D t}{t-\alpha}-P_\nu(1-2\beta)P_\nu(2\beta-1)\int_0^{\beta}\frac{P_\nu(1-2t)P_\nu(2t-1)\D t}{t-\alpha}\notag\\={}&\frac{\pi[P_{\nu }(2\alpha-1)]^{2}P_\nu(1-2\beta)P_\nu(2\beta-1)}{2\sin(\nu\pi)}\notag\\{}&-\frac{\sin^2(\nu\pi)}{2\pi^2}P_\nu(2\beta-1)\int_0^1\left\{\int_0^U\left[\int_W^1\frac{\D V}{\mathbb Y_{\nu,\alpha}(V)}\right]\frac{\D W}{\mathbb Y_{-\nu-1,\alpha}(W)}\right\}\frac{\D U}{\mathbb Y_{\nu,\beta}(U)}\notag\\{}&-\frac{\sin^2(\nu\pi)}{2\pi^2}P_\nu(2\beta-1)\int_0^1\left\{\int_0^U\left[\int_W^1\frac{\D V}{\mathbb Y_{-\nu-1,\alpha}(V)}\right]\frac{\D W}{\mathbb Y_{\nu,\alpha}(W)}\right\}\frac{\D U}{\mathbb Y_{-\nu-1,\beta}(U)}\notag\\{}&-\frac{ \sin(\nu\pi)P_{\nu }(1-2\alpha)P_\nu(2\beta-1)}{2\pi}\int^{1/\alpha}_0\left[\frac{1}{\mathscr U^{\nu+1}(1-\alpha\mathscr  U)^{\nu+1}}\int^1_{\mathscr U}\frac{u^{\nu}}{(1-\beta u)^{-\nu}}\left( \frac{1-\mathscr U}{1-u} \right)^\nu\frac{\D u}{1-u}\right]\D\mathscr U\notag\\{}&-\frac{ \sin(\nu\pi)P_{\nu }(1-2\alpha)P_\nu(2\beta-1)}{2\pi}\int^{1/\alpha}_0\left[\frac{1}{\mathscr U^{-\nu}(1-\alpha\mathscr  U)^{-\nu}}\int^1_{\mathscr U}\frac{u^{-\nu-1}}{(1-\beta u)^{\nu+1}}\left( \frac{1-\mathscr U}{1-u} \right)^{-\nu-1}\frac{\D u}{1-u}\right]\D\mathscr U.\label{eq:precursor_int_id}
\end{align}}Here in Eq.~\ref{eq:precursor_int_id}, all the integration paths are straight-line segments joining the end points, and the algebraic function $ \mathbb Y_{\nu,\alpha}$ is defined in Eq.~\ref{eq:Y_nu_alpha_defn}.      \qed\end{theorem}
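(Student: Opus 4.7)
The plan is to derive Eq.~\ref{eq:precursor_int_id} by inserting the Euler integral representation (Eq.~\ref{eq:Euler_int_Pnu}) for each Legendre factor appearing on the left-hand side, and then reorganizing the resulting multiple integrals through the three Kontsevich--Zagier principles \ref{itm:KZ-1}--\ref{itm:KZ-3}.

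First, I would replace each $P_\nu$-factor inside the two $t$-integrals on the LHS by its one-dimensional Euler integral on $(0,1)$: exponent $\nu$ for the factors $P_\nu(1-2t)$ in auxiliary variables $V,W$, and, invoking $P_\nu=P_{-\nu-1}$, exponent $-\nu-1$ for the factor $P_\nu(2t-1)$ in $W$. The ``external'' factor $P_\nu(1-2\beta)$ in the second LHS summand is likewise recast as an exponent-$\nu$ Euler integral in an auxiliary variable $U\in(0,1)$, producing the $\mathbb{Y}_{\nu,\beta}$-weight that appears on the RHS; the factor $P_\nu(2\beta-1)$ drawn from the first summand, after a Pfaff-type algebraic substitution \ref{itm:KZ-2} in the Euler variable (to turn the naive parameter $1-\beta$ into $\beta$), contributes the sister $\mathbb{Y}_{-\nu-1,\beta}$-weight. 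These manipulations turn the LHS into an iterated integral over $(t,U,V,W)$ with $t\in(0,\beta)$ and $U,V,W\in(0,1)$, whose integrand is elementary.

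Second, I would apply Fubini's theorem --- justified since $\alpha>\beta$ keeps the kernel $1/(t-\alpha)$ bounded on $(0,\beta)$, and $-1<\nu<0$ guarantees absolute convergence at every stage --- to perform the $t$-integration innermost. The algebraic decomposition
\begin{equation*}
\frac{g(t)}{t-\alpha}\,=\,\frac{g(\alpha)}{t-\alpha}\,+\,\frac{g(t)-g(\alpha)}{t-\alpha},
\end{equation*}
where $g(t)$ denotes the relevant monomial in $(1-tV)$, $(1-tW)$, or $(1-(1-t)W)$, splits the $t$-integral (by \ref{itm:KZ-1}) into a singular ``log-part'' contributing $g(\alpha)\log((\alpha-\beta)/\alpha)$ and a regular part whose integrand is analytic at $t=\alpha$. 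A further birational substitution \ref{itm:KZ-2} sending $t\in(0,\beta)$ onto an auxiliary interval, followed by a Newton--Leibniz antidifferentiation \ref{itm:KZ-3} that shifts the $\alpha$-dependence onto the Euler variables, turns the regular-part integrand into one whose $V,W$-measures carry the $\alpha$-parameter --- producing precisely the weights $\mathbb{Y}_{\nu,\alpha}$ and $\mathbb{Y}_{-\nu-1,\alpha}$ of the RHS. Collecting the regrouped contributions yields the two triple integrals plus the two $\mathscr U$-double integrals of Eq.~\ref{eq:precursor_int_id}: the triple integrals come from the image of the ``inside'' region under the substitution, while the extension of $\mathscr U$ to $(1,1/\alpha)$ --- where the straight-line path crosses the branch point of $(1-\mathscr U)^\nu$ and must be read through the principal-branch convention of \S\ref{subsec:notations} --- encodes the two double integrals. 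The boundary constant $\frac{\pi[P_\nu(2\alpha-1)]^2 P_\nu(1-2\beta) P_\nu(2\beta-1)}{2\sin(\nu\pi)}$ will emerge by summing the two log-contributions from the two LHS summands: invoking the classical connection formula $P_\nu(-x)=\cos(\nu\pi)P_\nu(x)-\frac{2\sin(\nu\pi)}{\pi}Q_\nu(x)$, the transcendental $\log((\alpha-\beta)/\alpha)$ factors cancel against the matching complex-log contribution generated by the branch-crossing portion of the $\mathscr U$-integrals, leaving behind only the announced closed-form algebraic constant.

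The principal technical obstacle, in my view, is this final cancellation: verifying that the $\log((\alpha-\beta)/\alpha)$ produced by the closed-form ``log-part'' of the $t$-integration matches exactly the imaginary- and real-log contribution extracted from the $(1-\mathscr U)^\nu$ branch crossing on the $\mathscr U\in(1,1/\alpha)$ range, and that what survives reduces algebraically to $\pi/(2\sin(\nu\pi))$ times $[P_\nu(2\alpha-1)]^2 P_\nu(1-2\beta)P_\nu(2\beta-1)$. This requires precise bookkeeping of the principal branch together with a Wronskian-type bilinear between $P_\nu$ and $Q_\nu$; the rest of the argument is a mechanical application of \ref{itm:KZ-1}--\ref{itm:KZ-3}.
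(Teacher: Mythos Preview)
Your proposal identifies some correct raw ingredients (Euler integrals for $P_\nu$, Fubini, birational substitutions, Newton--Leibniz), but the central mechanism is missing, and the unspecified ``birational substitution \ref{itm:KZ-2} \dots\ followed by a Newton--Leibniz antidifferentiation \ref{itm:KZ-3}'' step is where the real work hides. The left-hand side is a \emph{specific} linear combination
\[
P_\nu(2\beta-1)\cdot\big[P_\nu(2\beta-1)\,A-P_\nu(1-2\beta)\,B\big],
\]
and the entire right-hand side of Eq.~\ref{eq:precursor_int_id} carries $P_\nu(2\beta-1)$ as a common factor. So the crux is explaining why the bracket $P_\nu(2\beta-1)\,A-P_\nu(1-2\beta)\,B$ collapses to something elementary in the $\beta$-direction. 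In the paper this collapse is achieved by a nontrivial \emph{Legendre-type addition formula} (Lemma~\ref{lm:Legendre_addition}(a), Eq.~\ref{eq:3rd_kind_cancellation_Pnu}), whose proof requires Legendre's bilinear relation for $P_\nu$ and ${}_2F_1(\nu,-\nu;1;\cdot)$ and is not a routine consequence of inserting Euler integrals and integrating in $t$. Your sketch never names or replaces this step; without it, your ``regrouping'' cannot produce the nested $\int_0^U\int_W^1$ structure with $\mathbb Y_{\pm\nu,\alpha}$-weights.

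Two further concrete problems. First, the paper's $\alpha$-weights $\mathbb Y_{\nu,\alpha}$, $\mathbb Y_{-\nu-1,\alpha}$ arise not from the naive Euler integrals (which carry parameter $t$, not $\alpha$) but from an EFS step (Eq.~\ref{eq:F_ab}) that merges two $P_\nu$-denominators into a single one over the unit cube (Proposition~\ref{prop:triple_integrals_PnuPnu}), followed by Jacobi involutions (Propositions~\ref{prop:multi_integrals_Jacobi_involution}--\ref{prop:multi_integrals_Jacobi_involution_again}). Your plan bypasses the EFS merge, so there is no apparent source for the $\alpha$-weighted measures. Second, your proposed origin for the constant $\frac{\pi[P_\nu(2\alpha-1)]^2P_\nu(1-2\beta)P_\nu(2\beta-1)}{2\sin(\nu\pi)}$ --- a cancellation between $\log((\alpha-\beta)/\alpha)$ and a ``branch-crossing'' contribution from $(1-\mathscr U)^\nu$ on $(1,1/\alpha)$, closed off by a $P_\nu/Q_\nu$ Wronskian --- is not how this term actually arises. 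In the paper it falls out of the self-interaction computation (Eqs.~\ref{eq:log_self_sqr}--\ref{eq:log_self_mir}) via the Frobenius--Zagier identities (Eqs.~\ref{eq:FZ_a'}--\ref{eq:FZ_a''}); no $Q_\nu$ enters anywhere, and the $\mathscr U\in(1,1/\alpha)$ piece (Eq.~\ref{eq:rel_intn_self_intn}) emerges from analytic continuation of an addition formula, not from a branch jump feeding a log cancellation. As written, your final paragraph is a hope rather than an argument.
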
To verify the integral identity stated in the theorem above, we will rewrite Eq.~\ref{eq:KZ_int_precursor} as multiple integrals of elementary functions (\S\ref{subsec:Interaction}), before performing birational transformations on these multiple integrals (\S\ref{subsec:bir_intn_entropy}), and applying the Newton--Leibniz--Stokes formula to them (\S\ref{subsec:Legendre_add_intn_entropy}). The analytic constraint $ 0<\beta<\alpha<1$ in Theorem \ref{thm:KZ_precursor} is actually dispensable, upon subsequent analysis in  \S\ref{sec:Green_HC}, so that  Eq.~\ref{eq:KZ_int_precursor} can be reformulated as other forms of multiple integrals for generic  moduli parameters $ \alpha$ and $\beta$.
\subsection{Definition and examples for interaction entropies\label{subsec:Interaction}}
While   deriving the Kontsevich--Zagier integral representations for automorphic Green's functions satisfying the cusp-form-free condition $ \dim\mathscr S_k(\varGamma_0(N))=0$ in \cite[][\S2]{AGF_PartI},
we have encountered integrals of  meromorphic modular forms in the spirit of Eichler--Shimura theory. These Kontsevich--Zagier integrals motivate us to consider a class of integrations in the complex plane, which will be termed as  ``interaction entropies''.\begin{definition}[Interaction Entropy] A formal finite sum of integrals\begin{align}
S(\alpha\Vert\beta)=\sum_{m=1}^M\int_{L_m(\alpha,\beta)}^{U_m(\alpha,\beta)} f_m(t,\alpha,\beta)
\D t\end{align}is called an \textit{interaction entropy} if the following three conditions hold:\begin{enumerate}[leftmargin=*,  label=(IE\arabic*),ref=(IE\arabic*),
widest=a, align=left]\item \label{itm:IE1}(\textbf{Analyticity})
The functions $ f_m(t,\alpha,\beta),m=1,\dots,M$ are holomorphic with respect to $(t,\alpha, \beta)$ in a certain non-void open subset of $ \mathbb C^3$.  \item \label{itm:IE2}\textbf{(Periodness)} The functions $ L_{m}(\alpha,\beta),U_m(\alpha,\beta),m=1,\dots,M$ are algebraic.  In their domains of analyticity, each summand of $ S(\alpha\Vert\beta)$  maps algebraic arguments to members in the extended ring of Kontsevich--Zagier periods, \textit{i.e.} the relation\begin{align}\int_{L_m(\alpha,\beta)}^{U_m(\alpha,\beta)} f_m(t,\alpha,\beta)
\D t\in\widehat{\mathscr P}_{\mathrm{KZ}}=\mathscr P_{\mathrm{KZ}}[1/\pi],  \end{align}holds whenever the choice of  $\alpha,\beta\in\overline{\mathbb Q}$ and a path in the $t$-plane joining $L_m(\alpha,\beta)$ to $ U_m(\alpha,\beta)$  guarantees absolute convergence of the integral. \item \label{itm:IE3}(\textbf{Symmetry}) There exists a Gau{\ss}--Manin   differential operator $ \widehat O_\alpha$ in  the variable $ \alpha $, such that both $ \widehat O_\alpha S(\alpha\Vert\beta)$ and $ \widehat O_\alpha S(\beta\Vert\alpha) $ assume path-independent values, and the following differential equation\begin{align}
\widehat O_\alpha S(\alpha\Vert\beta)=\widehat O_\alpha S(\beta\Vert\alpha)=\frac{1}{\alpha-\beta}
\label{eq:IE_symm}
\end{align}holds.\eor\end{enumerate}\end{definition}  To show that such a definition is not vacuous, we provide some concrete examples that are relevant to the  Kontsevich--Zagier integral representations for automorphic Green's functions. \begin{proposition}[Interaction Entropies of Weight 4 and Degree $\nu$]\label{prop:S_nu_alpha_beta_recip} For  $ \nu\in\mathbb  (-1,0)$
and \emph{a.e.}~$ \alpha,\beta\in(\mathbb C\smallsetminus\mathbb R)\cup(0,1)$, we set \begin{align} \Lambda_\nu(\alpha,\beta)={}&[P_\nu(1-2\alpha)]^2[P_{\nu }(2\beta-1)]^2\mathbb Z+P_\nu(1-2\alpha)P_\nu(2\alpha-1)P_{\nu }(1-2\beta)P_{\nu }(2\beta-1)\mathbb Z\notag\\{}&+[P_\nu(2\alpha-1)]^2[P_{\nu }(1-2\beta)]^2\mathbb Z,\label{eq:Lambda_nu_alpha_beta_defn}\end{align} then the following bivariate function \begin{align}S_{\nu}(\alpha\Vert\beta)\equiv{}&[P_{\nu }(2\beta-1)]^2\int_0^{\beta}\frac{[P_\nu(1-2t)]^2\D t}{t-\alpha}-P_\nu(1-2\beta)P_\nu(2\beta-1)\int_0^{\beta}\frac{P_\nu(1-2t)P_\nu(2t-1)\D t}{t-\alpha}\notag\\{}&-P_\nu(2\beta-1)P_\nu(1-2\beta)\int_0^{1-\beta}\frac{P_\nu(1-2t)P_\nu(2t-1)\D t}{t-1+\alpha}+[P_{\nu }(1-2\beta)]^2\int_0^{1-\beta}\frac{[P_\nu(1-2t)]^2\D t}{t-1+\alpha},\notag\\{}&\pmod{2\pi i\Lambda_\nu(\alpha,\beta)},\label{eq:S_nu_alpha_beta_defn}\end{align}is well-defined up to the residues arising from the simple poles, provided that all the paths of integration circumvent the singularities of the integrands. The $ \frac{\mathbb C}{2\pi i\Lambda_\nu(\alpha,\beta)}$-valued function $ S_{\nu}(\alpha\Vert\beta)$ satisfies a functional identity:\begin{align}&S_{\nu}(\alpha\Vert\beta)-S_{\nu}(\beta\Vert\alpha)\notag\\\equiv{}& \frac{\pi}{2\sin(\nu\pi)}\{[P_\nu(1-2\alpha)]^2+[P_\nu(2\alpha-1)]^2\}P_\nu(2\beta-1)P_\nu(1-2\beta)
\notag\\{}&-\frac{\pi}{2\sin(\nu\pi)}\{[P_\nu(1-2\beta)]^2+[P_\nu(2\beta-1)]^2\}P_\nu(2\alpha-1)P_\nu(1-2\alpha),\pmod{2\pi i\Lambda_\nu(\alpha,\beta)}.\label{eq:S_recip}\end{align}Furthermore, for  each $ \nu\in\mathbb Q\cap(-1,0)$, the expression\begin{align}
\frac{\pi^2}{2\sin^2(\nu\pi)}S_\nu(\alpha\Vert\beta)
\end{align}represents an interaction entropy.
\end{proposition}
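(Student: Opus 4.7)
The plan is to address the three assertions in sequence: well-definedness, the reciprocity \eqref{eq:S_recip}, and the three interaction-entropy axioms.

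Well-definedness modulo $2\pi i\Lambda_\nu(\alpha,\beta)$ is residue bookkeeping. The first two integrals in \eqref{eq:S_nu_alpha_beta_defn} have a simple pole at $t=\alpha$ with residues $[P_\nu(1-2\alpha)]^2$ and $P_\nu(1-2\alpha)P_\nu(2\alpha-1)$; the last two have a simple pole at $t=1-\alpha$ with residues $[P_\nu(2\alpha-1)]^2$ and $P_\nu(2\alpha-1)P_\nu(1-2\alpha)$. Paired with the $\beta$-prefactors dictated by \eqref{eq:S_nu_alpha_beta_defn}, each path deformation shifts $S_\nu(\alpha\Vert\beta)$ by $2\pi i$ times a generator of $\Lambda_\nu(\alpha,\beta)$ exactly as tabulated in \eqref{eq:Lambda_nu_alpha_beta_defn}.

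For the reciprocity \eqref{eq:S_recip}, I would first apply $t\mapsto 1-t$ in the last two summands of \eqref{eq:S_nu_alpha_beta_defn}, which (via $P_\nu(1-2(1-t))=P_\nu(2t-1)$) merges the four pieces into a single Cauchy-type integral
\begin{equation*}
S_\nu(\alpha\Vert\beta)=\int_0^1\frac{\mathcal F(t,\beta)\,dt}{t-\alpha},
\end{equation*}
where $\mathcal F(t,\beta)=P_\nu(2\beta-1)P_\nu(1-2t)\,W(t,\beta)$ on $(0,\beta)$ and $\mathcal F(t,\beta)=-P_\nu(1-2\beta)P_\nu(2t-1)\,W(t,\beta)$ on $(\beta,1)$, with the Wronskian-type combination $W(t,\beta):=P_\nu(2\beta-1)P_\nu(1-2t)-P_\nu(1-2\beta)P_\nu(2t-1)$ antisymmetric under $t\leftrightarrow\beta$ and vanishing on the diagonal. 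Then $S_\nu(\alpha\Vert\beta)-S_\nu(\beta\Vert\alpha)$ is extracted by symmetrizing into a double integral over $(s,t)\in(0,1)^2$ and exploiting the partial-fraction identity $\tfrac{1}{(s-\alpha)(t-\beta)}-\tfrac{1}{(s-\beta)(t-\alpha)}=\tfrac{(s-t)(\alpha-\beta)}{(s-\alpha)(s-\beta)(t-\alpha)(t-\beta)}$; the analytic $\beta$-dependence of $\mathcal F$ then collapses, through the classical Legendre Wronskian $P_\nu(1-2x)\partial_x P_\nu(2x-1)-P_\nu(2x-1)\partial_x P_\nu(1-2x)=\sin(\nu\pi)/[\pi x(1-x)]$, onto the elementary right-hand side of \eqref{eq:S_recip}.

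For the interaction-entropy axioms: \ref{itm:IE1} is immediate since $P_\nu(z)$ is holomorphic on $\mathbb C\smallsetminus[1,+\infty)$, making each summand of $\tfrac{\pi^2}{2\sin^2(\nu\pi)}S_\nu$ jointly holomorphic in $(t,\alpha,\beta)$ away from branch cuts and the Cauchy pole. For \ref{itm:IE2}, I would substitute Euler's representation \eqref{eq:Euler_int_Pnu} into every $P_\nu$ factor, producing two copies of $\sin(\nu\pi)/\pi$ that cancel the prefactor $\pi^2/[2\sin^2(\nu\pi)]$; for $\nu\in\mathbb Q\cap(-1,0)$ the resulting integrand is an algebraic function over a product of unit intervals, and the $2\pi i$ thrown off by the simple pole at $t=\alpha$ contributes one factor of $\pi$, placing the value in $\widehat{\mathscr P}_{\mathrm{KZ}}=\mathscr P_{\mathrm{KZ}}[1/\pi]$. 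For \ref{itm:IE3}, I take $\widehat O_\alpha$ to be the third-order symmetric-square Picard--Fuchs operator in $\alpha$ whose kernel is spanned by $[P_\nu(1-2\alpha)]^2$, $P_\nu(1-2\alpha)P_\nu(2\alpha-1)$, $[P_\nu(2\alpha-1)]^2$; applied to $S_\nu(\alpha\Vert\beta)$ it annihilates every $\beta$-dependent prefactor, commutes past the integral, and acts only on the Cauchy kernel $1/(t-\alpha)$, so that the residues at $t=\alpha$ and $t=1-\alpha$ assemble, using the same Wronskian normalization from the reciprocity step, into $1/(\alpha-\beta)$ after rescaling $\widehat O_\alpha$ by a definite constant. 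The symmetric identity $\widehat O_\alpha S_\nu(\beta\Vert\alpha)=1/(\alpha-\beta)$ then drops out of \eqref{eq:S_recip}, as the right-hand side of that equation is bilinear in the basis solutions and thus lies in $\ker\widehat O_\alpha$.

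The main obstacle will be the reciprocity formula: the piecewise structure of $\mathcal F(t,\beta)$ across $t=\beta$, the path-deformation bookkeeping around the poles at $t=\alpha$ and $t=1-\alpha$, and the correct isolation of elementary remainder terms via the Wronskian are conceptually transparent but technically dense. Fixing the overall scalar in $\widehat O_\alpha$ so that \eqref{eq:IE_symm} holds exactly (rather than up to a nonzero constant) is a secondary but essentially mechanical calculation, since $\widehat O_\alpha$ itself is only determined up to scaling.
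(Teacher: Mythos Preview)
Your treatment of well-definedness and of \ref{itm:IE1}--\ref{itm:IE2} is fine and matches the paper. The substantive issues are in the reciprocity step and in \ref{itm:IE3}, where your logical order is the reverse of the paper's, and where the reciprocity argument as written has a genuine gap.

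\textbf{The gap in reciprocity.} Your rewrite $S_\nu(\alpha\Vert\beta)=\int_0^1\mathcal F(t,\beta)\,(t-\alpha)^{-1}\D t$ is correct and useful, but the next move---``symmetrizing into a double integral over $(s,t)\in(0,1)^2$'' and invoking the partial-fraction identity in $(s-\alpha)^{-1}(t-\beta)^{-1}$---does not apply here: in $S_\nu(\alpha\Vert\beta)$ only $\alpha$ sits in a Cauchy kernel, while $\beta$ enters solely through prefactors and the integration limit. There is no second kernel $(t-\beta)^{-1}$ to feed into that identity, so the mechanism you describe cannot fire. Nothing in the Wronskian $W(t,\beta)$ repairs this; the Wronskian governs the $t$-derivative structure of $\mathcal F$, not a hidden $\beta$-kernel.

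\textbf{The oversimplification in \ref{itm:IE3}.} Saying $\widehat O_\alpha$ ``commutes past the integral and acts only on the Cauchy kernel'', with the answer read off from ``residues at $t=\alpha$ and $t=1-\alpha$'', skips the actual content. The integrals have finite endpoints $0,\beta$ (resp.\ $0,1-\beta$), so after you use the key self-adjointness $\widehat A_t(t-\alpha)^{-1}+\widehat A_\alpha(t-\alpha)^{-1}=0$ and integrate by parts in $t$, what survives are \emph{boundary terms} at $t=\beta$ (and $t=1-\beta$), not residues. Evaluating those boundary terms is where the Legendre recursion and Legendre's relation $P_\nu(\xi)P_{\nu+1}(-\xi)+P_\nu(-\xi)P_{\nu+1}(\xi)+\tfrac{2\sin(\nu\pi)}{\pi(\nu+1)}=0$ enter, and they are what produces exactly $\tfrac{2\sin^2(\nu\pi)}{\pi^2}(\alpha-\beta)^{-1}$ for the full $S_\nu$. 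The inhomogeneity does not appear unless the two pairs of summands in \eqref{eq:S_nu_alpha_beta_defn} are combined: each pair alone leaves an extra $\beta$-dependent boundary term (see \eqref{eq:A_L_M}), and the reflection $(\alpha,\beta)\mapsto(1-\alpha,1-\beta)$ is what makes those extras cancel.

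\textbf{How the paper proceeds.} The paper proves \ref{itm:IE3} first and derives reciprocity from it. Concretely: (i) by the integration-by-parts computation just described, $\widehat A_\alpha S_\nu(\alpha\Vert\beta)=\tfrac{2\sin^2(\nu\pi)}{\pi^2}(\alpha-\beta)^{-1}$; (ii) by a direct Wro\'nskian/variation-of-parameters computation (now $\alpha$ lives in the prefactors and the upper limit), also $\widehat A_\alpha S_\nu(\beta\Vert\alpha)=\tfrac{2\sin^2(\nu\pi)}{\pi^2}(\alpha-\beta)^{-1}$; (iii) hence $S_\nu(\alpha\Vert\beta)-S_\nu(\beta\Vert\alpha)$ lies in $\ker\widehat A_\alpha$, i.e.\ equals $f_1(\beta)[P_\nu(1-2\alpha)]^2+f_2(\beta)P_\nu(1-2\alpha)P_\nu(2\alpha-1)+f_3(\beta)[P_\nu(2\alpha-1)]^2$; (iv) the coefficients $f_1,f_3$ are read off from the $\alpha\to0^+$ and $\alpha\to1^-$ asymptotics (where $S_\nu(\alpha\Vert\beta)/[P_\nu(2\alpha-1)]^2\to\tfrac{\pi}{2\sin(\nu\pi)}P_\nu(1-2\beta)P_\nu(2\beta-1)$ and $S_\nu(\beta\Vert\alpha)/[P_\nu(2\alpha-1)]^2\to0$), and $f_2$ then follows from the $\alpha\leftrightarrow\beta$ antisymmetry. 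Your final remark---that once reciprocity is in hand the second half of \eqref{eq:IE_symm} is free because the right side of \eqref{eq:S_recip} lies in $\ker\widehat O_\alpha$---is correct, but in the paper's order this observation is not needed: both halves of \eqref{eq:IE_symm} are established independently, and reciprocity is the consequence.
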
\begin{proof}
It is routine to check that the $ C^3$-solutions to the following third order  differential equation of Appell type (see, for example, \cite[][Eq.~23]{Appell1881} and \cite[][Eq.~16]{Zhou2013Pnu})
\begin{align}{\widehat A}_tf(t):=\frac{\partial}{\partial t}\left\{t(1-t)\frac{\partial  }{\partial t}\left[t(1-t)\frac{\partial f(t)  }{\partial t}\right]+4\nu (\nu +1)t(1-t)f(t)\right\}+4\nu(\nu+1)\frac{2t-1}{2}f(t)=0\label{eq:At_diff_op}
\end{align}can be written as linear combinations of $ [P_\nu(1-2t)]^2$, $ P_\nu(1-2t)P_\nu(2t-1)$ and $ [P_\nu(2t-1)]^2$. Moreover, it is also easy to verify that the Appell differential operator $ {\widehat A}_t$ satisfies
\begin{align}{\widehat A}_t
\frac{1}{t-\alpha}+{\widehat A}_\alpha
\frac{1}{t-\alpha}=0.
\end{align}

Thus, integrating by parts in the variable $t$, one can establish the following inhomogeneous third-order differential equation with respect to the variable $ \alpha$:
\begin{align}&
{\widehat A}_\alpha \left\{[P_{\nu }(2\beta-1)]^2\int_0^{\beta}\frac{[P_\nu(1-2t)]^2\D t}{t-\alpha}-P_\nu(1-2\beta)P_\nu(2\beta-1)\int_0^{\beta}\frac{P_\nu(1-2t)P_\nu(2t-1)\D t}{t-\alpha}\right\}\notag\\={}&\frac{\sin ^2  (\nu \pi)}{\pi ^2}\frac{1}{\alpha-\beta}+\frac{\sin(\nu\pi)}{\pi}\beta(1-\beta)[P_\nu(1-2\beta)P_\nu(2\beta-1)]^{2}\frac{\partial}{\partial\beta}\left[\frac{1}{\alpha-\beta}\frac{1}{P_\nu(1-2\beta)P_\nu(2\beta-1)}\right].\label{eq:A_L_M}
\end{align}To complete the computations above, we have availed ourselves with the recursion relations \((2\nu+1)(1-\xi^{2})\partial P_\nu(\xi)/\partial \xi=\nu(\nu+1)[P_{\nu+1}(\xi)-P_{\nu-1}(\xi)]$, $ (2\nu+1)\xi P_\nu(\xi)=(\nu+1)P_{\nu+1}(\xi)+\nu P_{\nu-1}(\xi)$ as well as Legendre's relation $ P_\nu(\xi)P_{\nu+1}(-\xi)+P_\nu(-\xi)P_{\nu+1}(\xi)+\frac{2\sin(\nu\pi)}{\pi(\nu+1)}=0$.

In Eq.~\ref{eq:A_L_M},  we may trade $ \alpha$ for $ 1-\alpha$ and $ \beta$ for $ 1-\beta$, which leads us to
\begin{align}&
{\widehat A}_\alpha \left\{ [P_{\nu }(1-2\beta)]^2\int_0^{1-\beta}\frac{[P_\nu(1-2t)]^2\D t}{t-1+\alpha} -P_\nu(2\beta-1)P_\nu(1-2\beta)\int_0^{1-\beta}\frac{P_\nu(1-2t)P_\nu(2t-1)\D t}{t-1+\alpha}\right\}\notag\\={}&\frac{\sin ^2  (\nu \pi)}{\pi ^2}\frac{1}{\alpha-\beta}-\frac{\sin(\nu\pi)}{\pi}\beta(1-\beta)[P_\nu(2\beta-1)P_\nu(1-2\beta)]^{2}\frac{\partial}{\partial\beta}\left[\frac{1}{\alpha-\beta}\frac{1}{P_\nu(2\beta-1)P_\nu(1-2\beta)}\right].\label{eq:A_L_M'}\tag{\ref{eq:A_L_M}$'$}
\end{align} Adding up Eqs.~\ref{eq:A_L_M} and \ref{eq:A_L_M'}, we obtain
\begin{align}&
{\widehat A}_\alpha S_{\nu}(\alpha\Vert\beta)=\frac{\sin ^2  (\nu \pi)}{\pi ^2}\frac{2}{\alpha-\beta}.
\label{eq:A_S_ab}\end{align}
Meanwhile,  by a standard Wro\'nskian argument for inhomogeneous differential equations, we can verify that
\begin{align}&
{\widehat A}_\alpha S_{\nu}(\beta\Vert\alpha)=\frac{\sin ^2  (\nu \pi)}{\pi ^2}\frac{2}{\alpha-\beta}.\label{eq:A_S_ba}
\end{align}Here, in both Eqs.~\ref{eq:A_S_ab} and \ref{eq:A_S_ba}, the identities are true irrespective of the winding numbers possessed by the integration paths for the definitions of $ S_{\nu}(\alpha\Vert\beta)$ and $ S_{\nu}(\beta\Vert\alpha)$, because every member in $ \Lambda_\nu(\alpha,\beta)$ is annihilated by the operator $\widehat A_\alpha $.

 Therefore, we have $S_{\nu}(\alpha\Vert\beta)-S_{\nu}(\beta\Vert\alpha)\equiv f_1(\beta)[P_\nu(1-2\alpha)]^2+f_2(\beta)P_\nu(1-2\alpha)P_\nu(2\alpha-1)+f_{3}(\beta)[P_\nu(2\alpha-1)]^2 \pmod{2\pi i\Lambda_\nu(\alpha,\beta)}$. Now, we may momentarily  adjust the integration paths so that the net  contribution from the residues  to  $ S_{\nu}(\alpha\Vert\beta)-S_{\nu}(\beta\Vert\alpha)$ is $ 0\in2\pi i\Lambda_\nu(\alpha,\beta) $. In these circumstances,  we can determine  $ f_1(\beta)=f_3(\beta)=\frac{\pi}{2\sin(\nu\pi)} P_\nu(1-2\beta)P_\nu(2\beta-1)$  from the asymptotic behavior of $ S_{\nu}(\alpha\Vert\beta)-S_{\nu}(\beta\Vert\alpha)$ in the regimes $ \alpha\to0^+$ and $ \alpha\to1-0^+$:{\allowdisplaybreaks\begin{align}\lim_{\alpha\to0^+}\frac{S_\nu(\alpha\Vert\beta)}{[P_\nu(2\alpha-1)]^2 }={}&-P_\nu(1-2\beta)P_\nu(2\beta-1)\lim_{\alpha\to0^+}\int_0^{\beta}\frac{P_\nu(1-2t)P_\nu(2t-1)}{t-\alpha}\frac{\D t}{[P_\nu(2\alpha-1)]^2}\notag\\={}&-P_\nu(1-2\beta)P_\nu(2\beta-1)\frac{\sin(\nu\pi)}{\pi}\lim_{\alpha\to0^+}\int_0^{\beta}\frac{\log t}{t-\alpha}\frac{\D t}{[P_\nu(2\alpha-1)]^2}\notag\\={}&\frac{\pi P_\nu(1-2\beta)P_\nu(2\beta-1)}{2\sin(\nu\pi)}=\lim_{\alpha\to1-0^+}\frac{S_\nu(\alpha\Vert\beta)}{[P_\nu(1-2\alpha)]^2 };\label{eq:S_nu_lim1}
\\\lim_{\alpha\to0^+}\frac{S_\nu(\beta\Vert\alpha)}{[P_\nu(2\alpha-1)]^2 }={}&0=\lim_{\alpha\to1-0^+}\frac{S_\nu(\beta\Vert\alpha)}{[P_\nu(1-2\alpha)]^2 };\label{eq:S_nu_lim2}
\end{align}}and subsequently find out $ f_2(\beta)=-\frac{\pi}{2\sin(\nu\pi)}\{[P_\nu(1-2\beta)]^2+[P_\nu(2\beta-1)]^2\}$ after switching the r\^oles of $ \alpha$ and $ \beta$ in the expression $S_{\nu}(\alpha\Vert\beta)-S_{\nu}(\beta\Vert\alpha)$. The rest of Eq.~\ref{eq:S_recip} then  follows from analytic continuation and residue calculus.

To summarize,  for $ \nu\in\mathbb Q\cap(-1,0)$, we have two observations: (i) The Euler integral representation of Legendre functions (Eq.~\ref{eq:Euler_int_Pnu}) entails that $ P_\nu(1-2\alpha)\in\widehat{\mathscr P}_{\mathrm{KZ}}$ for $ \alpha\in\overline{\mathbb Q}\smallsetminus[1,+\infty)$; (ii) The differential operator defined in Eq.~\ref{eq:At_diff_op} is of generalized Picard--Fuchs type, hence a member in the Gau{\ss}--Manin systems.  In view of these  observations,  the expression $ \frac{\pi^2}{2\sin^2(\nu\pi)}S_\nu(\alpha\Vert\beta)$ fulfills all the  criteria \ref{itm:IE1}--\ref{itm:IE3}, thus qualifying as an interaction entropy.
\end{proof} \begin{remark}By  \cite[][Propositions~2.1.1, 2.1.2 and 2.2.2]{AGF_PartI}, we know that for the degrees  $\nu\in\{-1/6,-1/4,-1/3,-1/2\}$ and the corresponding levels $ N=4\sin^2(\nu\pi)\in\{1,2,3,4\}$, one may express the weight-4 automorphic Green's functions $ G_2^{\mathfrak H/\overline{\varGamma}_0(N)}(z,z')$, a.e.~$ z,z'\in\Int\mathfrak D_N$  in terms of\begin{align}
\mathscr I_N(z,z'):=-\frac{1}{N}\R\left\{\I z\I z'
\frac{\partial}{\partial\I z}\frac{\partial}{\partial\I z'}\frac{S_\nu(\alpha_N(z)\Vert\alpha_N(z'))}{\I z\I z'[P_{\nu}(1-2\alpha_{N}(z))]^2[P_{\nu}(1-2\alpha_{N}(z'))]^2}\right\},\label{eq:I_N_z_z'_defn}
\end{align}in the sense that $ G_2^{\mathfrak H/\overline{\varGamma}_0(N)}(z,z')=\mathscr I_N(z,z')$ for $ N\in\{2,3,4\}$ and $G_2^{\mathfrak H/\overline{\varGamma}_0(1)}(z,z')=\mathscr I_1(z,z')+\mathscr I_N(z,-1/z') $. Thus one may combine the functional identity in Eq.~\ref{eq:S_recip}  with  the Ramanujan relations in Eq.~\ref{eq:Pnu_ratio_to_z} to support Green's reciprocity $ G_2^{\mathfrak H/\overline{\varGamma}_0(N)}(z,z')=G_2^{\mathfrak H/\overline{\varGamma}_0(N)}(z',z)$, a.e.~$ z,z'\in\Int\mathfrak D_N$ for $ N\in\{1,2,3,4\}$.   \eor\end{remark}\begin{remark}In \cite[][\S2.2]{AGF_PartI}, we referred to certain expressions associated with   $ S_\nu(\alpha\Vert\beta)$ (defined in Eq.~\ref{eq:S_nu_alpha_beta_defn}) as ``Legendre--Ramanujan representations'' for automorphic Green's functions. Such nomenclature is meant to acknowledge  Ramanujan's contribution  to the reformulation of certain modular forms  as algebraic expressions involving fractional degree Legendre functions. It is now appropriate to call the formula for    $ S_\nu(\alpha\Vert\beta)$  given in  Eq.~\ref{eq:S_nu_alpha_beta_defn} as the ``interaction entropy in Legendre--Ramanujan form''.   \eor\end{remark}\begin{remark}For $ \nu\in\mathbb Q\cap(-1,0)$, we have already seen that  the interaction entropy $ S_\nu(\alpha\Vert\beta)$ can be represented as   absolutely convergent integrals of algebraic functions over algebraic domains, for almost every  $ \alpha,\beta\in\overline{\mathbb Q}$. If there are other integral representations of  $ S_\nu(\alpha\Vert\beta)$  as Kontsevich--Zagier periods (perhaps apparently very different from the Legendre--Ramanujan form, like the one related to Eq.~\ref{eq:precursor_int_id}), then we would anticipate the existence of finitely many steps of elementary manipulations (permissible in the Kontsevich--Zagier program \ref{itm:KZ-1}--\ref{itm:KZ-3}) that connect the  Legendre--Ramanujan form to  these alternative  integral representations of   $ S_\nu(\alpha\Vert\beta)$, according to a motivic Hodge conjecture of Kontsevich and Zagier \cite[][\S1.2 and \S4.1]{KontsevichZagier}.
In  \S\S\ref{sec:Interaction_Entropies_Transformations}--\ref{sec:Green_HC}, we shall present explicit constructions of these elementary transformations for Kontsevich--Zagier periods. \eor\end{remark}
To facilitate the analysis of   interaction entropies $S_\nu(\alpha\Vert\beta)$ of weight 4 and degree $\nu$, we will first convert them into multiple integrals of elementary functions.

A useful identity for manipulating integrals of algebraic functions is Eq.~\ref{eq:F_ab} in the next lemma, which turns a product into an integral. Physicists often attribute this  technique (along with some of its generalizations)  to the seminal  works of R. P.  Feynman and J. \selectlanguage{german}Schwinger\selectlanguage{english} in the late 1940s  \cite[][p.~190]{QFT},  in the context of  quantum field theory. However, we note that the mathematical ideas behind  Eq.~\ref{eq:F_ab} had essentially appeared in an article of A. Erd\'elyi  published in 1937 \cite{Erdelyi1937b}.
In the following lemma, we state the ``EFS formula'' (Eq.~\ref{eq:F_ab}, which may be also termed as the Erd\'elyi identity, or the Feynman parametrization, or the \selectlanguage{german}Schwinger\selectlanguage{english} trick), without reiterating its standard proof.    \begin{lemma}[EFS Formula]\label{lm:Feynman_parameter}  Suppose that $ A,B\in\mathbb C\smallsetminus(-\infty,0]$, $ \R a>0$, $\R b>0$, and that the straight-line segment joining $A$ to $B$ also lies in $ \mathbb C\smallsetminus(-\infty,0]$, then we have the following identity\begin{align}\frac{1}{A^a B^b}=\frac{\Gamma(a+ b)}{\Gamma(a)\Gamma(b)}\int_0^1\frac{u^{a-1}(1-u)^{b-1}\D u}{[uA+(1-u)B]^{a+b}},
\label{eq:F_ab}
\end{align}where the fractional powers are defined according to the convention in Eq.~\ref{eq:frac_pow}.\qed\end{lemma}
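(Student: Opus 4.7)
The plan is to establish Eq.~\ref{eq:F_ab} first under the stronger hypothesis $\R A > 0$ and $\R B > 0$, where the Schwinger (Laplace--transform) representation of negative powers is directly applicable, and then to propagate the identity to the stated domain by analytic continuation. For the base case, I would start from the elementary identity $A^{-a} = \Gamma(a)^{-1}\int_0^\infty s^{a-1} e^{-sA}\,\D s$, valid whenever $\R a, \R A > 0$. Multiplying the two such representations (in independent variables $s$ and $t$) and invoking Fubini yields
\begin{align*}
\frac{1}{A^a B^b} = \frac{1}{\Gamma(a)\Gamma(b)}\int_0^\infty\!\!\int_0^\infty s^{a-1} t^{b-1} e^{-sA-tB}\,\D s\,\D t.
\end{align*}
The change of variables $(s,t)=(\lambda u,\lambda(1-u))$ maps $(0,\infty)^2$ onto $(0,1)_u\times(0,\infty)_\lambda$ with Jacobian $\lambda$, and after collapsing the inner $\lambda$-integral via $\int_0^\infty \lambda^{a+b-1} e^{-\lambda C}\,\D\lambda = \Gamma(a+b) C^{-a-b}$ (with $C = uA+(1-u)B$, whose real part is positive in the base case), one directly obtains the right-hand side of Eq.~\ref{eq:F_ab}.

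For the extension to the stated hypothesis, observe that under the branch convention in Eq.~\ref{eq:frac_pow} the left-hand side of Eq.~\ref{eq:F_ab} is jointly holomorphic in $(A,B)$ on $(\mathbb C\smallsetminus(-\infty,0])^2$. The right-hand side is holomorphic on the open set $\mathcal U := \{(A,B) : \{uA+(1-u)B : u \in [0,1]\} \subset \mathbb C\smallsetminus(-\infty,0]\}$, because there the denominator $[uA+(1-u)B]^{a+b}$ is unambiguously defined through Eq.~\ref{eq:frac_pow} for every $u \in [0,1]$, the endpoint singularities at $u=0,1$ are integrable by $\R a,\R b > 0$, and the parameter-dependent integrand is locally uniformly bounded on compact subsets of $\mathcal U$. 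Since $\mathcal U$ is connected --- any $(A,B)\in\mathcal U$ can be joined to $(1,1)$ by first linearly interpolating the two arguments to a common interior point of the original segment and then sliding along a path in $\mathbb C\smallsetminus(-\infty,0]$ to $1$, all while preserving the segment condition --- and contains the product of open right half-planes on which the identity has been established, analytic continuation extends Eq.~\ref{eq:F_ab} to the whole of $\mathcal U$.

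The main obstacle, really the only nontrivial step, is verifying that the two sides share a common connected domain of joint holomorphy captured by the segment condition; the remaining ingredients (absolute convergence of the Gamma integrals at $s,t\to0^+$ and $\to+\infty$, Fubini's theorem on $(0,\infty)^2$, and holomorphicity preservation under the linear substitution) are routine. A minor subtlety worth flagging is the compatibility of the branch cuts: one must check that the identification $[uA+(1-u)B]^{a+b}=e^{(a+b)\log(uA+(1-u)B)}$ under Eq.~\ref{eq:frac_pow} is continuous in $u \in [0,1]$ for each $(A,B)\in\mathcal U$, which follows immediately from the convexity hypothesis that keeps $uA+(1-u)B$ off the cut $(-\infty,0]$.
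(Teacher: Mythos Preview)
Your argument is correct and is precisely the standard Schwinger-trick derivation. The paper does not actually supply a proof of this lemma: immediately before stating it, the author writes that the EFS formula is presented ``without reiterating its standard proof,'' and the lemma is closed with a bare \qed. So there is nothing to compare against; your Laplace-integral computation for $\R A,\R B>0$ followed by analytic continuation over the connected domain $\mathcal U$ determined by the segment condition is exactly the textbook route the paper alludes to.
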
\begin{remark}Based on the formula \cite[][Eq.~4.1]{Erdelyi1937b}\begin{align}
\frac{1}{(1-t\xi)^a}=\frac{\Gamma(c)}{\Gamma(a)\Gamma(c-a)}\int_0^1\frac{\tau^{a-1}(1-\tau)^{c-a-1}\D\tau}{(1-t\tau\xi)^c},\quad 0<\R a<\R c,t\xi\in\mathbb C\smallsetminus[1,+\infty),\label{eq:Erdelyi_frac}
\end{align}which is a special case of Eq.~\ref{eq:F_ab} with $ A=1-t\xi$, $ B=1$, $ b=c-a$, Erd\'elyi symmetrized Euler's integral representation for hypergeometric functions (Eq.~\ref{eq:Euler_int}) into  the following form \cite[][Eq.~4.2]{Erdelyi1937b}:\begin{align}_2F_1\left( \left.\begin{array}{c}
a,b \\
c \\
\end{array}\right|\xi \right)={}&\frac{[\Gamma(c)]^{2}}{\Gamma(a)\Gamma(b)\Gamma(c-a)\Gamma(c-b)}\int_0^1\int _{0}^{1}\frac{t^{b-1}\tau^{a-1}(1-t)^{c-b-1}(1-\tau)^{c-a-1}}{(1-t\tau \xi)^c}\D t\D\tau,\quad
\label{eq:Erdelyi_int}
\end{align}on the conditions that $ \R c>\R a>0,\R c>\R b>0,-\pi<\arg(1-\xi)<\pi$. In particular, Erd\'elyi's  formula allows us to  establish a double integral representation for Legendre functions \begin{align}
P_{\nu}(1-2t)={}&\frac{\sin^{2} (\nu\pi   )}{\pi^{2}}\int_0^{1}\int _{0}^{1}\frac{\left[ \frac{U(1-V)}{V(1-U)} \right]^\nu}{1-tUV}\frac{\D U\D V}{(1-U)V}=:\frac{\sin^{2} (\nu\pi   )}{\pi^{2}}\iint\frac{\mathbb D_{\nu,0}U\mathbb D_{-\nu-1,0}V}{1-tUV}\label{eq:Pnu_double_int_repn}
\end{align}for $ -1<\nu<0$ and $ t\in\mathbb C\smallsetminus[1,+\infty)$. Here, one may wish to refer to Eq.~\ref{eq:mathbb_D_nu_alpha_nu_defn} for the usage of the notation $ \mathbb D_{\nu,\alpha}u$. \eor\end{remark}\begin{remark}The following variation on Eq.~\ref{eq:Erdelyi_frac}:\begin{align}
0={}&\left.\frac{\partial}{\partial\varepsilon}\right|_{\varepsilon=0}\frac{1}{(1-\xi)^{\nu+1}}=\left.\frac{\partial}{\partial\varepsilon}\right|_{\varepsilon=0}\left[\frac{\Gamma(1+\varepsilon)}{\Gamma(\nu+1)\Gamma(-\nu+\varepsilon)}\int_0^1\frac{u^{\nu}(1-u)^{\varepsilon-\nu-1}\D u}{(1-u\xi)^{1+\varepsilon}}\right]\notag\\={}&-\frac{\gamma_0+\psi^{(0)}(-\nu)}{(1-\xi)^{\nu+1}}-\frac{\sin(\nu\pi)}{\pi}\int\frac{\log\frac{1-u}{1-u\xi}}{1-u \xi}\mathbb D_{\nu,0}u,\quad -1<\nu<0,\xi\in\mathbb C\smallsetminus[1,+\infty)\label{eq:Erdelyi_log_deriv}
\end{align}will be used later in Proposition \ref{prop:multi_integrals_Jacobi_involution_again} and Lemma \ref{lm:nu_reflection_symm}.\eor\end{remark}As  direct applications of  Eq.~\ref{eq:F_ab} in the last lemma, we investigate some triple integrals of algebraic functions that represent products of two Legendre functions, as well as some triple integrals of elementary functions that are related to the summands in $ S_\nu(\alpha\Vert\beta)$. In the proposition below, we will also see that the arrangement of the variables $ \alpha$ and $ \beta$ in the interaction entropy  $ S_\nu(\alpha\Vert\beta)$ is compatible with the notation for  Kullback--Leibler divergence (Eq.~\ref{eq:Kullback-Leibler}).\begin{proposition}[Some Triple Integrals]\label{prop:triple_integrals_PnuPnu}\begin{enumerate}[label=\emph{(\alph*)}, ref=(\alph*), widest=a] \item We have the following triple integral representations for certain products of two Legendre functions:\begin{align}
[P_{\nu}(1-2t)]^{2}={}&-\frac{\sin^3(\nu\pi)}{\pi^3}\int_0^{1}\int_0^{1}\int_0^{1}\frac{\left[ \frac{U(1-V)(1-W)}{(1-U)VW} \right]^{\nu}\frac{\D U\D V\D W}{(1-U)VW}}{1-tUW-tV(1-W)}\notag\\=:{}&\mathbb E_{\nu,0}^U\mathbb E_{-\nu-1,0}^V\mathbb E_{-\nu-1,0}^W\frac{1}{1-tUW-tV(1-W)},\label{eq:Pnu_sqr_F_trip_int}\intertext{}P_\nu(1-2t)P_\nu(2t-1)={}&-\frac{\sin^3(\nu\pi)}{\pi^3}\int_0^{1}\int_0^{1}\int_0^{1}\frac{\left[ \frac{U(1-V)(1-W)}{(1-U)VW} \right]^{\nu}\frac{\D U\D V\D W}{(1-U)VW}}{1-tUW-(1-t)V(1-W)}\notag\\=:{}&\mathbb E_{\nu,0}^U\mathbb E_{-\nu-1,0}^V\mathbb E_{-\nu-1,0}^W\frac{ 1}{1-tUW-(1-t)V(1-W)},\label{eq:Pnu_mir_F_trip_int}\end{align}where $ \nu\in(-1,0)$ and $t\in(\mathbb C\smallsetminus\mathbb R)\cup(0,1)$. The Legendre function $ P_\nu$  follows the definition in Eq.~\ref{eq:Euler_int_Pnu}. \item \label{itm:triple_int_Ent}For  $ \nu\in(-1,0)$,  $ \alpha,\beta\in(\mathbb C\smallsetminus\mathbb R)\cup(0,1)$, we have the following integral identities: \begin{align}&\int_0^{\beta}\frac{[P_\nu(1-2t)]^{2}-[P_\nu(1-2\alpha)]^2}{t-\alpha}\D t\notag\\={}&\frac{\sin^3(\nu\pi)}{\pi^3} \int_0^{1}\int_0^{1}\int_0^{1}\frac{\log[1-\beta UW-\beta V(1-W)]}{1-\alpha UW-\alpha V(1-W)}\left[ \frac{U(1-V)(1-W)}{(1-U)VW} \right]^{\nu}\frac{\D U\D V\D W}{(1-U)VW}\notag\\=:{}&\frac{\sin^3(\nu\pi)}{\pi^3}\iiint\frac{\log[1-\beta UW-\beta V(1-W)]}{1-\alpha UW-\alpha V(1-W)}\mathbb D_{\nu,0}U\mathbb D_{-\nu-1,0}V\mathbb D_{-\nu-1,0}W,\label{eq:Pnu_sqr_KZ_int2trip_int}\intertext{and}
&\int_0^{\beta}\frac{P_\nu(1-2t)P_\nu(2t-1)-P_\nu(1-2\alpha)P_\nu(2\alpha-1)}{t-\alpha}\D t\notag\\={}&\frac{\sin^3(\nu\pi)}{\pi^3} \int_0^{1}\int_0^{1}\int_0^{1}\frac{\log\frac{1-\beta UW-(1-\beta) V(1-W)}{1- V(1-W)}}{1-\alpha UW-(1-\alpha) V(1-W)}\left[ \frac{U(1-V)(1-W)}{(1-U)VW} \right]^{\nu}\frac{\D U\D V\D W}{(1-U)VW}\notag\\=:{}&\frac{\sin^3(\nu\pi)}{\pi^3}\iiint\frac{\log\frac{1-\beta UW-(1-\beta) V(1-W)}{1- V(1-W)}}{1-\alpha UW-(1-\alpha) V(1-W)}\mathbb D_{\nu,0}U\mathbb D_{-\nu-1,0}V\mathbb D_{-\nu-1,0}W.\label{eq:Pnu_mir_KZ_int2trip_int}\end{align}
\end{enumerate}\end{proposition}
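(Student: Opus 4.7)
The plan is to reduce every identity of the proposition to the double integral representation of $P_\nu(1-2t)$ (Eq.~\ref{eq:Pnu_double_int_repn}) and to Euler's single integral (Eq.~\ref{eq:Euler_int_Pnu}), both of which are already at our disposal.

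For part~\emph{(a)}, I would carry out the innermost integration over $W$ first. In Eq.~\ref{eq:Pnu_sqr_F_trip_int}, rewriting the denominator as $1-tUW-tV(1-W)=(1-tV)-t(U-V)W$, the inner $W$-integral reduces to the Beta-type evaluation
\begin{align*}
\int_0^1\frac{W^{-\nu-1}(1-W)^\nu}{1-zW}\D W=-\frac{\pi}{\sin(\nu\pi)}(1-z)^\nu,
\end{align*}
which is Euler's integral for ${}_2F_1(1,-\nu;1;z)=(1-z)^\nu$; applied at $z=t(U-V)/(1-tV)$ it yields $-\pi(1-tU)^\nu/[\sin(\nu\pi)(1-tV)^{\nu+1}]$. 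The residual double integral in $(U,V)$ then factorizes into two independent Euler integrals, each evaluating to $-\pi P_\nu(1-2t)/\sin(\nu\pi)$, where the $V$-factor uses ${}_2F_1(\nu+1,-\nu;1;t)=P_\nu(1-2t)$. Collecting three factors of $-\pi/\sin(\nu\pi)$ with the prefactor $-\sin^3(\nu\pi)/\pi^3$ reproduces exactly $[P_\nu(1-2t)]^2$. Eq.~\ref{eq:Pnu_mir_F_trip_int} is handled analogously: the denominator decomposes as $(1-(1-t)V)-W[tU-(1-t)V]$, the $W$-integration yields $-\pi(1-tU)^\nu/[\sin(\nu\pi)(1-(1-t)V)^{\nu+1}]$, and the residual double integral factorizes into a product giving $P_\nu(1-2t)P_\nu(2t-1)$.

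For part~\emph{(b)}, I plan to insert the triple integrals of part~\emph{(a)} into the Cauchy-type integrand and swap the $t$-integration to the innermost position via Fubini. Writing $X:=UW+V(1-W)$ for Eq.~\ref{eq:Pnu_sqr_KZ_int2trip_int}, the partial fraction
\begin{align*}
\frac{1}{t-\alpha}\left[\frac{1}{1-tX}-\frac{1}{1-\alpha X}\right]=\frac{X}{(1-tX)(1-\alpha X)}
\end{align*}
has $t$-antiderivative $-\log(1-tX)/(1-\alpha X)$; evaluating from $0$ to $\beta$ gives $-\log(1-\beta X)/(1-\alpha X)$, and combining with the prefactor $-\sin^3(\nu\pi)/\pi^3$ produces the stated formula. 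For Eq.~\ref{eq:Pnu_mir_KZ_int2trip_int}, set $Y:=UW-V(1-W)$ and $Z:=1-V(1-W)$, so that the two denominators read $Z-tY$ and $Z-\alpha Y$; the inner $t$-integral now evaluates to $-\log[(Z-\beta Y)/Z]/(Z-\alpha Y)$, and unpacking $Z-\beta Y=1-\beta UW-(1-\beta)V(1-W)$ reveals precisely the logarithm of the ratio in the statement.

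The main technical obstacle will be the rigorous justification of Fubini and the consistent choice of branches when $\alpha,\beta\in(\mathbb C\smallsetminus\mathbb R)\cup(0,1)$ are complex. In the inner $W$-integral of part~\emph{(a)}, the fractional power $(1-z)^\nu$ requires $z=t(U-V)/(1-tV)\notin[1,+\infty)$ throughout the $(U,V)$-square; one first establishes the identity for $t$ in a neighborhood of $0$, where the constraint is automatic, and then extends by analytic continuation to $t\in(\mathbb C\smallsetminus\mathbb R)\cup(0,1)$. Absolute convergence of the triple integrals near the corners of the unit cube $(0,1)^3$ is secured by $\nu\in(-1,0)$, so Fubini applies unconditionally. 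The apparent pole at $t=\alpha$ in the Cauchy-type integrand of part~\emph{(b)} is removable in the combined numerator, and the principal branches prescribed in Eqs.~\ref{eq:Re_Im_log}--\ref{eq:frac_pow} ensure all logarithmic expressions are single-valued and consistent with the conventions of the rest of the paper.
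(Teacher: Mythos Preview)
Your proof is correct and follows essentially the same route as the paper: for part~(a) you evaluate the $W$-integral via Euler's hypergeometric integral, which is precisely the EFS formula (Lemma~\ref{lm:Feynman_parameter}) read in reverse, recovering the product of two single Euler integrals for $P_\nu(1-2t)P_{-\nu-1}(1-2t')$; for part~(b) your partial-fraction computation and $t$-antiderivative spell out exactly what the paper abbreviates as ``Integrate Eqs.~\ref{eq:Pnu_sqr_F_trip_int} and~\ref{eq:Pnu_mir_F_trip_int} in the variable $t$.''
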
\begin{proof}\begin{enumerate}[label=(\alph*),widest=a]\item We use the EFS formula (Eq.~\ref{eq:F_ab}) to combine the denominators in \begin{align}P_{\nu}(1-2t)P_{-\nu-1}(1-2t')=\frac{\sin^{2}(\nu\pi)}{\pi^{2}}\int_0^{1}\frac{U^{\nu}(1-U)^{-\nu-1}\D U}{(1-tU)^{-\nu}}\int_0^{1}\frac{V^{-\nu-1}(1-V)^{\nu}\D V}{(1-t'V)^{\nu+1}},\end{align}thereby confirming the triple integral representations in  Eqs.~\ref{eq:Pnu_sqr_F_trip_int} and \ref{eq:Pnu_mir_F_trip_int}. Bearing in mind that $ P_\nu(1)=1$, we can rephrase these triple integrals in terms of Legendre expectations (Eq.~\ref{eq:Legendre_expectation_defn}).   \item Integrate   Eqs.~\ref{eq:Pnu_sqr_F_trip_int} and \ref{eq:Pnu_mir_F_trip_int} in the variable $t$.\qedhere\end{enumerate}Later in \S\S\ref{subsec:bir_intn_entropy}--\ref{subsec:Legendre_add_intn_entropy}, we will use the notations $ \mathbb D_{\nu,\alpha}u$ to abbreviate integral formulae, as far as possible. Only occasionally will we spell out the underlying algebraic expression for clarification.
  \end{proof}
\subsection{Birational transformations for interaction entropies of weight 4 and degree $\nu$\label{subsec:bir_intn_entropy}}
In this subsection, we will  first apply Principles \ref{itm:KZ-1}--\ref{itm:KZ-2} in the Kontsevich--Zagier program to the triple integrals in Eqs.~\ref{eq:Pnu_sqr_KZ_int2trip_int} and \ref{eq:Pnu_mir_KZ_int2trip_int} for the $ \alpha=\beta$ scenario (self-interaction entropies), and then give a similar treatment to the   $ \alpha\neq\beta$ case (relative interaction entropies).

To handle the logarithmic factor in the integrands of multiple integrals, we need to compute some derivatives of hypergeometric functions with respect to their parameters, in the next lemma.
\begin{lemma}[Frobenius--Zagier Process]For $ \nu\in(-1,0)$ and $ \alpha\in(\mathbb C\smallsetminus\mathbb R)\cup(0,1)$, we have the following closed-form evaluations:\begin{align}&
\left.\frac{\partial}{\partial \varepsilon}\right|_{\varepsilon=0}{_2}F_1\left(\left. \begin{array}{c}
-\nu,\nu+1\ \\
1+\varepsilon\ \\
\end{array} \right| \alpha\right)\notag\\={}&\frac{\pi P_{\nu}(2\alpha-1)}{2\sin(\nu\pi)}+\frac{P_\nu(1-2\alpha)}{2}\left[ -2\gamma_{0}-\psi^{(0)}(-\nu)-\psi^{(0)}(\nu+1)+\log \frac{1-\alpha}{\alpha} \right],\label{eq:FZ_a}\intertext{}&\left.\frac{\partial}{\partial \varepsilon}\right|_{\varepsilon=0}{_2}F_1\left(\left. \begin{array}{c}
-\nu+\varepsilon,\nu+1+\varepsilon\ \\
1\ \\
\end{array} \right| \alpha\right)\notag\\={}&-P_\nu(1-2\alpha)\log(1-\alpha),\label{eq:FZ_b}
\end{align}which can be recast into the following integral formulae:\begin{align}&-\frac{\sin(\nu\pi)}{\pi} \int\log(1-U)(\mathbb D_{\nu,\alpha}U+\mathbb D_{-\nu-1,\alpha}U)\notag\\={}&\frac{\pi P_{\nu}(2\alpha-1)}{\sin(\nu\pi)}+P_\nu(1-2\alpha)\log \frac{1-\alpha}{\alpha},\label{eq:FZ_a'}\intertext{}&
\frac{\sin(\nu\pi)}{\pi} \int\log(1-\alpha U)(\mathbb D_{\nu,\alpha}U+\mathbb D_{-\nu-1,\alpha}U)\notag\\={}&-\frac{\sin(\nu\pi)}{\pi} \int\log\frac{U}{1-U}(\mathbb D_{\nu,\alpha}U+\mathbb D_{-\nu-1,\alpha}U)\notag\\={}&-P_\nu(1-2\alpha)\log(1-\alpha),\label{eq:FZ_b'}\intertext{}&\frac{\sin^{2} (\nu\pi   )}{\pi^{2}}\iint\frac{\log(1-\alpha UV)}{1-\alpha UV}\mathbb D_{\nu,0}U\mathbb D_{-\nu-1,0}V\notag\\={}&\frac{\pi P_{\nu}(2\alpha-1)}{2\sin(\nu\pi)}+\frac{P_\nu(1-2\alpha)}{2}\left[ -2\gamma_{0}-\psi^{(0)}(-\nu)-\psi^{(0)}(\nu+1)+\log \frac{1-\alpha}{\alpha} \right].\label{eq:FZ_a''}
\end{align}\end{lemma}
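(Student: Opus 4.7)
The two parameter-derivative identities (a)--(b) are the content of the lemma; the four integral reformulations are corollaries obtained by differentiating Euler's integral representation with respect to well-chosen parameters. I would treat (b) first (it is immediate), then (a) (the substantive step), then the integral identities together.

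For (b), the Euler transformation ${_2}F_1(a,b;c;z)=(1-z)^{c-a-b}{_2}F_1(c-a,c-b;c;z)$, applied with $a=-\nu+\varepsilon$, $b=\nu+1+\varepsilon$, $c=1$ so that $c-a-b=-2\varepsilon$, yields the functional equation $g(\varepsilon)=(1-\alpha)^{-2\varepsilon}g(-\varepsilon)$ for $g(\varepsilon):={_2}F_1(-\nu+\varepsilon,\nu+1+\varepsilon;1;\alpha)$. Differentiating at $\varepsilon=0$ with $g(0)=P_\nu(1-2\alpha)$ gives $2g'(0)=-2\log(1-\alpha)P_\nu(1-2\alpha)$, which is (b).

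For (a), apply the $z\to1-z$ connection formula for ${_2}F_1$ with $c=1+\varepsilon$ (so $c-a-b=\varepsilon$ is non-integer), which expresses $f(\varepsilon):={_2}F_1(-\nu,\nu+1;1+\varepsilon;\alpha)$ as $A(\varepsilon)\Phi(\varepsilon)+B(\varepsilon)(1-\alpha)^\varepsilon\Psi(\varepsilon)$ with gamma-quotient coefficients $A,B$ carrying simple poles at $\varepsilon=0$ and hypergeometric factors $\Phi(\varepsilon)={_2}F_1(-\nu,\nu+1;1-\varepsilon;1-\alpha)$, $\Psi(\varepsilon)={_2}F_1(1+\nu+\varepsilon,-\nu+\varepsilon;1+\varepsilon;1-\alpha)$. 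The poles cancel because $\Phi(0)=\Psi(0)=P_\nu(2\alpha-1)$. The key observation is a second Euler identity, $\Psi(\varepsilon)=\alpha^{-\varepsilon}\Phi(-\varepsilon)$, which collapses $\Psi'(0)$ into $\Phi'(0)$ via $\Psi'(0)=-\log\alpha\cdot P_\nu(2\alpha-1)-\Phi'(0)$. Laurent-expanding $A$ and $B$ to first order in $\varepsilon$ and matching the $\varepsilon^0$ coefficient against $f(0)=P_\nu(1-2\alpha)$ then gives a single linear equation that solves uniquely for $\Phi'(0)$; substituting $\alpha\mapsto1-\alpha$ in the resulting formula recovers $f'(0)$ and is precisely (a). The sole delicate point is that the finite part at $\varepsilon=0$ must be assembled to produce the combination $\psi^{(0)}(1+\nu)+\psi^{(0)}(-\nu)+\log(1-\alpha)$ exactly as required; invoking the Euler symmetry $\Psi(\varepsilon)=\alpha^{-\varepsilon}\Phi(-\varepsilon)$ before, rather than after, expanding keeps the bookkeeping tractable by reducing two unknown derivatives to one.

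The integral identities (2.2.20)--(2.2.22) are all parameter derivatives of Euler's representation
\[
\int_0^1 u^\nu(1-u)^{c-\nu-2}(1-\alpha u)^\nu\,du=\frac{\Gamma(\nu+1)\Gamma(c-\nu-1)}{\Gamma(c)}\,{_2}F_1(-\nu,\nu+1;c;\alpha).
\]
Shifting the $(1-u)$-exponent and differentiating at $c=1$ expresses $\int\log(1-U)\,\mathbb D_{\nu,\alpha}U$ in terms of $f(0),\,f'(0)$, and digamma constants; symmetrizing under the involution $\nu\mapsto-\nu-1$ (which fixes $P_\nu$, $\sin(\nu\pi)$, and $f$ but permutes $\psi^{(0)}(-\nu)\leftrightarrow\psi^{(0)}(\nu+1)$) annihilates the asymmetric digamma residues and converts the resulting identity into a direct rewriting of (a), which is (2.2.20). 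An analogous shift of the $(1-\alpha u)$-exponent, or of the $u$- and $(1-u)$-exponents together, yields (2.2.21) as a rewriting of (b). Finally, for (2.2.22), Euler's formula applied in the inner variable of $\iint(1-\alpha UV)^{-1-\varepsilon}\,\mathbb D_{\nu,0}U\,\mathbb D_{-\nu-1,0}V$ followed by a Beta integration in the outer variable produces the closed form $(\pi^{2}/\sin^2(\nu\pi))\,{_3}F_2(1+\varepsilon,\nu+1,-\nu;1,1;\alpha)$; comparing $\varepsilon$-derivatives at zero on both sides via the identity $\partial_\varepsilon(1+\varepsilon)_n^{-1}|_{\varepsilon=0}=-H_n/n!$ shows the left-hand side equals $f'(0)$, so (a) again closes the argument.
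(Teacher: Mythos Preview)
Your argument is correct, and it takes a genuinely different route from the paper.

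For (b), the paper derives it \emph{after} (a), by observing that the combination in the paper's Eq.~(2.2.19) solves the homogeneous Legendre equation and then fixing constants by boundary asymptotics. Your one-line proof via the Euler transformation $g(\varepsilon)=(1-\alpha)^{-2\varepsilon}g(-\varepsilon)$ is shorter, more elementary, and logically independent of (a).

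For (a), the paper's method is ODE-based: differentiate the hypergeometric equation in $\varepsilon$ to obtain an inhomogeneous Legendre equation for $f'(0)$, then argue that the difference between the two sides of (a) is a homogeneous Legendre solution bounded at both endpoints $\alpha\to0^+$ and $\alpha\to1^-$, hence identically zero. Your approach is purely algebraic: expand the $z\mapsto1-z$ connection formula to first order in $\varepsilon$, and use the Euler symmetry $\Psi(\varepsilon)=\alpha^{-\varepsilon}\Phi(-\varepsilon)$ to eliminate one of the two unknown derivatives. Each has its merits: the paper's argument is conceptually clean and generalizes to other parameter derivatives once the relevant inhomogeneous ODE is identified; yours avoids any boundary analysis and makes the appearance of $\log\frac{1-\alpha}{\alpha}$, $\psi^{(0)}(-\nu)+\psi^{(0)}(\nu+1)$, and $\pi/\sin(\nu\pi)$ completely mechanical (from the Laurent expansion of the $\Gamma$-coefficients).

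For the integral reformulations, your treatment of (2.2.20)--(2.2.21) is essentially the same as the paper's. For (2.2.22), the paper differentiates Erd\'elyi's symmetric double integral in the parameter $c$ and then reduces via (2.2.20); your route through the closed form $\iint(1-\alpha UV)^{-1-\varepsilon}\,\mathbb D_{\nu,0}U\,\mathbb D_{-\nu-1,0}V=(\pi^{2}/\sin^{2}\nu\pi)\,{_3}F_2(1+\varepsilon,-\nu,\nu+1;1,1;\alpha)$ is a nice shortcut, since termwise differentiation of $(1+\varepsilon)_n/(1)_n$ at $\varepsilon=0$ yields $H_n$, matching (with the opposite sign) the derivative of $1/(1+\varepsilon)_n$ in the series for $f(\varepsilon)$, so that (2.2.22) literally coincides with (a).
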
\begin{proof}As we differentiate the hypergeometric equation \begin{align}
\left[\alpha(1-\alpha)\frac{\partial^2 }{\partial\alpha^2}+(1+\varepsilon-2\alpha)\frac{\partial }{\partial \alpha}+\nu(\nu+1)\right] {_2}F_1\left(\left. \begin{array}{c}
-\nu,\nu+1\ \\
1+\varepsilon\ \\
\end{array} \right| \alpha\right)=0
\end{align}with respect to $ \varepsilon$ and specialize to the case where $ \varepsilon=0$, we obtain an inhomogeneous Legendre differential equation:\begin{align}
\left[\alpha(1-\alpha)\frac{\partial^2 }{\partial\alpha^2}+(1-2\alpha)\frac{\partial }{\partial \alpha}+\nu(\nu+1)\right]\left.\frac{\partial}{\partial \varepsilon}\right|_{\varepsilon=0} {_2}F_1\left(\left. \begin{array}{c}
-\nu,\nu+1\ \\
1+\varepsilon\ \\
\end{array} \right| \alpha\right)=-\frac{\partial P_{\nu}(1-2\alpha) }{\partial \alpha}.
\end{align}One can then show that the difference between the two sides of Eq.~\ref{eq:FZ_a} is a solution to the homogeneous Legendre differential equation, and this solution is bounded at both boundary points $ \alpha\to0^+$ and $ \alpha\to1^-$. Such a solution must  be identically zero, so  Eq.~\ref{eq:FZ_a} is verified.

By the Frobenius process \cite[][Eq.~1.3.8]{Slater}, one sees that the expression\begin{align}
P_\nu(1-2\alpha)\log \alpha+\left.\frac{\partial}{\partial \varepsilon}\right|_{\varepsilon=0}{_2}F_1\left(\left. \begin{array}{c}
-\nu+\varepsilon,\nu+1+\varepsilon\ \\
1\ \\
\end{array} \right| \alpha\right)+2\left.\frac{\partial}{\partial \varepsilon}\right|_{\varepsilon=0}{_2}F_1\left(\left. \begin{array}{c}
-\nu,\nu+1\ \\
1+\varepsilon\ \\
\end{array} \right| \alpha\right)\label{eq:FP_ex}
\end{align}   solves the homogeneous Legendre differential equation. Thus,  Eq.~\ref{eq:FP_ex} is a linear combination of $ P_\nu(1-2\alpha)$ and $ P_\nu(2\alpha-1)$. The combination coefficients can be determined by asymptotic analysis near the boundary points  $ \alpha\to0^+$ and $ \alpha\to1^-$.
This shows how  Eq.~\ref{eq:FZ_b} follows from Eq.~\ref{eq:FZ_a}.

One can verify  Eqs.~\ref{eq:FZ_a'}--\ref{eq:FZ_b'} by differentiating Euler's integral representation for hypergeometric functions (Eq.~\ref{eq:Euler_int}) and referring to Eqs.~\ref{eq:FZ_a}--\ref{eq:FZ_b}. If one differentiates Erd\'elyi's  double integral representation for hypergeometric functions (Eq.~\ref{eq:Erdelyi_int}) in the parameter $c$, one obtains\begin{align}&
\left.\frac{\partial}{\partial \varepsilon}\right|_{\varepsilon=0}{_2}F_1\left(\left. \begin{array}{c}
-\nu,\nu+1\ \\
1+\varepsilon\ \\
\end{array} \right| \alpha\right)-P_\nu(1-2\alpha)[-2\gamma_{0}-\psi^{(0)}(-\nu)-\psi^{(0)}(\nu+1)]\notag\\={}&\frac{\sin^{2} (\nu\pi   )}{\pi^{2}}\iint\frac{\log\frac{(1-U)(1-V)}{1-\alpha UV}}{1-\alpha UV}\mathbb D_{\nu,0}U\mathbb D_{-\nu-1,0}V.\label{eq:FZ_a''_prep1}
\end{align}As one computes\begin{align}&
\frac{\sin^{2} (\nu\pi   )}{\pi^{2}}\iint\frac{\log[(1-U)(1-V)]}{1-\alpha UV}\mathbb D_{\nu,0}U\mathbb D_{-\nu-1,0}V\notag\\={}&-\frac{\sin(\nu\pi)}{\pi}\left[ \int\log(1-U)\mathbb D_{\nu,\alpha}U+  \int\log(1-V)\mathbb D_{-\nu-1,\alpha}V\right]\label{eq:FZ_a''_prep2}
\end{align} upon reference to Eq.~\ref{eq:Erdelyi_frac}, one may deduce Eq.~\ref{eq:FZ_a''} from Eqs.~\ref{eq:FZ_a'}, \ref{eq:FZ_a''_prep1} and \ref{eq:FZ_a''_prep2}.    \end{proof}\begin{remark}For the case where  $ \nu=-1/3$, Eqs.~\ref{eq:FZ_a} and \ref{eq:FZ_b} were derived by Zagier \cite{Zagier1998}, in order to prove a conjecture about the mirror symmetry of Schoen's Calabi--Yau 3-folds. Our proof of Eqs.~\ref{eq:FZ_a} and \ref{eq:FZ_b}  for $ \nu\in(-1,0)$ follows closely Zagier's argument in \cite{Zagier1998}, and makes use of the classical Frobenius process, hence the namesake.\eor\end{remark}\begin{remark}For $ \nu=-1/2$, one can also verify the integral formulae in   Eqs.~\ref{eq:FZ_a'}--\ref{eq:FZ_b'} by the Jacobi elliptic functions. See, for example, \cite[][\S22.5]{WhittakerWatson1927} or \cite[][Lemma 3.3.2]{AGF_PartI}.     \eor\end{remark}

In the next proposition, we will transform the triple integrals in  Eqs.~\ref{eq:Pnu_sqr_KZ_int2trip_int} and \ref{eq:Pnu_mir_KZ_int2trip_int}    with repeated use of  Jacobi involutions. Here, a Jacobi involution is a variable substitution in the form of \begin{align}
\mathscr U=\frac{1-U}{1-\lambda U}\quad \Longleftrightarrow\quad U=\frac{1-\mathscr U}{1-\lambda\mathscr U},\label{eq:Jacobi_involution_U}
\end{align} which occurs naturally in the ``complementary angle transformation'' of the Jacobi elliptic functions \cite[][item 122.03]{ByrdFriedman}:
\begin{align}
\sn^2(\mathbf K(\sqrt{\lambda})-u|\lambda)=\frac{1-\sn^2(u|\lambda)}{1-\lambda\sn^2(u|\lambda)}\quad \Longleftrightarrow\quad \sn^2(u|\lambda)=\frac{1-\sn^2(\mathbf K(\sqrt{\lambda})-u|\lambda)}{1-\lambda\sn^2(\mathbf K(\sqrt{\lambda})-u|\lambda)}.\label{eq:Jacobi_involution_sn}
\end{align}

\begin{proposition}[Self-Interaction Entropies of Weight 4 and Degree $\nu$]\label{prop:multi_integrals_Jacobi_involution}For   $ \nu\in(-1,0)$,  $ \alpha\in(\mathbb C\smallsetminus\mathbb R)\cup(0,1)$,  we have\begin{align}&
\frac{\sin^3(\nu\pi)}{2\pi^3}\iiint\frac{\log[1-\alpha UW-\alpha V(1-W)]}{1-\alpha UW-\alpha V(1-W)}(\mathbb D_{\nu,0}U\mathbb D_{-\nu-1,0}V\mathbb D_{-\nu-1,0}W+\mathbb D_{-\nu-1,0}U\mathbb D_{\nu,0}V\mathbb D_{\nu,0}W)\notag\\={}&-\frac{\pi P_\nu(1-2\alpha) P_{\nu}(2\alpha-1)}{2\sin(\nu\pi)}+[P_\nu(1-2\alpha)]^{2}\left[ \gamma_{0}+\frac{\psi^{(0)}(-\nu)+\psi^{(0)}(\nu+1)}{2}-\log \frac{1-\alpha}{\sqrt{\alpha}} \right]\notag\\{}&+\frac{\sin^2(\nu\pi)}{2\pi^2}\iint\log(1-\alpha UW)(\mathbb D_{\nu,\alpha}U\mathbb D_{\nu,\alpha}W+\mathbb D_{-\nu-1,\alpha}U\mathbb  D_{-\nu-1,\alpha}W),\label{eq:log_self_sqr} \\\intertext{and}&\frac{\sin^3(\nu\pi)}{2\pi^3}\iiint\frac{\log\frac{1-\alpha UW-(1-\alpha) V(1-W)}{1- V(1-W)}}{1-\alpha UW-(1-\alpha) V(1-W)}(\mathbb D_{\nu,0}U\mathbb D_{-\nu-1,0}V\mathbb D_{-\nu-1,0}W+\mathbb D_{-\nu-1,0}U\mathbb D_{\nu,0}V\mathbb D_{\nu,0}W)\notag\\={}&\frac{\pi[P_\nu(1-2\alpha)]^{2}}{2\sin(\nu\pi)}-\frac{\pi[P_\nu(2\alpha-1)]^{2}}{2\sin(\nu\pi)}+P_\nu(1-2\alpha)P_\nu(2\alpha-1)\left[ \gamma_{0}+\frac{\psi^{(0)}(-\nu)+\psi^{(0)}(\nu+1)}{2}-\log\frac{1- \alpha}{\sqrt{\alpha}} \right]\notag\\&+\frac{\sin^2(\nu\pi)}{2\pi^2}\iint\log\left(1+\frac{ \alpha V W}{1-V}\right)(\mathbb D_{\nu,1-\alpha}V\mathbb D_{\nu,\alpha}W+\mathbb D_{-\nu-1,1-\alpha}V\mathbb  D_{-\nu-1,\alpha}W).\label{eq:log_self_mir}\end{align}
\end{proposition}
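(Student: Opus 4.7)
The plan is to collapse each triple integral into its claimed double-integral form via a three-step strategy: (i) an algebraic rewriting of the denominator that exposes its convex-combination structure, (ii) integration of one variable via a parametric differentiation of the EFS formula of Lemma~\ref{lm:Feynman_parameter}, and (iii) a Jacobi involution to shift the measures from $\mathbb{D}_{\bullet,0}$ to $\mathbb{D}_{\bullet,\alpha}$, with the residual scalar pieces evaluated by the Frobenius--Zagier formulas~(Eqs.~\ref{eq:FZ_a'}--\ref{eq:FZ_a''}).

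The starting observation is the identity
\begin{equation*}
1-\alpha UW-\alpha V(1-W)=W(1-\alpha U)+(1-W)(1-\alpha V),
\end{equation*}
which recasts the denominator as a convex combination of $1-\alpha U$ and $1-\alpha V$ with weight $W$. The measure $\mathbb{D}_{-\nu-1,0}W=W^{-\nu-1}(1-W)^\nu\,dW$ is the Beta-weight with parameters $(a,b)=(-\nu,\nu+1)$, and since $a+b=1$ matches the power of the denominator, EFS applied in reverse would collapse the $W$-integration, in the log-free case, to a double integral in $(U,V)$ whose integrand is $\mathrm{const}\cdot(1-\alpha U)^\nu(1-\alpha V)^{-\nu-1}$. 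To accommodate the logarithm I would write $\log X / X=-\partial_\varepsilon|_{\varepsilon=0}X^{-1-\varepsilon}$ and apply EFS to the family $X^{-1-\varepsilon}$, so that the $\varepsilon$-derivative of the Beta prefactor supplies the $\psi^{(0)}(-\nu)$ and $\psi^{(0)}(\nu+1)$ contributions while the $\varepsilon$-derivative of $A^{-a}B^{-b}$ produces a residual factor of $\log(1-\alpha U)$ (respectively $\log(1-\alpha V)$). The $(\nu\leftrightarrow-\nu-1)$ symmetrization of the measure on the left-hand side of Eq.~\ref{eq:log_self_sqr} is designed precisely so that the two resulting copies combine and the awkward $\log[W(1-\alpha U)+(1-W)(1-\alpha V)]$ residues cancel pairwise.

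I would then apply the Jacobi involution $U=(1-\mathscr U)/(1-\alpha\mathscr U)$ of Eq.~\ref{eq:Jacobi_involution_U} to the remaining $U$-integral. A direct calculation using $dU/d\mathscr U=-(1-\alpha)/(1-\alpha\mathscr U)^2$, together with $1-U=(1-\alpha)\mathscr U/(1-\alpha\mathscr U)$ and $1-\alpha U=(1-\alpha)/(1-\alpha\mathscr U)$, shows that this involution swaps $\mathbb{D}_{\nu,\alpha}U$ with $\mathbb{D}_{-\nu-1,\alpha}\mathscr U$. This swap makes the two symmetrized pieces of the LHS coalesce into expressions in the $\mathbb{D}_{\nu,\alpha}\cdot\mathbb{D}_{\nu,\alpha}$ and $\mathbb{D}_{-\nu-1,\alpha}\cdot\mathbb{D}_{-\nu-1,\alpha}$ measures of the RHS. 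The single-variable integrals left over from this reduction are then identified via Eqs.~\ref{eq:FZ_a'}--\ref{eq:FZ_a''}, producing the explicit coefficients $\pi P_\nu(1-2\alpha)P_\nu(2\alpha-1)/(2\sin(\nu\pi))$ and $[P_\nu(1-2\alpha)]^2[\gamma_0+\tfrac12(\psi^{(0)}(-\nu)+\psi^{(0)}(\nu+1))-\log\tfrac{1-\alpha}{\sqrt\alpha}]$ on the right.

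For Eq.~\ref{eq:log_self_mir} the same plan would apply to the rewriting $1-\alpha UW-(1-\alpha)V(1-W)=W(1-\alpha U)+(1-W)(1-(1-\alpha)V)$, with the Jacobi involution on the $V$-variable taken at parameter $1-\alpha$ instead of $\alpha$; this accounts for the $\mathbb{D}_{\nu,1-\alpha}V$ measure appearing on the right. The extra $\log[1-V(1-W)]$ subtraction inside the numerator is simply the natural regularization needed for the reduction to go through, since it kills the boundary contribution at $t=0$ in the derivation of Eq.~\ref{eq:Pnu_mir_KZ_int2trip_int}. The main obstacle throughout will be the systematic sign and Jacobian bookkeeping across the chain of substitutions, and the verification that the Frobenius--Zagier pieces combine with the parametric-differentiation residues to give exactly the stated constants without spurious $\log$ or $\psi^{(0)}$ leftovers; the delicate coordination between the $\nu\leftrightarrow-\nu-1$ symmetrization in the measure and the Jacobi involution (which itself interchanges these two degrees) is what makes this cancellation closable.
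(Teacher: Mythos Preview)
Your plan diverges from the paper's argument, and the divergence hides a genuine obstruction. The paper does \emph{not} integrate out $W$ via EFS-with-parametric-differentiation; instead it performs the birational change
\[
W=\frac{1-\mathscr W}{1-\alpha U\mathscr W}
\]
(a Jacobi involution in $W$ with \emph{variable} parameter $\alpha U$, not the fixed-parameter involution in $U$ that you propose). Under this substitution the denominator becomes $1-\alpha V\mathscr W$, the logarithm splits cleanly as $\log(1-\alpha U)+\log(1-\alpha V\mathscr W)-\log(1-\alpha U\mathscr W)$, and the $U$-measure is promoted from $\mathbb D_{\nu,0}$ to $\mathbb D_{\nu,\alpha}$ in one stroke. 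The three resulting pieces are then dispatched by Eqs.~\ref{eq:FZ_b'}, \ref{eq:FZ_a''} and the Erd\'elyi reduction $-\frac{\sin(\nu\pi)}{\pi}\int\frac{\mathbb D_{-\nu-1,0}V}{1-\alpha V\mathscr W}=(1-\alpha\mathscr W)^\nu$, which is exactly what produces the two-variable factor $\log(1-\alpha U\mathscr W)$ against $\mathbb D_{\nu,\alpha}\mathscr W$ on the right-hand side. For Eq.~\ref{eq:log_self_mir} the paper uses the companion substitution $W=1-\frac{1-\mathscr W}{1-(1-\alpha)V\mathscr W}$.

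The gap in your route is the step where you differentiate EFS in $\varepsilon$. Because the $W$-weight is fixed at $(a,b)=(-\nu,\nu+1)$ while the denominator carries exponent $1+\varepsilon$, EFS does not apply to $X^{-1-\varepsilon}$ directly; you must instead differentiate in $a$ or $b$, and either choice introduces an extra $\log W$ or $\log(1-W)$ term in the integrand. Your claim that these ``awkward residues cancel pairwise'' under the $\nu\leftrightarrow-\nu-1$ symmetrization is not correct: the two symmetrized copies carry different outer measures ($\mathbb D_{\nu,0}U\,\mathbb D_{-\nu-1,0}V$ versus $\mathbb D_{-\nu-1,0}U\,\mathbb D_{\nu,0}V$), and the residual triple integrals $\iiint \frac{\log[W(1-W)]}{X}\,(\cdots)$ are neither equal nor individually reducible by the Frobenius--Zagier identities available. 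Moreover, your fixed-parameter Jacobi involution $U\mapsto\frac{1-\mathscr U}{1-\alpha\mathscr U}$ sends $\log(1-\alpha U)$ to $\log(1-\alpha)-\log(1-\alpha\mathscr U)$, a one-variable expression; it cannot manufacture the genuinely two-variable kernel $\log(1-\alpha UW)$ that appears on the right of Eq.~\ref{eq:log_self_sqr}. That kernel arises only because the paper's substitution mixes $W$ with $U$ through the parameter $\alpha U$.
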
\begin{proof} Without loss of generality, we may assume that $ \alpha\in(0,1)$, and employ a variable substitution\begin{align}W=\frac{1-\mathscr W}{1-\alpha U\mathscr W},\quad\text{or equivalently,}\quad \frac{W}{1-W}=\frac{1-\mathscr W}{(1-\alpha U)\mathscr W}\label{eq:bir_sinsqr_0}\end{align}to transform \begin{align}&
\iiint\frac{\log[1-\alpha UW-\alpha V(1-W)]}{1-\alpha UW-\alpha V(1-W)}
\mathbb D_{\nu,0}U\mathbb D_{-\nu-1,0}V\mathbb D_{-\nu-1,0}W\notag\\={}&\iiint\frac{\log(1-\alpha U)+\log(1-\alpha V\mathscr W)-\log(1-\alpha U\mathscr W)}{1-\alpha V\mathscr W}\mathbb D_{\nu,\alpha}U\mathbb D_{-\nu-1,0}V\mathbb D_{\nu,0}\mathscr W.\label{eq:self_sqr_T}\end{align}In view of Eqs.~\ref{eq:FZ_b'} and \ref{eq:FZ_a''}, we can simplify{\allowdisplaybreaks\begin{align}
&-\frac{\sin^3(\nu\pi)}{\pi^3}\iiint\frac{\log(1-\alpha U)+\log(1-\alpha V\mathscr W)}{1-\alpha V\mathscr W}(\mathbb D_{\nu,\alpha}U\mathbb D_{-\nu-1,0}V\mathbb D_{\nu,0}\mathscr W+\mathbb D_{-\nu-1,\alpha}U\mathbb D_{\nu,0}V\mathbb D_{-\nu-1,0}\mathscr W)\notag\\={}&[P_\nu(1-2\alpha)]^{2}\log(1-\alpha)+2P_\nu(1-2\alpha)\times\notag\\{}&\times\left\{ \frac{\pi P_{\nu}(2\alpha-1)}{2\sin(\nu\pi)}+\frac{P_\nu(1-2\alpha)}{2}\left[ -2\gamma_{0}-\psi^{(0)}(-\nu)-\psi^{(0)}(\nu+1)+\log \frac{1-\alpha}{\alpha} \right] \right\}.
\end{align}}Meanwhile, Eq.~\ref{eq:Erdelyi_frac} allows us to put down\begin{align}&
-\frac{\sin(\nu\pi)}{\pi}\int\frac{\mathbb D_{-\nu-1,0}V}{1-\alpha V\mathscr W}=\frac{1}{(1-\alpha\mathscr W)^{-\nu}},
\end{align}and consequently, \begin{align}
-\frac{\sin(\nu\pi)}{\pi}\iint\frac{\log(1-\alpha U\mathscr W)\mathbb D_{-\nu-1,0}V\mathbb D_{\nu,0}\mathscr W}{1-\alpha V\mathscr W}=\int \log(1-\alpha U\mathscr W)\mathbb D_{\nu,\alpha}\mathscr W.
\end{align}
So far, we can verify Eq.~\ref{eq:log_self_sqr}  for $ \alpha\in(0,1)$, and the rest follows from analytic continuation.

In a similar vein, one may  use variable substitutions to  demonstrate  Eq.~\ref{eq:log_self_mir}  for $ \alpha\in(0,1)$, using the following transformation: \begin{align}
W=1-\frac{1-\mathscr W}{1-(1-\alpha) V\mathscr W},\quad\text{or equivalently,}\quad \frac{W}{1-W}=\frac{[1-(1-\alpha) V]\mathscr W}{1-\mathscr W},\label{eq:bir_sinsqr_0'}
\end{align}which is a variation on  Eq.~\ref{eq:bir_sinsqr_0}.     \end{proof}\begin{remark}For $ \nu=-1/2$, Eqs.~\ref{eq:Pnu_sqr_KZ_int2trip_int} and \ref{eq:log_self_sqr} bring us the following ``Jacobi self-interaction entropy formula'': \begin{align}
&\lim_{\mu\to\lambda-0^+}
\left\{\int_0^{\mu}\frac{[\mathbf K(\sqrt{t})]^{2}\D t}{t-\lambda}-[\mathbf K(\sqrt{\lambda})]^2\log\left( 1-\frac{\mu}{\lambda} \right)\right\}\notag\\
={}&-\frac{2}{\pi}\int_0^{\pi/2}\int_0^{\pi/2}\int_0^{\pi/2}\frac{\log(1-\lambda\sin^2\phi\sin^2\theta-\lambda\cos^2\phi\sin^2\psi)}{1-\lambda\sin^2\phi\sin^2\theta-\lambda\cos^2\phi\sin^2\psi}\D\phi\D\theta\D\psi\notag\\={}&\frac{\pi}{2}\mathbf{K}(\sqrt{\lambda})\mathbf{K}(\sqrt{1-\lambda})-[\mathbf{K}(\sqrt{\lambda})]^{2}\log \frac{4(1-\lambda)}{\sqrt{\lambda}}+\int_0^{\pi/2}\int_0^{\pi/2}\frac{\log(1-\lambda\sin^2\theta\sin^2\varphi)\D\theta\D\varphi}{\sqrt{1-\lambda\sin^2\theta}\sqrt{1-\lambda\sin^2\smash[b]{\varphi}}}\notag\\={}&\frac{\pi}{3}\mathbf K(\sqrt{\lambda})\mathbf K(\sqrt{1-\lambda})-\frac{2[\mathbf K(\sqrt{\lambda})]^2}{3}\log\frac{4(1-\lambda)}{\sqrt{\lambda}}\label{eq:log_self_sqr_K} \end{align}for $ 0<\lambda<1$. Here, the last double integral can be evaluated with the aid of elliptic functions \cite[][Eq.~3.3.12]{AGF_PartI}. Such an evaluation was essential in the proof of \cite[][Theorem 1.2.2(b)]{AGF_PartI} (see also Theorem~\ref{thm:app_HC}(a) of this article). See \S\ref{subsec:G2_Hecke4_GZ_renorm} for an alternative approach to Theorem~\ref{thm:app_HC}(a), without invoking elliptic functions. \eor\end{remark}
\begin{remark}One may  use Eq.~\ref{eq:log_self_sqr_K} to evaluate the entropy of a probability distribution supported on the cube $ [0,\pi/2]^3$ (Eq.~\ref{eq:Ent_rho_lambda}). Since the maximum entropy $ 3\log\frac{\pi}{2}$ is attained by the uniform distribution with the same support,  we obtain an inequality\begin{align}
\left[\frac{2\mathbf K(\sqrt{\lambda})}{\pi} \right]^6<{}&\frac{\lambda}{16(1-\lambda)^2}e^{\frac{\pi\mathbf K(\sqrt{1-\lambda})}{\mathbf K(\sqrt{\lambda})}}
\end{align}for $ 0<\lambda<1$. As pointed out by an anonymous referee, such an inequality can be derived by other means. For instance, using the relations \cite[cf.][Eqs.~3.0.1, 3.0.3 and 3.3.19a]{AGF_PartI}\begin{align}
\eta^{24}\left(i\frac{\mathbf K(\sqrt{1-\lambda})}{\mathbf K(\sqrt{\lambda})}\right) =\left[\frac{2\mathbf K(\sqrt{\lambda})}{\pi} \right]^{12}\frac{\lambda^2(1-\lambda)^2}{256},\quad \eta^{24}\left(2i\frac{\mathbf K(\sqrt{1-\lambda})}{\mathbf K(\sqrt{\lambda})}\right) =\left[\frac{2\mathbf K(\sqrt{\lambda})}{\pi} \right]^{12}\frac{\lambda^{4}(1-\lambda)}{65536},
\end{align}one can show that \begin{align}
\left[\frac{2\mathbf K(\sqrt{\lambda})}{\pi} \right]^6\frac{16(1-\lambda)^2}{\lambda}={}&\frac{}{}\frac{\eta^{36}\left(i\frac{\mathbf K(\sqrt{1-\lambda})}{\mathbf K(\sqrt{\lambda})}\right)}{\eta^{24}\left(2i\frac{\mathbf K(\sqrt{1-\lambda})}{\mathbf K(\sqrt{\lambda})}\right) } <e^{\frac{\pi\mathbf K(\sqrt{1-\lambda})}{\mathbf K(\sqrt{\lambda})}}
\end{align} holds for $ 0<\lambda<1$. Here, the last inequality follows from the infinite product expansion of the Dedekind eta function (Eq.~\ref{eq:Dedekind_eta_defn}).\eor\end{remark}
Now we will start handling the triple integrals in  Eqs.~\ref{eq:Pnu_sqr_KZ_int2trip_int} and \ref{eq:Pnu_mir_KZ_int2trip_int} in the situations where  $ \alpha\neq\beta$. Our attention will be   focused on triple integrals that belong to the type of ``relative entropies'', namely, \begin{align}&\iiint\frac{\log\frac{1-\beta UW-\beta V(1-W)}{1-\alpha UW-\alpha V(1-W)}}{1-\alpha UW-\alpha V(1-W)}\mathbb D_{\nu,0}U\mathbb D_{-\nu-1,0}V\mathbb D_{-\nu-1,0}W\\\text{and }&\iiint\frac{\log\frac{1-\beta UW-(1-\beta) V(1-W)}{1-\alpha UW-(1-\alpha) V(1-W)}}{1-\alpha UW-(1-\alpha) V(1-W)}\mathbb D_{\nu,0}U\mathbb D_{-\nu-1,0}V\mathbb D_{-\nu-1,0}W.\end{align} We shall refer to  such integrals as ``relative interaction entropies''.

The next proposition could be regarded as a continuation of Proposition~\ref{prop:multi_integrals_Jacobi_involution}, where we will again resort  to Jacobi involutions for the transformations of multiple integrals.
\begin{proposition}[Birational Transformations of Relative Interaction Entropies]\label{prop:multi_integrals_Jacobi_involution_again} For $ \nu\in(-1,0)$ and $ 0<\beta<\alpha<1$, we have {\allowdisplaybreaks\begin{align}
&\frac{\sin^3(\nu\pi)}{2\pi^3}\iiint\frac{\log\frac{1-\beta UW-\beta V(1-W)}{1-\alpha UW-\alpha V(1-W)}}{1-\alpha UW-\alpha V(1-W)}(\mathbb D_{\nu,0}U\mathbb D_{-\nu-1,0}V\mathbb D_{-\nu-1,0}W+\mathbb D_{-\nu-1,0}U\mathbb D_{\nu,0}V\mathbb D_{\nu,0}W)\notag\\={}&-[P_\nu(1-2\alpha)]^2\left[\log \left( 1-\frac{\beta}{\alpha} \right)+\gamma_{0}+\frac{\psi^{(0)}(-\nu)+\psi^{(0)}(\nu+1)}{2}\right]+\frac{[P_{\nu}(1-2\alpha)]^{2}}{2} \log(1-\alpha)\notag\\{}&+\frac{\sin^3(\nu\pi)}{2\pi^3}\iiint\frac{(1-\beta V)\log\frac{1-[\alpha W +\beta(1-W )]U }{1-\frac{\alpha W +\beta(1-W )}{\alpha} }\frac{}{}}{1-  [\alpha W +\beta(1-W )]V}(\mathbb D_{\nu,\alpha}U\mathbb D_{-\nu-1,\beta}V\mathbb D_{\nu,0}W+\mathbb D_{-\nu-1,\alpha}U\mathbb D_{\nu,\beta}V\mathbb D_{-\nu-1,0}W)
\label{eq:Pnu_sqr_intn_bir}\\\intertext{and}&\frac{\sin^3(\nu\pi)}{2\pi^3}\iiint\frac{\log\frac{1-\beta UW-(1-\beta) V(1-W)}{1-\alpha UW-(1-\alpha) V(1-W)}}{1-\alpha UW-(1-\alpha) V(1-W)}(\mathbb D_{\nu,0}U\mathbb D_{-\nu-1,0}V\mathbb D_{-\nu-1,0}W+\mathbb D_{-\nu-1,0}U\mathbb D_{\nu,0}V\mathbb D_{\nu,0}W)\notag\\={}&-P_{\nu}(1-2\alpha)P_{\nu}(2\alpha-1)\left[\log \left( 1-\frac{\beta}{\alpha} \right)+\gamma_{0}+\frac{\psi^{(0)}(-\nu)+\psi^{(0)}(\nu+1)}{2}\right]\notag\\{}&+\frac{P_{\nu}(1-2\alpha)P_{\nu}(2\alpha-1)}{2} \log(1-\alpha)+\frac{\sin^3(\nu\pi)}{2\pi^3}\iiint\frac{[1-(1-\beta)V]\log\frac{1-[\alpha W +\beta(1-W )]U }{1-\frac{\alpha W +\beta(1-W )}{\alpha} }\frac{}{}}{1-  [1-\alpha W -\beta(1-W )]V}\times\notag\\{}&\times(\mathbb D_{\nu,\alpha}U\mathbb D_{-\nu-1,1-\beta}V\mathbb D_{\nu,0}W+\mathbb D_{-\nu-1,\alpha}U\mathbb D_{\nu,1-\beta}V\mathbb D_{-\nu-1,0}W)
.\label{eq:Pnu_mir_intn_bir}\end{align}}\end{proposition}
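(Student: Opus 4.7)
The plan is to adapt, step by step, the blueprint established in Proposition~\ref{prop:multi_integrals_Jacobi_involution} for the self-interaction case $\alpha=\beta$. By the absolute convergence of the triple integrals when $\alpha,\beta\in(0,1)$ and the holomorphic dependence on both parameters, it suffices to prove the identities for the restricted range $0<\beta<\alpha<1$ and then invoke analytic continuation. The engine of the proof is the same Jacobi involution substitution used in Eq.~\ref{eq:bir_sinsqr_0} (for Eq.~\ref{eq:Pnu_sqr_intn_bir}) and its variant in Eq.~\ref{eq:bir_sinsqr_0'} (for Eq.~\ref{eq:Pnu_mir_intn_bir}); the challenge is to track how these substitutions handle the numerator of the log, which now carries the second modulus $\beta$.

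For Eq.~\ref{eq:Pnu_sqr_intn_bir}, I would first split the log as $\log\frac{1-\beta UW-\beta V(1-W)}{1-\alpha UW-\alpha V(1-W)}=\log[1-\beta UW-\beta V(1-W)]-\log[1-\alpha UW-\alpha V(1-W)]$. The second piece is handled directly by the computation already carried out in the proof of Proposition~\ref{prop:multi_integrals_Jacobi_involution}, yielding the terms $-[P_\nu(1-2\alpha)]^2[\gamma_0+\frac12(\psi^{(0)}(-\nu)+\psi^{(0)}(\nu+1))-\log\frac{1-\alpha}{\sqrt\alpha}]$ together with the double integral $\iint\log(1-\alpha UW)\,\mathbb D_{\nu,\alpha}U\,\mathbb D_{\nu,\alpha}W$ (and its $-\nu-1$ companion). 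For the first piece, I would apply the same substitution $W=(1-\mathscr W)/(1-\alpha U\mathscr W)$ so that the denominator rationalizes into $(1-\alpha U)(1-\alpha V\mathscr W)/(1-\alpha U\mathscr W)$; the numerator $1-\beta UW-\beta V(1-W)$ then becomes a polynomial in $U,V,\mathscr W$ that, after absorbing the Jacobian and the powers in $\mathbb D_{-\nu-1,0}W$, is expected to regroup as $1-[\alpha W+\beta(1-W)]U$ times $1-[\alpha W+\beta(1-W)]V$ divided by the appropriate denominator, where the effective parameter $\alpha W+\beta(1-W)$ arises as an affine interpolation between $\alpha$ and $\beta$ parameters. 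This algebraic identity is the key computational step; verifying it should be a direct expansion.

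After the substitution, Erd\'elyi's identity (Eq.~\ref{eq:Erdelyi_frac}) together with the Frobenius--Zagier formulas (Eqs.~\ref{eq:FZ_a'}--\ref{eq:FZ_a''}) should dispatch any pieces of the form $\int\log(1-\beta U)\,\mathbb D_{\nu,\alpha}U$ and its analogues, generating the term $-[P_\nu(1-2\alpha)]^2\log(1-\beta/\alpha)$ as well as the remaining logarithmic constants of the Frobenius type. The residual piece, after all explicit evaluations are peeled off, should be precisely the triple integral on the right-hand side of Eq.~\ref{eq:Pnu_sqr_intn_bir}. Then averaging with the companion integral obtained by swapping $\mathbb D_{\nu,0}\leftrightarrow\mathbb D_{-\nu-1,0}$ yields the stated symmetric sum.

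The proof of Eq.~\ref{eq:Pnu_mir_intn_bir} follows the same script with the substitution replaced by Eq.~\ref{eq:bir_sinsqr_0'}, namely $W=1-\frac{1-\mathscr W}{1-(1-\alpha)V\mathscr W}$, which rationalizes $1-\alpha UW-(1-\alpha)V(1-W)$ into a factored form involving $1-(1-\alpha)V$; the numerator $1-\beta UW-(1-\beta)V(1-W)$ will again reorganize into a product involving the interpolating parameter $\alpha W+\beta(1-W)$ after the analogous algebraic rearrangement, with $V$ now carrying the reflected parameter $1-\beta$ in the Legendre measure. The main obstacle throughout will not be conceptual but rather the careful bookkeeping of the multiple layers of $\mathbb D_{\nu,\cdot}$ and $\mathbb D_{-\nu-1,\cdot}$ factors across the substitution, and verifying that the apparently non-obvious appearance of the affine interpolant $\alpha W+\beta(1-W)$ really does fall out of the algebra. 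Once the target identity is established for $0<\beta<\alpha<1$, analytic continuation in $\alpha,\beta\in(\mathbb C\smallsetminus\mathbb R)\cup(0,1)$ completes the proof.
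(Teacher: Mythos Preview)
Your proposal has a genuine gap: the substitution you plan to reuse from Eq.~\ref{eq:bir_sinsqr_0}, namely $W=(1-\mathscr W)/(1-\alpha U\mathscr W)$, does \emph{not} produce the interpolant $\alpha\mathscr W+\beta(1-\mathscr W)$ that appears on the right-hand side. Under that substitution one has $1-W=\mathscr W(1-\alpha U)/(1-\alpha U\mathscr W)$, and a direct expansion of the $\beta$-numerator gives
\[
1-\beta UW-\beta V(1-W)=\frac{(1-\beta U)-\mathscr W\bigl[(\alpha-\beta)U+\beta V(1-\alpha U)\bigr]}{1-\alpha U\mathscr W},
\]
which does not factor as a product of two linear forms in $U$ and $V$ with common parameter $\alpha\mathscr W+\beta(1-\mathscr W)$. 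Your expected ``regrouping'' simply does not fall out of this algebra.

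The paper instead uses a \emph{different}, two-parameter Jacobi involution (Eq.~\ref{eq:arctan_bir_1}):
\[
\frac{W}{1-W}=\frac{1-\beta V}{1-\alpha U}\cdot\frac{1-\mathscr W}{\mathscr W},
\]
whose defining ratio carries both $1-\alpha U$ and $1-\beta V$. With this choice one checks that
\[
\frac{1-\beta UW-\beta V(1-W)}{1-\alpha UW-\alpha V(1-W)}
=\frac{(1-\beta V)\bigl\{1-[\alpha\mathscr W+\beta(1-\mathscr W)]U\bigr\}}{(1-\alpha U)\bigl\{1-[\alpha\mathscr W+\beta(1-\mathscr W)]V\bigr\}},
\]
and the Jacobian together with the powers in $\mathbb D_{-\nu-1,0}W$ converts the measure to $\mathbb D_{\nu,\alpha}U\,\mathbb D_{-\nu-1,\beta}V\,\mathbb D_{\nu,0}\mathscr W$, giving Eq.~\ref{eq:rel_sqr_T} directly on the whole quotient without splitting the logarithm. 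The pieces $\log(1-\alpha U)$ and $\log(1-\beta V)\cdot[1-\frac{\alpha\mathscr W+\beta(1-\mathscr W)}{\alpha}]/\{1-[\alpha\mathscr W+\beta(1-\mathscr W)]V\}$ are then evaluated, the latter via the parametric-derivative identity Eq.~\ref{eq:Erdelyi_log_deriv} with $u=\mathscr W$, $\xi=\frac{(\alpha-\beta)V}{1-\beta V}$, which is precisely what generates the $\log(1-\beta/\alpha)$ and digamma terms (Eq.~\ref{eq:trip_int_redn}). For Eq.~\ref{eq:Pnu_mir_intn_bir} the analogous modified substitution is Eq.~\ref{eq:arctan_bir_2}, replacing $1-\beta V$ by $1-(1-\beta)V$. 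The essential missing idea in your plan is this $\beta$-dependent modification of the involution; without it the algebra you are hoping for does not hold.
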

\begin{proof} We may introduce a new variable $ \mathscr W$ satisfying\begin{align}
\frac{W}{1-W}=\frac{1-\beta V}{1-\alpha U}\frac{1-\mathscr W}{\mathscr W},\label{eq:arctan_bir_1}
\end{align}
and verify that \begin{align}&
\iiint\frac{\log\frac{1-\beta UW-\beta V(1-W)}{1-\alpha UW-\alpha V(1-W)}}{1-\alpha UW-\alpha V(1-W)}\mathbb D_{\nu,0}U\mathbb D_{-\nu-1,0}V\mathbb D_{-\nu-1,0}W\notag\\={}&\iiint\frac{(1-\beta V)\log\frac{(1-\beta  V)\{1-[\alpha\mathscr  W +\beta(1-\mathscr W )]U\}\ }{(1-\alpha  U)\{1-[\alpha\mathscr  W +\beta(1-\mathscr W )]V\}\ }\frac{}{}}{1-  [\alpha\mathscr  W +\beta(1-\mathscr W )]V}\mathbb D_{\nu,\alpha}U\mathbb D_{-\nu-1,\beta}V\mathbb D_{\nu,0}\mathscr W.\label{eq:rel_sqr_T}
\end{align}
Integrating over $ \mathscr W$ and  recalling Eq.~\ref{eq:FZ_b'}, we can show that\begin{align}&\iiint\frac{(1-\beta V)\log(1-\alpha  U)}{1-  [\alpha\mathscr  W +\beta(1-\mathscr W )]V}(\mathbb D_{\nu,\alpha}U\mathbb D_{-\nu-1,\beta}V\mathbb D_{\nu,0}\mathscr W+\mathbb D_{-\nu-1,\alpha}U\mathbb D_{\nu,\beta}V\mathbb D_{-\nu-1,0}\mathscr W)\notag\\={}&-\frac{\pi}{\sin(\nu\pi)}
\iint\log(1-\alpha  U)(\mathbb D_{\nu,\alpha}U\mathbb D_{-\nu-1,\alpha}V\mathbb +\mathbb D_{-\nu-1,\alpha}U\mathbb D_{\nu,\alpha}V)\notag\\={}&-\frac{\pi^{3}}{\sin^{3}(\nu\pi)}[P_{\nu}(1-2\alpha)]^{2} \log(1-\alpha).
\end{align}In the meantime, we may evaluate \begin{align}&
\iiint\frac{(1-\beta V)\log\frac{(1-\beta V)\left[ 1-\frac{\alpha\mathscr  W +\beta(1-\mathscr W )}{\alpha} \right] }{1-[\alpha\mathscr  W +\beta(1-\mathscr W )]V }{}}{1-[\alpha\mathscr  W +\beta(1-\mathscr W )]V}\mathbb D_{\nu,\alpha}U\mathbb D_{-\nu-1,\beta}V\mathbb D_{\nu,0}\mathscr W\notag\\={}&-\frac{\pi ^3}{ \sin ^3(\nu\pi   )}[P_\nu(1-2\alpha)]^2\left[\log \left( 1-\frac{\beta}{\alpha} \right)+\gamma_{0}+\psi^{(0)}(-\nu)\right],\label{eq:trip_int_redn}
\end{align}by setting $ u=\mathscr W, \xi=\frac{(\alpha-\beta)V}{1-\beta V}$ in  Eq.~\ref{eq:Erdelyi_log_deriv}.
So far, Eq.~\ref{eq:Pnu_sqr_intn_bir} is confirmed.

Similar to Eq.~\ref{eq:arctan_bir_1}, substituting\begin{align}
\frac{W}{1-W}=\frac{1-(1-\beta) V}{1-\alpha U}\frac{1-\mathscr W}{\mathscr W}\label{eq:arctan_bir_2}
\end{align} before specializing  Eq.~\ref{eq:Erdelyi_log_deriv} to $ u=\mathscr W, \xi=\frac{(\beta-\alpha)V}{1-(1-\beta) V}$, we can verify   Eq.~\ref{eq:Pnu_mir_intn_bir}. \end{proof}
 \subsection{Legendre addition formulae for  interaction entropies of weight $4$ and  degree $\nu$\label{subsec:Legendre_add_intn_entropy}}
Thus far, in our manipulations of multiple integrals, we have been mainly revolving around linear additivity and algebraic variable substitutions, that is, Principles \ref{itm:KZ-1}--\ref{itm:KZ-2}. In \S\ref{subsec:Legendre_add_intn_entropy}, we will exploit extensively the  Newton--Leibniz--Stokes formula   \ref{itm:KZ-3}, so as to achieve  further
reductions of relative interaction entropies. During the presentation below, we will   use both the notations $ \mathbb Y_{\nu,\alpha}(u)$ and $\mathbb{D}_{\nu,\alpha}u$ that were introduced in Eqs.~\ref{eq:Y_nu_alpha_defn} and \ref{eq:mathbb_D_nu_alpha_nu_defn}. \begin{lemma}[Some Addition Formulae of Legendre Type]\label{lm:Legendre_addition}\begin{enumerate}[label=\emph{(\alph*)}, ref=(\alph*), widest=a] \item For any  $ \nu\in(-1,0)$, $ 0<\beta<\alpha<1$ and $  W\in (0,1)$, we have an addition formula: \begin{align}&
\int\frac{P_{\nu}(2\beta-1)(1-\beta V)}{1-[\alpha W +\beta(1-W)]V}\mathbb D_{-\nu-1,\beta}V-\int\frac{P_{\nu}(1-2\beta)[1-(1-\beta) V]}{1-  [1-\alpha W-\beta(1-W )]V}\mathbb D_{-\nu-1,1-\beta}V\notag\\={}&(\alpha -\beta ) \left\{\frac{[\alpha W+\beta(1-W)][1-\alpha W -\beta(1-W)]}{W}\right\}^\nu\int_0^W\left\{\frac{\omega}{[\alpha\omega+\beta(1-\omega)][1-\alpha\omega -\beta(1-\omega)]}\right\}^{\nu+1}\frac{\D \omega}{\omega},\label{eq:3rd_kind_cancellation_Pnu}
\end{align}and the following identity:
\begin{align}&
\iiint\frac{P_{\nu}(2\beta-1)(1-\beta V)\log\frac{1-[\alpha W +\beta(1-W )]U }{1-\frac{\alpha W +\beta(1-W )}{\alpha} }\frac{}{}}{1-  [\alpha W +\beta(1-W )]V}\mathbb D_{\nu,\alpha}U\mathbb D_{-\nu-1,\beta}V\mathbb D_{\nu,0}W\notag\\{}&-\iiint\frac{P_{\nu}(1-2\beta)[1-(1-\beta) V]\log\frac{1-[\alpha W +\beta(1-W )]U }{1-\frac{\alpha W +\beta(1-W )}{\alpha} }\frac{}{}}{1-  [1-\alpha W -\beta(1-W )]V}\mathbb D_{\nu,\alpha}U\mathbb D_{-\nu-1,1-\beta}V\mathbb D_{\nu,0}W\notag\\={}&\int_0^{1}\frac{\D U}{\mathbb Y_{\nu,\alpha}(U)}\int^{1}_{\frac{1-\alpha}{1-\beta}}\frac{\D X}{\mathbb Y_{-\nu-1,1-\beta}(X)}\int_{\frac{1-(1-\beta)X}{\alpha}}^1\frac{\D V}{\mathbb Y_{\nu,\alpha}(V)}\log\frac{1-\alpha UV}{1-V }.\label{eq:rel_intn_add_0}\end{align}\item  For $ \nu\in(-1,0)$, $ 0<\beta<\alpha<1$, $ X\in \left(\frac{1-\alpha}{1-\beta} ,1 \right)$ and  $ \mathscr U\in(0,1)$, we have the following addition formulae{\allowdisplaybreaks\begin{align}&
\int_{\frac{1-(1-\beta)X}{\alpha}}^1 \frac{\alpha V}{1-\alpha\mathscr UV} \frac{\D V}{\mathbb Y_{\nu,\alpha}(V)}\notag\\={}&-\frac{(\nu+1)\alpha}{\mathbb Y_{-\nu-1,\alpha}(\mathscr U)}
\int_0^{\mathscr U}\frac{u\D u}{\mathbb Y_{\nu,\alpha}(u)}\int_{\frac{1-(1-\beta)X}{\alpha}}^1\frac{\D V}{\mathbb Y_{\nu,\alpha}(V)}+\frac{(\nu+1)\alpha}{\mathbb Y_{-\nu-1,\alpha}(\mathscr U)}\int_0^{\mathscr U}\frac{\D u}{\mathbb Y_{\nu,\alpha}(u)}\int_{\frac{1-(1-\beta)X}{\alpha}}^1\frac{V\D V}{\mathbb Y_{\nu,\alpha}(V)}\notag\\{}&-\frac{1}{\mathbb Y_{-\nu-1,\alpha}(\mathscr U)}\frac{X^{\nu+1}[1-(1-\beta)X]^{\nu+1}}{\left( X-\frac{1-\alpha}{1-\beta} \right)^\nu}\int_0^{\mathscr U}\frac{(1-\beta)u}{1-[1-(1-\beta)X]u}\frac{\D u}{\mathbb Y_{\nu,\alpha}(u)},\label{eq:3rd_kind_to_2nd_kind}
\\\intertext{}&
\frac{1}{\mathbb Y_{-\nu-1,\alpha}(\mathscr U)}
\int^1_{\mathscr U}\frac{u\D u}{\mathbb Y_{\nu,\alpha}(u)}\notag\\={}&\int_{0}^1 \frac{ {_2}F_1\left( \left.\begin{smallmatrix}-\nu-1,\nu+1\\1\end{smallmatrix}\right|1-\alpha \right)}{1-\alpha\mathscr UV} \frac{V\D V}{\mathbb Y_{\nu,\alpha}(V)}+\int_{0}^1 \frac{P_{\nu}(1-2\alpha)-{_2}F_1\left( \left.\begin{smallmatrix}-\nu-1,\nu+1\\1\end{smallmatrix}\right|\alpha \right)}{1-(1-\alpha\mathscr U)V} \frac{V\D V}{\mathbb Y_{\nu,1-\alpha}(V)},\label{eq:3rd_kind_add_1}
\\\intertext{}&
\frac{1}{\mathbb Y_{-\nu-1,\alpha}(\mathscr U)}\int^1_{\mathscr U}\frac{\D u}{\mathbb Y_{\nu,\alpha}(u)}\notag\\={}&\int_{0}^1 \frac{\alpha P_{\nu}(2\alpha-1)}{1-\alpha\mathscr UV} \frac{V\D V}{\mathbb Y_{\nu,\alpha}(V)}+\int_{0}^1 \frac{\alpha P_{\nu}(1-2\alpha)}{1-(1-\alpha\mathscr U)V} \frac{V\D V}{\mathbb Y_{\nu,1-\alpha}(V)}.\label{eq:3rd_kind_add_2}\end{align}} \end{enumerate}
\end{lemma}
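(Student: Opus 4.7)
The lemma aggregates four addition-type identities of different grade. I would prove them in the order (b)$\to$(a), because part~(a) invokes the pure-algebraic identities of part~(b).

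\textbf{Step 1 (Part~(b), Eqs.~\ref{eq:3rd_kind_add_1} and \ref{eq:3rd_kind_add_2}).} Both identities are linear relations, in the variable $\mathscr{U}$, among solutions of a common Picard--Fuchs equation of degree~$\nu$. I would verify that each side of the equation, viewed as a function of $\mathscr U\in(0,1)$, is annihilated by the same second-order hypergeometric operator: the two kernels $1/(1-\alpha\mathscr{U}V)$ and $1/(1-(1-\alpha\mathscr{U})V)$ on the RHS furnish a pair of solutions that are exchanged by the Jacobi involution of Eq.~\ref{eq:Jacobi_involution_U} (with $\lambda=\alpha$, swapping $\alpha\leftrightarrow 1-\alpha$), while the LHS is a linear combination of the same pair up to an algebraic prefactor built from $\mathbb{Y}_{-\nu-1,\alpha}(\mathscr U)$. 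The combination coefficients $\alpha P_\nu(\pm(2\alpha-1))$ and ${}_2F_1(\cdots\mid\alpha),\ {}_2F_1(\cdots\mid 1-\alpha)$ are then pinned down by matching boundary asymptotics as $\mathscr U\to 0^+$ and $\mathscr U\to 1^-$ via the Euler integral representation Eq.~\ref{eq:Euler_int_Pnu}.

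\textbf{Step 2 (Part~(b), Eq.~\ref{eq:3rd_kind_to_2nd_kind}).} I would apply the Newton--Leibniz--Stokes formula \ref{itm:KZ-3} in the variable~$V$ to the exact form
\begin{equation*}
\D_V\!\left[\frac{-\log(1-\alpha\mathscr{U}V)}{\mathscr{U}\,\mathbb{Y}_{-\nu-1,\alpha}(\mathscr{U})\,\mathbb{Y}_{\nu,\alpha}(V)}\right]
\end{equation*}
restricted to the $V$-interval $((1-(1-\beta)X)/\alpha,1)$. The $V$-derivative of $1/\mathbb{Y}_{\nu,\alpha}(V)$ produces, up to an overall factor of $\nu+1$, the two ``second-kind'' integrands $u\,\D u/\mathbb{Y}_{\nu,\alpha}(u)$ and $\D u/\mathbb{Y}_{\nu,\alpha}(u)$ that appear on the RHS; the boundary contribution at $V=(1-(1-\beta)X)/\alpha$ supplies the remaining term involving $(1-\beta)u/[1-(1-(1-\beta)X)u]$, whereas the boundary term at $V=1$ vanishes thanks to the exponent $\nu+1>0$ in $\mathbb{Y}_{\nu,\alpha}$.

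\textbf{Step 3 (Part~(a), Eq.~\ref{eq:3rd_kind_cancellation_Pnu}).} Treat both sides as functions of $W$ with $\alpha,\beta$ fixed. The $W$-derivative of the RHS is immediate from the fundamental theorem of calculus. The $W$-derivative of the LHS, after commuting $\partial_W$ with the $V$-integration, collapses to the same algebraic expression by the Legendre recursion relations and the Legendre relation $P_\nu(\xi)P_{\nu+1}(-\xi)+P_\nu(-\xi)P_{\nu+1}(\xi)+2\sin(\nu\pi)/[\pi(\nu+1)]=0$ already deployed in the proof of Proposition~\ref{prop:S_nu_alpha_beta_recip}. It remains to match at $W\to 0^+$: the LHS reduces to $P_\nu(2\beta-1)\int\mathbb{D}_{-\nu-1,\beta}V-P_\nu(1-2\beta)\int\mathbb{D}_{-\nu-1,1-\beta}V$, which vanishes by Eq.~\ref{eq:Euler_int_Pnu}, while the RHS vanishes because the $\omega$-integration is over an empty interval.

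\textbf{Step 4 (Part~(a), Eq.~\ref{eq:rel_intn_add_0}).} Inject Eq.~\ref{eq:3rd_kind_cancellation_Pnu} into the LHS: the Legendre-function weightings in $V$ are precisely the ones assembled on the LHS of the cancellation formula, so the difference of the two triple integrals on the LHS of Eq.~\ref{eq:rel_intn_add_0} collapses to a quadruple integral with an extra variable $\omega\in(0,W)$ inherited from the RHS of Eq.~\ref{eq:3rd_kind_cancellation_Pnu}. I would then perform the birational change $X=(1-\alpha\omega-\beta(1-\omega))/(1-\beta)$, which swaps $\omega$ for $X\in((1-\alpha W-\beta(1-W))/(1-\beta),1)$, followed by a second Jacobi involution in $W$ that converts the remaining $W$-integration into the $V$-integration required on the RHS. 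The key arithmetic identity $\alpha W+\beta(1-W)=1-(1-\beta)X$, produced by the substitution, transforms the logarithmic integrand of Eq.~\ref{eq:rel_intn_add_0} into $\log((1-\alpha UV)/(1-V))$.

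\textbf{Main obstacle.} The principal difficulty lies in Step~4, and to a lesser extent in Step~2: one must orchestrate successive Jacobi involutions while tracking the branch prescriptions of the fractional powers in $\mathbb{Y}_{\nu,\cdot}$ and $\mathbb{Y}_{-\nu-1,\cdot}$, verifying simultaneously that all boundary terms produced by the Newton--Leibniz--Stokes reductions are finite and cancel in the right pairs. The entanglement between the indices $(\nu,\alpha,\beta)$ and $(-\nu-1,1-\alpha,1-\beta)$ on the two sides of each identity demands scrupulous bookkeeping, especially since the $V$-domain in Eq.~\ref{eq:rel_intn_add_0} depends jointly on $X$ and $\beta$ and approaches the singular locus $V=1/\alpha$ of $\mathbb{Y}_{\nu,\alpha}$ as $X\to(1-\alpha)/(1-\beta)$.
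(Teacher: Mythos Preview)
Your ordering premise is backwards: in the paper, parts~(a) and~(b) are proved independently, each from its own elementary differential identity, and (a) never invokes (b).

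For part~(a) your Steps~3 and~4 are essentially the paper's argument, though your Step~4 contains a bookkeeping slip. The paper substitutes $\omega=\frac{(1-\beta)(1-X)}{\alpha-\beta}$ (equivalent to your $X$-substitution) \emph{and} $W=\frac{\alpha V-\beta}{\alpha-\beta}$; it is the latter, an affine map rather than a Jacobi involution, that yields $\alpha W+\beta(1-W)=\alpha V$ and hence turns the logarithm into $\log\frac{1-\alpha UV}{1-V}$. Your ``key arithmetic identity $\alpha W+\beta(1-W)=1-(1-\beta)X$'' conflates $W$ with $\omega$: that relation holds with $\omega$ in place of $W$ and governs the $X$-integration limits, not the logarithm.

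The genuine gap is in your treatment of part~(b). Your Step~2 exact form
\[
\D_V\!\left[\frac{-\log(1-\alpha\mathscr{U}V)}{\mathscr{U}\,\mathbb{Y}_{-\nu-1,\alpha}(\mathscr{U})\,\mathbb{Y}_{\nu,\alpha}(V)}\right]
\]
carries $\mathscr U$ only as a parameter; integrating it over $V$ cannot produce the $u$-integrals $\int_0^{\mathscr U}\frac{u\,\D u}{\mathbb Y_{\nu,\alpha}(u)}$, $\int_0^{\mathscr U}\frac{\D u}{\mathbb Y_{\nu,\alpha}(u)}$ on the right-hand side of Eq.~\ref{eq:3rd_kind_to_2nd_kind}, which live over a \emph{different} variable and range. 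The paper instead writes down the two-variable cross-differential identity
\[
\frac{\alpha u}{\mathbb{Y}_{ \nu,\alpha}(u)}\left[-\left( 1-\frac{V }{u} \right)\frac{\nu+1}{\mathbb{Y}_{ \nu,\alpha}(V)}+
\frac{\partial }{\partial V }\frac{\mathbb{Y}_{ -\nu-1,\alpha}(V)}{1-\alpha uV}\right]=\frac{\partial}{\partial u}\left[ \frac{\alpha V}{1-\alpha uV} \frac{\mathbb{Y}_{- \nu-1,\alpha}(u)}{\mathbb{Y}_{ \nu,\alpha}(V)}\right]
\]
(Eq.~\ref{eq:incomp_3rd_kind_nu_diff}) and integrates it over a rectangle in the $(u,V)$-plane. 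Integrating over $V\in\bigl(\tfrac{1-(1-\beta)X}{\alpha},1\bigr)$ then $u\in(0,\mathscr U)$ gives Eq.~\ref{eq:3rd_kind_to_2nd_kind} directly, with the boundary term at $V=\tfrac{1-(1-\beta)X}{\alpha}$ supplying the final summand. The same identity, integrated instead over $V\in(0,1)$ and $u\in(\mathscr U,1)$, yields a relation (Eq.~\ref{eq:3rd_kind_breakdown_a}); a companion identity with $1-\alpha$ in place of $\alpha$ in the $V$-kernel gives another (Eq.~\ref{eq:3rd_kind_breakdown_b}); and Eqs.~\ref{eq:3rd_kind_add_1}--\ref{eq:3rd_kind_add_2} are then \emph{solved} from this $2\times2$ linear system using Legendre's relation. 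Your Step~1 proposal---annihilation by a common second-order operator plus boundary matching---may be salvageable in principle, but you have not identified the operator, and the left-hand side $\mathbb Y_{-\nu-1,\alpha}(\mathscr U)^{-1}\int_{\mathscr U}^1\D u/\mathbb Y_{\nu,\alpha}(u)$ is not a priori a hypergeometric function of $\mathscr U$; the paper's route is both shorter and more constructive.
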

\begin{proof}\begin{enumerate}[label=(\alph*),widest=a]\item We integrate an elementary identity \begin{align}& \frac{\nu(1-\beta V)}{\mathbb Y_{-\nu-1,\beta}(V)}\left[\frac{1}{\alpha\omega+\beta(1-\omega)}-\frac{1-\beta}{(\alpha -\beta )\omega\ }\frac{1}{1-\beta V}\right]+
\frac{\partial }{\partial V }\frac{\frac{V(1-V)(1-\beta V)}{\mathbb Y_{-\nu-1,\beta}(V)}}{1-[\alpha\omega+\beta(1-\omega)]V}\notag\\={}&\frac{1-\alpha\omega-\beta(1-\omega) }{\beta-\alpha}\left\{\frac{[\alpha\omega+\beta(1-\omega)][1-\alpha\omega-\beta(1-\omega) ]}{ \omega}\right\}^\nu\times\notag\\&\times \frac{\partial}{\partial\omega}\left( \frac{\frac{1-\beta V}{\mathbb Y_{-\nu-1,\beta}(V)}}{1-[\alpha\omega+\beta(1-\omega)]V} \left\{\frac{\omega}{[\alpha\omega+\beta(1-\omega)][1-\alpha\omega-\beta(1-\omega) ]}\right\}^\nu\right)\label{eq:third_kind_elem_id_1}
\end{align}over $ V\in(0,1)$, to obtain \begin{align}
&\frac{\partial}{\partial \omega}\left( \left\{\frac{\omega}{[\alpha\omega+\beta(1-\omega)][1-\alpha\omega-\beta(1-\omega) ]}\right\}^\nu\int\frac{1-\beta V}{1-[\alpha\omega +\beta(1-\omega)]V}\mathbb D_{-\nu-1,\beta}V\right)\notag\\={}&\frac{\nu\pi}{\sin (  \nu\pi )}\left\{\frac{\omega}{[\alpha\omega+\beta(1-\omega)][1-\alpha\omega-\beta(1-\omega) ]}\right\}^\nu\times\notag\\{}&\times\left[\frac{_2F_1\left( \left.\begin{smallmatrix}\nu,-\nu\\1\end{smallmatrix}\right|\beta \right)(\alpha -\beta )}{[\alpha\omega+\beta(1-\omega)][1-\alpha\omega-\beta(1-\omega) ]}-\frac{P_{\nu }(1-2 \beta )(1-\beta)}{[1-\alpha\omega-\beta(1-\omega)]\omega }\right].
\end{align}Here, in the last step, we have used the facts that (cf.~Eqs.~\ref{eq:Euler_int} and \ref{eq:Euler_int_Pnu})\begin{align}
P_\nu(1-2\beta)={_2}F_1\left( \left.\begin{array}{c}
\nu+1,-\nu \\
1
\end{array}\right|\beta \right)={}&-\frac{\sin(\nu\pi)}{\pi}\int\mathbb D_{-\nu-1,\beta}V,\notag\\{_2}F_1\left( \left.\begin{array}{c}
\nu,-\nu \\
1
\end{array}\right|\beta \right)={}&-\frac{\sin(\nu\pi)}{\pi}\int(1-\beta V)\mathbb D_{-\nu-1,\beta}V.
\end{align}
Similarly, we deduce
\begin{align}
&\frac{\partial}{\partial \omega}\left( \left\{\frac{\omega}{[\alpha\omega+\beta(1-\omega)][1-\alpha\omega-\beta(1-\omega) ]}\right\}^\nu\int\frac{1-(1-\beta)V}{1-[1-\alpha\omega -\beta(1-\omega)]V}\frac{}{}\mathbb D_{-\nu-1,1-\beta}V\right)\notag\\={}&\frac{\nu\pi}{\sin (  \nu\pi )}\left\{\frac{\omega}{[\alpha\omega+\beta(1-\omega)][1-\alpha\omega-\beta(1-\omega) ]}\right\}^\nu\times\notag\\{}&\times\left\{-\frac{_2F_1\left( \left.\begin{smallmatrix}\nu,-\nu\\1\end{smallmatrix}\right|1-\beta \right)(\alpha -\beta )}{[\alpha\omega+\beta(1-\omega)][1-\alpha\omega-\beta(1-\omega) ]}-\frac{P_{\nu }(2 \beta-1 )\beta}{[\alpha\omega+\beta(1-\omega)]\omega }\right\}.
\end{align}

 We have $
P_{\nu}(2\beta-1){_2}F_1\left( \left.\begin{smallmatrix}\nu,-\nu\\1\end{smallmatrix}\right|\beta \right)+P_{\nu}(1-2\beta){_2}F_1\left( \left.\begin{smallmatrix}\nu,-\nu\\1\end{smallmatrix}\right|1-\beta \right)=P_{\nu}(2\beta-1)P_{\nu}(1-2\beta)+\frac{\sin(\nu\pi)}{\nu\pi}
$ according to  Legendre's relation  \cite[][Theorem 3.2.8]{AAR}, so the foregoing computations combine into{\allowdisplaybreaks\begin{align}&
\frac{\partial}{\partial \omega}\Bigg(  \left\{\frac{\omega}{[\alpha\omega+\beta(1-\omega)][1-\alpha\omega-\beta(1-\omega) ]}\right\}^\nu\times\notag\\{}&\times\Bigg\{\int\frac{P_{\nu}(2\beta-1)(1-\beta V)}{1-[\alpha\omega\ +\beta(1-\omega)]V}\frac{}{}\mathbb D_{-\nu-1,\beta}V-\int\frac{P_{\nu}(1-2\beta)[1-(1-\beta) V]}{1-  [1-\alpha\omega-\beta(1-\omega )]V}\mathbb D_{-\nu-1,1-\beta}V\Bigg\}\Bigg)\notag\\={}&\frac{\alpha -\beta}{\omega}\left\{\frac{\omega}{[\alpha\omega+\beta(1-\omega)][1-\alpha\omega-\beta(1-\omega) ]}\right\}^{\nu+1}.
\end{align}}Integrating over $ \omega\in(0,W)$, we arrive at Eq.~\ref{eq:3rd_kind_cancellation_Pnu}.

Applying the addition formula in  Eq.~\ref{eq:3rd_kind_cancellation_Pnu} to the left-hand side of Eq.~\ref{eq:rel_intn_add_0}, we reformulate it as\begin{align}{}&(\alpha -\beta ) \iint\left\{\frac{[\alpha W+\beta(1-W)][1-\alpha W -\beta(1-W)]}{W}\right\}^\nu\times\notag\\{}&\times\left(\int_0^W\left\{\frac{\omega}{[\alpha\omega+\beta(1-\omega)][1-\alpha\omega -\beta(1-\omega)]}\right\}^{\nu+1}\frac{\D \omega}{\omega}\right)\log\frac{1-[\alpha W +\beta(1-W )]U }{1-\frac{\alpha W +\beta(1-W )}{\alpha} }\mathbb D_{\nu,\alpha}U\mathbb D_{\nu,0}W.
\end{align}In the expression above,  we make the following variable substitutions:\begin{align}
\omega=\frac{(1-\beta)(1-X)}{\alpha-\beta},\qquad W=\frac{\alpha V-\beta}{\alpha-\beta},
\end{align}which turns   it into\begin{align}&\int_0^{1}\frac{\D U}{\mathbb{Y}_{\nu,\alpha}(U)}\int_{\frac{\beta}{\alpha}{}}^{1}\frac{\D V}{\mathbb{Y}_{\nu,\alpha}(V)}\int^{1}_{\frac{1-\alpha V}{1-\beta}{}}\frac{\D X}{\mathbb{Y}_{-\nu-1,1-\beta}(X)}\log\frac{1-\alpha UV}{1-V }.\label{eq:rel_intn_form_1}
\end{align}Switching the order of integrations over $ V$ and $ X$,   we arrive at the right-hand side of  Eq.~\ref{eq:rel_intn_add_0}.   \item Similar to Eq.~\ref{eq:third_kind_elem_id_1}, we put down\begin{align}&
\frac{\alpha u}{\mathbb{Y}_{ \nu,\alpha}(u)}\left[-\left( 1-\frac{V }{u} \right)\frac{\nu+1}{\mathbb{Y}_{ \nu,\alpha}(V)}+
\frac{\partial }{\partial V }\frac{\mathbb{Y}_{ -\nu-1,\alpha}(V)}{1-\alpha uV}\right]=\frac{\partial}{\partial u}\left[ \frac{\alpha V}{1-\alpha uV} \frac{\mathbb{Y}_{- \nu-1,\alpha}(u)}{\mathbb{Y}_{ \nu,\alpha}(V)}\right]\label{eq:incomp_3rd_kind_nu_diff}
\end{align} for $ \alpha,u,V\in(0,1)$ and $ \nu\in(-1,0)$.
Integrating the equation above over $ V\in\left(\frac{1-(1-\beta)X}{\alpha},1\right)$ where $ X\in\left( \frac{1-\alpha}{1-\beta},1 \right)$,  before integrating over $ u\in(0,\mathscr U)$, we are able to  establish Eq.~\ref{eq:3rd_kind_to_2nd_kind} after brief rearrangements.

 Integrating Eq.~\ref{eq:incomp_3rd_kind_nu_diff}  over $ V\in(0,1)$,   before integrating over  $ u\in(\mathscr U,1)$, we can show that \begin{align}&
-\frac{\nu+1}{ \mathbb{Y}_{- \nu-1,\alpha}(\mathscr U)}P_{\nu}(1-2\alpha)\int^1_{\mathscr U}\frac{u\D u}{\mathbb Y_{\nu,\alpha}(u)}\notag\\{}&+\frac{\nu+1}{\alpha\mathbb{Y}_{- \nu-1,\alpha}(\mathscr U)}\left[ P_{\nu}(1-2\alpha) -{_2}F_1\left( \left.\begin{array}{c}
-\nu-1,\nu+1 \\
1
\end{array}\right|\alpha \right)\right]\int^1_{\mathscr U}\frac{\D u}{\mathbb Y_{\nu,\alpha}(u)}\notag\\={}&\frac{\sin(\nu \pi)}{\pi}\int_{0}^1 \frac{ V}{1-\alpha \mathscr UV} \frac{\D V}{\mathbb Y_{\nu,\alpha}(V)}.\label{eq:3rd_kind_breakdown_a}
\end{align}Meanwhile, instead of working on  Eq.~\ref{eq:incomp_3rd_kind_nu_diff}, we may integrate another identity\begin{align}&
\frac{(1-\alpha)u }{\mathbb{Y}_{\nu,\alpha}(u)}\left\{\left[ 1-\frac{1-(1-\alpha)V }{\alpha u} \right]\frac{\nu+1}{\mathbb{Y}_{ \nu,1-\alpha}(V)}+\frac{1-\alpha u}{\alpha u}
\frac{\partial }{\partial V }\frac{\mathbb{Y}_{ -\nu-1,1-\alpha}(V)}{1-(1-\alpha u)V}\right\}= \frac{\partial}{\partial u}\left[\frac{(1-\alpha) V}{1-(1-\alpha u)V} \frac{\mathbb{Y}_{- \nu-1,\alpha}(u)}{\mathbb{Y}_{ \nu,1-\alpha}(V)}\right]
\end{align} to produce an analog of Eq.~\ref{eq:3rd_kind_breakdown_a}:\begin{align}&
\frac{\nu+1}{ \mathbb{Y}_{- \nu-1,\alpha}(\mathscr U)}P_{\nu}(2\alpha-1)\int^1_{\mathscr U}\frac{u\D u}{\mathbb Y_{\nu,\alpha}(u)}-\frac{\nu+1}{ \alpha\mathbb{Y}_{- \nu-1,\alpha}(\mathscr U)}{_2}F_1\left( \left.\begin{array}{c}
-\nu-1,\nu+1 \\
1
\end{array}\right|1-\alpha \right)\int^1_{\mathscr U}\frac{\D u}{\mathbb Y_{\nu,\alpha}(u)}\notag\\={}&\frac{\sin(\nu \pi)}{\pi}\int_{0}^1 \frac{ V}{1-(1-\alpha \mathscr U)V} \frac{\D V}{\mathbb Y_{\nu,1-\alpha}(V)}.\label{eq:3rd_kind_breakdown_b}
\end{align}Consequently,  we can solve Eqs.~\ref{eq:3rd_kind_add_1} and \ref{eq:3rd_kind_add_2} from Eqs.~\ref{eq:3rd_kind_breakdown_a} and \ref{eq:3rd_kind_breakdown_b}.
\qedhere\end{enumerate}

 \end{proof}\begin{remark}For $ \nu=-1/2$, Eq.~\ref{eq:3rd_kind_cancellation_Pnu} becomes an addition formula for the complete elliptic integrals of the third kind \cite[][items 117.02 and 117.05]{ByrdFriedman}. Our proof of Eq.~\ref{eq:3rd_kind_cancellation_Pnu} is a modest extension of    the classical theory for the complete elliptic integrals of the third kind \cite[][\S31]{Enneper}, traceable to the original work of Legendre  \cite[][Chap.~XXIII]{LegendreTomeI}. 
 \eor\end{remark}\begin{remark}One key observation of Legendre is that  complete elliptic integrals of the third kind are always expressible in terms of  incomplete elliptic integrals of the first and second kinds. Some generalizations of Legendre's observation to Legendre functions of fractional degrees $ \nu\in(-1,0)$ are given by Eqs.~\ref{eq:3rd_kind_breakdown_a} and \ref{eq:3rd_kind_breakdown_b}. \eor\end{remark}\begin{proposition}[Legendre Addition Formulae for Relative Interaction Entropies]\label{prop:Legendre_add_rel_intn_ent}\begin{enumerate}[label=\emph{(\alph*)}, ref=(\alph*), widest=a] \item We have   a pair of  integral evaluations for  $ \nu\in(-1,0)$, $ 0<\beta<\alpha<1$, $ X\in \left(\frac{1-\alpha}{1-\beta} ,1 \right)$ and $ 0<u<1$:\begin{align}\int_{\frac{1-\alpha}{1-\beta}}^1
\frac{(1-\beta)u}{1-[1-(1-\beta)X]u}\frac{(1-X)^{\nu}}{\left( X-\frac{1-\alpha}{1-\beta} \right)^{\nu}}\D X={}&-\frac{\pi}{\sin(\nu\pi)}\left[ 1-\frac{(1-\beta u)^\nu}{(1-\alpha u)^\nu} \right],\label{eq:nu_modulus_change}\\\int_{\frac{1-\alpha}{1-\beta}}^1
\frac{(1-\beta)u}{1-(1-\beta)Xu}\frac{(1-X)^{\nu}}{\left( X-\frac{1-\alpha}{1-\beta} \right)^{\nu}}\D X={}&\frac{\pi}{\sin(\nu\pi)}\left\{ 1-\frac{[1-(1-\beta) u]^\nu}{[1-(1-\alpha )u]^\nu} \right\}.\label{eq:nu_modulus_change'}
\end{align}\item For $ \nu\in(-1,0)$ and  $ 0<\beta<\alpha<1$, we have the following integral identity:
{\allowdisplaybreaks\begin{align}&\frac{\sin(\nu\pi)}{\pi}\int^{1}_{\frac{1-\alpha}{1-\beta}}\frac{\D X}{\mathbb Y_{-\nu-1,1-\beta}(X)}\int_{\frac{1-(1-\beta)X}{\alpha}}^1\frac{\D V}{\mathbb Y_{\nu,\alpha}(V)}\frac{\alpha V}{1-\alpha\mathscr  UV}\notag\\={}&-\int_{0}^1 \frac{\alpha P_{\nu}(2\beta-1)}{1-\alpha\mathscr UV} \frac{V\D V}{\mathbb Y_{\nu,\alpha}(V)}-\int_{0}^1 \frac{\alpha P_{\nu}(1-2\beta)}{1-(1-\alpha\mathscr U)V} \frac{V\D V}{\mathbb Y_{\nu,1-\alpha}(V)}\notag\\{}&+\frac{1}{\mathscr U^{\nu+1}(1-\alpha\mathscr  U)^{\nu+1}}\int^1_{\mathscr U}\frac{u^{\nu}}{(1-\beta u)^{-\nu}}\left( \frac{1-\mathscr U}{1-u} \right)^\nu\frac{\D u}{1-u},\quad \text{where }0<\mathscr U<1/\alpha,\ \label{eq:3rd_kind_add_comb}
\end{align}as well as a  reformulation of Eq.~\ref{eq:rel_intn_add_0}:
\begin{align}&
\frac{\sin(\nu\pi)}{\pi}\int_0^{1}\frac{\D U}{\mathbb Y_{\nu,\alpha}(U)}\int^{1}_{\frac{1-\alpha}{1-\beta}}\frac{\D X}{\mathbb Y_{-\nu-1,1-\beta}(X)}\int_{\frac{1-(1-\beta)X}{\alpha}}^1\frac{\D V}{\mathbb Y_{\nu,\alpha}(V)}\log\frac{1-\alpha UV}{1-V }\notag\\={}&-P_{\nu}(2\beta-1)\iint\log\frac{1-\alpha UW}{1-W}\mathbb D_{\nu,\alpha}U\mathbb D_{\nu,\alpha}W+P_{\nu}(1-2\beta)\iint\log(1-V+\alpha  VW)\mathbb D_{\nu,1-\alpha}V\mathbb D_{\nu,\alpha}W\notag\\{}&-\int_0^1\left\{\int_0^U\left[\int_W^1\frac{\D V}{\mathbb Y_{\nu,\alpha}(V)}\right]\frac{\D W}{\mathbb Y_{-\nu-1,\alpha}(W)}\right\}\frac{\D U}{\mathbb Y_{\nu,\beta}(U)}\notag\\{}&-\frac{\pi P_{\nu}(1-2\alpha)}{\sin(\nu\pi)}\int^{1/\alpha}_0\left[\frac{1}{\mathscr U^{\nu+1}(1-\alpha\mathscr  U)^{\nu+1}}\int^1_{\mathscr U}\frac{u^{\nu}}{(1-\beta u)^{-\nu}}\left( \frac{1-\mathscr U}{1-u} \right)^\nu\frac{\D u}{1-u}\right]\D\mathscr U.\label{eq:rel_intn_self_intn}\end{align}}
 \end{enumerate}\end{proposition}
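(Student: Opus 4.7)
The proof breaks into three logically independent pieces: part~(a), Eq.~\ref{eq:3rd_kind_add_comb}, and Eq.~\ref{eq:rel_intn_self_intn}; I plan to handle them in that order, each resting on ingredients already established in the excerpt. For part~(a), the affine substitution $X = \tfrac{1-\alpha}{1-\beta}+t\cdot\tfrac{\alpha-\beta}{1-\beta}$ sends $X\in(\tfrac{1-\alpha}{1-\beta},1)$ onto $t\in(0,1)$ and reduces both integrals to the single Beta-type form $s\int_0^1 t^{-\nu}(1-t)^\nu\,\D t/(1+st)$, where $s=\tfrac{(\alpha-\beta)u}{1-\alpha u}$ for Eq.~\ref{eq:nu_modulus_change} and $s=-\tfrac{(\alpha-\beta)u}{1-(1-\alpha)u}$ for Eq.~\ref{eq:nu_modulus_change'}. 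By Euler's integral (Eq.~\ref{eq:Euler_int}) this equals $\Gamma(1-\nu)\Gamma(1+\nu)\,s\cdot{}_2F_1(1-\nu,1;2;-s) = \tfrac{\pi}{\sin(\nu\pi)}[(1+s)^\nu-1]$, where the closed form for the ${}_2F_1$ follows term-by-term from the binomial series for $(1+s)^\nu$. The identifications $1+s=\tfrac{1-\beta u}{1-\alpha u}$ and $1+s=\tfrac{1-(1-\beta)u}{1-(1-\alpha)u}$ then deliver the two stated formulae.

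For Eq.~\ref{eq:3rd_kind_add_comb}, I would apply Eq.~\ref{eq:3rd_kind_to_2nd_kind} from Lemma~\ref{lm:Legendre_addition} to the inner $V$-integral on the LHS, producing three terms. The third term carries the rational prefactor $X^{\nu+1}[1-(1-\beta)X]^{\nu+1}/(X-\tfrac{1-\alpha}{1-\beta})^\nu$, which exactly neutralises $\mathbb Y_{-\nu-1,1-\beta}(X)$ and leaves the integrand of Eq.~\ref{eq:nu_modulus_change}; carrying out the $X$-integration first via Fubini collapses this piece to the rightmost log-term of the target RHS, after rewriting $(1-\beta u)^\nu/[(1-\alpha u)^\nu \mathbb Y_{\nu,\alpha}(u)] = u^\nu(1-\beta u)^\nu/(1-u)^{\nu+1}$. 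For the first two terms I swap the $X$-$V$ integration using $V>\tfrac{1-(1-\beta)X}{\alpha}\Leftrightarrow X>\tfrac{1-\alpha V}{1-\beta}$ (so that $V$ ranges over $(\tfrac{\beta}{\alpha},1)$), and evaluate the resulting incomplete $X$-integrals $\int_{(1-\alpha V)/(1-\beta)}^1 X^a\,\D X/\mathbb Y_{-\nu-1,1-\beta}(X)$ for $a\in\{0,1\}$ by Eqs.~\ref{eq:3rd_kind_add_1}--\ref{eq:3rd_kind_add_2} applied with $(\nu,\alpha,\mathscr U)\to(-\nu-1,1-\beta,\tfrac{1-\alpha V}{1-\beta})$ and $P_{-\nu-1}=P_\nu$. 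The simplifications $1-(1-\beta)\cdot\tfrac{1-\alpha V}{1-\beta}=\alpha V$ and $1-\tfrac{1-\alpha V}{1-\beta}=\tfrac{\alpha V-\beta}{1-\beta}$ allow the $V$-integrals to reassemble into the two complete integrals on the RHS, carrying Legendre coefficients $-\alpha P_\nu(2\beta-1)$ and $-\alpha P_\nu(1-2\beta)$ respectively.

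For Eq.~\ref{eq:rel_intn_self_intn}, I plan to decompose
\[
\log\tfrac{1-\alpha UV}{1-V} = -\int_0^U \tfrac{\alpha V\,\D s}{1-\alpha sV} + \int_0^{1/\alpha}\tfrac{\alpha V\,\D\mathscr U}{1-\alpha\mathscr UV},
\]
splitting the LHS as $T_B+T_A$. In $T_B$ the inner $(X,V)$-double integral at parameter $s$ is exactly the LHS of the just-proven Eq.~\ref{eq:3rd_kind_add_comb}; substituting its RHS and performing $\int_0^U\D s/(1-\alpha sV)=-\log(1-\alpha UV)/(\alpha V)$ and $\int_0^U\D s/(1-(1-\alpha s)V)=\log\tfrac{1-V+\alpha UV}{1-V}/(\alpha V)$ produces $-P_\nu(2\beta-1)\iint\log(1-\alpha UV)\mathbb D_{\nu,\alpha}U\mathbb D_{\nu,\alpha}V$ and $+P_\nu(1-2\beta)\iint\log\tfrac{1-V+\alpha UV}{1-V}\mathbb D_{\nu,\alpha}U\mathbb D_{\nu,1-\alpha}V$, while the $R_3$-piece of $T_B$, after cancellation of $(1-W)^{\pm\nu}$ factors, becomes $-\int_0^1\D U/\mathbb Y_{\nu,\alpha}(U)\int_0^U \D W/\mathbb Y_{-\nu-1,\alpha}(W)\int_W^1\D V/\mathbb Y_{\nu,\beta}(V)$. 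In $T_A$ the outer $U$-integral factors off as $-\pi P_\nu(1-2\alpha)/\sin(\nu\pi)$; applying Eq.~\ref{eq:3rd_kind_add_comb} with $\mathscr U\in(0,1/\alpha)$ and computing $\int_0^{1/\alpha}\D\mathscr U/(1-\alpha\mathscr UV)=-\log(1-V)/(\alpha V)$ (and analogously for the $1-(1-\alpha\mathscr U)V$ denominator) generates $+P_\nu(2\beta-1)\iint\log(1-V)\mathbb D_{\nu,\alpha}U\mathbb D_{\nu,\alpha}V$, $+P_\nu(1-2\beta)\iint\log(1-V)\mathbb D_{\nu,\alpha}U\mathbb D_{\nu,1-\alpha}V$, together with the $\int_0^{1/\alpha}$ log-term on the RHS. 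The paired $R_1$- and $R_2$-contributions from $T_A$ and $T_B$ telescope into the announced $-P_\nu(2\beta-1)\iint\log\tfrac{1-\alpha UW}{1-W}$ and $+P_\nu(1-2\beta)\iint\log(1-V+\alpha VW)$.

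The main obstacle is that the $T_B$-$R_3$ triple integral carries $\mathbb Y_{\nu,\alpha}(U)$ on the outer integration, whereas the target RHS of Eq.~\ref{eq:rel_intn_self_intn} demands $\mathbb Y_{\nu,\beta}(U)$. This is reconciled by the observation that the region $\{0<W<U<1,\ W<V<1\}$ is invariant under the involution $U\leftrightarrow V$, so Fubini trades the labels $\alpha$ and $\beta$ between the outer and inner factors at no additional cost. This symmetry argument, combined with the delicate bookkeeping of signs and orders of integration (in particular the extension of the $\mathscr U$-variable to the enlarged interval $(0,1/\alpha)$ used in the $T_A$ decomposition, permitted only because the validity of Eq.~\ref{eq:3rd_kind_add_comb} extends there), is the technical heart of the identification.
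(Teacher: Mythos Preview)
Your handling of part~(a) is correct and coincides with the paper's: both reduce the integral by the affine substitution to an EFS/Euler-type Beta integral. Your treatment of Eq.~\ref{eq:rel_intn_self_intn} is also essentially the paper's argument (integrate Eq.~\ref{eq:3rd_kind_add_comb} over $\mathscr U$, then over $U$), and your $U\leftrightarrow V$ symmetry of the region $\{W<\min(U,V)\}$ is a clean alternative to the integration-by-parts manoeuvre the paper uses in Eq.~\ref{eq:trip_int_part0} to swap $\alpha$ and $\beta$ on the outer factor of the triple integral.

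The gap is in your derivation of Eq.~\ref{eq:3rd_kind_add_comb}. After applying Eq.~\ref{eq:3rd_kind_to_2nd_kind}, the first two terms carry the $\mathscr U$-dependent prefactors $\frac{(\nu+1)\alpha}{\mathbb Y_{-\nu-1,\alpha}(\mathscr U)}\int_0^{\mathscr U}u^b\,\D u/\mathbb Y_{\nu,\alpha}(u)$ multiplied by the $X$-independent double integrals
\[
I_a=\int_{\frac{1-\alpha}{1-\beta}}^{1}\frac{\D X}{\mathbb Y_{-\nu-1,1-\beta}(X)}\int_{\frac{1-(1-\beta)X}{\alpha}}^{1}\frac{V^{a}\,\D V}{\mathbb Y_{\nu,\alpha}(V)},\qquad a\in\{0,1\}.
\]
After Fubini the inner integral is $\int_{(1-\alpha V)/(1-\beta)}^{1}\D X/\mathbb Y_{-\nu-1,1-\beta}(X)$ with \emph{no} power $X^{a}$; the distinction $a\in\{0,1\}$ sits on $V$, not $X$, so your stated plan to ``evaluate $\int X^{a}\,\D X/\mathbb Y_{-\nu-1,1-\beta}(X)$ by Eqs.~\ref{eq:3rd_kind_add_1}--\ref{eq:3rd_kind_add_2}'' mis-identifies the structure. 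More seriously, even once $I_0$ and $I_1$ are known in closed form (the paper obtains them by applying Eqs.~\ref{eq:3rd_kind_add_1}--\ref{eq:3rd_kind_add_2} at $\mathscr U'=\frac{1-(1-\beta)X}{\alpha}$ to the $V$-integral and then invoking Eqs.~\ref{eq:nu_modulus_change}--\ref{eq:nu_modulus_change'} for the $X$-integration), combining them with the $\mathscr U$-prefactors and with the leftover piece $\frac{1}{\mathbb Y_{-\nu-1,\alpha}(\mathscr U)}\int_0^{\mathscr U}\D u/\mathbb Y_{\nu,\alpha}(u)$ from the third term requires \emph{two applications of Legendre's relation}
\[
P_{\nu}(2\alpha-1)\,{_2F_1}\!\left(\begin{smallmatrix}-\nu-1,\nu+1\\1\end{smallmatrix}\middle|\alpha\right)
+P_{\nu}(1-2\alpha)\,{_2F_1}\!\left(\begin{smallmatrix}-\nu-1,\nu+1\\1\end{smallmatrix}\middle|1-\alpha\right)
=P_{\nu}(2\alpha-1)P_{\nu}(1-2\alpha)-\tfrac{\sin(\nu\pi)}{(\nu+1)\pi},
\]
together with Eqs.~\ref{eq:3rd_kind_breakdown_a}--\ref{eq:3rd_kind_breakdown_b} in reverse, to reconstitute the complete $V$-integrals on the right-hand side of Eq.~\ref{eq:3rd_kind_add_comb}. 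Your claim that ``the $V$-integrals reassemble into the two complete integrals on the RHS'' hides this step entirely; without Legendre's relation the ${}_2F_1(-\nu-1,\nu+1;1;\cdot)$ coefficients in $I_1$ and the incomplete $\int_0^{\mathscr U}$ pieces cannot collapse to the clean $-\alpha P_\nu(2\beta-1)$ and $-\alpha P_\nu(1-2\beta)$ you want. Likewise, the third term does not by itself yield the rightmost summand of Eq.~\ref{eq:3rd_kind_add_comb}: it produces $\frac{1}{\mathbb Y_{-\nu-1,\alpha}(\mathscr U)}\bigl[\int_0^{\mathscr U}\D u/\mathbb Y_{\nu,\alpha}(u)-\int_0^{\mathscr U}\D u/\mathbb Y_{\nu,\beta}(u)\bigr]$, and only the second bracket survives after cancellation against the first-two-term contributions.
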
\begin{proof}\begin{enumerate}[label=(\alph*),widest=a]\item

With a variable transformation $ X=\frac{(1-\alpha)t}{1-\beta}+1-t$, we  compute\begin{align}
\int_{\frac{1-\alpha}{1-\beta}}^1
\frac{(1-\beta)u}{1-[1-(1-\beta)X]u}\frac{(1-X)^{\nu}}{\left( X-\frac{1-\alpha}{1-\beta} \right)^{\nu}}\D X={}&\int_0^1\left[1-\frac{1-\alpha u}{(1-\alpha u)t+(1-\beta u)(1-t)}\right]t^\nu(1-t)^{-\nu-1}\D t\notag\\={}&-\frac{\pi}{\sin(\nu\pi)}\left[ 1-\frac{(1-\beta u)^\nu}{(1-\alpha u)^\nu} \right],\label{eq:EFS_app}
\end{align}with the help of the EFS formula (Eq.~\ref{eq:F_ab}).
 This confirms Eq.~\ref{eq:nu_modulus_change}.
The proof of Eq.~\ref{eq:nu_modulus_change'} is similar.
\item On the left-hand sides of Eqs.~\ref{eq:3rd_kind_add_1} and \ref{eq:3rd_kind_add_2}, we perform a variable substitution $ \mathscr U=[1-(1-\beta)X]/\alpha$, before applying
 Eqs.~\ref{eq:nu_modulus_change} and \ref{eq:nu_modulus_change'} to them. These operations lead us to closed-form evaluations of the following two  double integrals:\begin{align}\int^{1}_{\frac{1-\alpha}{1-\beta}}
\frac{\D X}{\mathbb Y_{-\nu-1,1-\beta}(X)}\int_{\frac{1-(1-\beta)X}{\alpha}}^1\frac{\D V}{\mathbb Y_{\nu,\alpha}(V)}={}&\frac{\pi^2}{\sin^2(\nu\pi)}[P_\nu(1-2\alpha)P_\nu(2\beta-1)-P_\nu(2\alpha-1)P_\nu(1-2\beta)],\intertext{and}
\int^{1}_{\frac{1-\alpha}{1-\beta}}
\frac{\D X}{\mathbb Y_{-\nu-1,1-\beta}(X)}\int_{\frac{1-(1-\beta)X}{\alpha}}^1\frac{V\D V}{\mathbb Y_{\nu,\alpha}(V)}={}&-\frac{1}{(\nu+1)\alpha}\frac{\pi}{\sin(\nu\pi)}-\frac{\pi^2}{\sin^2(\nu\pi)}\frac{{_2}F_1\left( \left.\begin{smallmatrix}-\nu-1,\nu+1\\1\end{smallmatrix}\right|1-\alpha \right)}{\alpha}P_\nu(1-2\beta)\notag\\{}&+\frac{\pi^2}{\sin^2(\nu\pi)}\frac{P_{\nu}(1-2\alpha)-{_2}F_1\left( \left.\begin{smallmatrix}-\nu-1,\nu+1\\1\end{smallmatrix}\right|\alpha \right)}{\alpha}P_\nu(2\beta-1).\end{align}These formulae, together with a direct consequence of   Eq.~\ref{eq:nu_modulus_change}: \begin{align}&
\int^{1}_{\frac{1-\alpha}{1-\beta}}
\frac{X^{\nu+1}[1-(1-\beta)X]^{\nu+1}}{\left( X-\frac{1-\alpha}{1-\beta} \right)^\nu}\frac{\D X}{\mathbb Y_{-\nu-1,1-\beta}(X)}\int_0^{\mathscr U}\frac{(1-\beta)u}{1-[1-(1-\beta)X]u}\frac{\D u}{\mathbb Y_{\nu,\alpha}(u)}\notag\\={}&-\frac{\pi}{\sin(\nu\pi)}\left[\int_0^{\mathscr U}\frac{\D u}{\mathbb Y_{\nu,\alpha}(u)}-\int_0^{\mathscr U}\frac{\D u}{\mathbb Y_{\nu,\beta}(u)}\right],
\end{align}further bring us a reduction of Eq.~\ref{eq:3rd_kind_to_2nd_kind}:
 \begin{align}
&\frac{\sin(\nu\pi)}{\pi}\int^{1}_{\frac{1-\alpha}{1-\beta}}\frac{\D X}{\mathbb Y_{-\nu-1,1-\beta}(X)}\int_{\frac{1-(1-\beta)X}{\alpha}}^1\frac{\D V}{\mathbb Y_{\nu,\alpha}(V)}\frac{\alpha V}{1-\alpha\mathscr  UV}\notag\\={}&-\int_{0}^1 \frac{\alpha P_{\nu}(2\beta-1)}{1-\alpha\mathscr UV} \frac{V\D V}{\mathbb Y_{\nu,\alpha}(V)}-\int_{0}^1 \frac{\alpha P_{\nu}(1-2\beta)}{1-(1-\alpha\mathscr U)V} \frac{V\D V}{\mathbb Y_{\nu,1-\alpha}(V)}+\frac{1}{\mathbb Y_{-\nu-1,\alpha}(\mathscr U)}\int^1_{\mathscr U}\frac{\D u}{\mathbb Y_{\nu,\beta}(u)},\label{eq:3rd_kind_add_comb_prep}
\end{align} where we have twice invoked  Legendre's relation in the form of  $
P_{\nu}(2\alpha-1){_2}F_1\left( \left.\begin{smallmatrix}-\nu-1,\nu+1\\1\end{smallmatrix}\right|\alpha \right)+P_{\nu}(1-2\alpha){_2}F_1\left( \left.\begin{smallmatrix}-\nu-1,\nu+1\\1\end{smallmatrix}\right|1-\alpha \right)=P_{\nu}(2\alpha-1)P_{\nu}(1-2\alpha)-\frac{\sin(\nu\pi)}{(\nu+1)\pi}
$. The expression in  Eq.~\ref{eq:3rd_kind_add_comb_prep} agrees with that in Eq.~\ref{eq:3rd_kind_add_comb} (restricted to the range $ 0<\mathscr U<1$), up to an elementary rearrangement of the factors in $ \mathbb Y_{-\nu-1,\alpha}(\mathscr U)\mathbb Y_{\nu,\beta}(u)$.

 Although we have just derived Eq.~\ref{eq:3rd_kind_add_comb} under the restriction that $ \mathscr U\in(0,1)$, its both sides remain equal for  $ \mathscr U\in(0,1/\alpha)$, by analytic continuation. Integrating  Eq.~\ref{eq:3rd_kind_add_comb}  over $ \mathscr U\in(U,1/\alpha)$ for any $U\in(0,1) $, we obtain\begin{align}&\frac{\sin(\nu\pi)}{\pi}\int^{1}_{\frac{1-\alpha}{1-\beta}}\frac{\D X}{\mathbb Y_{-\nu-1,1-\beta}(X)}\int_{\frac{1-(1-\beta)X}{\alpha}}^1\frac{\D V}{\mathbb Y_{\nu,\alpha}(V)}\log\frac{1-\alpha  UV}{1-V}\notag\\={}&-P_{\nu}(2\beta-1)\int_{0}^1  \frac{\log\frac{1-\alpha  UV}{1-V}\D V}{\mathbb Y_{\nu,\alpha}(V)}+ P_{\nu}(1-2\beta)\int_{0}^1  \frac{\log(1-V+\alpha  UV)\D V}{\mathbb Y_{\nu,1-\alpha}(V)}\notag\\{}&+\int^{1/\alpha}_U\left[\frac{1}{\mathscr U^{\nu+1}(1-\alpha\mathscr  U)^{\nu+1}}\int^1_{\mathscr U}\frac{u^{\nu}}{(1-\beta u)^{-\nu}}\left( \frac{1-\mathscr U}{1-u} \right)^\nu\frac{\D u}{1-u}\right]\D\mathscr U.\label{eq:int_U_1_over_alpha}
\end{align}Clearly, the  identity above results in a transformation of the last line in Eq.~\ref{eq:rel_intn_add_0}:{\allowdisplaybreaks\begin{align}&
\frac{\sin(\nu\pi)}{\pi}\int_0^{1}\frac{\D U}{\mathbb Y_{\nu,\alpha}(U)}\int^{1}_{\frac{1-\alpha}{1-\beta}}\frac{\D X}{\mathbb Y_{-\nu-1,1-\beta}(X)}\int_{\frac{1-(1-\beta)X}{\alpha}}^1\frac{\D V}{\mathbb Y_{\nu,\alpha}(V)}\log\frac{1-\alpha UV}{1-V }\notag\\={}&-P_{\nu}(2\beta-1)\int_0^1\int_{0}^1\frac{\D U}{\mathbb Y_{\nu,\alpha}(U)}  \frac{\D W}{\mathbb Y_{\nu,\alpha}(W)}\log\frac{1-\alpha  UW}{1-W}\notag\\{}&+P_{\nu}(1-2\beta)\int_0^1\int_{0}^1  \frac{\D V}{\mathbb Y_{\nu,1-\alpha}(V)}\frac{\D W}{\mathbb Y_{\nu,\alpha}(W)}\log(1-V+\alpha  VW)\notag\\{}&+\int_0^1\left\{\int^{1/\alpha}_U\left[\frac{1}{\mathscr U^{\nu+1}(1-\alpha\mathscr  U)^{\nu+1}}\int^1_{\mathscr U}\frac{u^{\nu}}{(1-\beta u)^{-\nu}}\left( \frac{1-\mathscr U}{1-u} \right)^\nu\frac{\D u}{1-u}\right]\D\mathscr U\right\}\frac{\D U}{\mathbb Y_{\nu,\alpha}(U)}.
\end{align}We now reformulate the last triple integral with integration by parts:\begin{align}
&\int_0^1\left\{\int^{1/\alpha}_U\left[\frac{1}{\mathscr U^{\nu+1}(1-\alpha\mathscr  U)^{\nu+1}}\int^1_{\mathscr U}\frac{u^{\nu}}{(1-\beta u)^{-\nu}}\left( \frac{1-\mathscr U}{1-u} \right)^\nu\frac{\D u}{1-u}\right]\D\mathscr U\right\}\frac{\D U}{\mathbb Y_{\nu,\alpha}(U)}\notag\\={}&-\frac{\pi P_{\nu}(1-2\alpha)}{\sin(\nu\pi)}\int^{1/\alpha}_0\left[\frac{1}{\mathscr U^{\nu+1}(1-\alpha\mathscr  U)^{\nu+1}}\int^1_{\mathscr U}\frac{u^{\nu}}{(1-\beta u)^{-\nu}}\left( \frac{1-\mathscr U}{1-u} \right)^\nu\frac{\D u}{1-u}\right]\D\mathscr U\notag\\{}&-\int_0^1\left[\int_U^1\frac{\D V}{\mathbb Y_{\nu,\alpha}(V)}\right]\left[\int^1_{U}\frac{\D u}{\mathbb Y_{\nu,\beta}(u)}\right]\frac{\D U}{\mathbb Y_{-\nu-1,\alpha}(U)}\notag\\={}&-\frac{\pi P_{\nu}(1-2\alpha)}{\sin(\nu\pi)}\int^{1/\alpha}_0\left[\frac{1}{\mathscr U^{\nu+1}(1-\alpha\mathscr  U)^{\nu+1}}\int^1_{\mathscr U}\frac{u^{\nu}}{(1-\beta u)^{-\nu}}\left( \frac{1-\mathscr U}{1-u} \right)^\nu\frac{\D u}{1-u}\right]\D\mathscr U\notag\\{}&-\int_0^1\left\{\int_0^U\left[\int_W^1\frac{\D V}{\mathbb Y_{\nu,\alpha}(V)}\right]\frac{\D W}{\mathbb Y_{-\nu-1,\alpha}(W)}\right\}\frac{\D U}{\mathbb Y_{\nu,\beta}(U)},\label{eq:trip_int_part0}
\end{align}}which completes the verification of Eq.~\ref{eq:rel_intn_self_intn}.
\qedhere\end{enumerate}\end{proof}Finally, combining the results from Eqs.~\ref{eq:Pnu_sqr_KZ_int2trip_int}, \ref{eq:Pnu_mir_KZ_int2trip_int}, \ref{eq:FZ_a'}, \ref{eq:FZ_b'}, \ref{eq:log_self_sqr}, \ref{eq:log_self_mir}, \ref{eq:Pnu_sqr_intn_bir}, \ref{eq:Pnu_mir_intn_bir}, \ref{eq:rel_intn_add_0} and  \ref{eq:rel_intn_self_intn},
we arrive at Eq.~\ref{eq:precursor_int_id}, thereby  completing  the task stated in   Theorem \ref{thm:KZ_precursor}, at the beginning of this section.
\section{Entropy formulae for automorphic Green's functions and their applications\label{sec:Green_HC}}
\setcounter{equation}{0}\setcounter{theorem}{0}
The purpose of \S\ref{subsec:Ent_HC} is to combine the analysis in \S\ref{sec:Interaction_Entropies_Transformations} into transformations for the interaction entropy (see  Proposition \ref{prop:S_nu_alpha_beta_recip}) \begin{align*}S_{\nu}(\alpha\Vert\beta)={}&[P_{\nu }(2\beta-1)]^2\int_0^{\beta}\frac{[P_\nu(1-2t)]^2\D t}{t-\alpha}-P_\nu(1-2\beta)P_\nu(2\beta-1)\int_0^{\beta}\frac{P_\nu(1-2t)P_\nu(2t-1)\D t}{t-\alpha}\notag\\{}&-P_\nu(2\beta-1)P_\nu(1-2\beta)\int_0^{1-\beta}\frac{P_\nu(1-2t)P_\nu(2t-1)\D t}{t-1+\alpha}+[P_{\nu }(1-2\beta)]^2\int_0^{1-\beta}\frac{[P_\nu(1-2t)]^2\D t}{t-1+\alpha}\end{align*}defined for generic moduli parameters $ \alpha$ and $ \beta$, thereby geometrically interpreting the  Kontsevich--Zagier integral representations for automorphic Green's functions
(Eqs.~\ref{eq:G2_z_z'_arb1} and \ref{eq:G2Hecke234_Pnu1}) as entropy couplings of Legendre--Ramanujan  curves $ Y^n=(1-X)^{n-1}X(1-\alpha X)$ ($ n\in\{6,4,3,2\}$).

By ``entropy coupling'', we are referring to  certain types of double integrals that generalize abelian integrals on algebraic curves, as  defined below.\begin{definition}[Entropy Couplings $ H_\nu(\alpha\Vert\beta)$ and $ h_\nu(\alpha\Vert\beta)$]\label{defn:HC}For $ \nu\in\{-1/6,-1/4,-1/3,-1/2\}$ and $ \alpha,\beta\in(\mathbb C\smallsetminus\mathbb R)\cup(0,1)$, we define entropy couplings {\allowdisplaybreaks\begin{align}
H_\nu(\alpha\Vert\beta):={}&\frac{1}{2}\left\{\left( \mathbb E_{\nu,\alpha\vphantom{\beta}}^{U} \mathbb E_{\nu,\beta}^{V}+\mathbb E_{-\nu-1,\alpha\vphantom{\beta}}^{U} \mathbb E_{-\nu-1,\beta}^{V}\right)\log\frac{1-\beta UV}{1-\beta V}\right.\notag\\{}&\left.-\left( \mathbb E_{-\nu-1,1-\alpha\vphantom{\beta}}^{U} \mathbb E_{\nu,\beta}^{V}+\mathbb E_{\nu,1-\alpha\vphantom{\beta}}^{U} \mathbb E_{-\nu-1,\beta}^{V} \right)\log\left[ 1-\frac{1-\alpha}{\alpha}\frac{\beta}{1-\beta}(1-U)(1-V)\right]\right\},\label{eq:H_nu_alpha_beta_defn}
\end{align}}under the constraints $\alpha,1-\alpha,\beta,1-\beta,\frac{\beta(1-\alpha)}{\alpha(1-\beta)}\in\mathbb C\smallsetminus[1,+\infty)$;  we define $ H_\nu(\alpha\Vert\beta)$ for distinct moduli parameters $ \alpha,\beta$ not fulfilling the foregoing constraints as an analytic continuation of Eq.~\ref{eq:H_nu_alpha_beta_defn}.
Here, the Legendre expectations $ \mathbb E_{\nu,\alpha}^{u}$ are defined in Eq.~\ref{eq:Legendre_expectation_defn}, whose integration paths are always  straight-line segments.

The ``regular part'' of  $ H_\nu(\alpha\Vert\beta)$, or the regularized entropy coupling $ h_\nu(\alpha\Vert\beta)$, is defined as {\allowdisplaybreaks\begin{align}
h_\nu(\alpha\Vert\beta):={}&\frac{1}{2}\left[\left( \mathbb E_{\nu,\alpha\vphantom{\beta}}^{U} \mathbb E_{\nu,\beta}^{V}+\mathbb E_{-\nu-1,\alpha\vphantom{\beta}}^{U} \mathbb E_{-\nu-1,\beta}^{V}\right)\log(1-\alpha UV)\right.\notag\\{}&\left.-\left( \mathbb E_{\nu,1-\alpha\vphantom{\beta}}^{U} \mathbb E_{\nu,\beta}^{V}+\mathbb E_{-\nu-1,1-\alpha\vphantom{\beta}}^{U} \mathbb E_{-\nu-1,\beta}^{V} \right)\log\left( 1+\frac{\alpha UV}{1-U} \right)\right].\label{eq:reg_h_nu_alpha_beta_defn}
\end{align}}Such a definition of $ h_\nu(\alpha\Vert\beta)$ applies to  $ \nu\in\{-1/6,-1/4,-1/3,-1/2\}$ and $ \alpha,\beta\in(\mathbb C\smallsetminus\mathbb R)\cup(0,1)$.\eor\end{definition}
\begin{remark}
When  $ \nu\in\{-1/6,-1/4,-1/3,-1/2\}$ and $ \alpha,\beta\in(\mathbb C\smallsetminus\mathbb R)\cup(0,1)$, we have $P_{\nu}(1-2\alpha) P_{\nu}(1-2\beta)\neq0  $ and $P_{\nu}(2\alpha-1) P_{\nu}(1-2\beta) \neq0$  according to Ramanujan's elliptic function theory to alternative bases \cite[][Eq.~2.1.8]{AGF_PartI}. Thus, the Legendre expectations in Eqs.~\ref{eq:H_nu_alpha_beta_defn} and \ref{eq:reg_h_nu_alpha_beta_defn} are indeed well-defined.\eor\end{remark}\begin{remark}Owing to the extra symmetry (Lemma \ref{lm:nu_reflection_symm})\begin{align}
\left( \mathbb E_{\nu,\alpha\vphantom{\beta}}^{U} \mathbb E_{\nu,\beta}^{V}-\mathbb E_{-\nu-1,\alpha\vphantom{\beta}}^{U} \mathbb E_{-\nu-1,\beta}^{V}\right)\log\frac{1-\beta UV}{1-\beta V}={}&0,\\\left( \mathbb E_{-\nu-1,1-\alpha\vphantom{\beta}}^{U} \mathbb E_{\nu,\beta}^{V}-\mathbb E_{\nu,1-\alpha\vphantom{\beta}}^{U} \mathbb E_{-\nu-1,\beta}^{V} \right)\log\left[ 1-\frac{1-\alpha}{\alpha}\frac{\beta}{1-\beta}(1-U)(1-V)\right]={}&0,
\end{align}the definition of the entropy coupling $ H_\nu(\alpha\Vert\beta)$ can be actually reduced to half as many operations of Legendre expectations. (Therefore, the formulations of $ H_\nu(\alpha\Vert\beta)$ in Eqs.~\ref{eq:HC_defn_intro} and \ref{eq:H_nu_alpha_beta_defn} are in fact  equivalent.) Such a symmetric reduction, however, does not apply to the expression for $ h_\nu(\alpha\Vert\beta)$. \eor\end{remark}\begin{remark}The letter $H$ chosen for $ H_\nu(\alpha\Vert\beta)$ represents both entropy (as in information theory) and height (as in arithmetic geometry). For $ 0<\beta<\alpha<1$, one may  verify the following identity (cf.~Eq.~\ref{eq:H2_double_refl}):\begin{align}
H_\nu(\alpha\Vert\beta)=\frac{\displaystyle\int_0^1\frac{\mathbb E_{\nu,\beta}^{V}\log(1-\beta UV)\D U}{U^{-\nu}(1-U)^{\nu+1}(1-\alpha U)^{-\nu}}}{\displaystyle\int_0^{1}\frac{ \D U}{U^{-\nu}(1-U)^{\nu+1}(1-\alpha U)^{-\nu}}}-\frac{\displaystyle\int_1^{1/\alpha}\frac{\mathbb E_{\nu,\beta}^{V}\log(1-\beta UV)\D U}{U^{-\nu}(U-1)^{\nu+1}(1-\alpha U)^{-\nu}}}{\displaystyle\int_1^{1/\alpha}\frac{ \D U}{U^{-\nu}(U-1)^{\nu+1}(1-\alpha U)^{-\nu}}},
\end{align}which can be converted into \begin{align}
H_\nu(\alpha\Vert\beta)={}&\frac{\displaystyle\int_0^1\frac{\D U}{U^{-\nu}(1-U)^{\nu+1}(1-\alpha U)^{-\nu}}\int_{1/\beta}^{\infty}\frac{ \D\mathscr  V}{\mathscr V^{\nu+1}(\mathscr V-1)^{-\nu}(\beta\mathscr  V-1)^{\nu+1}}\log(\mathscr V-U)}{\displaystyle\int_0^{1}\frac{ \D U}{U^{-\nu}(1-U)^{\nu+1}(1-\alpha U)^{-\nu}}\int_{1/\beta}^{\infty}\frac{ \D\mathscr  V}{\mathscr V^{\nu+1}(\mathscr V-1)^{-\nu}(\beta\mathscr  V-1)^{\nu+1}}}\notag\\{}&-\frac{\displaystyle\int_1^{1/\alpha}\frac{\D U}{U^{-\nu}(U-1)^{\nu+1}(1-\alpha U)^{-\nu}}\int_{1/\beta}^{\infty}\frac{ \D\mathscr  V}{\mathscr V^{\nu+1}(\mathscr V-1)^{-\nu}(\beta\mathscr  V-1)^{\nu+1}}\log(\mathscr V-U)}{\displaystyle\int_1^{1/\alpha}\frac{ \D U}{U^{-\nu}(U-1)^{\nu+1}(1-\alpha U)^{-\nu}}\int_{1/\beta}^{\infty}\frac{ \D\mathscr  V}{\mathscr V^{\nu+1}(\mathscr V-1)^{-\nu}(\beta\mathscr  V-1)^{\nu+1}}}
\end{align}by $ V=1/(\beta\mathscr V)$. In particular, for $ 0<\mu<\lambda<1$, we have {\allowdisplaybreaks\begin{align}
H_{-1/2}(\lambda\Vert\mu)={}&\int_{0}^{\mathbf K(\sqrt{\lambda})}\frac{\D u}{\mathbf K(\sqrt{\lambda})}\int_{0}^{ \mathbf K(\sqrt{\vphantom1\smash[b]{\mu }})}\frac{\D v}{\mathbf K(\sqrt{\vphantom1\smash[b]{\mu }})}\,\log[1-\mu\sn^2(u|\lambda)\sn^2(v|\mu)]\notag\\{}&-\int_{\mathbf K(\sqrt{\lambda})}^{\mathbf K(\sqrt{\lambda})+i\mathbf K(\sqrt{1-\lambda})}\frac{\D u}{i\mathbf K(\sqrt{1-\lambda})}\int_{0}^{ \mathbf K(\sqrt{\vphantom1\smash[b]{\mu }})}\frac{\D v}{\mathbf K(\sqrt{\vphantom1\smash[b]{\mu }})}\,\log[1-\mu\sn^2(u|\lambda)\sn^2(v|\mu)]\notag\\={}&\int_{0}^{\mathbf K(\sqrt{\lambda})}\frac{\D u}{\mathbf K(\sqrt{\lambda})}\int_{i\mathbf K(\sqrt{\vphantom1\smash[b]{1-\mu }})}^{ \mathbf K(\sqrt{\vphantom1\smash[b]{\mu }})+i\mathbf K(\sqrt{\vphantom1\smash[b]{1-\mu }})}\frac{\D v}{\mathbf K(\sqrt{\vphantom1\smash[b]{\mu }})}\,\log[\sn^2(v|\mu)-\sn^2(u|\lambda)]\notag\\{}&-\int_{\mathbf K(\sqrt{\lambda})}^{\mathbf K(\sqrt{\lambda})+i\mathbf K(\sqrt{1-\lambda})}\frac{\D u}{i\mathbf K(\sqrt{1-\lambda})}\int_{i\mathbf K(\sqrt{\vphantom1\smash[b]{1-\mu }})}^{ \mathbf K(\sqrt{\vphantom1\smash[b]{\mu }})+i\mathbf K(\sqrt{\vphantom1\smash[b]{1-\mu }})}\frac{\D v}{\mathbf K(\sqrt{\vphantom1\smash[b]{\mu }})}\,\log[\sn^2(v|\mu)-\sn^2(u|\lambda)],
\end{align}}which measures the ``expected'' value of a ``height function'' $\log(X_2-X_1) $ on certain cycles of two coupled elliptic curves $ E_\lambda(\mathbb C):Y_1^2=X_1(1-X_1)(1-\lambda X_1)$ and $ E_\mu(\mathbb C):Y_2^2=X_2(1-X_2)(1-\mu X_2)$. In a sequel to the current work, we shall see that the entropy coupling can be regarded as localizations of the global height pairings in the work of Gross--Zagier \cite{GrossZagierI}, Gross--Kohnen--Zagier \cite{GrossZagierII} and Zhang \cite{SWZhang1997}.   \eor\end{remark}

The connections from  the entropy coupling $ H_\nu(\alpha\Vert\beta)$ to Eq.~\ref{eq:KZ_int_precursor} will be eventually revealed in Proposition \ref{prop:HC_int_Ent}. In short, the interaction entropy in Legendre--Ramanujan form (defined in  Proposition \ref{prop:S_nu_alpha_beta_recip}) can be recast into the  entropy-coupling form (spelt out in Proposition \ref{prop:HC_int_Ent}). We will see that\begin{align}
S_{\nu}(\alpha\Vert\beta)\equiv{}& P_\nu(1-2\alpha)P_\nu(2\alpha-1)P_\nu(1-2\beta)P_\nu(2\beta-1)[H_\nu(\alpha\Vert\beta)+H_\nu(1-\alpha\Vert1-\beta)]
\notag\\&\pmod{2\pi i\Lambda_\nu(\alpha,\beta)}\end{align}follows from Eq.~\ref{eq:S_nu_H_nu} (with $ \Lambda_\nu(\alpha,\beta)$ being defined in Eq.~\ref{eq:Lambda_nu_alpha_beta_defn}), which completes the proof of Theorem \ref{thm:HC_wt4_Green}.

In \S\ref{subsec:lim_ent}, we will analyze    $ h_\nu(\alpha\Vert\alpha)$, $ H_{-1/2}(\lambda\Vert1/(1-\lambda))$ and $ H_{-1/2}(1-\lambda\Vert\lambda/(\lambda-1))$, where the moduli parameters $\alpha$ and $\lambda$ tend to certain extreme values. The asymptotic properties of these special entropy formulae will lead us to a proof of Theorem \ref{thm:app_HC} in  \S\ref{subsec:G2_Hecke4_GZ_renorm}.

\subsection{Interaction entropies and Kontsevich--Zagier integrals for weight-4 automorphic Green's functions\label{subsec:Ent_HC}}
In \S\ref{subsec:Legendre_add_intn_entropy}, the constraint $ 0<\beta<\alpha<1$ relieved us of  concerns over the branch cuts of the fractional powers and the logarithms in the integrands.  In the next lemma, we will turn the last two multiple integrals on the right-hand side of Eq.~\ref{eq:rel_intn_self_intn} into definite integrals on the unit square $ (0,1)\times(0,1)\subset\mathbb R^2$,  paving way for their analytic continuation to generic values of $ \alpha$ and $ \beta$.

\begin{lemma}[Some Addition Formulae for Multiple Integrals]\begin{enumerate}[label=\emph{(\alph*)}, ref=(\alph*), widest=a] \item For $ -1<\nu<0,0<\beta<\alpha<1$, we have \begin{align}&
-\int_0^1\left\{\int_0^U\left[\int_W^1\frac{\D V}{\mathbb Y_{\nu,\alpha}(V)}\right]\frac{\D W}{\mathbb Y_{-\nu-1,\alpha}(W)}\right\}\frac{\D U}{\mathbb Y_{\nu,\beta}(U)}\notag\\={}& P_{\nu}(2\alpha-1)\iint\log(1-\alpha UV)\mathbb D_{\nu,\alpha}U \mathbb D_{\nu,\beta} V-P_{\nu}(1-2\alpha)\iint\log\left( 1+\frac{\alpha VW}{1-V} \right)\mathbb D_{\nu,1-\alpha}V \mathbb D_{\nu,\beta} W.\label{eq:trip_indef_to_double_log_def}
\end{align}\item For $ -1<\nu<0,0<\beta<\alpha<1$, the following identity holds:\begin{align}&\frac{\pi}{\sin(\nu\pi)}
\int^{1/\alpha}_0\left[\frac{1}{\mathscr U^{\nu+1}(1-\alpha\mathscr  U)^{\nu+1}}\int^1_{\mathscr U}\frac{u^{\nu}}{(1-\beta u)^{-\nu}}\left( \frac{1-\mathscr U}{1-u} \right)^\nu\frac{\D u}{1-u}\right]\D\mathscr U\notag\\={}&\iint\log(1-V)\mathbb D_{\nu,1-\alpha}U \mathbb D_{\nu,\beta} V-\iint\log\left( 1+\frac{\alpha VW}{1-V} \right)\mathbb D_{\nu,1-\alpha}V \mathbb D_{\nu,\beta} W\notag\\{}&+\iint\log\left[ 1-\frac{(1-\alpha)U(1-\beta V)}{1-\beta} \right]\mathbb D_{\nu,1-\alpha}U \mathbb D_{\nu,\beta} V.\label{eq:double_indef_to_double_log_def}
\end{align}\item For  $ -1<\nu<0,0<\beta<\alpha<1$, we have {\allowdisplaybreaks\begin{align}&-\int_0^1\left\{\int_0^U\left[\int_W^1\frac{\D V}{\mathbb Y_{\nu,\alpha}(V)}\right]\frac{\D W}{\mathbb Y_{-\nu-1,\alpha}(W)}\right\}\frac{\D U}{\mathbb Y_{\nu,\beta}(U)}\notag\\{}&-\frac{\pi P_{\nu}(1-2\alpha)}{\sin(\nu\pi)}\int^{1/\alpha}_0\left[\frac{1}{\mathscr U^{\nu+1}(1-\alpha\mathscr  U)^{\nu+1}}\int^1_{\mathscr U}\frac{u^{\nu}}{(1-\beta u)^{-\nu}}\left( \frac{1-\mathscr U}{1-u} \right)^\nu\frac{\D u}{1-u}\right]\D\mathscr U\notag\\={}& \frac{\pi P_{\nu}(1-2\alpha)}{\sin(\nu\pi)}\left[P_{\nu}(2\alpha-1)\int\log(1-V) \mathbb D_{\nu,\beta} V-P_{\nu}(1-2\beta)\int\log\frac{1-(1-\alpha )U}{\alpha}\mathbb D_{\nu,1-\alpha}U\right]\notag\\{}&+P_{\nu}(2\alpha-1)\iint\log(1-\alpha UV)\mathbb D_{\nu,\alpha}U \mathbb D_{\nu,\beta} V\notag\\{}&- P_{\nu}(1-2\alpha)\iint\log\left[ 1-\frac{1-\alpha}{\alpha}\frac{\beta}{1-\beta}(1-U)(1-V)\right]\mathbb D_{-\nu-1,1-\alpha}U \mathbb D_{\nu,\beta} V.\label{eq:rel_ent_princip}\end{align}}Furthermore, the last double integral satisfies\begin{align}
{}&\iint\log\left[ 1-\frac{1-\alpha}{\alpha}\frac{\beta}{1-\beta}(1-U)(1-V)\right]\mathbb D_{-\nu-1,1-\alpha}U \mathbb D_{\nu,\beta} V\notag\\={}&\int_1^{1/\alpha}\left(\int\log  \frac{1-\beta\mathscr U\mathscr V}{1-\beta\mathscr V} \mathbb D_{-\nu-1,\beta}\mathscr  V\right)\frac{\D \mathscr U}{\mathscr U^{\nu+1}(\mathscr U-1)^{-\nu}(1-\alpha \mathscr U)^{\nu+1}}.\label{eq:H2_double_refl}
\end{align}\end{enumerate}\end{lemma}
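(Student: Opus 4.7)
The plan is to reduce each identity to permissible operations in the Kontsevich--Zagier framework (Principles \ref{itm:KZ-1}--\ref{itm:KZ-3}), in the same spirit as Propositions \ref{prop:multi_integrals_Jacobi_involution}--\ref{prop:multi_integrals_Jacobi_involution_again}: parts (a) and (b) by integration by parts combined with Jacobi-type substitutions, part (c) by an algebraic combination of the two, and Eq.~\ref{eq:H2_double_refl} by a compound M\"obius-plus-Jacobi substitution.

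For part (a), integrate by parts in the outermost variable $U$ against a suitable antiderivative of $\mathbb D_{\nu,\beta}U$, converting the nested triple integral into a double integral in $(V,W)$. A Jacobi involution in $V$ produces the $\log(1+\alpha VW/(1-V))$ piece with coefficient $-P_\nu(1-2\alpha)$, while the remainder assembles into the $\log(1-\alpha UV)$ term with coefficient $P_\nu(2\alpha-1)$, the latter arising from $\int\mathbb D_{-\nu-1,\alpha}W = -\pi P_\nu(2\alpha-1)/\sin(\nu\pi)$. For part (b), the substitution $\mathscr U = [1-(1-\beta)X]/\alpha$ (inverse of the map in Eq.~\ref{eq:3rd_kind_add_comb_prep}) recasts the LHS as a double integral over $(X,u) \in ((1-\alpha)/(1-\beta),1)\times(0,1)$; Fubini reordering together with integration in $u$ through Eqs.~\ref{eq:FZ_a'}--\ref{eq:FZ_b'} then produces the three logarithmic pieces $\log(1-V)$, $\log(1+\alpha VW/(1-V))$, and $\log[1-(1-\alpha)U(1-\beta V)/(1-\beta)]$ on the RHS.

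Part (c) is obtained by multiplying (b) by $-P_\nu(1-2\alpha)$ and adding it to (a): the shared $\log(1+\alpha VW/(1-V))$ contributions cancel. The surviving $\log(1-V)$ term collapses via $\int\mathbb D_{\nu,1-\alpha}U = -\pi P_\nu(2\alpha-1)/\sin(\nu\pi)$ to $(\pi P_\nu(1-2\alpha)P_\nu(2\alpha-1)/\sin(\nu\pi))\int\log(1-V)\mathbb D_{\nu,\beta}V$. The remaining $\log[1-(1-\alpha)U(1-\beta V)/(1-\beta)]$ integral is treated by the Jacobi involution $U \mapsto U' = (1-U)/(1-(1-\alpha)U)$, under which the log-argument factorizes as $\log[\alpha/(1-(1-\alpha)U')] + \log[1-\frac{1-\alpha}{\alpha}\frac{\beta}{1-\beta}(1-U')(1-V)]$ while the measure becomes $\mathbb D_{-\nu-1,1-\alpha}U'$; a further reduction via Eq.~\ref{eq:FZ_b'} transforms the first factor into the $-P_\nu(1-2\beta)\int\log[(1-(1-\alpha)U)/\alpha]\mathbb D_{\nu,1-\alpha}U$ term in Eq.~\ref{eq:rel_ent_princip}.

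For the reflection \ref{eq:H2_double_refl}, apply the compound substitution $V = (1-\mathscr V)/(1-\beta\mathscr V)$ (Jacobi involution in $V$, converting $\mathbb D_{\nu,\beta}V \to \mathbb D_{-\nu-1,\beta}\mathscr V$) together with the linear M\"obius change $\mathscr U = [1-(1-\alpha)U]/\alpha$ (mapping $U \in (0,1)$ onto $\mathscr U \in (1,1/\alpha)$): a direct calculation shows $\mathbb D_{-\nu-1,1-\alpha}U \to \D\mathscr U/[\mathscr U^{\nu+1}(\mathscr U-1)^{-\nu}(1-\alpha\mathscr U)^{\nu+1}]$ and that the bilinear log-argument becomes $\log[(1-\beta\mathscr U\mathscr V)/(1-\beta\mathscr V)]$. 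The main technical obstacle lies in (c): ensuring that the auxiliary $\log\alpha$ constants generated by the factorization $[1-(1-\alpha)U]=\alpha/(1-(1-\alpha)U')$ cancel correctly against the residual Frobenius--Zagier terms, which relies on the full $\nu \leftrightarrow -\nu-1$ symmetry in the Legendre-function framework; the orientation of $\mathscr U$ across the interval $(1,1/\alpha)$ in the reflection identity also demands care with branches of the fractional powers.
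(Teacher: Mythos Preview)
Your plan for part (c) and for Eq.~\ref{eq:H2_double_refl} is sound and matches the paper: combine (a) and $-P_\nu(1-2\alpha)\times$(b), cancel the shared $\log(1+\alpha VW/(1-V))$ term, then apply the Jacobi involution $U\mapsto(1-U)/(1-(1-\alpha)U)$ (this is exactly Eq.~\ref{eq:Jac_inv_int} in the paper). Likewise, the compound substitution $U=(1-\alpha\mathscr U)/(1-\alpha)$, $V=(1-\mathscr V)/(1-\beta\mathscr V)$ for Eq.~\ref{eq:H2_double_refl} is precisely what the paper does.

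The genuine gap is in your treatment of (a) and (b). For (a), ``integrate by parts in $U$ against an antiderivative of $\mathbb D_{\nu,\beta}U$'' does not collapse the triple integral: after that step you still carry an incomplete integral $\int_0^U\D u/\mathbb Y_{\nu,\beta}(u)$, so nothing has been gained. The paper's route is different and relies essentially on the Legendre-type addition formula Eq.~\ref{eq:3rd_kind_add_2}, already proved in Lemma~\ref{lm:Legendre_addition}, which expresses $\frac{1}{\mathbb Y_{-\nu-1,\alpha}(W)}\int_W^1\frac{\D V}{\mathbb Y_{\nu,\alpha}(V)}$ as a sum of two \emph{rational} kernels $\frac{\alpha V}{1-\alpha WV}$ and $\frac{\alpha V}{1-(1-\alpha W)V}$. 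Integrating those kernels in $W$ over $(0,U)$ is what produces the two logarithms in Eq.~\ref{eq:log_int_add_F_sqr}; a bare Jacobi involution in $V$ cannot do this.

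For (b), the substitution $\mathscr U=[1-(1-\beta)X]/\alpha$ that you cite from Eq.~\ref{eq:3rd_kind_add_comb_prep} maps $X\in((1-\alpha)/(1-\beta),1)$ only onto $\mathscr U\in(\beta/\alpha,1)$, so it cannot account for the full range $\mathscr U\in(0,1/\alpha)$. The paper instead splits $\int_0^{1/\alpha}=\int_0^1+\int_1^{1/\alpha}$ and, crucially, derives a \emph{new} addition formula (Eq.~\ref{eq:gen_incomp_ell_int_3rd_add_a}) for $\int_0^u\D\mathscr U/\mathbb Y_{-\nu-1,\alpha}(\mathscr U)$, obtained by analytically continuing Eq.~\ref{eq:incomp_3rd_kind_nu_diff} across $V=1$ and invoking Legendre's relation. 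This is what converts the inner $u$-integral into the logarithmic pieces on the right of Eq.~\ref{eq:double_indef_to_double_log_def}. The Frobenius--Zagier identities (Eqs.~\ref{eq:FZ_a'}--\ref{eq:FZ_b'}) that you invoke concern integrals of $\log(1-U)$ and $\log(1-\alpha U)$ against a full $\mathbb D_{\nu,\alpha}U$, not incomplete integrals over $(\mathscr U,1)$, and do not supply this step.
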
\begin{proof}\begin{enumerate}[label=(\alph*),widest=a]\item
A direct  integration of Eq.~\ref{eq:3rd_kind_add_2} gives rise to \begin{align}&
 \int_0^U\left[\int_W^1\frac{\D V}{\mathbb Y_{\nu,\alpha}(V)}\right]\frac{\D W}{\mathbb Y_{-\nu-1,\alpha}(W)}\notag\\={}& -P_{\nu}(2\alpha-1)\int_{0}^1 \frac{\log(1-\alpha UV)\D V}{\mathbb Y_{\nu,\alpha}(V)}+ P_{\nu}(1-2\alpha)\int_{0}^1  \frac{\log\left( 1+\frac{\alpha UV}{1-V} \right)\D V}{\mathbb Y_{\nu,1-\alpha}(V)},\label{eq:log_int_add_F_sqr}
\end{align} which reduces the  triple integral on the left-hand side of Eq.~\ref{eq:trip_indef_to_double_log_def} into a double integral over the unit square $0<U<1,0<V<1 $. We may subsequently recast this into the form stated on the right-hand side of  Eq.~\ref{eq:trip_indef_to_double_log_def}.\item We start by splitting the integral over $\mathscr U\in(0,1/\alpha) $  into two parts:\begin{align}&
\int^{1/\alpha}_0\left[\frac{1}{\mathscr U^{\nu+1}(1-\alpha\mathscr  U)^{\nu+1}}\int^1_{\mathscr U}\frac{u^{\nu}}{(1-\beta u)^{-\nu}}\left( \frac{1-\mathscr U}{1-u} \right)^\nu\frac{\D u}{1-u}\right]\D\mathscr U\notag\\={}&\int^{1}_0\left[\frac{(1-\mathscr U)^\nu}{\mathscr U^{\nu+1}(1-\alpha\mathscr  U)^{\nu+1}}\int^1_{\mathscr U}\frac{u^{\nu}(1-u)^{-\nu-1}}{(1-\beta u)^{-\nu}}\D u\right]\D\mathscr U\notag\\{}&+\int_1^{1/\alpha}\left[\frac{(\mathscr U-1)^\nu}{\mathscr U^{\nu+1}(1-\alpha\mathscr  U)^{\nu+1}}\int_1^{\mathscr U}\frac{u^{\nu}(u-1)^{-\nu-1}}{(1-\beta u)^{-\nu}}\D u\right]\D\mathscr U.
\end{align}

On the right-hand side of the equation above, we rewrite the first double integral involving   $ \mathscr U\in(0,1)$ as \begin{align}
\int_0^1\frac{\D \mathscr U}{\mathbb Y_{-\nu-1,\alpha}(\mathscr U)}\int^1_{\mathscr U}\frac{\D u}{\mathbb Y_{\nu,\beta}(u)}=\int^1_{0}\frac{\D u}{\mathbb Y_{\nu,\beta}(u)}\int_0^u\frac{\D \mathscr U}{\mathbb Y_{-\nu-1,\alpha}(\mathscr U)},
\end{align}where the remaining integration over $\mathscr  U\in(0,u)$ can be  reformulated as\begin{align}
\int_0^u\frac{\D \mathscr U}{\mathbb Y_{-\nu-1,\alpha}(\mathscr U)}=&\frac{\sin(\nu \pi)}{\pi}\int\left\{\log[1-u+(1-\alpha)u V]-\log\frac{1-V+\alpha uV}{1-V}\right\}\mathbb D_{\nu,1-\alpha}V,\label{eq:gen_incomp_ell_int_3rd_add_a}
\end{align} which we will explain in the  paragraph below.

We point out that the following elementary identity \begin{align}&
\frac{\alpha u}{\mathbb{Y}_{ \nu,\alpha}(u)}\left[-\left( 1-\frac{V }{u} \right)\frac{\nu+1}{V^{-\nu}(1-\alpha V)^{-\nu}(V-1)^{\nu+1}}-
\frac{\partial }{\partial V }\frac{V^{\nu+1}(1-\alpha V)^{\nu+1}(V-1)^{-\nu}}{1-\alpha uV}\right]\notag\\={}& \frac{\partial}{\partial u}\left[ \frac{\alpha V}{1-\alpha uV} \frac{\mathbb{Y}_{- \nu-1,\alpha}(u)}{V^{-\nu}(1-\alpha V)^{-\nu}(V-1)^{\nu+1}}\right]\label{eq:incomp_3rd_kind_nu_diff'}\tag{\ref{eq:incomp_3rd_kind_nu_diff}$'$}
\end{align} for $ \alpha,u\in(0,1)$, $ \nu\in(-1,0)$ and $ V\in(1,1/\alpha)$ can be regarded as an analytic continuation of Eq.~\ref{eq:incomp_3rd_kind_nu_diff}. Integrating Eq.~\ref{eq:incomp_3rd_kind_nu_diff'} over $ V=t+\frac{1-t}{\alpha}\in(1,1/\alpha)$ and $ u\in(0,\mathscr U)$, we arrive at an analog of Eq.~\ref{eq:3rd_kind_breakdown_b}: \begin{align}
&\frac{\nu+1}{ \mathbb{Y}_{- \nu-1,\alpha}(\mathscr U)}P_{\nu}(2\alpha-1)\int_0^{\mathscr U}\frac{u\D u}{\mathbb Y_{\nu,\alpha}(u)}-\frac{\nu+1}{ \alpha\mathbb{Y}_{- \nu-1,\alpha}(\mathscr U)}{_2}F_1\left( \left.\begin{array}{c}
-\nu-1,\nu+1 \\
1
\end{array}\right|1-\alpha \right)\int_0^{\mathscr U}\frac{\D u}{\mathbb Y_{\nu,\alpha}(u)}\notag\\={}&\frac{\sin(\nu \pi)}{\alpha\pi}\int_{0}^1 \frac{1-(1-\alpha)t}{1-\mathscr U+(1-\alpha)\mathscr Ut} \frac{\D t}{\mathbb Y_{\nu,1-\alpha}(t)}.\label{eq:3rd_kind_breakdown_b_ImT}
\end{align}Adding up  Eqs.~\ref{eq:3rd_kind_breakdown_b} and  \ref{eq:3rd_kind_breakdown_b_ImT}, we are able to establish an addition formula:  \begin{align}
-\frac{\sin(\nu \pi)}{\pi}\int_{0}^1 \left[\frac{\alpha V}{1-(1-\alpha\mathscr U)V}+\frac{1-(1-\alpha)V}{1-\mathscr U+(1-\alpha)\mathscr UV}\right] \frac{\D V}{\mathbb Y_{\nu,1-\alpha}(V)}=\frac{1}{\mathbb{Y}_{- \nu-1,\alpha}(\mathscr U)},\label{eq:gen_incomp_ell_int_3rd_add_a_prep}
\end{align} after an invocation of Legendre's relation   \cite[][Theorem 3.2.8]{AAR} in the following manner:\begin{align}&
(\nu+1)\left[\alpha P_{\nu}(2\alpha-1)\int_0^{1}\frac{u\D u}{\mathbb Y_{\nu,\alpha}(u)}-{_2}F_1\left( \left.\begin{array}{c}
-\nu-1,\nu+1 \\
1
\end{array}\right|1-\alpha \right)\int_0^{1}\frac{\D u}{\mathbb Y_{\nu,\alpha}(u)}\right]\notag\\={}&-\frac{(\nu+1)\pi}{\sin(\nu\pi)}\left[ P_\nu(2\alpha-1)P_\nu(1-2\alpha)-P_\nu(2\alpha-1) {_2}F_1\left( \left.\begin{array}{c}
-\nu-1,\nu+1 \\
1
\end{array}\right|\alpha \right)\right.\notag\\{}&\left.-{_2}F_1\left( \left.\begin{array}{c}
-\nu-1,\nu+1 \\
1
\end{array}\right|1-\alpha \right)P_\nu(1-2\alpha)\right]=-1.
\end{align} Integrating Eq.~\ref{eq:gen_incomp_ell_int_3rd_add_a_prep} over $ \mathscr U\in(0,u)$, we arrive at Eq.~\ref{eq:gen_incomp_ell_int_3rd_add_a}.

Now we  turn  our attention to the proof of \begin{align}&
\int_1^{1/\alpha}\left[\frac{(\mathscr U-1)^\nu}{\mathscr U^{\nu+1}(1-\alpha\mathscr  U)^{\nu+1}}\int_1^{\mathscr U}\frac{u^{\nu}(u-1)^{-\nu-1}}{(1-\beta u)^{-\nu}}\D u\right]\D\mathscr U\notag\\={}&-\frac{\sin(\nu \pi)}{\pi}\int_0^1\frac{\D U}{\mathbb Y_{\nu,1-\alpha}(U)}\int_{0}^1  \frac{\D V}{\mathbb Y_{\nu,\beta}(V)}\log\frac{1-V+(1-\alpha)UV}{1-V}\notag\\{}&+\frac{\sin(\nu \pi)}{\pi}\int_0^1\frac{\D U}{\mathbb Y_{\nu,1-\alpha}(U)}\int_{0}^1  \frac{\D V}{\mathbb Y_{\nu,\beta}(V)}\log\left[ 1-\frac{(1-\alpha)U(1-\beta V)}{1-\beta} \right].\label{eq:1_to_1_over_alpha_add_form_log}
\end{align} To demonstrate the identity above, we compute\begin{align}&
\int_1^{\mathscr U}\frac{u^{\nu}(u-1)^{-\nu-1}}{(1-\beta u)^{-\nu}}\D u\overset{u=\frac{1}{1-(1-\beta)t}}{=\!\!=\!\!=\!\!=\!\!=
\!\!=\!\!=}\int^{\frac{\mathscr U-1}{(1-\beta)\mathscr U}}_0\frac{\D t}{\mathbb Y_{-\nu-1,1-\beta}(t)}\notag\\={}&-\frac{\sin(\nu \pi)}{\pi}\int_{0}^1  \frac{\D V}{\mathbb Y_{\nu,\beta}(V)}\log\frac{1-V+\frac{\mathscr U-1}{\mathscr U}V}{1-V}\notag\\{}&+\frac{\sin(\nu \pi)}{\pi}\int_{0}^1  \frac{\D V}{\mathbb Y_{\nu,\beta}(V)}\log\left[1-\frac{(\mathscr U-1)(1-\beta V)}{(1-\beta)\mathscr U}\right],
\end{align} with the help of Eq.~\ref{eq:gen_incomp_ell_int_3rd_add_a}, before noting the fact that\begin{align}
\int_1^{1/\alpha}\frac{(\mathscr U-1)^\nu f( \mathscr U) }{\mathscr U^{\nu+1}(1-\alpha\mathscr  U)^{\nu+1}}\D\mathscr U=\int_0^1\frac{f\left( \frac{1}{1-(1-\alpha)U} \right)\D U}{\mathbb Y_{\nu,1-\alpha}(U)}
\end{align}holds for any suitably regular function $f(\mathscr U),1<\mathscr U<1/\alpha$.

Clearly, the efforts above can be combined into Eq.~\ref{eq:double_indef_to_double_log_def}.\item The equality in  Eq.~\ref{eq:rel_ent_princip} descends from Eqs.~\ref{eq:trip_indef_to_double_log_def}--\ref{eq:double_indef_to_double_log_def} and an application of the following Jacobi involution\begin{align}
\int f(u)\mathbb D_{\nu,1-\alpha}u:=\int_0^1\frac{f(u)\D u}{\mathbb Y_{\nu,1-\alpha}(u)}=\int_0^1\frac{f\left( \frac{1-U}{1-(1-\alpha)U} \right)\D U}{\mathbb Y_{-\nu-1,1-\alpha}(U)}=:\int f\left( \frac{1-U}{1-(1-\alpha)U} \right)\mathbb D_{-\nu-1,1-\alpha}U\label{eq:Jac_inv_int}
\end{align}to the last integral in Eq.~\ref{eq:double_indef_to_double_log_def}.

The equality in Eq.~\ref{eq:H2_double_refl} originates from the birational variable substitutions $U=\frac{1-\alpha\mathscr U}{1-\alpha},V=\frac{1-\mathscr V}{1-\beta\mathscr V} $.
\qedhere\end{enumerate}
\end{proof}

Combining  Eqs.~\ref{eq:precursor_int_id} and  \ref{eq:rel_ent_princip},  we see that   the following formula holds for  $ 0<\beta<\alpha<1$:{\allowdisplaybreaks\begin{align}
&
\frac{2\pi^2}{\sin^2(\nu\pi)}\left\{[P_{\nu }(2\beta-1)]^2\int_0^{\beta}\frac{[P_\nu(1-2t)]^2\D t}{t-\alpha}-P_\nu(1-2\beta)P_\nu(2\beta-1)\int_0^{\beta}\frac{P_\nu(1-2t)P_\nu(2t-1)\D t}{t-\alpha}\right\}\notag\\={}&\frac{\pi^3[P_\nu(2\alpha-1)]^{2}P_\nu(1-2\beta)P_\nu(2\beta-1)}{\sin^3(\nu\pi)}-\frac{\pi^3P_\nu(1-2\alpha)P_\nu(2\alpha-1)[P_\nu(2\beta-1)]^{2}}{\sin^3(\nu\pi)}\notag\\{}&+\frac{\pi^{2}P_\nu(1-2\alpha)P_\nu(2\alpha-1)P_\nu(1-2\beta)P_\nu(2\beta-1)}{\sin^2(\nu\pi)}\left(\log\frac{\beta}{1-\beta}-\log\frac{\alpha}{1-\alpha}\right)\notag\\{}&+P_\nu(2\beta-1)\left\{P_{\nu}(2\alpha-1)\iint\log\frac{1-\alpha UV}{1-\alpha U}(\mathbb D_{\nu,\alpha}U\mathbb D_{\nu,\beta}V+\mathbb D_{-\nu-1,\alpha}U\mathbb D_{-\nu-1,\beta}V)\right.\notag\\{}&\left.- P_{\nu}(1-2\alpha)\iint\log\left[ 1-\frac{\beta(1-\alpha)}{\alpha(1-\beta)}(1-U)(1- V)\right](\mathbb D_{-\nu-1,1-\alpha}U\mathbb D_{\nu,\beta}V+\mathbb D_{\nu,1-\alpha}U\mathbb D_{-\nu-1,\beta}V)\right\},\label{eq:intn_ent_log_form'}
\end{align}}with the understanding that   the integration paths on both the left- and right-hand sides are straight-line segments.  In order to relate the right-hand side of Eq.~\ref{eq:intn_ent_log_form'} to the entropy coupling $ H_\nu(\alpha\Vert\beta)$ (Eq.~\ref{eq:H_nu_alpha_beta_defn}), we need  further simplifications.

The following self-explanatory integral identity (cf.~\cite[][Proposition 4.3]{Zhou2013Pnu} and \cite[][Lemma 3.3.1]{AGF_PartI}) \begin{align}&
\int_0^1\int_0^1\frac{f(U,V)\D U\D V}{\sqrt{U(1-U)(1-\lambda U)V(1-V)(1-\lambda V)}}\notag\\={}&\int_0^1\int_0^1\frac{f\left(\frac{1-\mathscr U}{1-\lambda  \mathscr U \mathscr V},\frac{(1-\mathscr V) (1-\lambda  \mathscr U \mathscr V)}{1-\lambda\mathscr V  }\right)\D \mathscr U\D \mathscr V}{\sqrt{\mathscr U(1-\mathscr U)\mathscr V(1-\mathscr V)[1-\lambda+ \lambda^{2}\mathscr U\mathscr V(1-\mathscr V)]}},\quad 0<\lambda<1\label{eq:Ramanujan_rotation}
\end{align}  is true for any sufficiently regular bivariate function $ f(U,V),0<U<1,0<V<1$. We (re)discovered this formula in an attempt to reconstruct Ramanujan's thoughts behind Entry 7(x) in Chapter 17 of his second notebook \cite[][pp.~110--111]{RN3}. We have referred to this trick as ``Ramanujan's rotations''
due to the spherical rotations that were originally employed to introduce these variable substitutions. From a different perspective, the birational maps in Eq.~\ref{eq:Ramanujan_rotation} can be broken down into two successive Jacobi involutions (cf.~Eq.~\ref{eq:Jacobi_involution_U}): $
V=\frac{1-\mathscr V}{1-\lambda   U \mathscr V}
$ and $ U=\frac{1-\mathscr U}{1-\lambda  \mathscr U \mathscr V}$.

As an  application of Ramanujan's rotations (Eq.~\ref{eq:Ramanujan_rotation}), we demonstrate a reciprocity relation
in the next lemma.\begin{lemma}[Ramanujan Reciprocity]\label{lm:Ramanujan_recip} For  $ \alpha,\beta\in(\mathbb C\smallsetminus\mathbb R)\cup(0,1)$ and $ \nu\in(-1,0)$, we have \begin{align}
\iint\log\frac{1-\alpha UV}{\frac{1-\alpha U}{1-U}}\mathbb D_{\nu,\alpha}U\mathbb D_{\nu,\beta}V=\iint \log\frac{1-\beta UV}{\frac{1-\beta V}{1-V}}\mathbb{D}_{\nu,\alpha}U\mathbb D_{\nu,\beta}V.\label{eq:Ramanujan_recip}
\end{align} \end{lemma}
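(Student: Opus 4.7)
The plan is to reduce the reciprocity to a comparison of ``genuinely coupled'' double integrals, and then apply Ramanujan's rotation (Eq.~\ref{eq:Ramanujan_rotation}) combined with birational substitutions in the spirit of the hint preceding the lemma. First, I would split the logarithms on both sides via
\begin{align*}
\log\frac{(1-\alpha UV)(1-U)}{1-\alpha U}=\log(1-\alpha UV)+\log(1-U)-\log(1-\alpha U),
\end{align*}
and likewise for the right-hand side. The single-variable pieces $\log(1-U)$, $\log(1-\alpha U)$ (respectively $\log(1-V)$, $\log(1-\beta V)$) factor out against $\int\mathbb D_{\nu,\beta}V=-\pi P_\nu(1-2\beta)/\sin(\nu\pi)$ (respectively $\int\mathbb D_{\nu,\alpha}U$). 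This step reduces Eq.~\ref{eq:Ramanujan_recip} to the claim that
\begin{align*}
\iint[\log(1-\alpha UV)-\log(1-\beta UV)]\mathbb D_{\nu,\alpha}U\mathbb D_{\nu,\beta}V
\end{align*}
equals a specific combination of single-variable integrals that can be independently evaluated using the Frobenius--Zagier identities (Eqs.~\ref{eq:FZ_a'}--\ref{eq:FZ_b'}).

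Next, to transform the two coupled double integrals, I would apply the $U$-dependent Jacobi-type substitution $V=(1-\mathscr V)/(1-\alpha U\mathscr V)$ to the $\log(1-\alpha UV)$ integral, and the analogous $V$-dependent substitution $U=(1-\mathscr U)/(1-\beta V\mathscr U)$ to the $\log(1-\beta UV)$ integral. The key algebraic fact (verified by direct computation) is the clean identity
\begin{align*}
1-\alpha UV=\frac{1-\alpha U}{1-\alpha U\mathscr V},
\end{align*}
which decouples the logarithm as $\log(1-\alpha UV)=\log(1-\alpha U)-\log(1-\alpha U\mathscr V)$. Under the same substitution, $\mathbb D_{\nu,\beta}V$ transforms into an expression involving the factor $[(1-\beta)+\mathscr V(\beta-\alpha U)]^\nu$, which does not a priori admit a product form; here I would invoke the EFS formula (Lemma~\ref{lm:Feynman_parameter}) applied to this factor to introduce an auxiliary variable and restore a tensor-product measure $\mathbb D_{\nu,0}\mathbb D_{-\nu-1,0}$ reminiscent of Erd\'elyi's double integral representation for Legendre functions (Eq.~\ref{eq:Pnu_double_int_repn}). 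At this juncture, Ramanujan's rotation (Eq.~\ref{eq:Ramanujan_rotation}), or its generalization obtained by successive Jacobi involutions as indicated in the remark following it, symmetrizes the resulting multiple integral in the two moduli $\alpha$ and $\beta$.

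The main obstacle is the accounting for the cross-terms involving $(\beta-\alpha U)$ that arise precisely because the modulus of the measure ($\beta$) disagrees with the modulus used in the Jacobi involution ($\alpha$, more precisely $\alpha U$). These terms do not cancel directly but must be rearranged using Legendre's relation together with Eqs.~\ref{eq:FZ_a}--\ref{eq:FZ_a''}. A potentially cleaner alternative is to verify the identity at $\alpha=\beta$, where it becomes trivial by the $U\leftrightarrow V$ symmetry of the integrand, and then establish matching partial derivatives $\partial/\partial\alpha$ of both sides using the hypergeometric differential equation satisfied by $P_\nu(1-2\alpha)$. In either approach, the ``extended ring of Kontsevich--Zagier periods'' framework of Proposition~\ref{prop:S_nu_alpha_beta_recip} provides the underlying rationale: both sides represent the same period in $\widehat{\mathscr P}_{\mathrm{KZ}}$ and differ only by elementary manipulations allowed by \ref{itm:KZ-1}--\ref{itm:KZ-3}.
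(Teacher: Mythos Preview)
Your plan goes in a more complicated direction than necessary, and the acknowledged ``main obstacle'' of cross-terms is precisely the sign that splitting the logarithm is the wrong first move. The paper's proof avoids this entirely by keeping the ratio $\frac{(1-\alpha UV)(1-U)}{1-\alpha U}$ intact and applying Ramanujan's rotation (Eq.~\ref{eq:Ramanujan_rotation}) with modulus $\alpha$, namely $U=\frac{1-\mathscr U}{1-\alpha\mathscr U\mathscr V}$, $V=\frac{(1-\mathscr V)(1-\alpha\mathscr U\mathscr V)}{1-\alpha\mathscr V}$, directly to the left-hand side. The key algebraic miracle---which you miss by decomposing the logarithm---is that under this substitution the full ratio collapses to $\mathscr U$, so $\log\frac{(1-\alpha UV)(1-U)}{1-\alpha U}=\log\mathscr U$. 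The measure $\mathbb D_{\nu,\alpha}U\,\mathbb D_{\nu,\beta}V$ transforms into $\frac{\mathscr U^{-\nu-1}(1-\mathscr U)^\nu\mathscr V^{-\nu-1}(1-\mathscr V)^\nu}{(1-\alpha\mathscr V)^{2\nu+1}[1-\alpha\mathscr V-\beta(1-\mathscr V)+\alpha\beta\mathscr U\mathscr V(1-\mathscr V)]^{-\nu}}\,\D\mathscr U\,\D\mathscr V$, and one further Jacobi involution $\mathscr V=\frac{1-\mathscr W}{1-\alpha\mathscr W}$ yields
\[
\int_0^1\int_0^1\frac{\mathscr U^{-\nu-1}(1-\mathscr U)^\nu(1-\mathscr W)^{-\nu-1}\mathscr W^\nu\log\mathscr U}{[(1-\alpha\mathscr W)(1-\beta\mathscr W)+\alpha\beta\mathscr U\mathscr W(1-\mathscr W)]^{-\nu}}\,\D\mathscr U\,\D\mathscr W,
\]
which is manifestly symmetric in $\alpha$ and $\beta$. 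Applying the same two-step substitution with modulus $\beta$ to the right-hand side produces the identical expression. No EFS, no Frobenius--Zagier identities, no cross-term bookkeeping.

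Your alternative---matching at $\alpha=\beta$ and propagating via $\partial/\partial\alpha$---could in principle be made to work, but would require controlling the derivative of the coupled integral $\iint\log(1-\alpha UV)\,\mathbb D_{\nu,\alpha}U\,\mathbb D_{\nu,\beta}V$ in $\alpha$, where $\alpha$ appears both in the logarithm and in the measure; this is not simpler than the direct computation. The closing appeal to the Kontsevich--Zagier framework is not an argument: that two expressions lie in $\widehat{\mathscr P}_{\mathrm{KZ}}$ does not by itself imply they are equal.
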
\begin{proof}In what follows, we prove Eq.~\ref{eq:Ramanujan_recip} for $ \alpha,\beta\in(0,1)$ and $\nu\in(-1,0)$, using Ramanujan's rotations.
 The generic cases for  $ \alpha,\beta\in(\mathbb C\smallsetminus\mathbb R)\cup(0,1)$ and $ \nu\in(-1,0)$ would then arise from analytic continuation.

By Ramanujan's rotations $U= \frac{1-\mathscr U}{1-\alpha  \mathscr U \mathscr V},V=\frac{(1-\mathscr V) (1-\alpha  \mathscr U \mathscr V)}{1-\alpha\mathscr V  }$, the left-hand side of  Eq.~\ref{eq:Ramanujan_recip} is equal to\begin{align}
\int_0^1\int_0^1\frac{\mathscr U^{-\nu-1}(1-\mathscr U)^\nu\mathscr V^{-\nu-1}(1-\mathscr V)^\nu\log \mathscr U\D \mathscr U\D \mathscr V}{(1-\alpha \mathscr V)^{2\nu+1}[1-\alpha\mathscr V-\beta(1-\mathscr V)+\alpha\beta\mathscr U\mathscr V(1-\mathscr V)]^{-\nu}}.
\end{align}As we follow this up  with a Jacobi involution $ \mathscr V=\frac{1-\mathscr W}{1-\alpha\mathscr W}$, we can identify the left-hand side of  Eq.~\ref{eq:Ramanujan_recip}  with \begin{align}
\int_0^1\int_0^1\frac{\mathscr U^{-\nu-1}(1-\mathscr U)^\nu(1-\mathscr W)^{-\nu-1}\mathscr W^\nu\log \mathscr U\D \mathscr U\D \mathscr W}{[(1-\alpha\mathscr W)(1-\beta\mathscr W)+\alpha\beta\mathscr U\mathscr W(1-\mathscr W)]^{-\nu}},\label{eq:symm_H_recip}
\end{align}an expression that is symmetric in the variables $ \alpha$ and $ \beta$. Meanwhile, if we apply  Ramanujan's rotations $U= \frac{1-\mathscr U}{1-\beta  \mathscr U \mathscr V},V=\frac{(1-\mathscr V) (1-\beta  \mathscr U \mathscr V)}{1-\beta\mathscr V  }$ to  the right-hand side of  Eq.~\ref{eq:Ramanujan_recip}, and follow up with a Jacobi involution $ \mathscr V=\frac{1-\mathscr W}{1-\beta\mathscr W}$, we also arrive at the expression displayed in Eq.~\ref{eq:symm_H_recip}.

This proves the reciprocity relation stated in Eq.~\ref{eq:Ramanujan_recip}.
 \end{proof}

 In view of Eqs.~\ref{eq:FZ_a'} and \ref{eq:Ramanujan_recip}, one may demonstrate the following identity\begin{align}
&\frac{\pi^3[P_\nu(2\alpha-1)P_\nu(1-2\beta)-P_\nu(1-2\alpha)P_\nu(2\beta-1)]}{\sin^3(\nu\pi)}+\frac{\pi^2P_\nu(1-2\alpha)P_\nu(1-2\beta)}{\sin^2(\nu\pi)}\left(\log\frac{\beta}{1-\beta}-\log\frac{\alpha}{1-\alpha}\right)\notag\\{}&+\iint\log\frac{1-\alpha UV}{1-\alpha U}(\mathbb D_{\nu,\alpha}U\mathbb D_{\nu,\beta}V+\mathbb D_{-\nu-1,\alpha}U\mathbb D_{-\nu-1,\beta}V)\notag\\={}&\iint\log\frac{1-\beta UV}{1-\beta V}(\mathbb D_{\nu,\alpha}U\mathbb D_{\nu,\beta}V+\mathbb D_{-\nu-1,\alpha}U\mathbb D_{-\nu-1,\beta}V)\label{eq:H_nu_recip}
\end{align}for $ \alpha,\beta\in(\mathbb C\smallsetminus\mathbb R)\cup(0,1)$, $ \nu\in(-1,0)$, with all the integration paths being straight-line segments. Therefore, one can condense  Eq.~\ref{eq:intn_ent_log_form'}   into the following form: \begin{align}&
\frac{2\pi^2}{\sin^2(\nu\pi)}\left\{[P_{\nu }(2\beta-1)]^2\int_0^{\beta}\frac{[P_\nu(1-2t)]^2\D t}{t-\alpha}-P_\nu(1-2\beta)P_\nu(2\beta-1)\int_0^{\beta}\frac{P_\nu(1-2t)P_\nu(2t-1)\D t}{t-\alpha}\right\}\notag\\={}&P_\nu(2\beta-1)\left\{P_{\nu}(2\alpha-1)\iint\log\frac{1-\beta UV}{1-\beta V}(\mathbb D_{\nu,\alpha}U\mathbb D_{\nu,\beta}V+\mathbb D_{-\nu-1,\alpha}U\mathbb D_{-\nu-1,\beta}V)\right.\notag\\{}&\left.- P_{\nu}(1-2\alpha)\iint\log\left[ 1-\frac{\beta(1-\alpha)}{\alpha(1-\beta)}(1-U)(1- V)\right](\mathbb D_{-\nu-1,1-\alpha}U\mathbb D_{\nu,\beta}V+\mathbb D_{\nu,1-\alpha}U\mathbb D_{-\nu-1,\beta}V)\right\},\label{eq:intn_ent_log_form}\end{align}for $ -1<\nu<0$ and $0<\beta<\alpha<1$.

As another  application of Lemma \ref{lm:Ramanujan_recip}, we construct a functional equation involving certain double integrals, in order to  show that the sum  $ h_\nu(\alpha\Vert\beta)+h_\nu(1-\alpha\Vert1-\beta)$ remains unchanged when the moduli parameters $ \alpha$ and $\beta$ trade their places. \begin{lemma}[A Reflection Formula for Ramanujan Entropy Coupling]
For  $ \nu\in\{-1/6$, $-1/4$, $-1/3$, $-1/2\}$ and $ \alpha,\beta\in(\mathbb C\smallsetminus\mathbb R)\cup(0,1)$, define the Ramanujan entropy coupling\begin{align}&
\mathscr R_\nu(\alpha\Vert\beta):=\frac{1}{2}\left( \mathbb E^U_{\nu,\alpha}\mathbb E^V_{\nu,\beta}+ \mathbb E^U_{-\nu-1,\alpha}\mathbb E^V_{-\nu-1,\beta}\right)\log\frac{1-\alpha UV}{\frac{1-\alpha U}{1-U}},\label{eq:Ramanujan_HC_defn}
\end{align} then $ \mathscr R_\nu(\alpha\Vert\beta)=\mathscr R_\nu(\beta\Vert\alpha)$ (according to Eq.~\ref{eq:Ramanujan_recip}), and we have the following exact identities:\begin{align}&
 \mathscr R_\nu(\alpha\Vert\beta)- \mathscr R_\nu(1-\beta\Vert1-\alpha)\notag\\={}&h_\nu(\alpha\Vert\beta)-h_\nu(1-\beta\Vert1-\alpha)-\frac{1}{2}\left(\log\frac{\alpha}{1-\alpha}+\log\frac{\beta}{1-\beta}\right)\notag\\{}&+\frac{\pi}{2\sin(\nu\pi)}\left[ \frac{P_\nu(2\alpha-1)}{P_\nu(1-2\alpha)}-\frac{P_\nu(1-2\alpha)}{P_\nu(2\alpha-1)}+ \frac{P_\nu(2\beta-1)}{P_\nu(1-2\beta)}-\frac{P_\nu(1-2\beta)}{P_\nu(2\beta-1)}\right],\label{eq:R_nu_alpha_beta_refl_form}
\intertext{}&h_\nu(\alpha\Vert\beta)+h_\nu(1-\alpha\Vert1-\beta)=h_\nu(\beta\Vert\alpha)+h_\nu(1-\beta\Vert1-\alpha).\label{eq:hReg_recip_refl}\end{align}\end{lemma}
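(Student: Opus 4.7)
The symmetry $\mathscr R_\nu(\alpha\Vert\beta)=\mathscr R_\nu(\beta\Vert\alpha)$ follows directly from applying Lemma~\ref{lm:Ramanujan_recip} (Eq.~\ref{eq:Ramanujan_recip}) to each summand in the definition~(\ref{eq:Ramanujan_HC_defn}) and then relabeling the dummy variables $U\leftrightarrow V$; the same identity holds verbatim with $\nu$ replaced by $-\nu-1$, so both summands transform compatibly. For the two exact identities, the plan is to establish (\ref{eq:R_nu_alpha_beta_refl_form}) by direct computation, and to deduce (\ref{eq:hReg_recip_refl}) from it as a symmetry consequence.

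To prove (\ref{eq:R_nu_alpha_beta_refl_form}), I first decompose $\log\frac{1-\alpha UV}{(1-\alpha U)/(1-U)}=\log(1-\alpha UV)+\log(1-U)-\log(1-\alpha U)$. The first piece reproduces twice the first half of $h_\nu(\alpha\Vert\beta)$ in Eq.~(\ref{eq:reg_h_nu_alpha_beta_defn}), while the Frobenius--Zagier formulae~(\ref{eq:FZ_a'})--(\ref{eq:FZ_b'}) collapse the univariate pieces to
\begin{equation*}
(\mathbb E^U_{\nu,\alpha}+\mathbb E^U_{-\nu-1,\alpha})[\log(1-U)-\log(1-\alpha U)]=\frac{\pi P_\nu(2\alpha-1)}{\sin(\nu\pi)P_\nu(1-2\alpha)}-\log\alpha.
\end{equation*}
Writing $B_\nu(\alpha,\beta):=(\mathbb E^U_{\nu,1-\alpha}\mathbb E^V_{\nu,\beta}+\mathbb E^U_{-\nu-1,1-\alpha}\mathbb E^V_{-\nu-1,\beta})\log\left(1+\frac{\alpha UV}{1-U}\right)$ for the second integral in~(\ref{eq:reg_h_nu_alpha_beta_defn}), so that $2h_\nu(\alpha\Vert\beta)$ equals the first Legendre average minus $B_\nu(\alpha,\beta)$, I obtain
\begin{equation*}
2\mathscr R_\nu(\alpha\Vert\beta)-2h_\nu(\alpha\Vert\beta)=B_\nu(\alpha,\beta)+\frac{\pi P_\nu(2\alpha-1)}{\sin(\nu\pi)P_\nu(1-2\alpha)}-\log\alpha.
\end{equation*}

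The crucial step is to compute $B_\nu(\alpha,\beta)-B_\nu(1-\beta,1-\alpha)$ using the Jacobi involutions $U=\frac{1-W}{1-(1-\alpha)W}$ (variant of Eq.~(\ref{eq:Jacobi_involution_U})) on the first and $U=\frac{1-W}{1-\beta W}$ on the second. A short calculation gives $\mathbb D_{\nu,1-\alpha}U\leftrightarrow\mathbb D_{-\nu-1,1-\alpha}W$ in the first case, $\mathbb D_{\nu,\beta}U\leftrightarrow\mathbb D_{-\nu-1,\beta}W$ in the second, and both transformed integrands take the common shape $\log(1-(1-V)(1-W))-\log W$. Renaming dummies $V\leftrightarrow W$ in the second expression (which exploits the $V\leftrightarrow W$ symmetry of $\log(1-(1-V)(1-W))$) aligns its prefactor with that of the first, so the symmetric log-terms cancel and only
\begin{equation*}
B_\nu(\alpha,\beta)-B_\nu(1-\beta,1-\alpha)=(\mathbb E^V_{\nu,\beta}+\mathbb E^V_{-\nu-1,\beta})\log V-(\mathbb E^W_{\nu,1-\alpha}+\mathbb E^W_{-\nu-1,1-\alpha})\log W
\end{equation*}
remains. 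Combining Eqs.~(\ref{eq:FZ_a'}) and (\ref{eq:FZ_b'}) yields the one-variable identity $(\mathbb E^U_{\nu,\gamma}+\mathbb E^U_{-\nu-1,\gamma})\log U=\frac{\pi P_\nu(2\gamma-1)}{\sin(\nu\pi)P_\nu(1-2\gamma)}-\log\gamma$, and substitution at $\gamma=\beta$ and $\gamma=1-\alpha$ respectively reproduces (\ref{eq:R_nu_alpha_beta_refl_form}) on the nose.

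Finally, (\ref{eq:hReg_recip_refl}) is immediate from (\ref{eq:R_nu_alpha_beta_refl_form}): the Legendre bracket and the correction $-\frac{1}{2}\log\frac{\alpha}{1-\alpha}-\frac{1}{2}\log\frac{\beta}{1-\beta}$ on its right-hand side are each manifestly symmetric under $\alpha\leftrightarrow\beta$, and the left-hand side is symmetric by two applications of $\mathscr R_\nu(\alpha\Vert\beta)=\mathscr R_\nu(\beta\Vert\alpha)$; therefore $h_\nu(\alpha\Vert\beta)-h_\nu(1-\beta\Vert1-\alpha)=h_\nu(\beta\Vert\alpha)-h_\nu(1-\alpha\Vert1-\beta)$, which rearranges to the stated reflection identity. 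The main obstacle is the bookkeeping in the Jacobi involution step---keeping track of how the measures $\mathbb D_{\nu,\cdot}$ swap with $\mathbb D_{-\nu-1,\cdot}$ and how the dummy rename $V\leftrightarrow W$ aligns the prefactors in the two instances of $B_\nu$---but once this alignment is verified, no new analytic machinery beyond what is already developed in~\S\ref{sec:Interaction_Entropies_Transformations} is required.
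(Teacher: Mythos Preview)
Your proposal is correct and follows essentially the same route as the paper's proof. Both arguments reduce $\mathscr R_\nu-h_\nu$ (plus elementary corrections from Eqs.~\ref{eq:FZ_a'}--\ref{eq:FZ_b'}) to the $B_\nu$-integral, apply the Jacobi involution $U=\frac{1-W}{1-(1-\alpha)W}$ to convert $\log\bigl(1+\frac{\alpha UV}{1-U}\bigr)$ into $\log(W+V-VW)-\log W$, and then deduce \eqref{eq:hReg_recip_refl} from the $\alpha\leftrightarrow\beta$ symmetry of \eqref{eq:R_nu_alpha_beta_refl_form}; the only cosmetic difference is that the paper absorbs your residual $\log W$ term via Eqs.~\ref{eq:FZ_a'}--\ref{eq:FZ_b'} to display a single manifestly $(\alpha,\beta)\mapsto(1-\beta,1-\alpha)$--invariant expression, whereas you compute $B_\nu(\alpha,\beta)-B_\nu(1-\beta,1-\alpha)$ directly.
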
\begin{proof}By definitions of $ h_\nu(\cdot\Vert\cdot)$ (Eq.~\ref{eq:reg_h_nu_alpha_beta_defn}) and $ \mathscr R_\nu(\cdot\Vert\cdot)$ (Eq.~\ref{eq:Ramanujan_HC_defn}), as well as applications of Eqs.~\ref{eq:FZ_a'} and \ref{eq:FZ_b'}, we have \begin{align}&
h_\nu(\alpha\Vert\beta)-
 \mathscr R_\nu(\alpha\Vert\beta)-\frac{1}{2}\log\frac{\alpha}{1-\alpha}+\frac{\pi}{2\sin(\nu\pi)}\left[ \frac{P_\nu(2\alpha-1)}{P_\nu(1-2\alpha)}-\frac{P_\nu(1-2\alpha)}{P_\nu(2\alpha-1)} \right]\notag\\={}&-\frac{1}{2}\left( \mathbb E^U_{\nu,1-\alpha}\mathbb E^V_{\nu,\beta}+\mathbb E^U_{-\nu-1,1-\alpha}\mathbb E^V_{-\nu-1,\beta} \right)\log\left( 1+\frac{\alpha UV}{1-U} \right)+\frac{\log(1-\alpha)}{2}-\frac{\pi}{2\sin(\nu\pi)}\frac{P_\nu(1-2\alpha)}{P_\nu(2\alpha-1)}.
\end{align}By a Jacobi involution $ U=\frac{1-u}{1-(1-\alpha) u}$ together with applications of Eqs.~\ref{eq:FZ_a'} and \ref{eq:FZ_b'},  we convert the expression above into\begin{align}
-\frac{1}{2}\left( \mathbb E^u_{-\nu-1,1-\alpha}\mathbb E^V_{\nu,\beta}+\mathbb E^u_{\nu,1-\alpha}\mathbb E^V_{-\nu-1,\beta} \right)\log(u+V-uV),
\end{align} which is manifestly invariant under the transformations $ (\alpha,\beta)\mapsto(1-\beta,1-\alpha)$. This proves    Eq.~\ref{eq:R_nu_alpha_beta_refl_form}.

 Now that the left-hand side of    Eq.~\ref{eq:R_nu_alpha_beta_refl_form} remains intact under the interchange of moduli parameters $ \alpha\leftrightarrow\beta$, so must its right-hand side. Spelling out this invariance property for the right-hand side of Eq.~\ref{eq:R_nu_alpha_beta_refl_form}, and rearranging, we reach  Eq.~\ref{eq:hReg_recip_refl}.\end{proof}

While the Legendre function satisfies $ P_\nu=P_{-\nu-1}$, such a ``$\nu$-reflection symmetry'' cannot be taken for granted for many multiple integrals we have encountered so far. Fortunately, owing to the Ramanujan reciprocity (Lemma \ref{lm:Ramanujan_recip}),  some important double integrals do remain invariant  when   $ \nu$ and $ -\nu-1$ switch their places, as explained in the next lemma.
\begin{lemma}[$ \nu$-Reflection Symmetry for Certain Double Integrals]\label{lm:nu_reflection_symm}Let $ \nu$ be a rational number from the set $\{-1/6,-1/4,-1/3,-1/2\} $. For  $ \alpha,\beta\in(\mathbb C\smallsetminus\mathbb R)\cup(0,1)$, we have the following identity:\begin{align}
\left( \mathbb E_{\nu,\alpha\vphantom{\beta}}^{U} \mathbb E_{\nu,\beta}^{V}-\mathbb E_{-\nu-1,\alpha\vphantom{\beta}}^{U} \mathbb E_{-\nu-1,\beta}^{V}\right)\log\frac{1-\beta UV}{1-\beta V}={}&0.\label{eq:H1_nu_refl}
\end{align}For  $\alpha,1-\alpha,\beta,1-\beta,\frac{\beta(1-\alpha)}{\alpha(1-\beta)}\in\mathbb C\smallsetminus[1,+\infty)$, the following relation is true:\begin{align}
\left( \mathbb E_{-\nu-1,1-\alpha\vphantom{\beta}}^{U} \mathbb E_{\nu,\beta}^{V}-\mathbb E_{\nu,1-\alpha\vphantom{\beta}}^{U} \mathbb E_{-\nu-1,\beta}^{V} \right)\log\left[ 1-\frac{1-\alpha}{\alpha}\frac{\beta}{1-\beta}(1-U)(1-V)\right]=0.\label{eq:H2_nu_refl}
\end{align}   \end{lemma}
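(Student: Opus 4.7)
The plan rests on the Jacobi involution identity: under the change of variable $X=(1-X')/(1-\gamma X')$, a short computation (expanding $X^\nu$, $(1-X)^{\nu+1}$, $(1-\gamma X)^\nu$, and $dX$ in terms of $X'$ and $\gamma$) gives $\mathbb D_{\nu,\gamma}X = \mathbb D_{-\nu-1,\gamma}X'$, so $\int_0^1 f(X)\mathbb D_{\nu,\gamma}X = \int_0^1 f\bigl((1-X')/(1-\gamma X')\bigr)\mathbb D_{-\nu-1,\gamma}X'$. This involution---which interchanges the two degrees $\nu$ and $-\nu-1$ at fixed modulus $\gamma$, without touching the normalization $P_\nu(1-2\gamma)=P_{-\nu-1}(1-2\gamma)$---provides the basic conversion between $\mathbb E_{\nu,\gamma}$ and $\mathbb E_{-\nu-1,\gamma}$. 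Combined with the Ramanujan reciprocity of Lemma~\ref{lm:Ramanujan_recip} and the Frobenius--Zagier evaluations~\ref{eq:FZ_a'}--\ref{eq:FZ_b'}, this underpins the proof of both~\ref{eq:H1_nu_refl} and~\ref{eq:H2_nu_refl}.

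For~\ref{eq:H1_nu_refl} I would apply the Jacobi involutions $U\mapsto(1-U)/(1-\alpha U)$ and $V\mapsto(1-V)/(1-\beta V)$ simultaneously to the right-hand side, carrying $\mathbb D_{-\nu-1,\alpha}U\mathbb D_{-\nu-1,\beta}V$ back to $\mathbb D_{\nu,\alpha}U\mathbb D_{\nu,\beta}V$. A direct simplification using $1-\beta\phi_\beta(V)=(1-\beta)/(1-\beta V)$ and the expansion $(1-\alpha U)(1-\beta V)-\beta(1-U)(1-V)=(1-\beta)-(\alpha-\beta)U-\beta(1-\alpha)UV$ shows that the transformed log argument becomes $\log\tfrac{(1-\beta)-(\alpha-\beta)U-\beta(1-\alpha)UV}{(1-\alpha U)(1-\beta)}$. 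Hence~\ref{eq:H1_nu_refl} reduces to showing that
\[
\iint\log\frac{(1-\beta UV)(1-\alpha U)(1-\beta)}{(1-\beta V)\bigl[(1-\beta)(1-\alpha U)+\beta(1-\alpha)U(1-V)\bigr]}\mathbb D_{\nu,\alpha}U\mathbb D_{\nu,\beta}V=0.
\]
The factorization $(1-\beta)(1-\alpha U)+\beta(1-\alpha)U(1-V)=(1-\beta)[1-\tfrac{\alpha-\beta}{1-\beta}U-\tfrac{\beta(1-\alpha)}{1-\beta}UV]$ isolates the contributions $\log(1-\alpha U)$ and $\log(1-\beta)$, which are controlled by~\ref{eq:FZ_a'} and~\ref{eq:FZ_b'}. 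After these cancellations, the remaining combination of four logarithms can be arranged to coincide with the integrand of Eq.~\ref{eq:Ramanujan_recip} (or its $\alpha\leftrightarrow\beta$ counterpart, obtained by swapping the roles of $U$ and $V$ in Lemma~\ref{lm:Ramanujan_recip}), so that the integral vanishes.

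For~\ref{eq:H2_nu_refl} I would apply the Jacobi involutions $U\mapsto(1-U)/(1-(1-\alpha)U)$ and $V\mapsto(1-V)/(1-\beta V)$ to the left-hand side, interchanging the two Legendre-expectation degrees on each variable so as to match the right-hand side. A short calculation, using that $(1-(1-\alpha)U)(1-\beta V)-(1-\alpha)\beta UV = 1-(1-\alpha)U-\beta V$, transforms the logarithmic integrand into $\log\tfrac{1-(1-\alpha)U-\beta V}{(1-(1-\alpha)U)(1-\beta V)}$. The identity~\ref{eq:H2_nu_refl} then reduces to the vanishing of the $\mathbb E_{\nu,1-\alpha}^U\mathbb E_{-\nu-1,\beta}^V$-expectation of the difference between the transformed and original log arguments. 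This vanishing is to be established by a mixed-degree analogue of the Ramanujan rotations used in the proof of Lemma~\ref{lm:Ramanujan_recip}, adapted to the measure $\mathbb D_{\nu,1-\alpha}U\mathbb D_{-\nu-1,\beta}V$, together with~\ref{eq:FZ_a'}--\ref{eq:FZ_b'}.

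The main obstacle is the algebraic bookkeeping required to recognize the post-involution log integrands as Ramanujan-reciprocity integrands plus Frobenius--Zagier remainders, especially in~\ref{eq:H2_nu_refl} where the two Legendre expectations carry distinct degrees and a mixed-type reciprocity---not stated explicitly earlier in the paper---needs to be constructed in situ.
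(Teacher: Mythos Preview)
Your strategy for~\ref{eq:H1_nu_refl}---applying simultaneous Jacobi involutions to both variables and then reducing the difference to Ramanujan reciprocity---is natural, and your algebra up to the expression $(1-\beta)-(\alpha-\beta)U-\beta(1-\alpha)UV$ is correct. But the final step, where you assert that ``the remaining combination of four logarithms can be arranged to coincide with the integrand of Eq.~\ref{eq:Ramanujan_recip}'', is not substantiated and is in fact the whole difficulty. That quadratic form does not factor as a product of the linear terms $1-\alpha UV$, $1-\beta UV$, $1-\alpha U$, $1-\beta V$, $1-U$, $1-V$ appearing in Ramanujan reciprocity; one checks for instance that it equals $(1-\beta)(1-\alpha U)+\beta(1-\alpha)U(1-V)$, a genuine sum. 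Logarithms of such sums cannot be reduced to the Ramanujan-reciprocity combination by the Frobenius--Zagier identities~\ref{eq:FZ_a'}--\ref{eq:FZ_b'}, which evaluate only single-variable integrals of $\log(1-\gamma U)$ or $\log(1-U)$. The paper circumvents this by \emph{not} applying the Jacobi involution directly: instead it unfolds $\mathbb D_{\nu,\alpha}U$ into $\mathbb E^W_{-\nu-1,0}(1-\alpha UW)^{-1}\mathbb D_{\nu,0}U$ via Erd\'elyi's fractional formula, then applies the birational change $\tfrac{U}{1-U}=\tfrac{1}{(1-\beta V)(1-\alpha W)}\tfrac{1-\mathscr U}{\mathscr U}$ to the resulting triple integral, where Ramanujan reciprocity and Eq.~\ref{eq:Erdelyi_log_deriv} close the argument.

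For~\ref{eq:H2_nu_refl} your proposal is more seriously incomplete: you explicitly call for a ``mixed-degree analogue of the Ramanujan rotations\dots not stated explicitly earlier in the paper'' and offer no construction. The paper avoids any such new machinery entirely. Its argument is structural: the unpaired version~\ref{eq:intn_ent_log_form''} of the main entropy identity (obtained by rerunning Propositions~\ref{prop:multi_integrals_Jacobi_involution}--\ref{prop:Legendre_add_rel_intn_ent} without symmetrizing $\nu\leftrightarrow-\nu-1$) equates a Legendre-function expression---manifestly invariant under $\nu\mapsto-\nu-1$ since $P_\nu=P_{-\nu-1}$---to a combination of the two double integrals in~\ref{eq:H1_nu_refl} and~\ref{eq:H2_nu_refl}. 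Once~\ref{eq:H1_nu_refl} is known, the $\nu$-invariance of the left side forces~\ref{eq:H2_nu_refl}. You should look for this kind of leverage from the global identity rather than attempt a direct bivariate-substitution proof.
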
\begin{proof}Without loss of generality, we may demonstrate the claimed identities for $ 0<\beta<\alpha<1$, and perform analytic continuation later afterwards.

To show   Eq.~\ref{eq:H1_nu_refl}, we first put down\begin{align}
\iint\log\frac{1-\beta UV}{1-\beta V}\mathbb D_{\nu,\alpha }U\mathbb D_{\nu,\beta}V=\iint\mathbb E^W_{-\nu-1,0}\frac{\log\frac{1-\beta UV}{1-\beta V}}{1-\alpha UW}\mathbb D_{\nu,0 }U\mathbb D_{\nu,\beta}V
\end{align} and then argue that \begin{align}
&\iint\mathbb E^W_{-\nu-1,0}\frac{\log\frac{1-\beta UV}{1-\beta V}-\log(1-\alpha UW)}{1-\alpha UW}\mathbb D_{\nu,0 }U\mathbb D_{\nu,\beta}V\notag\\={}&\iint\mathbb E^{V}_{\nu,0}\frac{\log\frac{1-\alpha\mathscr  UW}{1-\alpha W}-\log(1-\beta\mathscr  UV)}{1-\beta\mathscr  UV}\mathbb D_{-\nu-1,0 }\mathscr U\mathbb D_{-\nu-1,\alpha}W\notag\\={}&\iint\log\frac{1-\alpha\mathscr  UW}{1-\alpha W}\mathbb D_{-\nu-1,\beta }\mathscr U\mathbb D_{-\nu-1,\alpha}W-\iint\mathbb E^{V}_{\nu,0}\frac{\log(1-\beta\mathscr  UV)}{1-\beta\mathscr  UV}\mathbb D_{-\nu-1,0 }\mathscr U\mathbb D_{-\nu-1,\alpha}W\label{eq:H1_nu_refl_prep}
\end{align} follows from a birational transformation\begin{align}
\frac{U}{1-U}=\frac{1}{1-\beta V}\frac{1}{1-\alpha W}\frac{1-\mathscr U}{\mathscr U}
\end{align} and an integration over $V$. By  the Ramanujan reciprocity, we have \begin{align}{}&
\iint\log\frac{1-\alpha\mathscr  UW}{1-\alpha W}\mathbb D_{-\nu-1,\beta }\mathscr U\mathbb D_{-\nu-1,\alpha}W\notag\\={}&\iint\log\frac{1-\beta\mathscr  UW}{{1-\beta\mathscr U}}\mathbb D_{-\nu-1,\beta }\mathscr U\mathbb D_{-\nu-1,\alpha}W+\iint\log\frac{1-\mathscr U}{1-W}\mathbb D_{-\nu-1,\beta }\mathscr U\mathbb D_{-\nu-1,\alpha}W,
\end{align}while Eq.~\ref{eq:Erdelyi_log_deriv} allows us to deduce\begin{align}&
\iint\log\frac{1-\mathscr U}{1-W}\mathbb D_{-\nu-1,\beta }\mathscr U\mathbb D_{-\nu-1,\alpha}W\notag\\={}&\iint\mathbb E^{V}_{\nu,0}\frac{\log(1-\beta\mathscr  UV)}{1-\beta\mathscr  UV}\mathbb D_{-\nu-1,0 }\mathscr U\mathbb D_{-\nu-1,\alpha}W-\iint\mathbb E^{V}_{\nu,0}\frac{\log(1-\alpha VW)}{1-\alpha VW}\mathbb D_{-\nu-1,\beta }\mathscr U\mathbb D_{-\nu-1,0}W,
\end{align}so Eq.~\ref{eq:H1_nu_refl_prep} leads us to     Eq.~\ref{eq:H1_nu_refl}.

Had we refrained from pairing up $ \nu$ and $ -\nu-1$ in Propositions~\ref{prop:multi_integrals_Jacobi_involution}--\ref{prop:multi_integrals_Jacobi_involution_again}, and summarized the computations in Eqs.~\ref{eq:rel_intn_add_0}, \ref{eq:rel_intn_self_intn}, \ref{eq:rel_ent_princip}, before invoking the Ramanujan reciprocity from Eq.~\ref{eq:Ramanujan_recip}, we would have obtained the following identity for  $ -1<\nu<0$ and $0<\beta<\alpha<1$: \begin{align}&
\frac{\pi^2}{\sin^2(\nu\pi)}\left\{[P_{\nu }(2\beta-1)]^2\int_0^{\beta}\frac{[P_\nu(1-2t)]^2\D t}{t-\alpha}-P_\nu(1-2\beta)P_\nu(2\beta-1)\int_0^{\beta}\frac{P_\nu(1-2t)P_\nu(2t-1)\D t}{t-\alpha}\right\}\notag\\={}&P_\nu(2\beta-1)\left\{P_{\nu}(2\alpha-1)\iint\log\frac{1-\beta UV}{1-\beta V}\mathbb D_{\nu,\alpha}U\mathbb D_{\nu,\beta}V\right.\notag\\{}&\left.- P_{\nu}(1-2\alpha)\iint\log\left[ 1-\frac{\beta(1-\alpha)}{\alpha(1-\beta)}(1-U)(1- V)\right]\mathbb D_{-\nu-1,1-\alpha}U\mathbb D_{\nu,\beta}V\right\},\label{eq:intn_ent_log_form''}\tag{\ref{eq:intn_ent_log_form}$'$}\end{align}
in place of Eq.~\ref{eq:intn_ent_log_form}. From the equation above, we see that the symmetry relation in Eq.~\ref{eq:H2_nu_refl} descends from Eq.~\ref{eq:H1_nu_refl}.
\end{proof}

\begin{proposition}[Kontsevich--Zagier Integrals as Entropy Couplings]\label{prop:HC_int_Ent}\begin{enumerate}[label=\emph{(\alph*)}, ref=(\alph*), widest=a]
\item When $ \nu\in\{-1/6$, $-1/4$, $-1/3$, $-1/2\}$ and $\alpha,1-\alpha,\beta,1-\beta,\frac{\beta(1-\alpha)}{\alpha(1-\beta)}\in\mathbb C\smallsetminus[1,+\infty)$, we have an integral  identity:\begin{align}
&
[P_{\nu }(2\beta-1)]^2\int_0^{\beta}\frac{[P_\nu(1-2t)]^2\D t}{t-\alpha}-P_\nu(1-2\beta)P_\nu(2\beta-1)\int_0^{\beta}\frac{P_\nu(1-2t)P_\nu(2t-1)\D t}{t-\alpha}\notag\\\equiv{}&P_\nu(1-2\alpha)P_\nu(2\alpha-1)P_\nu(1-2\beta)P_\nu(2\beta-1)H_\nu(\alpha\Vert\beta),\pmod{2\pi  i\Lambda_\nu(\alpha,\beta)},\label{eq:S_nu_H_nu}
\end{align}where the integration paths on the left-hand side circumvent singularities of the  integrands. The notations $H_\nu(\alpha\Vert\beta) $ and  $ \Lambda_\nu(\alpha,\beta)$ were defined, respectively,  in Eqs.~\ref{eq:HC_defn_intro} and \ref{eq:Lambda_nu_alpha_beta_defn}.
\item For $ \nu\in(-1,0),\beta\in(\mathbb C\smallsetminus\mathbb R)\cup(0,1)$, the following integral identity holds:
\begin{align}&\frac{\pi^2}{\sin^2(\nu\pi)}\left\{P_{\nu }(2\beta-1)\int_0^{\beta}[P_\nu(1-2t)]^2\D t-P_\nu(1-2\beta)\int_0^{\beta}P_\nu(1-2t)P_\nu(2t-1)\D t\right\}\notag\\={}&
\frac{\sqrt{\pi }}{2^{2\nu+1}\Gamma (-\nu ) \Gamma \left(\nu +\frac{3}{2}\right)}\iint U^\nu\log(1-\beta UV)\mathbb D_{\nu,0}U\mathbb D_{\nu,\beta}V,\label{eq:Pnu_Pnu_degen_coupling}\end{align}with all the integration paths being straight-line segments.
\end{enumerate}\end{proposition}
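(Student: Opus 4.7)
The strategy for (a) is to chain the multiple-integral transformations developed throughout \S\ref{sec:Interaction_Entropies_Transformations}. Starting from Theorem~\ref{thm:KZ_precursor} and successively invoking Eqs.~\ref{eq:Pnu_sqr_KZ_int2trip_int}--\ref{eq:Pnu_mir_KZ_int2trip_int}, \ref{eq:FZ_a'}--\ref{eq:FZ_b'}, \ref{eq:log_self_sqr}--\ref{eq:log_self_mir}, \ref{eq:Pnu_sqr_intn_bir}--\ref{eq:Pnu_mir_intn_bir}, \ref{eq:rel_intn_add_0}, and \ref{eq:rel_intn_self_intn}, one reaches an intermediate relation valid for $0<\beta<\alpha<1$ which, after applying Ramanujan reciprocity (Lemma~\ref{lm:Ramanujan_recip}) and the $\nu$-reflection symmetries of Lemma~\ref{lm:nu_reflection_symm}, becomes the $\nu$-irreducible form Eq.~\ref{eq:intn_ent_log_form''}.

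Next, I translate each double integral on the right-hand side of Eq.~\ref{eq:intn_ent_log_form''} into Legendre expectations via the identity $\iint f\,\mathbb{D}_{\nu,\alpha}U\,\mathbb{D}_{\nu,\beta}V = \tfrac{\pi^2 P_\nu(1-2\alpha)P_\nu(1-2\beta)}{\sin^2(\nu\pi)}\,\mathbb{E}^U_{\nu,\alpha}\mathbb{E}^V_{\nu,\beta}f$ (and the analogous formula for the pair $(-\nu-1,1-\alpha;\nu,\beta)$, which produces the factor $P_\nu(2\alpha-1)$ since $P_{-\nu-1}=P_\nu$). The exterior factors $P_\nu(2\beta-1)P_\nu(2\alpha-1)$ and $-P_\nu(2\beta-1)P_\nu(1-2\alpha)$ combine with the Legendre-function products extracted from the expectations to produce the overall prefactor $\tfrac{\pi^2}{\sin^2(\nu\pi)}P_\nu(1-2\alpha)P_\nu(2\alpha-1)P_\nu(1-2\beta)P_\nu(2\beta-1)$. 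The remaining bracket matches $H_\nu(\alpha\Vert\beta)$ per Eq.~\ref{eq:HC_defn_intro}, once one swaps $(-\nu-1,1-\alpha;\nu,\beta)$ for $(\nu,1-\alpha;-\nu-1,\beta)$ in the second summand (legitimate by Eq.~\ref{eq:H2_nu_refl}); the $\tfrac{\pi^2}{\sin^2(\nu\pi)}$ prefactors then cancel, yielding Eq.~\ref{eq:S_nu_H_nu} on $\{0<\beta<\alpha<1\}$.

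Both sides of Eq.~\ref{eq:S_nu_H_nu} are holomorphic on the connected region $\{\alpha,1-\alpha,\beta,1-\beta,\tfrac{\beta(1-\alpha)}{\alpha(1-\beta)}\in\mathbb{C}\setminus[1,+\infty)\}$, which contains $\{0<\beta<\alpha<1\}$ as an open subset, so the identity extends throughout by analytic continuation. The congruence $\pmod{2\pi i\Lambda_\nu(\alpha,\beta)}$ records the winding-number freedom of the integration paths on the left-hand side: each loop around the simple pole $t=\alpha$ contributes a residue in the integer span of $[P_\nu(1-2\alpha)]^2[P_\nu(2\beta-1)]^2$, $P_\nu(1-2\alpha)P_\nu(2\alpha-1)P_\nu(1-2\beta)P_\nu(2\beta-1)$, and $[P_\nu(2\alpha-1)]^2[P_\nu(1-2\beta)]^2$, which together exhaust the generators of $\Lambda_\nu(\alpha,\beta)$ in Eq.~\ref{eq:Lambda_nu_alpha_beta_defn}.

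For part (b), the plan is to start directly from the triple-integral representations Eqs.~\ref{eq:Pnu_sqr_F_trip_int}--\ref{eq:Pnu_mir_F_trip_int} for $[P_\nu(1-2t)]^2$ and $P_\nu(1-2t)P_\nu(2t-1)$, and to exchange the order of integration so that the $t$-integration from $0$ to $\beta$ is performed first, producing logarithmic factors. The resulting combination of two triple integrals (weighted by $P_\nu(2\beta-1)$ and $-P_\nu(1-2\beta)$) simplifies through a Jacobi involution on the variable $W$ (cf.~Eq.~\ref{eq:Jacobi_involution_U}), followed by an Erd\'elyi-type reduction (Eq.~\ref{eq:Erdelyi_frac}) collapsing one of the remaining auxiliary variables, leaving the double integral on the right-hand side of Eq.~\ref{eq:Pnu_Pnu_degen_coupling}. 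The coefficient $\tfrac{\sqrt{\pi}}{2^{2\nu+1}\Gamma(-\nu)\Gamma(\nu+3/2)}$ rewrites as $\tfrac{\Gamma(\nu+1)}{\Gamma(-\nu)\Gamma(2\nu+2)} = -\tfrac{\sin(\nu\pi)}{\pi}B(\nu+1,\nu+1)$ (via Legendre's duplication formula and Euler reflection), and materialises as the Beta-function factor $B(\nu+1,\nu+1)$ produced by an EFS consolidation (Lemma~\ref{lm:Feynman_parameter}) of the two Legendre-function denominators. The main technical subtlety is managing the signs and branches through the substitutions, and identifying the correct Wro\'nskian-type cancellation that pairs up the two summands on the left-hand side.
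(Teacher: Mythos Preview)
Your treatment of (a) is correct and matches the paper: the one-line proof there (``This is an analytic continuation of Eq.~\ref{eq:intn_ent_log_form}'') is precisely the summary of the transformation chain you describe, and your translation of the double integrals into Legendre expectations together with the $\nu$-reflection symmetry (Lemma~\ref{lm:nu_reflection_symm}) is exactly what identifies the right-hand side with $H_\nu(\alpha\Vert\beta)$.

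For (b), your approach diverges from the paper's and is underspecified. The paper does \emph{not} rebuild the identity from the triple-integral representations; instead it recycles part (a). Concretely, it applies a $\nu$-reflection and a Jacobi involution $\mathscr V=\tfrac{1-V}{1-\beta V}$ to the second double integral in Eq.~\ref{eq:intn_ent_log_form''}, obtaining the variant Eq.~\ref{eq:H_nu_Jac_inv}, valid for $\beta\in(0,1)$ and $\alpha\in(\mathbb C\smallsetminus\mathbb R)\cup(\beta,1)$; it then multiplies both sides by $-\alpha/P_\nu(2\beta-1)$ and sends $\alpha\to i\infty$. On the left, $-\alpha/(t-\alpha)\to1$ produces exactly the pole-free integrals of (b). On the right, the argument $\tfrac{1-(1-\alpha)U}{\alpha}$ in the second logarithm tends to $U$, so both double integrals acquire the common kernel $\log(1-\beta UV)$, while the large-$\alpha$ asymptotics of $P_\nu(2\alpha-1)$, $P_\nu(1-2\alpha)$, $\mathbb D_{\nu,\alpha}U$ and $\mathbb D_{\nu,1-\alpha}U$ combine to yield the extra factor $U^\nu$ and the constant $\tfrac{\sqrt{\pi}}{2^{2\nu+1}\Gamma(-\nu)\Gamma(\nu+3/2)}$. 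Your direct route---a fresh Jacobi involution on $W$ followed by an unspecified Erd\'elyi collapse and a ``Wro\'nskian-type cancellation''---may be workable in principle, but you have not identified which substitution produces the anomalous $U^\nu$ in the integrand, nor the mechanism that singles out that particular Gamma-quotient (your Beta-function rewrite is correct algebra, but does not by itself explain why $B(\nu+1,\nu+1)$ rather than, say, $B(2\nu+1,-\nu)$ should emerge from your manipulations). The paper's limit argument is both shorter and more robust, since it reuses the hard analysis already invested in (a).
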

\begin{proof}\begin{enumerate}[label=(\alph*),widest=a]\item This is an analytic continuation of Eq.~\ref{eq:intn_ent_log_form}.

\item
For $ 0<\beta<\alpha<1$, we may apply a $ \nu$-reflection $ \nu\mapsto-\nu-1$ and a  Jacobi involution $\mathscr  V=\frac{1-V}{1-\beta V}$ to the last  integral in Eq.~\ref{eq:intn_ent_log_form''}, which results in an exact equality:\begin{align}
&\frac{\pi^2}{\sin^2(\nu\pi)}\left\{[P_{\nu }(2\beta-1)]^2\int_0^{\beta}\frac{[P_\nu(1-2t)]^2\D t}{t-\alpha}-P_\nu(1-2\beta)P_\nu(2\beta-1)\int_0^{\beta}\frac{P_\nu(1-2t)P_\nu(2t-1)\D t}{t-\alpha}\right\}\notag\\={}&P_{\nu}(2\alpha-1)P_\nu(2\beta-1)\iint\log(1-\beta UV)\mathbb D_{\nu,\alpha}U\mathbb D_{\nu,\beta}V\notag\\{}&- P_{\nu}(1-2\alpha)P_\nu(2\beta-1)\iint\log\left[1-\frac{1-(1-\alpha)U}{\alpha}\beta\mathscr  V\right]\mathbb D_{\nu,1-\alpha}U\mathbb D_{\nu,\beta}\mathscr V.\label{eq:H_nu_Jac_inv}\end{align} This extends to a valid identity for $ \beta\in(0,1)$ and $ \alpha\in(\mathbb C\smallsetminus\mathbb R)\cup(\beta,1)$. Multiplying both sides of the equation above by $ -\alpha/P_\nu(2\beta-1)$ and taking the $ \alpha\to i\infty$ limit, we arrive at Eq.~\ref{eq:Pnu_Pnu_degen_coupling} for $ \beta\in(0,1)$. The rest follows from analytic continuation.
 \qedhere\end{enumerate}\end{proof}\begin{remark}We note that the sum of the     last two integrals in  Eq.~\ref{eq:intn_ent_log_form} is invariant under the variable substitutions $ (\alpha,\beta)\mapsto(1-\beta,1-\alpha)$. Thus, a comparison between   Eqs.~\ref{eq:intn_ent_log_form} and  \ref{eq:intn_ent_log_form'}  also results in an independent verification of the reciprocity relation for $ S_\nu(\alpha\Vert\beta)-S_\nu(\beta\Vert\alpha)$, which was mentioned in Eq.~\ref{eq:S_recip}.\eor\end{remark}\begin{remark}For $ \nu\in\{-1/6,-1/4,-1/3\}$,  Eq.~\ref{eq:Pnu_Pnu_degen_coupling} has appeared in the Kontsevich--Zagier integral representations for $ G_2^{\mathfrak H/\overline{\varGamma}_0(N)}(z_{1},z_{2}),N=4\sin^2(\nu\pi)\in\{1,2,3\}$, where $\alpha_N(z)=\infty,\alpha_N(z')=\beta $ \cite[][\S2.2]{AGF_PartI}; for $ \nu=-1/2$, Eq.~\ref{eq:Pnu_Pnu_degen_coupling} has appeared an integral representation for the Epstein zeta function \cite[][\S4]{EZF}:\begin{align}&
E^{\varGamma_0(4)}\left( -\frac{1}{4z},2 \right):=-\frac{3}{4\pi}\lim_{\I z'\to+\infty}G_{2}^{\mathfrak H/\overline{\varGamma}_0(4)}(z,z')\I z'\notag\\={}&\frac{21 \zeta (3)}{8 \pi ^3\I z}+\frac{3}{4\pi^{3}}\R\int_0^{\lambda(2z+1)}\frac{[\mathbf K(\sqrt{t})]}{\I z}^2\left[ \frac{i\mathbf K(\sqrt{1-t})}{\mathbf K(\sqrt{t})} -2z-1\right]\left[ \frac{i\mathbf K(\sqrt{1-t})}{\mathbf K(\sqrt{t})} -2\overline{z}-1\right]\D t,\label{eq:Epstein_2_Eichler}
\end{align}where $ \zeta(3)=\sum_{n=1}^\infty n^{-3}=\frac{2}{7}\int_0^1[\mathbf K(\sqrt{t})]^2\D t$ is Ap\'ery's constant, and $ z+\frac12\in\Int\mathfrak D_4$. Specializing Eq.~\ref{eq:Pnu_Pnu_degen_coupling} to the case where $ \nu=-1/2$, we obtain \begin{align}
&[\mathbf K(\sqrt{\vphantom1\smash[b]{1-\mu}})]^2\int_0^\mu[\mathbf K(\sqrt{t})]^2\D t-\mathbf K(\sqrt{\vphantom1\smash[b]{\mu }})\mathbf K(\sqrt{1-\smash[b]{\mu}})\int_0^\mu\mathbf K(\sqrt{t})\mathbf K(\sqrt{1-t})\D t\notag\\={}&\frac{\pi}{2} \mathbf K(\sqrt{1-\smash[b]{\mu}})\int_0^{\pi/2}\int_0^{\pi/2}\frac{\log(1-\mu\sin^2\theta\sin^2\varphi)\D\theta\D\varphi}{\sin\theta\sqrt{\smash[b]{1-\mu\sin^2\varphi}}},\quad \mu\in(\mathbb C\smallsetminus\mathbb R)\cup(0,1)\label{eq:deg_height_coupling_prep}.
\end{align}A limit scenario of the reflection formula for  Ramanujan entropy coupling (Eq.~\ref{eq:R_nu_alpha_beta_refl_form}) leads us to  \begin{align}&
\frac{1}{\mathbf  K(\sqrt{\vphantom1\smash[b]{\mu }})}
\int_0^{\pi/2}\int_0^{\pi/2}\frac{\log(1-\mu\sin^2\theta\sin^2\varphi)\D\theta\D\varphi}{\sin\theta\sqrt{\smash[b]{1-\mu\sin^2\varphi}}}\notag\\{}&-\frac{1}{\mathbf  K(\sqrt{\vphantom1\smash[b]{1-\mu }})}
\int_0^{\pi/2}\int_0^{\pi/2}\frac{\log[1-(1-\mu)\sin^2\theta\sin^2\varphi]\D\theta\D\varphi}{\sin\theta\sqrt{\smash[b]{1-(1-\mu)\sin^2\varphi}}}\notag\\={}&\frac{\pi}{2\mathbf K(\sqrt{\vphantom1\smash[b]{\mu }})\mathbf K(\sqrt{\vphantom1\smash[b]{1-\mu }})}\int_0^{\pi/2}\frac{F(\theta,1-\mu)[2\mathbf K(\sqrt{\vphantom1\smash[b]{1-\mu }})-F(\theta,1-\mu)]\D\theta}{\sin\theta}-\frac{\pi^{2}}{4}.
\end{align}Setting $ \mu=1-\lambda(2z+1)$, and parametrizing the last integral with elliptic functions, we arrive at  \cite[cf.][Theorem 1.1(b)]{EZF}\begin{align}
&E^{\varGamma_0(4)}\left( -\frac{1}{4z},2 \right)\notag\\={}&\frac{3\I z}{4\pi^{2}}\R\frac{\partial}{\partial\I z}\frac{2z+1}{i\I z}\int_0^{\mathbf K(\sqrt{\lambda(2z+1)})}\frac{\dn(u|\lambda(2z+1))}{\sn(u|\lambda(2z+1))}\frac{\pi u[u-2\mathbf K(\sqrt{\lambda(2z+1)})]\D u}{4\mathbf K(\sqrt{\lambda(2z+1)})\mathbf K(\sqrt{1-\lambda(2z+1)})}\notag\\={}&\frac{21\zeta(3)}{8\pi^3\I z}+\frac{6}{\pi^3}\R\frac{\partial}{\partial\I z}\frac{1}{\I z}\sum_{n=0}^\infty\frac{1}{(2n+1)^{3}}\frac{1}{e^{(2n+1)\pi \frac{2z+1}{i}}+1}
\end{align}  for $ z+\frac12\in\Int\mathfrak D_4$, where the last step involves a standard Fourier expansion  \cite[][item 908.06]{ByrdFriedman}\begin{align}
\frac{\dn(u|\lambda(2z+1))}{\sn(u|\lambda(2z+1))}={}&\frac{\pi}{2\mathbf K(\sqrt{\lambda(2z+1)})}\csc\frac{\pi u}{2\mathbf K(\sqrt{\lambda(2z+1)})}\notag\\{}&-\frac{2\pi}{\mathbf K(\sqrt{\lambda(2z+1)})}\sum^\infty_{m=0}\frac{e^{(2n+1)\pi i(2z+1)}}{1+e^{(2n+1)\pi i(2z+1)}}\sin\frac{(2n+1)\pi u}{2\mathbf K(\sqrt{\lambda(2z+1)})},
\end{align} and termwise integration.  \eor\end{remark}

\subsection{Limit behavior of certain entropy formulae\label{subsec:lim_ent}}To prepare for the proof of Theorem~\ref{thm:app_HC} in \S\ref{subsec:G2_Hecke4_GZ_renorm}, we need to analyze some special entropy formulae.

First, we discuss entropy formulae that arise from the asymptotic behavior of $ G_2^{\mathfrak H/\overline{\varGamma}_0(N)}(z,z')=\mathscr I_N(z,z'),N\in\{2,3,4\}$ (Eq.~\ref{eq:G2_Hecke234_via_I}) as $ z'$ approaches $z$. If we set \begin{align}f_{\nu}(\alpha):={}&\int_0^{\alpha}\frac{[P_\nu(1-2t)]^2-[P_\nu(1-2\alpha)]^2}{(t-\alpha)P_\nu(1-2\alpha)P_\nu(2\alpha-1)}\D t,\label{eq:f_nu_alpha_defn}\\g_\nu(\alpha):={}&\int_0^{\alpha}\frac{P_\nu(1-2t)P_\nu(2t-1)-P_\nu(1-2\alpha)P_\nu(2\alpha-1)}{(t-\alpha)P_\nu(1-2\alpha)P_\nu(2\alpha-1)}\D t,\label{eq:g_nu_alpha_defn}\end{align} and  (cf.~Eq.~\ref{eq:trip_int_part0})\begin{align}
\mathfrak Q_{\nu}({\alpha}):={}&\int_0^1\left\{\int_0^U\left[\int_W^1\frac{\D V}{\mathbb Y_{\nu,\alpha}(V)}\right]\frac{\D W}{\mathbb Y_{-\nu-1,\alpha}(W)}\right\}\frac{\D U}{\mathbb Y_{\nu,\alpha}(U)}\notag\\{}&+\int_0^1\left\{\int_0^U\left[\int_W^1\frac{\D V}{\mathbb Y_{-\nu-1,\alpha}(V)}\right]\frac{\D W}{\mathbb Y_{\nu,\alpha}(W)}\right\}\frac{\D U}{\mathbb Y_{-\nu-1,\alpha}(U)}\notag\\={}&\int_0^1\left[\int_W^1\frac{\D V}{\mathbb Y_{\nu,\alpha}(V)}\right]^2\frac{\D W}{\mathbb Y_{-\nu-1,\alpha}(W)}+\int_0^1\left[\int_W^1\frac{\D V}{\mathbb Y_{-\nu-1,\alpha}(V)}\right]^2\frac{\D W}{\mathbb Y_{\nu,\alpha}(W)},\label{eq:frakQ_nu_alpha_defn}
\end{align} then we may combine Eqs.~\ref{eq:Pnu_sqr_KZ_int2trip_int}, \ref{eq:Pnu_mir_KZ_int2trip_int}, \ref{eq:log_self_sqr}, \ref{eq:log_self_mir},  and  \ref{eq:trip_indef_to_double_log_def} into the following identity:\begin{align}&
\frac{P_\nu(2\alpha-1)}{P_\nu(1-2\alpha)}f_{\nu}(\alpha)-g_\nu(\alpha)=-\frac{\pi}{2\sin(\nu\pi)}\frac{P_\nu(1-2\alpha)}{P_\nu(2\alpha-1)}-\frac{\sin^{2}(\nu\pi)\mathfrak Q_{\nu}({\alpha)}}{2\pi^2[P_\nu(1-2\alpha)]^{2}P_\nu(2\alpha-1)}.\label{eq:f-g}
\end{align}Furthermore, \cite[][Eq.~57]{Zhou2013Pnu} can be readily generalized into\begin{align}
g_\nu(\alpha)-g_\nu(1-\alpha)=-\frac{\pi}{2\sin(\nu\pi)}\left[ \frac{P_\nu(2\alpha-1)}{P_\nu(1-2\alpha)} -\frac{P_\nu(1-2\alpha)}{P_\nu(2\alpha-1)}\right]+\log\frac{\alpha}{1-\alpha}.\label{eq:g-g}
\end{align}

In the following lemma, we shall investigate the limit behavior of $f_\nu(\alpha)$, $g_\nu(\alpha)$ and $ \mathfrak Q_\nu(\alpha)$ as $\alpha(1-\alpha)$ tends to zero. \begin{lemma}[Limit Behavior of Self-Interaction Entropies as $\alpha\to0$ and $ \alpha\to1$]\label{lm:cusp_limits}\begin{enumerate}[label=\emph{(\alph*)}, ref=(\alph*), widest=a]
\item As $ \alpha\to0$, we have the following expansions:\begin{align}\frac{\sin^{2}(\nu\pi)\mathfrak Q_{\nu}({\alpha)}}{2\pi^2[P_\nu(1-2\alpha)]^{2}P_\nu(2\alpha-1)}={}&o(1),\label{eq:Q_nu_alpha0}\\\widehat D_\alpha\frac{\sin^{2}(\nu\pi)\mathfrak Q_{\nu}({\alpha)}}{2\pi^2[P_\nu(1-2\alpha)]^{2}P_\nu(2\alpha-1)}={}&o(1),\label{eq:D_Q_nu_alpha0}\\\widehat D^2_\alpha\frac{\sin^{2}(\nu\pi)\mathfrak Q_{\nu}({\alpha)}}{2\pi^2[P_\nu(1-2\alpha)]^{2}P_\nu(2\alpha-1)}={}&o(1),\label{eq:DD_Q_nu_alpha0}\\
\frac{P_\nu(2\alpha-1)}{P_\nu(1-2\alpha)}f_{\nu}(\alpha)={}&o(1),\label{eq:f_nu_alpha0}\\g_\nu(\alpha)={}&o(1),\label{eq:g_nu_alpha0}
\end{align}where\begin{align}\widehat D_\alpha:=
\frac{\pi\alpha(1-\alpha)}{\sin(\nu\pi)}P_\nu(1-2\alpha)P_\nu(2\alpha-1) \frac{\partial}{\partial\alpha},\label{eq:op_D_alpha}
\end{align}and $o(1)$ represents an infinitesimal quantity in the limit procedure, during which the  branch cut of $ P_\nu(2\alpha-1),\alpha\in\mathbb C\smallsetminus(-\infty,0]$ is avoided. \item As $ \alpha\to1$, without interfering with the branch cut of  $ P_\nu(1-2\alpha),\alpha\in\mathbb C\smallsetminus[1,+\infty)$, we have the following expansions:\begin{align}\frac{\sin^{2}(\nu\pi)\mathfrak Q_{\nu}({\alpha)}}{2\pi^2[P_\nu(1-2\alpha)]^{2}P_\nu(2\alpha-1)}={}&-\frac{\pi}{3\sin(\nu\pi)}\frac{P_\nu(1-2\alpha)}{P_\nu(2\alpha-1)}+o(1),\label{eq:Q_nu_alpha1}\\\widehat D_\alpha\frac{\sin^{2}(\nu\pi)\mathfrak Q_{\nu}({\alpha)}}{2\pi^2[P_\nu(1-2\alpha)]^{2}P_\nu(2\alpha-1)}={}&+\frac{\pi}{3\sin(\nu\pi)}\frac{P_\nu(1-2\alpha)}{P_\nu(2\alpha-1)}+o(1),\label{eq:D_Q_nu_alpha1}\\\widehat D^2_\alpha\frac{\sin^{2}(\nu\pi)\mathfrak Q_{\nu}({\alpha)}}{2\pi^2[P_\nu(1-2\alpha)]^{2}P_\nu(2\alpha-1)}={}&-\frac{\pi}{3\sin(\nu\pi)}\frac{P_\nu(1-2\alpha)}{P_\nu(2\alpha-1)}+o(1),\label{eq:DD_Q_nu_alpha1}\\
\frac{P_\nu(2\alpha-1)}{P_\nu(1-2\alpha)}f_{\nu}(\alpha)={}&+\frac{\pi}{3\sin(\nu\pi)}\frac{P_\nu(1-2\alpha)}{P_\nu(2\alpha-1)}-\log(1-\alpha)+o(1),\label{eq:f_nu_alpha1}\\g_\nu(\alpha)={}&+\frac{\pi}{2\sin(\nu\pi)}\frac{P_\nu(1-2\alpha)}{P_\nu(2\alpha-1)}-\log(1-\alpha)+o(1).\label{eq:g_nu_alpha1}
\end{align} \end{enumerate}\end{lemma}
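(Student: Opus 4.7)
The plan is to regard the algebraic identity~\eqref{eq:f-g} as the structural bridge between $\mathfrak Q_\nu(\alpha)$, $f_\nu(\alpha)$ and $g_\nu(\alpha)$: once the cusp asymptotics of $f_\nu$ and $g_\nu$ are pinned down, those of $\mathfrak Q_\nu$ follow algebraically. The reflection~\eqref{eq:g-g} will transport $g_\nu$-information between $\alpha\to 0$ and $\alpha\to 1$. The harder ingredient is $f_\nu$ at $\alpha\to 1$, which I will extract from the self-interaction entropy formula~\eqref{eq:log_self_sqr} specialized to $\beta=\alpha$. All derivative estimates rely on the eigen-identity $\widehat D_\alpha(P_\nu(1-2\alpha)/P_\nu(2\alpha-1))=P_\nu(1-2\alpha)/P_\nu(2\alpha-1)$, which follows from the Wronskian $P_\nu'(\xi)P_\nu(-\xi)+P_\nu(\xi)P_\nu'(-\xi)=-2\sin(\nu\pi)/[\pi(1-\xi^2)]$ together with $1-(2\alpha-1)^2=4\alpha(1-\alpha)$.

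For part~(a), the integrands in \eqref{eq:f_nu_alpha_defn} and \eqref{eq:g_nu_alpha_defn} are uniformly bounded on $[0,\alpha]$ because $[P_\nu(1-2t)]^2$ and $P_\nu(1-2t)P_\nu(2t-1)$ are holomorphic near $t=0$, so the $1/(t-\alpha)$ factor is cancelled by the vanishing of the numerator at $t=\alpha$. Hence $f_\nu(\alpha),g_\nu(\alpha)=O(\alpha)$ and, using $P_\nu(1-2\alpha)=1+O(\alpha)$ and $P_\nu(2\alpha-1)\sim-(\sin(\nu\pi)/\pi)\log\alpha$, one obtains \eqref{eq:f_nu_alpha0} and \eqref{eq:g_nu_alpha0}; substituting into \eqref{eq:f-g} and using $P_\nu(1-2\alpha)/P_\nu(2\alpha-1)=O(1/|\log\alpha|)$ yields \eqref{eq:Q_nu_alpha0}. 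The estimates \eqref{eq:D_Q_nu_alpha0}--\eqref{eq:DD_Q_nu_alpha0} come from iterating $\widehat D_\alpha$: each application either differentiates the upper limit $\alpha$ (producing a boundary term with an extra factor $(1-\alpha)P_\nu(1-2\alpha)P_\nu(2\alpha-1)=O(\log\alpha)$ that still vanishes), or acts inside the integrand, where the Appell operator~\eqref{eq:At_diff_op} provides uniform control of higher $t$-derivatives of $[P_\nu(1-2t)]^2$ near $t=0$.

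For part~(b), reflection \eqref{eq:g-g} combined with the $\alpha\to 0$ asymptotic of $g_\nu(1-\alpha)$ from part~(a) delivers \eqref{eq:g_nu_alpha1}. For \eqref{eq:f_nu_alpha1}, I specialize \eqref{eq:Pnu_sqr_KZ_int2trip_int} at $\beta=\alpha$ and invoke \eqref{eq:log_self_sqr} to obtain a representation
\[
f_\nu(\alpha)P_\nu(1-2\alpha)P_\nu(2\alpha-1)=-\frac{\pi P_\nu(1-2\alpha)P_\nu(2\alpha-1)}{2\sin(\nu\pi)}+[P_\nu(1-2\alpha)]^2\!\left[\gamma_0+\tfrac{\psi^{(0)}(-\nu)+\psi^{(0)}(\nu+1)}{2}-\log\tfrac{1-\alpha}{\sqrt\alpha}\right]+\mathscr J(\alpha),
\]
where $\mathscr J(\alpha)$ is the symmetric double integral appearing as the last summand of \eqref{eq:log_self_sqr}. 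The $\alpha\to 1$ asymptotic of $\mathscr J(\alpha)/[P_\nu(1-2\alpha)]^2$ is then pried out using the Frobenius--Zagier expansions \eqref{eq:FZ_a'}--\eqref{eq:FZ_b'}, which isolate the principal logarithmic singularities of $\log(1-\alpha UW)$ against the measures $\mathbb D_{\nu,\alpha}\mathbb D_{\nu,\alpha}$; after dividing by $P_\nu(1-2\alpha)P_\nu(2\alpha-1)$ the $-\log(1-\alpha)$ term in \eqref{eq:f_nu_alpha1} emerges from $-\log((1-\alpha)/\sqrt\alpha)$ and the residual $(\pi/(3\sin(\nu\pi)))P_\nu(1-2\alpha)/P_\nu(2\alpha-1)$ from $\mathscr J(\alpha)$ — this is the generic-$\nu$ analog of the closed-form evaluation \eqref{eq:log_self_sqr_K} for $\nu=-1/2$. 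Feeding \eqref{eq:f_nu_alpha1} and \eqref{eq:g_nu_alpha1} back into \eqref{eq:f-g} produces \eqref{eq:Q_nu_alpha1}.

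The principal obstacle is pinning down the universal coefficient $1/3$ of $P_\nu(1-2\alpha)/P_\nu(2\alpha-1)$ in $\mathscr J(\alpha)$ for generic $\nu\in(-1,0)$, since for $\nu\neq -1/2$ we cannot parametrize by Jacobi elliptic functions as in the derivation of \eqref{eq:log_self_sqr_K}. My strategy is to act with $\widehat D_\alpha$ on $\mathscr J(\alpha)$ twice and use the Appell-type ODE \eqref{eq:At_diff_op}, reducing the leading asymptotic to an initial-value problem on the two-dimensional solution space spanned by $\{[P_\nu(1-2\alpha)]^2,\, P_\nu(1-2\alpha)P_\nu(2\alpha-1)\}$; the boundary data already known from part~(a) at $\alpha\to 0$ then fix the constant $1/3$. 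For the derivative estimates \eqref{eq:D_Q_nu_alpha1}--\eqref{eq:DD_Q_nu_alpha1}, the sign-alternation in the leading coefficient is delicate: although $\widehat D_\alpha^k(P_\nu(1-2\alpha)/P_\nu(2\alpha-1))=P_\nu(1-2\alpha)/P_\nu(2\alpha-1)$ preserves the eigenfunction, the $o(1)$ remainder in \eqref{eq:Q_nu_alpha1} is \emph{not} $o(1)$ after $\widehat D_\alpha$ because $\widehat D_\alpha(-\log(1-\alpha))=\pi\alpha P_\nu(1-2\alpha)P_\nu(2\alpha-1)/\sin(\nu\pi)\sim(\pi/\sin(\nu\pi))P_\nu(1-2\alpha)/P_\nu(2\alpha-1)$, producing a contribution of size $+2\pi/(3\sin(\nu\pi))\cdot P_\nu(1-2\alpha)/P_\nu(2\alpha-1)$ that flips the sign. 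Precise bookkeeping of these interchange-of-limit contributions, using \eqref{eq:f-g} together with the individual asymptotics of $\widehat D_\alpha f_\nu$ and $\widehat D_\alpha g_\nu$ (obtained by applying $\widehat D_\alpha$ to \eqref{eq:f_nu_alpha1} and \eqref{eq:g_nu_alpha1} with careful remainder tracking), will complete the proof.
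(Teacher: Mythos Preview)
Your proposal has two concrete problems, one fixable and one structural.

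\textbf{Sign error in the eigen-identity.} Your claimed Wronskian $P_\nu'(\xi)P_\nu(-\xi)+P_\nu(\xi)P_\nu'(-\xi)=-2\sin(\nu\pi)/[\pi(1-\xi^2)]$ has the wrong sign; the correct formula is $+2\sin(\nu\pi)/[\pi(1-\xi^2)]$ (check $\nu=-1/2$, $\xi=0$ numerically, or derive it from $\partial\alpha_N/\partial z=2\pi i\alpha(1-\alpha)[P_\nu(1-2\alpha)]^2$ together with $q/p=-i\sqrt N z$). Consequently $\widehat D_\alpha\bigl(P_\nu(1-2\alpha)/P_\nu(2\alpha-1)\bigr)=-P_\nu(1-2\alpha)/P_\nu(2\alpha-1)$, not $+$. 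This sign is exactly what produces the alternation in \eqref{eq:Q_nu_alpha1}--\eqref{eq:DD_Q_nu_alpha1}: once the leading term is $-\frac{\pi}{3\sin(\nu\pi)}p/q$ and the remainder is $O(q/p)$ with bounded $\alpha$-derivative, each application of $\widehat D_\alpha$ flips the sign of the leading term while keeping the remainder $o(1)$. Your contorted explanation via ``the $o(1)$ remainder is not $o(1)$ after $\widehat D_\alpha$ because of a hidden $-\log(1-\alpha)$'' is both unnecessary and wrong---there is no $\log(1-\alpha)$ inside the $o(1)$ of \eqref{eq:Q_nu_alpha1}.

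\textbf{The $1/3$ constant.} Your plan to extract the coefficient from $\mathscr J(\alpha)$ by ``acting with $\widehat D_\alpha$ twice, invoking the Appell ODE, and solving an initial-value problem with boundary data at $\alpha\to0$'' is not a proof: $\mathscr J(\alpha)$ does not satisfy the homogeneous Appell equation, and boundary data at the \emph{other} cusp cannot pin down a local asymptotic coefficient without global control you have not established. The paper avoids this entirely by working directly with the integral definition \eqref{eq:frakQ_nu_alpha_defn} of $\mathfrak Q_\nu$. Writing $a(W)=\int_W^1 dV/\mathbb Y_{\nu,\alpha}(V)$ and $b(W)=\int_W^1 dV/\mathbb Y_{-\nu-1,\alpha}(V)$, one has $\mathfrak Q_\nu=-\int a^2\,db-\int b^2\,da$; adding and subtracting $-\int a^2\,da-\int b^2\,db=\tfrac{1}{3}(a(0)^3+b(0)^3)=-\tfrac{2\pi^3}{3\sin^3(\nu\pi)}[P_\nu(1-2\alpha)]^3$ and integrating by parts once more gives
\[
-\frac{\sin^3(\nu\pi)}{\pi^3}\frac{\mathfrak Q_\nu(\alpha)}{[P_\nu(1-2\alpha)]^3}-\frac{2}{3}=-\frac{\sin^3(\nu\pi)}{2\pi^3[P_\nu(1-2\alpha)]^3}\int_0^1(a-b)^2\Bigl[\frac{1}{\mathbb Y_{\nu,\alpha}}+\frac{1}{\mathbb Y_{-\nu-1,\alpha}}\Bigr]dW.
\]
The constant $2/3$ drops out algebraically from $\int a^2\,da=a(0)^3/3$, and the right-hand side is shown to be $O((q/p)^2)$ by bounding $(a-b)^2$ uniformly in $W$ via a Jacobi-imaginary substitution $W=w/(w-1)$. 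This yields \eqref{eq:Q_nu_alpha1}--\eqref{eq:DD_Q_nu_alpha1} directly; then \eqref{eq:g_nu_alpha1} comes from \eqref{eq:g-g} as you say, and \eqref{eq:f_nu_alpha1} is obtained by \emph{back-substituting} into \eqref{eq:f-g}---the opposite order from your plan. Your route through $\mathscr J(\alpha)$ is in principle equivalent (via \eqref{eq:trip_indef_to_double_log_def} at $\beta=\alpha$ it is the same integral in disguise), but you have not supplied the key step that makes the constant visible.
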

\begin{proof}\begin{enumerate}[label=(\alph*), ref=(\alph*), widest=a]
\item Judging from Eq.~\ref{eq:frakQ_nu_alpha_defn}, we know that $ \mathfrak Q_{\nu}({\alpha)}$ admits a convergent Taylor series in an open neighborhood of the origin in the complex $ \alpha$-plane, so Eqs.~\ref{eq:Q_nu_alpha0}--\ref{eq:DD_Q_nu_alpha0} immediately follow. Using the definition of $f_\nu$ in Eq.~\ref{eq:f_nu_alpha_defn}, one can justify Eq.~\ref{eq:f_nu_alpha0}. By Eq.~\ref{eq:f-g}, one subsequently verifies Eq.~\ref{eq:g_nu_alpha0}.\item We first need to  show that \begin{align}
-\frac{\sin^3(\nu\pi)}{\pi^3}\frac{\mathfrak Q_{\nu}({\alpha})}{[P_\nu(1-2\alpha)]^{3}}={}&\frac{2}{3}+O\left( \left[\frac{P_\nu(2\alpha-1)}{P_{\nu}(1-2\alpha)}\right]^2 \right),&\alpha\to{}&1-0^{+}.\label{eq:two_thirds_lim}
\end{align}By Eq.~\ref{eq:frakQ_nu_alpha_defn}, we may confirm \begin{align}{}&
-\frac{\sin^3(\nu\pi)}{\pi^3}\frac{\mathfrak Q_{\nu}({\alpha})}{[P_\nu(1-2\alpha)]^{3}}-\frac{2}{3}\notag\\={}&-\frac{\sin^3(\nu\pi)}{\pi^3[P_\nu(1-2\alpha)]^{3}}\int_0^1\left\{\left[\int_W^1\frac{\D V}{\mathbb Y_{\nu,\alpha}(V)}\right]^2-\left[\int_W^1\frac{\D V}{\mathbb Y_{-\nu-1,\alpha}(V)}\right]^2\right\}\left[\frac{1}{\mathbb Y_{-\nu-1,\alpha}(W)}-\frac{1}{\mathbb Y_{\nu,\alpha}(W)}\right]\D W\notag\\={}&-\frac{\sin^3(\nu\pi)}{2\pi^3[P_\nu(1-2\alpha)]^{3}}\int_0^1\left[\int_W^1\frac{\D V}{\mathbb Y_{\nu,\alpha}(V)}-\int_W^1\frac{\D V}{\mathbb Y_{-\nu-1,\alpha}(V)}\right]^2\left[\frac{1}{\mathbb Y_{-\nu-1,\alpha}(W)}+\frac{1}{\mathbb Y_{\nu,\alpha}(W)}\right]\D W.\label{eq:two_thirds_diff}
\end{align}after integration by parts. Then, it would suffice to show that\begin{align}
\left[\int_W^1\frac{\D V}{\mathbb Y_{\nu,\alpha}(V)}-\int_W^1\frac{\D V}{\mathbb Y_{-\nu-1,\alpha}(V)}\right]^2,\quad W\in(0,1)
\end{align}is a bounded quantity as $ \alpha\to1-0^{+}$. By a variable substitution $ W=w/(w-1)$ (an analog of  Jacobi's imaginary  transformation in elliptic function theory), we see that \begin{align}{}&
\left[\int_W^1\frac{\D V}{\mathbb Y_{\nu,\alpha}(V)}-\int_W^1\frac{\D V}{\mathbb Y_{-\nu-1,\alpha}(V)}\right]^2\notag\\={}&\left\{\int_0^w\frac{(-u)^\nu[1-(1-\alpha)u]^\nu\D u}{(1-u)^{\nu+1}}-\int_0^w\frac{(-u)^{-\nu-1}[1-(1-\alpha)u]^{-\nu-1}\D u}{(1-u)^{-\nu}}\right\}^2\label{eq:int_diff_W_to_w}
\end{align} for $ w<0$, and the right-hand side of the equation above is invariant as one trades $w$ for $ \frac{1}{(1-\alpha)w}$. Therefore, it would suffice to bound the right-hand side of Eq.~\ref{eq:int_diff_W_to_w} for $ -1/\sqrt{1-\alpha}<w<0$. Suppose that $ -2<w<0$, then one can construct bounds like \begin{align}
\left\vert \int_0^w\frac{(-u)^\nu[1-(1-\alpha)u]^\nu\D u}{(1-u)^{\nu+1}}\right\vert\leq \left\vert \int_0^w(-u)^\nu\D u\right|\leq 2^{\nu+1}.
\end{align}For $ -1/\sqrt{1-\alpha}<w<-2$, we use the Taylor expansion of the integrands to show that \begin{align}&
\int_{-2}^w\frac{(-u)^\nu[1-(1-\alpha)u]^\nu\D u}{(1-u)^{\nu+1}}-\int_{-2}^w\frac{(-u)^{-\nu-1}[1-(1-\alpha)u]^{-\nu-1}\D u}{(1-u)^{-\nu}}\notag\\={}&O(\sqrt{1-\alpha})+O\left(\int^w_{-2}\frac{\D u}{(1-u)u}\right)
\end{align}  is bounded.   This completes the verification of the claim in Eq.~\ref{eq:two_thirds_lim}.

With cosmetic changes to the foregoing arguments, one can show that,  for generic $ \alpha\to1$ limits, the expression\begin{align}
\left\{-\frac{\sin^3(\nu\pi)}{\pi^3}\frac{\mathfrak Q_{\nu}({\alpha})}{[P_\nu(1-2\alpha)]^{3}}-\frac{2}{3}\right\} \left[\frac{P_\nu(1-2\alpha)}{P_{\nu}(2\alpha-1)}\right]^2
\end{align}is bounded, and so are its derivatives with respect to $\alpha$. This proves Eqs.~\ref{eq:Q_nu_alpha1}--\ref{eq:DD_Q_nu_alpha1}. Meanwhile, one sees that Eq.~\ref{eq:g_nu_alpha1} descends from Eqs.~\ref{eq:g-g} and \ref{eq:g_nu_alpha0}. Back substituting into Eq.~\ref{eq:f-g}, we can then derive Eq.~\ref{eq:f_nu_alpha1} from Eq.~\ref{eq:Q_nu_alpha1}.   \qedhere
\end{enumerate}\end{proof}

Another topic of  \S\ref{subsec:G2_Hecke4_GZ_renorm} will be  the asymptotic analysis of $ G_2^{\mathfrak H/\overline{\varGamma}_0(1)}(z,z'),z\to z'$. To prepare for this,  we need a quantitative understanding of the special automorphic Green's function\begin{align}\mathfrak G(z)
:=G_2^{\mathfrak H/\overline{\varGamma}(2)}\left(-\frac{1}{z+1},z\right)=G_2^{\mathfrak H/\overline{\varGamma}(2)}\left(-\frac{z+1}{z},z\right),\quad \text{for }j(z)\neq0,\label{eq:frakG_defn}
\end{align} where the weight-4 automorphic Green's function on $ \overline{\varGamma}(2)$ \cite[][Eq.~2.2.1]{AGF_PartI}\begin{align}
G_{2}^{\mathfrak H/\overline{\varGamma}(2)}(z,z')=G_2^{\mathfrak H/\overline{\varGamma}_{0}(4)}\left(\frac{z}{2},\frac{z'}{2}\right),\quad \text{a.e. }z,z'\in\mathfrak H
\end{align}   has involution symmetries $ G_{2}^{\mathfrak H/\overline{\varGamma}(2)}(z,z')=G_{2}^{\mathfrak H/\overline{\varGamma}(2)}(-1/z,-1/z')=G_{2}^{\mathfrak H/\overline{\varGamma}(2)}(z+1,z'+1)$ (see \cite[][Eq.~2.1.19]{AGF_PartI} and \cite[][Eq.~3.8]{EZF}). Using these symmetries, one can check that  $ \mathfrak G(z)=\mathfrak G(-1/z)=\mathfrak G(z+1)$, so the function $ \mathfrak G(z),z\in\mathfrak H\smallsetminus\left(SL(2,\mathbb Z)\frac{1+i\sqrt{3}}{2}\right)$ is in fact $SL(2,\mathbb Z)$-invariant.

Noting that $ \lambda(-1/(z+1))=1/[1-\lambda(z)]$ \cite[cf.][Eq.~2.3.8]{AGF_PartI}, we will be interested in the asymptotic analysis of \begin{align}
\mathscr J\left( \lambda(z)\left\Vert\frac{1}{1-\lambda(z)} \right.\right)\quad\text{and}\quad\mathscr J\left( 1-\lambda(z)\left\Vert\frac{\lambda(z)}{\lambda(z)-1} \right.\right)
\end{align}for \begin{align}
\mathscr J(\lambda\Vert\mu):=\int_0^{\mu}\frac{\frac{ \mathbf K(\sqrt{1-\smash[b]{\mu}})}{\mathbf K(\sqrt{\vphantom1\smash[b]{\mu }})}[\mathbf K(\sqrt{t})]^2-\mathbf K(\sqrt{t})\mathbf K(\sqrt{1-t})}{\mathbf K(\sqrt{\lambda} )\mathbf K(\sqrt{1-\lambda})}\frac{\D t}{t-\lambda},\label{eq:J_lambda_mu}
\end{align}as $z$ approaches the infinite cusp $ i\infty$ of $SL(2,\mathbb Z)$, so that  $ \lambda(z)\to0$. (We note that the difference $ H_{-1/2}(\lambda\Vert\mu)-\mathscr J(\lambda\Vert\mu)$ does not affect the evaluation of automorphic Green's functions.)
\begin{lemma}[Limit Behavior of Certain Level-4 Entropy Formulae as $ \lambda\to0$]We have the following asymptotic formulae\begin{align}\mathscr J\left( \lambda \left\Vert\frac{1}{1-\lambda} \right.\right)={}&-\frac{i\pi\I\lambda}{|\I\lambda|}-\frac{\pi}{2}\frac{\mathbf K(\sqrt{1-\lambda})}{\mathbf K(\sqrt{\lambda})}+\frac{3\pi}{2}\frac{\mathbf K(\sqrt{\lambda})}{\mathbf K(\sqrt{1-\lambda})}+o\left( \frac{\mathbf K(\sqrt{\lambda})}{\mathbf K(\sqrt{1-\lambda})} \right),\label{eq:J_lambda_0}\\
\mathscr J\left( 1-\lambda \left\Vert\frac{\lambda}{\lambda-1} \right.\right)={}&O(\lambda),\label{eq:J_lambda_1}
\end{align} as $ \lambda\to0$ and $ \I \lambda\neq0$.\end{lemma}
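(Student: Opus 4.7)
Both asymptotics are obtained by direct expansion of the definition of $\mathscr J(\lambda\|\mu)$ in Eq.~3.2.14, distinguishing the two cases by whether the simple pole at $t=\lambda$ (resp.\ $t=1-\lambda$) lies on, or far from, the straight-line contour $[0,\mu]$. Throughout, I use the classical expansions $\mathbf K(\sqrt t)=\pi/2+O(t)$ as $t\to 0$ and $\mathbf K(\sqrt{1-t})=\tfrac{1}{2}\log(16/t)+o(1)$ as $t\to 0$, together with the vanishing of the numerator $N(t):=\frac{\mathbf K(\sqrt{1-\mu})}{\mathbf K(\sqrt\mu)}[\mathbf K(\sqrt t)]^2-\mathbf K(\sqrt t)\mathbf K(\sqrt{1-t})$ at $t=\mu$.

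For Eq.~3.2.29 I take $\mu=\lambda/(\lambda-1)$, so $|\mu|\asymp|\lambda|\to 0$ while the pole at $t=1-\lambda$ remains at distance $\sim 1$ from the integration segment $[0,\mu]$. Using the two expansions above, the numerator $N(t)$ is of size $O(\log|t|^{-1})$ uniformly on $[0,\mu]$ and vanishes at the endpoint $t=\mu$; the integrand is therefore uniformly $O(\log|t|^{-1})$ on a segment of length $O(|\lambda|)$, giving an unnormalised integral of size $O(|\lambda|\log|\lambda|^{-1})$. Dividing by the normalising factor $\mathbf K(\sqrt{1-\lambda})\mathbf K(\sqrt\lambda)\asymp\log|\lambda|^{-1}$ yields $O(\lambda)$, as claimed.

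For Eq.~3.2.28 I take $\mu=1/(1-\lambda)$, so the pole $t=\lambda$ lies strictly inside the contour $[0,\mu]$ while $\mu\to 1$. I extract the pole via $\frac{N(t)}{t-\lambda}=\frac{N(t)-N(\lambda)}{t-\lambda}+\frac{N(\lambda)}{t-\lambda}$. The second piece integrates explicitly to $N(\lambda)\bigl[\log(\mu-\lambda)-\log(-\lambda)\bigr]$; the branch convention in Eq.~1.2.1 gives $\I\log(-\lambda)=-\pi\,\mathrm{sgn}(\I\lambda)$, which after division by $\mathbf K(\sqrt\lambda)\mathbf K(\sqrt{1-\lambda})\mathbf K(\sqrt\mu)$ and substitution of the leading value $N(\lambda)/[\mathbf K(\sqrt\lambda)\mathbf K(\sqrt{1-\lambda})]=\mathbf K(\sqrt{1-\mu})/\mathbf K(\sqrt\mu)-1$ yields the explicit term $-i\pi\I\lambda/|\I\lambda|$. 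The regular remainder $\int_0^\mu(N(t)-N(\lambda))(t-\lambda)^{-1}\D t$ I split as $\int_0^\lambda+\int_\lambda^1+\int_1^\mu$; the inner two integrals are matched to the Legendre-type quantities $f_{-1/2}(\lambda)$, $g_{-1/2}(\lambda)$ of Eqs.~3.2.5--3.2.6, whose $\lambda\to 0$ behaviour is controlled by Eqs.~3.2.20--3.2.21 of Lemma~3.2.1(a), while the boundary integral $\int_1^\mu$ is of size $O(\mu-1)=O(\lambda)$ once the branch of $\mathbf K(\sqrt{1-t})$ is handled across $t=1$. Linear combination of these leading contributions, together with Eq.~3.2.7 coupling $f_{-1/2}$ and $g_{-1/2}$ to the cubic integral $\mathfrak Q_{-1/2}$, produces the precise constants $-\pi/2$ and $3\pi/2$.

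The main obstacle will be the bookkeeping in the second case. When $\mu=1/(1-\lambda)>1$ for real $\lambda\in(0,1)$, the quantities $\mathbf K(\sqrt\mu)$ and $\mathbf K(\sqrt{1-\mu})=\mathbf K(\sqrt{-\lambda/(1-\lambda)})$ must be defined through analytic continuation from the regime $\I\lambda\neq 0$; a Landen-type substitution $s=1-(1-\lambda)t$ (or equivalently $t\mapsto t/(t-1)$) converts the endpoint portion $\int_1^\mu$ into an integral on $(0,1)$ and makes explicit the implicit $i$-factors, which must be reconciled with the $-i\pi\I\lambda/|\I\lambda|$ jump already produced by the pole extraction. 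Once all signs are aligned with the convention of Eq.~1.2.1, the dominant $\log|\lambda|^{-1}$-terms combine cleanly to yield the stated asymptotic.
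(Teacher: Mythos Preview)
Your treatment of Eq.~3.2.29 is fine and matches the paper's brief remark: the segment $[0,\lambda/(\lambda-1)]$ has length $O(|\lambda|)$, the pole at $1-\lambda$ is far away, and the integrand is $O(\log|t|^{-1})$, so the whole expression is $O(\lambda)$.

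For Eq.~3.2.28, however, your plan has a genuine gap. The quantities $f_{-1/2}(\lambda)$ and $g_{-1/2}(\lambda)$ in Eqs.~3.2.5--3.2.6 are integrals over $[0,\lambda]$, an interval that collapses as $\lambda\to0$; Lemma~3.2.1(a) tells you only that they are $o(1)$. The dominant contribution to your ``regular remainder'' $\int_0^\mu(N(t)-N(\lambda))(t-\lambda)^{-1}\D t$ comes from the portion $\int_\lambda^1$, which is not controlled by $f_{-1/2}(\lambda),g_{-1/2}(\lambda)$ at all. You cannot extract the exact constants $-\pi/2$ and $3\pi/2$ from $o(1)$ information and the relation in Eq.~3.2.7. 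What the paper actually uses here is the \emph{exact} closed-form evaluation
\[
\int_0^{1}\frac{\mathbf K(\sqrt{t})\mathbf K(\sqrt{1-t})}{\mathbf K(\sqrt{\lambda})\mathbf K(\sqrt{1-\lambda})}\,\frac{\D t}{t-\lambda}
=\frac{i\pi\I\lambda}{|\I\lambda|}+\frac{\pi}{2}\left[\frac{\mathbf K(\sqrt{1-\lambda})}{\mathbf K(\sqrt{\lambda})}-\frac{\mathbf K(\sqrt{\lambda})}{\mathbf K(\sqrt{1-\lambda})}\right],
\]
quoted from \cite[Eq.~51]{Zhou2013Pnu}. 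This identity (valid for all $\lambda$ with $\I\lambda\neq0$, not merely asymptotically) supplies in one stroke both the discontinuous term $i\pi\,\mathrm{sgn}(\I\lambda)$ and the coefficient $-\pi/2$; the remaining $+\pi$ comes from the elementary integral $\int_0^1(t-\lambda)^{-1}\D t$ multiplied by the $[\mathbf K(\sqrt t)]^2$-coefficient. Your pole-extraction route would have to reprove this identity to succeed.

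There are also smaller slips in your sketch: the stated leading value $N(\lambda)/[\mathbf K(\sqrt\lambda)\mathbf K(\sqrt{1-\lambda})]=\mathbf K(\sqrt{1-\mu})/\mathbf K(\sqrt\mu)-1$ omits a factor $\mathbf K(\sqrt\lambda)/\mathbf K(\sqrt{1-\lambda})$; and $\I\log(-\lambda)=\arg(-\lambda)=\arg\lambda-\pi\,\mathrm{sgn}(\I\lambda)$ is not $-\pi\,\mathrm{sgn}(\I\lambda)$ unless $\lambda$ approaches $0$ tangentially to the real axis, so your pole term alone does not account for the discrete jump $-i\pi\,\mathrm{sgn}(\I\lambda)$ --- that jump is a feature of the full integral over $[0,1]$, not of the local residue.
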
\begin{proof}In the limit of $ \lambda\to0$, the estimates \begin{align}
\frac{\mathbf K\left(\sqrt\frac{\lambda}{\lambda-1}{}\right)}{\mathbf K\left(\sqrt{\frac{1}{1-\lambda}}\right)}\int_{\frac{1}{1-\lambda}}^{1}\frac{[\mathbf K(\sqrt{t})]^2}{\mathbf K(\sqrt{\lambda} )\mathbf K(\sqrt{1-\lambda})}\frac{\D t}{t-\lambda}={}&O(\lambda)\intertext{and}\int_{\frac{1}{1-\lambda}}^{1}\frac{\mathbf K(\sqrt{t})\mathbf K(\sqrt{1-t})}{\mathbf K(\sqrt{\lambda} )\mathbf K(\sqrt{1-\lambda})}\frac{\D t}{t-\lambda}={}&O(\lambda)
\end{align}follow directly from expansions of the integrands.

Therefore, the asymptotic analysis in the last paragraph allows us to deduce \begin{align}
\mathscr J\left( \lambda \left\Vert\frac{1}{1-\lambda} \right.\right)=\int_0^{1}\frac{\frac{\mathbf K\left(\sqrt\frac{\lambda}{\lambda-1}{}\right)}{\mathbf K\left(\sqrt{\frac{1}{1-\lambda}}\right)}[\mathbf K(\sqrt{t})]^2-\mathbf K(\sqrt{t})\mathbf K(\sqrt{1-t})}{\mathbf K(\sqrt{\lambda} )\mathbf K(\sqrt{1-\lambda})}\frac{\D t}{t-\lambda}+O(\lambda).
\end{align}Here, the closed-form evaluation \begin{align}
\int_0^{1}\frac{\mathbf K(\sqrt{t})\mathbf K(\sqrt{1-t})}{\mathbf K(\sqrt{\lambda} )\mathbf K(\sqrt{1-\lambda})}\frac{\D t}{t-\lambda}=\frac{i\pi\I\lambda}{|\I\lambda|}+\frac{\pi}{2}\left[\frac{\mathbf K(\sqrt{1-\lambda})}{\mathbf K(\sqrt{\lambda})}-\frac{\mathbf K(\sqrt{\lambda})}{\mathbf K(\sqrt{1-\lambda})}\right]\label{eq:KKmir_Tricomi_eval}
\end{align}follows from \cite[][Eq.~51]{Zhou2013Pnu}. Meanwhile, it is clear that \begin{align}
\frac{\mathbf K\left(\sqrt\frac{\lambda}{\lambda-1}{}\right)}{\mathbf K\left(\sqrt{\frac{1}{1-\lambda}}\right)}\int_0^{1}\frac{[\mathbf K(\sqrt{t})]^2-[\mathbf K(\sqrt{\lambda})]^2}{\mathbf K(\sqrt{\lambda} )\mathbf K(\sqrt{1-\lambda})}\frac{\D t}{t-\lambda}=O\left( \left[\frac{\mathbf K(\sqrt{\lambda})}{\mathbf K(\sqrt{1-\lambda})}\right]^2 \right),
\end{align}so \begin{align}
\frac{\mathbf K\left(\sqrt\frac{\lambda}{\lambda-1}{}\right)}{\mathbf K\left(\sqrt{\frac{1}{1-\lambda}}\right)}\int_0^{1}\frac{[\mathbf K(\sqrt{\lambda})]^2}{\mathbf K(\sqrt{\lambda} )\mathbf K(\sqrt{1-\lambda})}\frac{\D t}{t-\lambda}=\pi\frac{\mathbf K(\sqrt{\lambda})}{\mathbf K(\sqrt{1-\lambda})}+o\left( \frac{\mathbf K(\sqrt{\lambda})}{\mathbf K(\sqrt{1-\lambda})} \right)
\end{align}can be combined with Eq.~\ref{eq:KKmir_Tricomi_eval} to yield Eq.~\ref{eq:J_lambda_0}.

The derivation of Eq.~\ref{eq:J_lambda_1} is similar to the first paragraph of this proof. \end{proof}
\subsection{Automorphic  self-energies $G_2^{\mathfrak H/\overline{\varGamma}_0(1)}(z )$, $G_2^{\mathfrak H/\overline{\varGamma}_0(2)}(z )$, $ G_2^{\mathfrak H/\overline{\varGamma}_0(3)}(z )$ and $G_2^{\mathfrak H/\overline{\varGamma}_0(4)}(z ) $\label{subsec:G2_Hecke4_GZ_renorm}}For $N\in\{1$, $2$, $3$, $4\}$, the Gross--Zagier renormalized Green's function (also known as ``automorphic self-energy'') is defined by
 ~(see \cite[][Chap.~II,  Eq.~5.7]{GrossZagierI} or \cite[][Eq.~3.2.1]{AGF_PartI})

\begin{align}
G_2^{\mathfrak H/\overline{\varGamma}_0(N)}(z):={}&-2\sum_{\hat  \gamma\in\overline {\varGamma}_0(N),\hat \gamma z\neq z}Q_{1}
\left( 1+\frac{\vert z -\hat  \gamma z\vert ^{2}}{2\I z\I(\hat\gamma z)} \right)-2\left\{ \log\left\vert 4\pi\eta^{4}(z)  \I z\right|-1 \right\},\label{eq:G_2_HeckeN_GZ_renorm}
\end{align} so long as  $ z$ corresponds to  a non-elliptic point of $ \varGamma_0(N)$. As $\varGamma_0(4)$ contains no elliptic points, Eq.~\ref{eq:G_2_HeckeN_GZ_renorm} is applicable to all $ z\in\mathfrak H$ for $N=4$. For $ N\in\{2,3\}$, Eq.~\ref{eq:G_2_HeckeN_GZ_renorm} fails when $ z$ is an elliptic point, where $ \alpha_N(z)=\infty$; for $N=1$,  Eq.~\ref{eq:G_2_HeckeN_GZ_renorm} breaks down when $ z$ is an elliptic point satisfying $j(z)[j(z)-1728]=0$. The automorphic self-energies at these elliptic points have already been computed in \cite[][Theorem 1.2.2(a)]{AGF_PartI}.

Hereafter,  we will   only consider  automorphic self-energies at non-elliptic points, where Eq.~\ref{eq:G_2_HeckeN_GZ_renorm} can be reformulated as \cite[][Eq.~3.2.3]{AGF_PartI} \begin{align}
G_2^{\mathfrak H/\overline{\varGamma}_0(N)}(z)=\lim_{z'\to z}\left[G_2^{\mathfrak H/\overline{\varGamma}_0(N)}(z,z')-2\log|z-z'|\right]-2\log\left|2\pi\eta^{4}(z)  \right|.\label{eq:G_2_HeckeN_GZ_renorm_alt}
\end{align}
To facilitate quantitative analysis of the weight-4 automorphic self-energies of levels 2, 3 and 4, we rewrite Eq.~\ref{eq:G_2_HeckeN_GZ_renorm_alt} using the entropy formulae (Eqs.~\ref{eq:G2_Hecke234_via_I} and \ref{eq:I_N_via_H_nu}), in the next lemma.

\begin{lemma}[Integral Representations of  Automorphic Self-Energies $G_2^{\mathfrak H/\overline{\varGamma}_0(2)}(z )$, $ G_2^{\mathfrak H/\overline{\varGamma}_0(3)}(z )$ and $G_2^{\mathfrak H/\overline{\varGamma}_0(4)}(z )$] \label{lm:G_2_Hecke234_int_repn}

For $z\in\Int\mathfrak D_N$ where $ N\in\{2,3,4\}$,  we have \begin{align}
G_2^{\mathfrak H/\overline{\varGamma}_0(N)}(z)={}&-\frac{\sin^{2}(\nu\pi)}{2\pi^2}\R\left\{\frac{(\I z)^{2}}{3}
\left(\frac{\partial}{\partial\I z}\frac{1}{\I z}\right)^2\frac{z^{2}\left[\frac{\mathfrak Q_{\nu}({\alpha_N(z))}}{P_\nu(1-2\alpha_{N}(z))}+\frac{\mathfrak Q_{\nu}({1-\alpha_N(z))}}{P_\nu(2\alpha_{N}(z)-1)}\right]}{P_\nu(1-2\alpha_{N}(z))P_\nu(2\alpha_{N}(z)-1)}\right\}\notag\\{}&+2\log\left\vert \frac{\partial\alpha_{N}(z)/\partial z}{2\pi\eta^{4}(z)} \right\vert-\frac{4}{3}\log|\alpha_{N}(z)[1-\alpha_{N}(z)]|+\frac{2}{3}[2\gamma_0+\psi^{(0)}(-\nu)+\psi^{(0)}(\nu+1)],\label{eq:G_2_Hecke234_renorm_int_repn}
\end{align}where $ \mathfrak Q_\nu(\alpha)$ is defined in Eq.~\ref{eq:frakQ_nu_alpha_defn} and $ N=4\sin^2(\nu\pi)$ for $ \nu\in\{-1/4,-1/3,-1/2\}$.
\end{lemma}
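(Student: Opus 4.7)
The plan is to take the entropy formula in Theorem~\ref{thm:HC_wt4_Green} (Eqs.~\ref{eq:G2_Hecke234_via_I}--\ref{eq:I_N_via_H_nu}) and feed it into the Gross--Zagier renormalisation Eq.~\ref{eq:G_2_HeckeN_GZ_renorm_alt}, so that the sought identity Eq.~\ref{eq:G_2_Hecke234_renorm_int_repn} becomes a statement about the coincidence limit $z'\to z$ of the double-variable entropy coupling. First I would decompose $H_\nu(\alpha\Vert\beta)=h_\nu(\alpha\Vert\beta)+[H_\nu-h_\nu](\alpha\Vert\beta)$ according to Definition~\ref{defn:HC}; by Ramanujan reciprocity (Lemma~\ref{lm:Ramanujan_recip}) together with the Frobenius--Zagier evaluations Eqs.~\ref{eq:FZ_a'}--\ref{eq:FZ_b'}, the difference $H_\nu-h_\nu$ reduces to explicit Legendre-function ratios plus a logarithmic term proportional to $\log[\alpha_N(z)-\alpha_N(z')]$. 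Because $\alpha_N(z)-\alpha_N(z')=\alpha_N'(z)(z-z')+O((z-z')^2)$, this logarithmic part produces precisely the $-2\log|z-z'|$ that is to be cancelled by the renormalisation and leaves behind the residual $2\log|\alpha_N'(z)|$; combined with the $-2\log|2\pi\eta^4(z)|$ from Eq.~\ref{eq:G_2_HeckeN_GZ_renorm_alt}, this reconstructs the factor $2\log|\alpha_N'(z)/(2\pi\eta^4(z))|$ in the target formula.

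The heart of the calculation is the closed-form evaluation of the diagonal $h_\nu(\alpha\Vert\alpha)+h_\nu(1-\alpha\Vert1-\alpha)$. For this I would set $\beta=\alpha$ in the identity Eq.~\ref{eq:trip_indef_to_double_log_def} and pair it with its $\nu\leftrightarrow-\nu-1$ counterpart (justified by the $\nu$-reflection symmetry of Lemma~\ref{lm:nu_reflection_symm}), thereby expressing the combination of logarithmic double integrals defining $h_\nu(\alpha\Vert\alpha)$ as $\mathfrak Q_\nu(\alpha)$ divided by the appropriate product of Legendre functions. Adding the companion formula with $\alpha\mapsto 1-\alpha$ produces the symmetric bracket $\mathfrak Q_\nu(\alpha)/P_\nu(1-2\alpha)+\mathfrak Q_\nu(1-\alpha)/P_\nu(2\alpha-1)$ that appears inside Eq.~\ref{eq:G_2_Hecke234_renorm_int_repn}. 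The Frobenius--Zagier constants accumulated along the way assemble into precisely the terms $-\tfrac{4}{3}\log|\alpha_N(1-\alpha_N)|$ and $\tfrac{2}{3}[2\gamma_0+\psi^{(0)}(-\nu)+\psi^{(0)}(\nu+1)]$ of the target identity, after the $\R\{\cdots\}$-projection and a use of the Ramanujan relation Eq.~\ref{eq:Pnu_ratio_to_z}, $z=iP_\nu(2\alpha_N(z)-1)/[\sqrt{N}P_\nu(1-2\alpha_N(z))]$, to convert $P_\nu$-ratios into powers of $z$.

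The main obstacle is the bookkeeping of the second-order differential operator $\I z\,\I z'\,\partial_{\I z}\partial_{\I z'}[\,\cdot\,/(\I z\,\I z')]$ in Eq.~\ref{eq:I_N_via_H_nu} as $z'\to z$: one must split the integrand into pieces that are respectively holomorphic, antiholomorphic and mixed in the two variables, collapse the mixed derivative against the diagonal, and symmetrise via Green's reciprocity. This is where the coefficient $\tfrac13$ in front of the $\mathfrak Q_\nu$ term emerges, from averaging $\partial_{\I z}\partial_{\I z'}$ into $\tfrac13(\I z\,\partial_{\I z}/\I z)^{2}$ on the symmetric part after integration by parts. The asymptotic estimates in Lemma~\ref{lm:cusp_limits}, controlling $\mathfrak Q_\nu$ and its $\widehat D_\alpha$-derivatives at the cusps $\alpha\in\{0,1\}$, will be invoked to verify that the manipulations stay within the regime of absolutely convergent Kontsevich--Zagier periods and that no spurious boundary contribution contaminates Eq.~\ref{eq:G_2_Hecke234_renorm_int_repn}. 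Once these analytic points are settled, the remaining assembly of elementary terms is purely bookkeeping, and the identity follows.
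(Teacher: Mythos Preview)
Your overall architecture is right: start from Eqs.~\ref{eq:G2_Hecke234_via_I}--\ref{eq:I_N_via_H_nu}, push through the renormalisation Eq.~\ref{eq:G_2_HeckeN_GZ_renorm_alt}, and split the entropy coupling into a part that carries the $\log|z-z'|$ singularity and a part that is regular on the diagonal. The paper does exactly this, although it works with the Legendre--Ramanujan decomposition $A_\nu+M_\nu$ from Eq.~\ref{eq:S_nu_H_nu} rather than your $h_\nu+[H_\nu-h_\nu]$ split; the two are closely related and the bookkeeping you describe for the differential operator and the emergence of the $\tfrac13$ factor is essentially what happens in Eqs.~\ref{eq:L_nu_contrib}--\ref{eq:A_nu_contrib}.

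The gap is in your treatment of the constants. You assert that after relating $h_\nu(\alpha\Vert\alpha)$ to $\mathfrak Q_\nu(\alpha)$ via Eq.~\ref{eq:trip_indef_to_double_log_def}, ``the Frobenius--Zagier constants accumulated along the way assemble into precisely the terms $-\tfrac{4}{3}\log|\alpha_N(1-\alpha_N)|$ and $\tfrac{2}{3}[2\gamma_0+\psi^{(0)}(-\nu)+\psi^{(0)}(\nu+1)]$.'' They do not, at least not by pure algebra. After all the algebraic reductions (Eqs.~\ref{eq:f-g}, \ref{eq:g-g}, \ref{eq:trip_indef_to_double_log_def}) the paper arrives at Eq.~\ref{eq:G2RN_via_frakL} with a leftover term $\mathfrak L_\nu(\alpha)=\R\mathscr L_\nu(\alpha)$ defined in Eq.~\ref{eq:frakL_nu_defn}. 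This leftover still contains the unevaluated integrals $f_\nu(\alpha)$, $f_\nu(1-\alpha)$ and $\widehat D_\alpha$-derivatives of the $\mathfrak Q_\nu$ expression; none of these admit a closed form by the manipulations you cite. The identification
\[
\mathfrak L_\nu(\alpha)=\tfrac{2}{3}\bigl[2\gamma_0+\psi^{(0)}(-\nu)+\psi^{(0)}(\nu+1)\bigr]-\tfrac{1}{3}\log|\alpha(1-\alpha)|
\]
is obtained only by a \emph{global} argument: one checks that the difference of the two sides extends to a bounded harmonic function on $\mathbb C\smallsetminus\{0,1\}$ and vanishes at the cusps $\alpha\to 0,1$, hence must be identically zero. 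Lemma~\ref{lm:cusp_limits} is not a convergence check as you suggest; its asymptotic formulae for $\mathfrak Q_\nu$, $\widehat D_\alpha\mathfrak Q_\nu$, $\widehat D_\alpha^2\mathfrak Q_\nu$, $f_\nu$, $g_\nu$ at $\alpha\to 0,1$ are the actual mechanism by which the constants are pinned down. Without this Liouville-type step your argument does not close, and you should expect to find, when you carry out the bookkeeping, a residual function of $\alpha$ that you cannot simplify further by local identities alone.
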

\begin{proof}
In view of Eq.~\ref{eq:I_N_via_H_nu}, we need to analyze the asymptotic behavior of $H_\nu(\alpha_N(z)\Vert\alpha_N(z'))+H_\nu(1-\alpha_N(z)\Vert1-\alpha_N(z'))$ as $ z'$ approaches $z$. By Eq.~\ref{eq:S_nu_H_nu}, we have the following identity for  $\alpha,1-\alpha,\beta,1-\beta,\frac{\beta(1-\alpha)}{\alpha(1-\beta)}\in\mathbb C\smallsetminus[1,+\infty)$:\begin{align}
&
P_\nu(1-2\alpha)P_\nu(2\alpha-1)P_\nu(1-2\beta)P_\nu(2\beta-1)H_\nu(\alpha\Vert\beta)\equiv A_\nu(\alpha\Vert\beta)+M_\nu(\alpha\Vert\beta)\pmod{2\pi  i\Lambda_\nu(\alpha,\beta)},\label{eq:S_nu_H_nu_pm_log}
\end{align}where\begin{align}
A_\nu(\alpha\Vert\beta):={}&[P_{\nu }(2\beta-1)]^2\int_0^{\beta}\frac{[P_\nu(1-2t)]^2-[P_\nu(1-2\alpha)]^2}{t-\alpha}\D t\notag\\{}&-P_\nu(1-2\beta)P_\nu(2\beta-1)\int_0^{\beta}\frac{P_\nu(1-2t)P_\nu(2t-1)-P_\nu(1-2\alpha)P_\nu(2\alpha-1)}{t-\alpha}\D t
\end{align}remains analytic as $\beta$ approaches $\alpha$, and\begin{align}
M_\nu(\alpha\Vert\beta):=\left\{[P_{\nu }(2\beta-1)]^2[P_\nu(1-2\alpha)]^2-P_\nu(1-2\beta)P_\nu(2\beta-1)P_\nu(1-2\alpha)P_\nu(2\alpha-1)\right\}\log\left( 1-\frac{\beta}{\alpha} \right)
\end{align}has a mild singularity of order $ O(|\beta-\alpha|\log|\beta-\alpha|)$, in the $ \beta\to\alpha$ limit.
No matter which  logarithmic branch is chosen  in the definition of $M_\nu(\alpha\Vert\beta)$, the  identity in  Eq.~\ref{eq:S_nu_H_nu_pm_log} remains valid, modulo $ 2\pi  i\Lambda_\nu(\alpha,\beta)$.

First we consider the  mildly singular portion $ M_\nu(\alpha\Vert\beta)$. As we have (cf.~Eq.~\ref{eq:Pnu_ratio_to_z})\begin{align}&
\frac{M_\nu({\alpha_{N}(z)\|\alpha_{N}(z'))+M_\nu(1-{\alpha_{N}(z)\|1-\alpha_{N}(z'))}}}{P_\nu(1-2\alpha_{N}(z))P_\nu(2\alpha_{N}(z)-1)P_\nu(1-2\alpha_{N}(z'))P_\nu(2\alpha_{N}(z')-1)}\notag\\={}&\left( \frac{z'}{z}-1 \right)\log\left( 1-\frac{\alpha_{N}(z')}{\alpha_{N}(z)} \right)+\left( \frac{z}{z'}-1 \right)\log\left( 1-\frac{1-\alpha_{N}(z')}{1-\alpha_{N}(z)} \right)
\end{align}for $N=4\sin^2(\nu\pi)$ and $ z,z'\in\Int\mathfrak D_N$, the singular part contributes to the asymptotic behavior $ G_2^{\mathfrak H/\overline{\varGamma}_0(N)}(z,z'),z'\to z$ as follows (cf.~Eq.~\ref{eq:I_N_via_H_nu}):
{\allowdisplaybreaks\begin{align}
{}&\R\left\{\I z\I z'
\frac{\partial}{\partial\I z}\frac{\partial}{\partial\I z'}\frac{zz'\left[ \left( \frac{z'}{z}-1 \right)\log\left( 1-\frac{\alpha_{N}(z')}{\alpha_{N}(z)} \right )+\left( \frac{z}{z'}-1 \right)\log\left( 1-\frac{1-\alpha_{N}(z')}{1-\alpha_{N}(z)} \right)\right]}{\I z\I z'}\right\}\notag\\={}&3+\R\left\{\frac{z\partial\alpha_{N}(z)/\partial z}{\alpha_{N}(z)[1-\alpha_{N}(z)]}\right\}+2\log|z-z'|+\log\left\vert \frac{[\partial\alpha_{N}(z)/\partial z]^2}{\alpha_{N}(z)[1-\alpha_{N}(z)]} \right\vert+O(|z-z'|\log|z-z'|).\label{eq:L_nu_contrib}
\end{align}}

Then we move on to the treatment of the analytic part  $ A_\nu(\alpha\Vert\beta)$.
With Taylor expansions in the form of   \begin{align}
\int_0^{\beta}\frac{\varphi(t)-\varphi(\alpha)}{t-\alpha}\D t=\int_0^{\alpha}\frac{\varphi(t)-\varphi(\alpha)}{t-\alpha}\D t+\varphi'(\alpha)(\beta-\alpha)+\frac{\varphi''(\alpha)}{4}(\beta-\alpha)^2+O((\beta-\alpha)^3)
\end{align}and  the following relation \cite[cf.][Eqs.~2.1.4 and 2.2.17]{AGF_PartI}:\begin{align}
\frac{\partial \alpha_{N}(z)}{\partial z}=2\pi i\alpha_N(z)[1-\alpha_N(z)][P_\nu(1-2\alpha_{N}(z))]^{2},\quad \forall z\in\Int\mathfrak D_N,
\end{align} we see that  the net contribution from the analytic part \begin{align}
&\frac{A_\nu({\alpha_{N}(z)\|\alpha_{N}(z'))+A_\nu(1-{\alpha_{N}(z)\|1-\alpha_{N}(z'))}}}{P_\nu(1-2\alpha_{N}(z))P_\nu(2\alpha_{N}(z)-1)P_\nu(1-2\alpha_{N}(z'))P_\nu(2\alpha_{N}(z')-1)}
\end{align}   to $ G_2^{\mathfrak H/\overline{\varGamma}_0(N)}(z,z'),z'\to z$ amounts to{\allowdisplaybreaks
\begin{align}
{}&\R\left\{\I z\I z'
\frac{\partial}{\partial\I z}\frac{\partial}{\partial\I z'}\frac{zz'\left[\frac{\sqrt{N}z'}{i}f_{\nu}(\alpha_{N}(z))-g_{\nu}(\alpha_{N}(z)) +\frac{i}{\sqrt{N}z'}f_{\nu}(1-\alpha_{N}(z))-g_{\nu}(1-\alpha_{N}(z))\right]}{\I z\I z'}\right\}\notag\\{}&+\frac{3\sin^2(\nu\pi)}{2\pi^{2}}\R\left\{ zz'\frac{\partial^{2}}{\partial z\partial z'} \frac{[\alpha_{N}(z)-\alpha_{N}(z')]^{2}}{[\alpha_{N}(z)]^{2}[1-\alpha_{N}(z)]^{2}[P_\nu(1-2\alpha)]^2[P_\nu(2\alpha-1)]^{2}}\right\}+O(z-z')
\notag\\={}&\frac{(\I z)^{2}}{3}
\left(\frac{\partial}{\partial\I z}\frac{1}{\I z}\right)^2\R\left\{z^{2}\left[\frac{\sqrt{N}z}{i}f_{\nu}(\alpha_{N}(z))-g_{\nu}(\alpha_{N}(z)) +\frac{i}{\sqrt{N}z}f_{\nu}(1-\alpha_{N}(z))-g_{\nu}(1-\alpha_{N}(z))\right]\right\}\notag\\{}&-\frac{1}{3}\R\left\{\left[ 1-\left(z \frac{\partial}{\partial z} \right)^2 \right]\left[\frac{\sqrt{N}z}{i}f_{\nu}(\alpha_{N}(z))-g_{\nu}(\alpha_{N}(z)) +\frac{i}{\sqrt{N}z}f_{\nu}(1-\alpha_{N}(z))-g_{\nu}(1-\alpha_{N}(z))\right]\right\}\notag\\{}&-\R\left\{ z\frac{\partial}{\partial z} \left[ \frac{\sqrt{N}z}{i}f_{\nu}(\alpha_{N}(z))- \frac{i}{\sqrt{N}z}f_{\nu}(1-\alpha_{N}(z))\right]\right\}\notag\\{}&+\R\left[\frac{\sqrt{N}z}{i}f_{\nu}(\alpha_{N}(z))+ \frac{i}{\sqrt{N}z}f_{\nu}(1-\alpha_{N}(z))\right]-3+O(z-z'),\label{eq:A_nu_contrib}
\end{align}}where $ f_\nu$ and $ g_\nu$ were defined in  Eqs.~\ref{eq:f_nu_alpha_defn} and \ref{eq:g_nu_alpha_defn}.

Adding up  Eqs.~\ref{eq:L_nu_contrib} and \ref{eq:A_nu_contrib} in the $z'\to z$ limit, while referring back to   Eqs.~\ref{eq:f-g} and  \ref{eq:g-g}, we obtain\begin{align}
G_2^{\mathfrak H/\overline{\varGamma}_0(N)}(z)={}&-\frac{\sin^{2}(\nu\pi)}{6\pi^2}\R\left\{(\I z)^{2}
\left(\frac{\partial}{\partial\I z}\frac{1}{\I z}\right)^2\frac{z^{2}\left[\frac{\mathfrak Q_{\nu}({\alpha_N(z))}}{P_\nu(1-2\alpha_{N}(z))}+\frac{\mathfrak Q_{\nu}({1-\alpha_N(z))}}{P_\nu(2\alpha_{N}(z)-1)}\right]}{P_\nu(1-2\alpha_{N}(z))P_\nu(2\alpha_{N}(z)-1)}\right\}\notag\\{}&+2\log\left\vert \frac{\partial\alpha_{N}(z)/\partial z}{2\pi\eta^{4}(z)} \right|-\log|\alpha_{N}(z)[1-\alpha_{N}(z)]\vert+\mathfrak L_\nu(\alpha_N(z)),\label{eq:G2RN_via_frakL}\end{align}for \begin{align} \mathscr L_\nu(\alpha):={}&\frac{\sin^{2}(\nu\pi)}{6\pi^2}( 1- \widehat D_\alpha^2)\frac{\frac{\mathfrak Q_{\nu}({\alpha})}{P_\nu(1-2\alpha)}+\frac{\mathfrak Q_{\nu}({1-\alpha})}{P_\nu(2\alpha-1)}}{P_\nu(1-2\alpha)P_\nu(2\alpha-1)}+\frac{\sin^{2}(\nu\pi)}{2\pi^2} \widehat D_\alpha\frac{\frac{\mathfrak Q_{\nu}({\alpha})}{P_\nu(1-2\alpha)}-\frac{\mathfrak Q_{\nu}({1-\alpha})}{P_\nu(2\alpha-1)}}{P_\nu(1-2\alpha)P_\nu(2\alpha-1)}\notag\\{}&+\frac{P_\nu(2\alpha-1)}{P_\nu(1-2\alpha)}f_{\nu}(\alpha)+ \frac{P_\nu(1-2\alpha)}{P_\nu(2\alpha-1)}f_{\nu}(1-\alpha),\quad \text{and}\quad \mathfrak L_\nu(\alpha):=\R \mathscr L_\nu(\alpha).\label{eq:frakL_nu_defn}
\end{align}(Here, we refer the readers to   Eq.~\ref{eq:op_D_alpha} for the differential operator $ \widehat D_\alpha$.) Clearly, $ \mathfrak L_\nu(\alpha)$ is well-defined for $ \alpha\in(\mathbb C\smallsetminus\mathbb R)\cup(0,1)$ and $ \nu\in\{-1/4,-1/3,-1/2\}$, as the Legendre functions in the  denominators never assume zero values. One can also continuously extend $ \mathfrak L_\nu(\alpha)$ to $ \alpha\in\mathbb C\smallsetminus\{0,1\}$, because $ \mathscr L_\nu(x+i0^+)= \overline{\mathscr L_\nu(x-i0^+)}$ for $ x\in(-\infty,0)\cup(1,+\infty)$.

Finally, we point out  that \begin{align}
 \mathfrak L_\nu(\alpha)-\frac{2}{3}[2\gamma_0+\psi^{(0)}(-\nu)+\psi^{(0)}(\nu+1)]+\frac{1}{3}\log|\alpha(1-\alpha)|
\end{align}vanishes in the limits of $ \alpha\to0$ and $1$  (which can be checked by the asymptotic formulae in Lemma~\ref{lm:cusp_limits}), and the same expression remains bounded as $ \alpha\to\infty$ (which can be shown by a modest variation on \cite[][Proposition 2.1.1]{AGF_PartI}).
 This leads to a closed-form evaluation of $  \mathfrak L_\nu(\alpha)$, because a bounded harmonic function in the entire plane that vanishes at  certain points must vanish  identically.
\end{proof}
\begin{remark}For  $ N=4\sin^2(\nu\pi)$ with $ \nu\in\{-1/4,-1/3,-1/2\}$, one may employ \cite[][Eqs.~2.1.8 and 2.1.28]{AGF_PartI} to further simplify the following expression occurring in Eq.~\ref{eq:G_2_Hecke234_renorm_int_repn}:\begin{align}
2\log\left\vert \frac{\partial\alpha_{N}(z)/\partial z}{2\pi\eta^{4}(z)} \right\vert-\frac{4}{3}\log|\alpha_{N}(z)[1-\alpha_{N}(z)]|+\frac{2}{3}[2\gamma_0+\psi^{(0)}(-\nu)+\psi^{(0)}(\nu+1)].
\end{align}The results are\begin{align}
\quad \frac{1}{3}\log\frac{|\alpha_2(z)|}{2^6} ,\quad \frac{1}{3}\log\frac{|\alpha_3(z)|}{3^{3}|1-\alpha_3(z)|},\quad\text{and}\quad -\frac{1}{3}\log\frac{2^4|1-\alpha_{4}(z)|^{2}}{|\alpha_{4}(z)|}\label{eq:harmonic_redn}
\end{align}for $ N=2,3$ and $4$, respectively.\eor\end{remark}
With the lemma above, we will be able to reprove one of   the main results in Part I  \cite[][Theorem 1.2.2(b)]{AGF_PartI}, as recapitulated in the next proposition.
\begin{proposition}[Automorphic Self-Energy of Weight 4 and Level 4] \label{prop:self_intn_Hecke4}The weight-4, level-4 automorphic self-energy $ G_2^{\mathfrak H/\overline{\varGamma}_0(4)}(z)$ can be evaluated in closed form:\begin{align}
G_2^{\mathfrak H/\overline{\varGamma}_0(4)}(z)={}&-\frac{1}{3}\log\frac{2^4|1-\alpha_{4}(z)|^{2}}{|\alpha_{4}(z)|}
=-8\log\left\vert \frac{\eta(z)}{\eta(2z)} \right\vert,\quad \forall z\in\mathfrak H.\label{eq:G_2_z_Hecke4_eval}\end{align}
\end{proposition}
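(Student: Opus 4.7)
The plan is to specialize Lemma~\ref{lm:G_2_Hecke234_int_repn} to $(N,\nu)=(4,-1/2)$. As already recorded in Eq.~\ref{eq:harmonic_redn}, the ``elementary'' part of that formula equals $-\tfrac13\log(2^4|1-\alpha_4(z)|^2/|\alpha_4(z)|)$ without any further work, so the entire proof reduces to showing that the remaining contribution, the operator-acted expression built from $\mathfrak Q_{-1/2}(\alpha_4(z))$ and $\mathfrak Q_{-1/2}(1-\alpha_4(z))$, vanishes identically.

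The key input is an explicit closed form for $\mathfrak Q_{-1/2}(\alpha)$. At the self-dual exponent $\nu=-1/2$ the two algebraic functions in Eq.~\ref{eq:frakQ_nu_alpha_defn} coincide, since $\mathbb Y_{-1/2,\alpha}(u)=\sqrt{u(1-u)(1-\alpha u)}=\mathbb Y_{-\nu-1,\alpha}(u)$. Switching to the Jacobi parametrization $W=\sn^{2}(w\,|\,\alpha)$, $V=\sn^{2}(v\,|\,\alpha)$ trivializes the nested integrals, giving $\int_W^1 dV/\mathbb Y_{-1/2,\alpha}(V)=2[\mathbf K(\sqrt\alpha)-w]$, after which a one-line polynomial integration produces $\mathfrak Q_{-1/2}(\alpha)=\tfrac{16}{3}[\mathbf K(\sqrt\alpha)]^3=\tfrac{2\pi^3}{3}[P_{-1/2}(1-2\alpha)]^3$.

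Inserting this into the bracket of Eq.~\ref{eq:G_2_Hecke234_renorm_int_repn} and invoking Ramanujan's relation $2z=iP_{-1/2}(2\alpha_4(z)-1)/P_{-1/2}(1-2\alpha_4(z))$, the meromorphic quantity to which the operator $\tfrac{(\I z)^{2}}{3}(\partial/\partial\I z\cdot 1/\I z)^{2}$ is applied collapses to $\tfrac{i\pi^{3}}{3}(z-4z^{3})$. Since this is holomorphic in $z$, one may replace $\partial/\partial y$ by $i\,\partial/\partial z$, expand the second-order operator in the form $g(z)\mapsto g''/y^{2}-3g'/y^{3}+3g/y^{4}$, and verify by a brief real/imaginary bookkeeping that the real part of $\tfrac{y^{2}}{3}$ applied to this expression vanishes for a cubic of the shape $c_{1}z+c_{2}z^{3}$. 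This cancellation is precisely what reduces $G_{2}^{\mathfrak H/\overline{\varGamma}_{0}(4)}(z)$ to its elementary part.

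Combining the two contributions yields the first equality; the alternative eta-quotient form follows from the formula $\alpha_4(z)=\lambda(2z)=16\eta^8(z)\eta^{16}(4z)/\eta^{24}(2z)$ together with the standard level-4 eta identity expressing $1-\lambda(2z)$ as an eta quotient, which collectively reduce $2^{4}|1-\alpha_4(z)|^{2}/|\alpha_4(z)|$ to $|\eta(z)/\eta(2z)|^{24}$. A priori the identity is proved for $z\in\Int\mathfrak D_{4}$, but both sides are $\varGamma_{0}(4)$-invariant and regular on $\mathfrak H$, so the equality propagates to all $z\in\mathfrak H$. The main obstacle is the explicit evaluation of $\mathfrak Q_{-1/2}(\alpha)$ via the Jacobi parametrization and the subsequent verification that the cubic polynomial $z-4z^{3}$ sits in the kernel of the Gross--Zagier-type operator; once these are in place, Lemma~\ref{lm:G_2_Hecke234_int_repn} supplies the rest.
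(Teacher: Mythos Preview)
Your proof is correct and follows essentially the same route as the paper: compute $\mathfrak Q_{-1/2}(\alpha)=\tfrac{2\pi^{3}}{3}[P_{-1/2}(1-2\alpha)]^{3}$, plug into Lemma~\ref{lm:G_2_Hecke234_int_repn}, and read off the elementary part from Eq.~\ref{eq:harmonic_redn}. The only difference is that the paper simply asserts ``so the first term on the right-hand side of Eq.~\ref{eq:G_2_Hecke234_renorm_int_repn} vanishes'' without further comment, whereas you explicitly reduce the bracketed quantity to $\tfrac{i\pi^{3}}{3}(z-4z^{3})$ via Ramanujan's relation and then verify by hand that the real part of the Gross--Zagier operator applied to this cubic is zero; this is a welcome elaboration of what the paper leaves to the reader.
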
\begin{proof}One may directly compute that\begin{align}\mathfrak Q_{-1/2}({\alpha}):={}&2\int_0^1\left[\int_W^1\frac{\D V}{\mathbb Y_{-1/2,\alpha}(V)}\right]^2\frac{\D W}{\mathbb Y_{-1/2,\alpha}(W)}=\frac{2\pi^{3}}{3}[P_{-1/2}(1-2\alpha)]^3,
\end{align}so the first term on the right-hand side of Eq.~\ref{eq:G_2_Hecke234_renorm_int_repn} vanishes. Using the last item in Eq.~\ref{eq:harmonic_redn}, we subsequently turn  Eq.~\ref{eq:G_2_Hecke234_renorm_int_repn} into Eq.~\ref{eq:G_2_z_Hecke4_eval}.\end{proof}
The  evaluation of $ G_2^{\mathfrak H/\overline{\varGamma}_0(4)}(z)$ in the proposition above will become useful in the following analysis of $  G_2^{\mathfrak H/\overline{\varGamma}_0(1)}(z)$.
\begin{lemma}[Integral Representation for $ G_2^{\mathfrak H/\overline{\varGamma}_0(1)}(z)$]\label{lm:G_2_PSL2Z_int_repn}For $ z\in(\Int\mathfrak D_1)\smallsetminus\{i\}$, we have \begin{align}
G_2^{\mathfrak H/\overline{\varGamma}_0(1)}(z)={}&G_{2}^{\mathfrak H/\overline{\varGamma}_0(4)}\left( -\frac{1}{2z} ,\frac{z}{2}\right) +G_{2}^{\mathfrak H/\overline{\varGamma}_0(4)}\left( \frac{z+1}{2} ,\frac{z}{2}\right)+ G_{2}^{\mathfrak H/\overline{\varGamma}_0(4)}\left( \frac{z}{2(z+1)} ,\frac{z}{2}\right)\notag\\{}&+2G_{2}^{\mathfrak H/\overline{\varGamma}_0(4)}\left( -\frac{1}{2(z+1)} ,\frac{z}{2}\right) -2\log2.\label{eq:G_2_PSL2Z_RN_decomp}
\end{align}Meanwhile, there exist two analytic functions $ \mathfrak q(z)$ and $ \mathfrak Q(z)$ such that \begin{align}
G_{2}^{\mathfrak H/\overline{\varGamma}_0(4)}\left( -\frac{1}{2(z+1)} ,\frac{z}{2}\right) ={}&\R\left[(\I z)^{2}
\left(\frac{\partial}{\partial\I z}\frac{1}{\I z}\right)^2 \mathfrak q(z)\right],\label{eq:frakq_deriv}\\G_2^{\mathfrak H/\overline{\varGamma}_0(1)}(z)={}&\R\left[(\I z)^{2}
\left(\frac{\partial}{\partial\I z}\frac{1}{\I z}\right)^2 \mathfrak Q(z)\right]-\frac{\log|j(z)-1728|}{3}.\label{eq:frakQ_deriv}
\end{align}\end{lemma}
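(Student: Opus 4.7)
My strategy for Eq.~\ref{eq:G_2_PSL2Z_RN_decomp} is to unfold $G_2^{\mathfrak H/PSL(2,\mathbb Z)}(z,z')$ via the coset decomposition $SL(2,\mathbb Z)=\bigsqcup_{\sigma}\varGamma(2)\sigma$ of index~$6$, with representatives drawn from the lift of $PSL_2(\mathbb F_2)\cong S_3$, such as $\sigma\in\{I, S, T, ST, TS, STS\}$. This yields
\[G_2^{\mathfrak H/PSL(2,\mathbb Z)}(z,z')=\sum_{\sigma}G_2^{\mathfrak H/\overline{\varGamma}(2)}(z,\sigma z').\]
The $PSL(2,\mathbb Z)$-invariance $G_2^{\mathfrak H/\overline{\varGamma}(2)}(\tau z_1,\tau z_2)=G_2^{\mathfrak H/\overline{\varGamma}(2)}(z_1,z_2)$ (which follows by re-indexing the sum definition and the $SL(2,\mathbb R)$-invariance of the kernel inside $Q_1$), combined with the $PSL(2,\mathbb Z)$-conjugacy $S(ST)S^{-1}\equiv TS$, forces the two $3$-cycle cosets to produce the same function of $z$ when $z_1=z_2=z$, namely $\mathfrak G(z)$; this generates the coefficient~$2$ in front of $G_2^{\mathfrak H/\overline{\varGamma}_0(4)}(-1/(2(z+1)),z/2)$. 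The remaining non-identity cosets $[S],[T],[STS]=[TST]$ produce the three unit-coefficient terms, via the rescaling identity $G_2^{\mathfrak H/\overline{\varGamma}(2)}(z_1,z_2)=G_2^{\mathfrak H/\overline{\varGamma}_0(4)}(z_1/2,z_2/2)$.

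To extract the constant $-2\log 2$, I will track how the Gross--Zagier regularization transforms under the rescaling $z\mapsto z/2$. Inserting $|z-z'|=2|z/2-z'/2|$ into the logarithmic subtraction of Eq.~\ref{eq:G_2_HeckeN_GZ_renorm_alt} and matching the common $\eta^4$-normalization produces
\[G_2^{\mathfrak H/\overline{\varGamma}(2)}(z)=G_2^{\mathfrak H/\overline{\varGamma}_0(4)}(z/2)+2\log\left|\frac{\eta^4(z/2)}{\eta^4(z)}\right|-2\log 2.\]
Substituting the explicit evaluation $G_2^{\mathfrak H/\overline{\varGamma}_0(4)}(z/2)=-8\log|\eta(z/2)/\eta(z)|$ from Proposition~\ref{prop:self_intn_Hecke4} causes the Dedekind eta contributions to cancel exactly, leaving the universal constant $G_2^{\mathfrak H/\overline{\varGamma}(2)}(z)=-2\log 2$. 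Combining this with the five non-identity coset contributions (paired as above) completes the derivation of Eq.~\ref{eq:G_2_PSL2Z_RN_decomp}.

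For the analytic representations in Eqs.~\ref{eq:frakq_deriv}--\ref{eq:frakQ_deriv}, I plan to specialize the entropy formula of Theorem~\ref{thm:HC_wt4_Green}, namely $G_2^{\mathfrak H/\overline{\varGamma}_0(4)}(z_1,z_2)=\R\{\I z_1\,\I z_2\,\partial_{\I z_1}\partial_{\I z_2}[\Phi(z_1,z_2)/(\I z_1\,\I z_2)]\}$ with potential $\Phi(z_1,z_2)=z_1z_2[H_{-1/2}(\alpha_4(z_1)\Vert\alpha_4(z_2))+H_{-1/2}(1-\alpha_4(z_1)\Vert 1-\alpha_4(z_2))]$ holomorphic in both arguments. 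Substituting $z_1=f(z)$ analytic and $z_2=z/2$, the chain rule collapses the mixed-variable operator into the canonical form $\R[(\I z)^2(\partial/\partial\I z\,(1/\I z))^2\mathfrak q_f(z)]$ with $\mathfrak q_f(z)$ analytic in $z$; specializing to $f(z)=-1/(2(z+1))$ proves Eq.~\ref{eq:frakq_deriv}, and summing the four analogues attached to the four terms of Eq.~\ref{eq:G_2_PSL2Z_RN_decomp} produces the main analytic content of $\mathfrak Q(z)$. The main obstacle is explaining the correction $-\log|j(z)-1728|/3$ in Eq.~\ref{eq:frakQ_deriv}: since the differential operator $\R[(\I z)^2(\partial/\partial\I z\,(1/\I z))^2\cdot]$ does not annihilate constants (mapping $c\mapsto 3c/(\I z)^2$) and cannot accommodate the branch ambiguity hidden in $\alpha_1(z)$ (whose defining formula in Eq.~\ref{eq:alpha_1_defn} involves $\sqrt{j(z)}$), this correction must precisely reconcile both obstructions. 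I expect the verification to rely on cross-checking against the level-$1$ entropy representation Eq.~\ref{eq:G2_PSL2Z_via_I}, tracking the branch structure of $\sqrt{j(z)}$ carefully throughout $\Int\mathfrak D_1\smallsetminus\{i\}$.
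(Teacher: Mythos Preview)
Your derivation of Eq.~\ref{eq:G_2_PSL2Z_RN_decomp} is correct and matches the paper: the coset unfolding over $\overline\varGamma(2)\backslash PSL(2,\mathbb Z)$, the pairing of the two $3$-cycle cosets into $2\mathfrak G(z)$, and the extraction of $-2\log2$ from the rescaling of the Gross--Zagier regularization combined with Proposition~\ref{prop:self_intn_Hecke4} are exactly the ingredients the paper invokes (citing the addition formula of Part~I and Eqs.~\ref{eq:frakG_defn}, \ref{eq:G_2_HeckeN_GZ_renorm_alt}, \ref{eq:G_2_z_Hecke4_eval}).

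Your treatment of Eqs.~\ref{eq:frakq_deriv}--\ref{eq:frakQ_deriv}, however, has a genuine gap. The assertion that ``the chain rule collapses the mixed-variable operator into the canonical form $\R[(\I z)^2(\partial_{\I z}(1/\I z))^2\mathfrak q_f(z)]$'' is not true as stated. When you substitute $z_1=f(z)$, $z_2=z/2$ into the bilinear operator $\I z_1\,\I z_2\,\partial_{\I z_1}\partial_{\I z_2}[\Phi/(\I z_1\I z_2)]$, the result is (taking $\Phi$ holomorphic) $-\Phi_{z_1z_2}-i\Phi_{z_1}/\I z_2-i\Phi_{z_2}/\I z_1+\Phi/(\I z_1\I z_2)$; the denominator $\I z_1=\I f(z)$ contains $|z+1|^2$ and is \emph{not} holomorphic in $z$, so the four terms do not reassemble into $\R[-q''-3iq'/\I z+3q/(\I z)^2]$ for any holomorphic $q$. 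The paper makes this explicit in Eq.~\ref{eq:G2Hecke4_G3PSL2Z_corr}: substituting into the Kontsevich--Zagier representation yields the desired $\frac{(\I z)^2}{3}(\partial_{\I z}(1/\I z))^2$-term \emph{plus} several residual terms on the right-hand side. Those residuals are then shown to vanish by a global argument: they form a bounded $SL(2,\mathbb Z)$-invariant function on $X_0(1)(\mathbb C)$, whose cuspidal vanishing is checked via the limit formulae for $\mathscr J(\lambda\Vert 1/(1-\lambda))$ and $\mathscr J(1-\lambda\Vert\lambda/(\lambda-1))$ in Eqs.~\ref{eq:J_lambda_0}--\ref{eq:J_lambda_1}, and whose boundedness at $z\to(i\sqrt3\mp1)/2$ is handled separately. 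This is precisely the content of \S\ref{subsec:lim_ent} that your outline bypasses.

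For Eq.~\ref{eq:frakQ_deriv} the paper takes a different route altogether: rather than summing four entropy potentials and hunting for the $-\frac13\log|j(z)-1728|$ correction in branch ambiguities of $\alpha_1$, it rewrites the first three Green's-function terms on the right of Eq.~\ref{eq:G_2_PSL2Z_RN_decomp} as differences of self-energies $G_2^{\mathfrak H/\overline\varGamma_0(2)}-G_2^{\mathfrak H/\overline\varGamma_0(4)}$ (via identities from Part~I), and then invokes the already-proved representation of Lemma~\ref{lm:G_2_Hecke234_int_repn} together with the relation $\alpha_2(z)=1-[1-2\lambda(2z+1)]^{-2}$, from which the $j(z)-1728$ factor emerges explicitly by elementary algebra. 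Your proposed mechanism (branch structure of $\sqrt{j(z)}$ in $\alpha_1$) plays no role here.
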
\begin{proof}Taking the $ z'\to z$ limit in the addition formula \cite[cf.][Eq.~2.2.6]{AGF_PartI} \begin{align}
G_2^{\mathfrak H/\overline{\varGamma}_0(1)}(z,z')={}&G_2^{\mathfrak H/\overline{\varGamma}(2)}(z,z')+G_2^{\mathfrak H/\overline{\varGamma}(2)}\left(-\frac{1}{z},z'\right)+G_2^{\mathfrak H/\overline{\varGamma}(2)}\left(-\frac{z+1}{z},z'\right)+G_2^{\mathfrak H/\overline{\varGamma}(2)}\left(\frac{z\vphantom{1}}{z+1},z'\right)\notag\\& +G_2^{\mathfrak H/\overline{\varGamma}(2)}(z+1,z')+G_2^{\mathfrak H/\overline{\varGamma}(2)}\left(-\frac{1}{z+1},z'\right),
\end{align}while  referring to Eqs.~\ref{eq:frakG_defn}, \ref{eq:G_2_HeckeN_GZ_renorm_alt} and \ref{eq:G_2_z_Hecke4_eval}, we arrive at Eq.~\ref{eq:G_2_PSL2Z_RN_decomp}.

A special case for Eq.~\ref{eq:G2Hecke234_Pnu1} reads\begin{align}&
G_{2}^{\mathfrak H/\overline{\varGamma}_0(4)}\left( -\frac{1}{2(z\pm1)} ,\frac{z}{2}\right)+\frac{(\I z)^{2}}{3}\left(\frac{\partial}{\partial\I z}\frac{1}{\I  z}\right)^2\R\left[z(z\pm1)\varPhi\left( -\frac{1}{z},z\pm1 \right)\right]\notag\\={}&\frac{1}{3}\R\left\{\left[1-(z\pm2)\frac{\partial}{\partial z}-z(z\pm1)\frac{\partial^{2}}{\partial z^{2}}\right]\varPhi\left( -\frac{1}{z},z\pm1 \right)\right\}\pm\R\left[\left.\frac{\partial}{\partial w}\right|_{w=z\pm1}\varPhi\left( -\frac{1}{z},w \right)\right]\notag\\{}&+\R\left[\frac{z\pm1}{z}\left.\frac{\partial^{2}}{\partial w\partial w'}\right|_{w=-1/z,w'=z\pm1}\varPhi(w,w')\right],\label{eq:G2Hecke4_G3PSL2Z_corr}
\end{align}where $ \varPhi(w,w')=\mathscr J(\lambda(w)\Vert\lambda(w'))+\mathscr J(1-\lambda(w)\Vert1-\lambda(w'))$ (cf.~Eq.~\ref{eq:J_lambda_mu}), so long as $ z/2$ and $ (z\pm1)/2$ both reside in $ \Int\mathfrak D_4$. As $ z\to i\infty$, one can use Eqs.~\ref{eq:J_lambda_0}--\ref{eq:J_lambda_1} and their partial derivatives to check that the right-hand side of Eq.~\ref{eq:G2Hecke4_G3PSL2Z_corr} vanishes.
Relying on the asymptotic methods  in \cite[][Proposition 2.1.1]{AGF_PartI}, one may show that the right-hand side of  Eq.~\ref{eq:G2Hecke4_G3PSL2Z_corr} remains bounded as $z\to(i\sqrt{3}\mp1)/2$. Furthermore, the branch cut of $ \mathbf K(\sqrt{t}),t\in\mathbb C\smallsetminus[1,+\infty)$ does not affect the continuous extension of Eq.~\ref{eq:G2Hecke4_G3PSL2Z_corr}. Thus, the left-hand side of Eq.~\ref{eq:G2Hecke4_G3PSL2Z_corr} can be identified with a bounded (hence constant) harmonic function on the compact Riemann surface  $ SL(2,\mathbb Z)\backslash\mathfrak H^*$, which vanishes at the cusp. This proves Eq.~\ref{eq:frakq_deriv}.

With \cite[][Eqs.~2.0.4, 3.4.26, 3.4.27]{AGF_PartI}, we are able to verify that\begin{align}{}&
G_{2}^{\mathfrak H/\overline{\varGamma}_0(4)}\left( -\frac{1}{2z} ,\frac{z}{2}\right) +G_{2}^{\mathfrak H/\overline{\varGamma}_0(4)}\left( \frac{z+1}{2} ,\frac{z}{2}\right)+ G_{2}^{\mathfrak H/\overline{\varGamma}_0(4)}\left( \frac{z}{2(z+1)} ,\frac{z}{2}\right)\notag\\={}&\left[ G_2^{\mathfrak H/\overline{\varGamma}_{0}(2)}\left(\frac{z-1}{2}\right) - G_2^{\mathfrak H/\overline{\varGamma}_{0}(4)}\left(\frac{z}{2}\right)\right]+\left[ G_2^{\mathfrak H/\overline{\varGamma}_{0}(2)}\left( -\frac{1}{z} \right)- G_2^{\mathfrak H/\overline{\varGamma}_{0}(4)}\left(-\frac{1}{2z}\right)\right]\notag\\{}&+\left[ G_2^{\mathfrak H/\overline{\varGamma}_{0}(2)}(z)- G_2^{\mathfrak H/\overline{\varGamma}_{0}(4)}\left(\frac{z}{2}\right)\right]
+\frac{1}{3}\log\frac{2^4}{|1-\lambda(z)|^{4}|\lambda(z)|}.
\end{align} In view of Eqs.~\ref{eq:G_2_Hecke234_renorm_int_repn} and \ref{eq:G_2_z_Hecke4_eval}, as well as the relation $ \alpha_2(z)=1-[1-2\lambda(2z+1)]^{-2}$ \cite[][Eq.~2.1.22]{AGF_PartI}, we convert the last displayed equation into the form of \begin{align}
\R\left\{(\I z)^{2}
\left(\frac{\partial}{\partial\I z}\frac{1}{\I z}\right)^2 \widetilde{\mathfrak Q}(z)\right\}-\frac{\log|j(z)-1728|}{3}+2\log2,
\end{align} where $  \widetilde{\mathfrak Q}(z)$ is analytic. Adding up with Eqs.~\ref{eq:G_2_PSL2Z_RN_decomp} and \ref{eq:frakq_deriv}, we arrive at Eq.~\ref{eq:frakQ_deriv}.\end{proof}
One can now use Lemmata~\ref{lm:G_2_Hecke234_int_repn} and \ref{lm:G_2_PSL2Z_int_repn} to establish some algebraic relations between weight-4 automorphic self-energies and
certain special cases of weight-6 automorphic Green's functions, as revealed in the proposition below.

\begin{proposition}[Special Correspondence Between Automorphic Green's Functions of Weights 4 and 6]
So long as $z$ is not an elliptic point on $ \varGamma_0(N)$ for $ N\in\{1,2,3\}$, we have\begin{align}
G_{2}^{\mathfrak H/\overline{\varGamma}_0(1)}(z)+\frac{\log|j(z)-1728|}{3}={}&\frac{2}{3}G_3^{\mathfrak H/\overline{\varGamma}_0(1)}(i,z)+\frac{2}{3}G_3^{\mathfrak H/\overline{\varGamma}_0(1)}\left(\frac{1+i\sqrt{3}}{2},z\right),\label{eq:G2_PSL2Z_weight_lift}\\
G_{2}^{\mathfrak H/\overline{\varGamma}_0(2)}(z)+G_{2}^{\mathfrak H/\overline{\varGamma}_0(2)}\left( -\frac{1}{2z} \right)-\frac{1}{3}\log\frac{|\alpha_{2}(z)[1-\alpha_2(z)]|}{2^{12}}={}&\frac{4}{3}G_3^{\mathfrak H/\overline{\varGamma}_{0}(2)}\left(\frac{1+i}{2},z\right),\label{eq:G2_Hecke2_weight_lift}\\G_{2}^{\mathfrak H/\overline{\varGamma}_0(3)}(z)+G_{2}^{\mathfrak H/\overline{\varGamma}_0(3)}\left( -\frac{1}{3z} \right)+2\log 3={}&\frac{4}{3}G_3^{\mathfrak H/\overline{\varGamma}_{0}(3)}\left(\frac{3+i\sqrt{3}}{6},z\right).\label{eq:G2_Hecke3_weight_lift}
\end{align}Furthermore, the right-hand side of Eq.~\ref{eq:G2_PSL2Z_weight_lift} has the following breakdown:\begin{align}G_3^{\mathfrak H/\overline{\varGamma}_0(1)}(i,z)={}&\frac{3}{2}\left[ G_{2}^{\mathfrak H/\overline{\varGamma}(2)}\left( -\frac{1}{z} ,z\right) +G_{2}^{\mathfrak H/\overline{\varGamma}(2)}\left( z+1 ,z\right)+ G_{2}^{\mathfrak H/\overline{\varGamma}(2)}\left( \frac{z}{z+1} ,z\right)\right]\notag\\{}&+\frac{\log|j(z)-1728|}{2}-3\log2,\label{eq:G3_PSL2Z_i_add_form}\quad \\
G_3^{\mathfrak H/\overline{\varGamma}_0(1)}\left(\frac{1+i\sqrt{3}}{2},z\right)={}&3G_{2}^{\mathfrak H/\overline{\varGamma}(2)}\left( -\frac{1}{z+1} ,z\right).\label{eq:G3_PSL2Z_rho_add_form}
\end{align}\end{proposition}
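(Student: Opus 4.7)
The key structural observation is that, via Lemmas \ref{lm:G_2_Hecke234_int_repn} and \ref{lm:G_2_PSL2Z_int_repn}, the left-hand sides of Eqs.~\ref{eq:G2_PSL2Z_weight_lift}--\ref{eq:G2_Hecke3_weight_lift} (after absorbing their elementary correction terms) already carry the shape $\R[(\I z)^2(\partial_{\I z}(1/\I z))^2 \mathfrak{Q}(z)]$ with $\mathfrak Q$ holomorphic. This is precisely the structural signature of a weight-$6$ automorphic Green's function $G_3^{\mathfrak H/\overline{\varGamma}_0(N)}(z_0,z)$ as a function of $z$ with the CM argument $z_0$ frozen, as can be read off from Eichler-integral representations of weight-$6$ modular forms. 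So the game is to pin down $\mathfrak Q(z)$ and match it against weight-$6$ Green's functions at the distinguished elliptic fixed points, then close the argument via a Liouville principle on the compact Riemann surfaces $X_0(N)(\mathbb C)$.

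I would first establish the two decompositions \ref{eq:G3_PSL2Z_i_add_form}--\ref{eq:G3_PSL2Z_rho_add_form}. The CM point $\rho=(1+i\sqrt 3)/2$ has order-$3$ stabilizer in $PSL(2,\mathbb Z)$, so when the defining series for $G_3^{\mathfrak H/\overline{\varGamma}_0(1)}(\rho,z)$ is restricted along a set of coset representatives for $\overline{\varGamma}(2)\backslash\overline{\varGamma}_0(1)$, the orbit collapses threefold, producing one copy of $G_2^{\mathfrak H/\overline{\varGamma}(2)}(-1/(z+1),z)=G_{2}^{\mathfrak H/\overline{\varGamma}_0(4)}(-1/(2(z+1)),z/2)$ with the prefactor $3$ of Eq.~\ref{eq:G3_PSL2Z_rho_add_form}. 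For $z_0=i$, whose stabilizer has order $2$, the same coset analysis gives three distinct contributions, one from each representative of the three $\overline{\varGamma}_0(1)$-classes of order-$2$ elliptic points in $\overline{\varGamma}(2)\backslash\mathfrak H$; the $\log|j(z)-1728|$ term of Eq.~\ref{eq:G3_PSL2Z_i_add_form} absorbs the logarithmic singularity at $z=i$ (where $j-1728$ vanishes to second order) so that the right-hand side extends continuously across the elliptic locus. Equivalently, one may argue by Liouville: both sides are harmonic with identical prescribed singularities at the relevant CM points and cusps, and agree at one reference point (computed by the $\I z\to\infty$ asymptotics already provided by Eqs.~\ref{eq:J_lambda_0}--\ref{eq:J_lambda_1}).

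With \ref{eq:G3_PSL2Z_i_add_form}--\ref{eq:G3_PSL2Z_rho_add_form} secured, Eq.~\ref{eq:G2_PSL2Z_weight_lift} is obtained by substituting them into the splitting \ref{eq:G_2_PSL2Z_RN_decomp}: the six level-$4$ Green's functions on the right of \ref{eq:G_2_PSL2Z_RN_decomp} regroup as $\frac{2}{3}\cdot\frac{3}{2}(\text{three-term sum})+2\cdot\frac{1}{3}\cdot 3(\text{single term})$, and the $-2\log 2$ and $\frac{1}{2}\log|j(z)-1728|-3\log 2$ constants fuse precisely to $-\frac{1}{3}\log|j(z)-1728|$. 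For the level-$2$ and level-$3$ identities \ref{eq:G2_Hecke2_weight_lift}--\ref{eq:G2_Hecke3_weight_lift}, one applies the same strategy inside $X_0(N)(\mathbb C)$: use the integral representation of Lemma~\ref{lm:G_2_Hecke234_int_repn} together with the explicit elementary reductions in Eq.~\ref{eq:harmonic_redn} to cast $G_2^{\mathfrak H/\overline{\varGamma}_0(N)}(z)$ as a harmonic expression $\R[(\I z)^{2}(\partial_{\I z}(1/\I z))^{2}\mathfrak Q_N(z)]$ plus an elementary correction; then identify the analytic generator $\mathfrak Q_N$ with the weight-$6$ Green's function at the unique $\overline{\varGamma}_0(N)$-elliptic fixed point --- $(1+i)/2$ for $N=2$ and $(3+i\sqrt 3)/6$ for $N=3$ --- via an order-$2$ (respectively order-$3$) stabilizer collapse. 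The Atkin--Lehner involution $z\mapsto -1/(Nz)$ fixes these elliptic points modulo $\varGamma_0(N)$, which is why the Fricke companion $G_{2}^{\mathfrak H/\overline{\varGamma}_0(N)}(-1/(Nz))$ appears on the left; the elementary constants $-\frac{1}{3}\log(|\alpha_2[1-\alpha_2]|/2^{12})$ and $2\log 3$ are produced by balancing the cusp singularities of $\alpha_N(z)$ on both sides using Eqs.~\ref{eq:Q_nu_alpha0}--\ref{eq:g_nu_alpha1}.

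The main obstacle is the bookkeeping at the elliptic fixed points and cusps. One must verify, after subtracting the explicit elementary logarithmic pieces, that the difference of the two sides is a bounded harmonic function on the compact Riemann surface $X_0(N)(\mathbb C)$. This requires: (i) matching the leading logarithmic singularity of $G_3^{\mathfrak H/\overline{\varGamma}_0(N)}(z_0,z)$ at $z\to z_0$ with the singularity of the weight-$4$ self-energy at the corresponding elliptic point, noting that the ``pole order'' of $G_3(z_0,\cdot)$ at an order-$\mathsf n$ elliptic point is $2/\mathsf n$ times that of $G_2(z_0,\cdot)$ in the appropriate normalisation; and (ii) confirming cusp-vanishing using the limit lemmas of \S\ref{subsec:lim_ent}, in particular Eqs.~\ref{eq:J_lambda_0}--\ref{eq:J_lambda_1} for the level-$1$ case and parallel asymptotics for levels $2,3$. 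Once both conditions are met, Liouville on $X_0(N)(\mathbb C)$ forces the difference to be the zero constant, completing the proof.
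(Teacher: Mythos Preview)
Your high-level strategy---recognize that both sides lie in the kernel of an appropriate second-order operator, match singularities and cusp asymptotics, then invoke a Liouville-type principle on $X_0(N)(\mathbb C)$---is exactly the paper's. Two points, however, need correction.

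First, the coset-collapse argument you offer for Eq.~\ref{eq:G3_PSL2Z_rho_add_form} does not work. A coset decomposition of $\overline{\varGamma}_0(1)$ over $\overline{\varGamma}(2)$ would rewrite $G_3^{\mathfrak H/\overline{\varGamma}_0(1)}(\rho,z)$ as a sum of \emph{weight-6} Green's functions $G_3^{\mathfrak H/\overline{\varGamma}(2)}(\gamma\rho,z)$ (each built from $Q_2$), never as weight-4 objects $G_2^{\mathfrak H/\overline{\varGamma}(2)}$ (built from $Q_1$). Eq.~\ref{eq:G3_PSL2Z_rho_add_form} is a genuine cross-weight identity and cannot be obtained by orbit bookkeeping alone. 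The paper therefore runs your program in the opposite order: it proves Eqs.~\ref{eq:G2_PSL2Z_weight_lift}--\ref{eq:G2_Hecke3_weight_lift} directly (difference of the two sides is bounded, annihilated by $\Delta_{\mathfrak H}-6$, vanishes at the cusps, hence zero by \cite[Lemma 2.0.1]{AGF_PartI}); then proves Eq.~\ref{eq:G3_PSL2Z_rho_add_form} by the same mechanism, using Eq.~\ref{eq:frakq_deriv} to exhibit the required structural form on the right-hand side; and finally obtains Eq.~\ref{eq:G3_PSL2Z_i_add_form} by subtracting \ref{eq:G3_PSL2Z_rho_add_form} from \ref{eq:G2_PSL2Z_weight_lift} via the decomposition \ref{eq:G_2_PSL2Z_RN_decomp}.

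Second, you repeatedly write ``harmonic'' where the correct condition is ``annihilated by $\Delta_{\mathfrak H}-6$''. The key computation (which the paper makes explicit) is that $\R\bigl[(\I z)^2(\partial_{\I z}(1/\I z))^2 F(z)\bigr]$ with $F$ holomorphic satisfies $(\Delta_{\mathfrak H}-6)u=0$, matching the eigenvalue carried by weight-6 automorphic Green's functions. This is not merely terminological: for a bounded \emph{harmonic} function the Liouville step yields only a constant, which you would then need to kill separately; with eigenvalue $6$ the maximum principle forces the bounded difference to vanish outright.
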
\begin{proof}It is easy to appreciate that both sides of  Eqs.~\ref{eq:G2_PSL2Z_weight_lift}--\ref{eq:G2_Hecke3_weight_lift} share the same symmetry and asymptotic behavior (as $ \alpha_N(z)$ approaches $0$, $1$ or $\infty$, for $ N\in\{1,2,3\}$). Furthermore, in view of  Lemmata~\ref{lm:G_2_Hecke234_int_repn} and \ref{lm:G_2_PSL2Z_int_repn}, the left-hand side for each of these three formulae can be rewritten in the form of\begin{align}
(\I z)^{2}
\left(\frac{\partial}{\partial\I z}\frac{1}{\I z}\right)^2\R F(z),
\end{align} where $F(z) $ is an analytic function. Such an expression is annihilated by the differential operator\begin{align}
\Delta_{\mathfrak H}-6:=(\I z)^{2}\left[\frac{\partial^{2}}{\partial(\R z)^{2}}+\frac{\partial^{2}}{\partial(\I z)^{2}}\right]-6,
\end{align} as is the right-hand side of each aforementioned formula.

Thus, for every formula among Eqs.~\ref{eq:G2_PSL2Z_weight_lift}--\ref{eq:G2_Hecke3_weight_lift}, the difference between the two sides defines a bounded function on a compact Riemann surface $ X_0(N)(\mathbb C)=\varGamma_0(N)\backslash\mathfrak H^*$ (for $ N\in\{1,2,3\}$), which is annihilated by $ \Delta_{\mathfrak H}-6$, and is vanishing at the cusps. Such a bounded function must be identically zero \cite[Lemma 2.0.1]{AGF_PartI}.

In a similar vein, one can prove Eq.~\ref{eq:G3_PSL2Z_rho_add_form} by Eq.~\ref{eq:frakq_deriv} and asymptotic analysis. Subtracting Eq.~\ref{eq:G3_PSL2Z_rho_add_form} from Eq.~\ref{eq:G2_PSL2Z_weight_lift}, and referring back to Eq.~\ref{eq:G_2_PSL2Z_RN_decomp}, we arrive at Eq.~\ref{eq:G3_PSL2Z_i_add_form}    in their wake.    \end{proof}
\begin{remark}Plugging the results $ G_{2}^{\mathfrak H/\overline{\varGamma}_0(2)}(i/\sqrt{2})=-3\log2$ and $ G_{2}^{\mathfrak H/\overline{\varGamma}_0(3)}(i/\sqrt{3})=-2\log\frac{3}{\sqrt[3]{4}}$  \cite[][Theorem 1.2.2(a)]{AGF_PartI} into Eqs.~\ref{eq:G2_Hecke2_weight_lift} and \ref{eq:G2_Hecke3_weight_lift}, we are able to  compute the following special values of weight-6 automorphic Green's functions:\begin{align}
G_3^{\mathfrak H/\overline{\varGamma}_{0}(2)}\left(\frac{1+i}{2},\frac{i}{\sqrt{2}}\right)={}&-\log2,\\G_3^{\mathfrak H/\overline{\varGamma}_{0}(3)}\left(\frac{3+i\sqrt{3}}{6},\frac{i}{\sqrt{3}}\right)={}&2\log2-\frac{3\log 3}{2}.
\end{align}One may wish to check these evaluations  against the integral representations of these weight-6 automorphic Green's functions \cite[cf.][Eqs.~2.3.19 and 2.3.20]{AGF_PartI}:\begin{align}
G_3^{\mathfrak H/\overline{\varGamma}_{0}(2)}\left(\frac{1+i}{2},\frac{i}{\sqrt{2}}\right)={}&\frac{\pi^2}{8}\int_0^1\xi[P_{-1/4}(\xi)]^2\{[P_{-1/4}(\xi)]^{2}-[P_{-1/4}(-\xi)]^{2}\}\D\xi,\\G_3^{\mathfrak H/\overline{\varGamma}_{0}(3)}\left(\frac{3+i\sqrt{3}}{6},\frac{i}{\sqrt{3}}\right)={}&\frac{\pi^2}{27}\int_0^1\xi[P_{-1/3}(\xi)]^2\{[P_{-1/3}(\xi)]^{2}-[P_{-1/3}(-\xi)]^{2}\}\D\xi,
\end{align} and a generic integral identity \cite[][Eq.~1.2]{Zhou2016Int4Pnu} \begin{align}
\int_{0}^1
x[P_\nu(x)]^4\D x-\int_{0}^1
x[P_\nu(x)]^2 [P_\nu(-x)]^{2}\D x={}&\frac{4\sin^2(\nu\pi)}{(2\nu+1)^{2}\pi^{2}}\left[ \frac{\psi^{(0)}(\nu+1)+\psi^{(0)}(-\nu)}{2}+ \gamma_0+2\log2\right]
,\end{align}which is applicable to $ \nu\in\mathbb C$ (with the understanding that limits are taken for the right-hand side when $ \nu\in\{-1/2\}\cup\mathbb Z$).\eor\end{remark}

\begin{table}[t]\scriptsize\caption{Some special values of  $j$-invariants and automorphic Green's functions}\label{tab:class_number_1_examples}

\begin{align*}\begin{array}{r@{\;\in\;}l|l|l|l|l}\hline\hline  \vphantom{\dfrac{\frac12}{\frac12}}z&\mathfrak Z_{D}& j(z)-1728&e^{-4(\I z)^2G_2^{\mathfrak H/\overline{\varGamma}_0(1)}(z)}&e^{-4(\I z)^2G_3^{\mathfrak H/\overline{\varGamma}_0(1)}(i,z)}\!\!\!\!\!&e^{-\frac{2(\I z)^2}{3}G_3^{\mathfrak H/\overline{\varGamma}_0(1)}\left(\frac{1+i\sqrt{3}}{2},z\right)}\!\!\!\!\!\\\hline  \vphantom{\dfrac{\frac\int2}{\frac12}} \dfrac{1+i\sqrt{7}}{2}&\mathfrak Z_{-7}&-3^6 7&\dfrac{1}{3^{14}5^{12}7^7}&3^67^7&\dfrac{5^{3}}{3}\\[6pt]i\sqrt{2}&\mathfrak Z_{-8}&2^{7}7^2&\left(\dfrac{1}{2^{6}5^27^{3}}\right)^4&(2^47^{5})^{2}&(5)^{2}\\[8pt] \dfrac{1+i\sqrt{11}}{2}&\mathfrak Z_{-11}&-2^67^211&\dfrac{1}{2^{42}7^611^{11}}&\dfrac{11^{11}}{7^2}&2^{5}\\[5pt]i\sqrt{3}&\mathfrak Z_{-12}&2^4 3^3 11^2&\left(\dfrac{5^2}{2^{4}3^{6}11^{5}}\right)^4&\left(\dfrac{11^{9}}{3^{9}}\right)^2&\left(\dfrac{3^{3}}{5}\right)^2\\[8pt]i\sqrt{4}&\mathfrak Z_{-16}&2^33^67^2&\left(\dfrac{7}{2^{4}3^{8}11^{6}}\right)^4&\left(\dfrac{3^{30}}{7^{11}}\right)^2&\left(\dfrac{11^{3}}{3^{5}}\right)^2\\[8pt]\dfrac{1+i\sqrt{19}}{2}&\mathfrak Z_{-19}&-2^6 3^619&\dfrac{1}{2^{26}3^{38}19^{19}}&\dfrac{19^{19}}{3^{30}}&\dfrac{3^{5}}{2^{3}}\\[8pt]\dfrac{1+i3\sqrt{3}}{2}&\mathfrak Z_{-27}&-2^6 3^111^2 23^2&\dfrac{3^911^{10}}{2^{10}5^{52}23^{38}}&\dfrac{3^{27}23^{30}}{11^{42}}&\dfrac{5^{13}}{2^{11}3^9}\\[6pt]i\sqrt{7}& \mathfrak Z_{-28}&3^8 7^1 19^2&\left(\dfrac{5^{18}}{3^{27}7^717^619^5}\right)^4&\left( \dfrac{7^{14}19}{3^{17}} \right)^2&\left( \dfrac{3^{7}17^{3}}{5^{9}} \right)^2\\[9pt]\dfrac{1+i\sqrt{43}}{2}&\mathfrak Z_{-43}&-2^63^87^243&\dfrac{5^{12}7^{58}}{2^{254}3^{108}43^{43}}&\dfrac{3^{104}43^{43}}{7^{130}}&\dfrac{2^{42}}{3^{19}5^3}\\[8pt]\dfrac{1+i\sqrt{67}}{2}&\mathfrak Z_{-67}&-2^63^67^231^267&\dfrac{2^{70}5^{108}31^{10}}{3^{134}7^{118}11^{84}67^{67}}&\dfrac{7^{110}67^{67}}{3^{174}31^{82}}&\dfrac{3^{29}11^{21}}{2^{51}5^{27}}\\[8pt]\dfrac{1+i\sqrt{163}}{2}&\mathfrak Z_{-163}&\hspace{-.6em}\begin{array}{l}
-2^63^67^211^2\times \\
\times 19^2127^2 163 \\
\end{array}&\dfrac{2^{466}5^{12}7^{74}19^{250}}{3^{326}11^{70}23^{84}29^{276}127^{182}163^{163}}&\dfrac{3^{834}127^{110}163^{163}}{7^{274}11^{58}19^{538}}&\dfrac{23^{21}29^{69}}{2^{198}3^{139}5^3}\\[8pt]\hline\hline\end{array}
\end{align*}\end{table}

\begin{remark}Suppose that $z\in\mathfrak H$ is a quadratic irrational whose discriminant $ D$ belongs to the  finite set $ \{-7$, $-8$, $-11$, $-12$, $-16$, $-19$, $-27$, $-28$, $-43$, $-67$, $-163\}$, then both sides of Eq.~\ref{eq:G2_PSL2Z_weight_lift}  are computable from the Gross--Kohnen--Zagier formula (see \cite[][Chap.~V, Corollary 4.3]{GrossZagierI} and \cite[][p.~50, Theorem~II.2]{ZagierKyushuJ1}). We display these exact evaluations in  Table~\ref{tab:class_number_1_examples}.
\eor\end{remark}

\subsection*{Acknowledgements} This work was partly supported by the Applied Mathematics Program within the Department
of Energy (DOE) Office of Advanced Scientific Computing Research (ASCR) as part
of the Collaboratory on Mathematics for Mesoscopic Modeling of Materials (CM4). The manuscript was completed during the author's visit to Prof.\ Weinan E at Princeton  in 2014 and at BICMR\ in 2015. The author thanks Prof.~E for discussions on renormalization group theory, and Prof.\ Shou-Wu Zhang for his comments on Theorem~\ref{thm:app_HC} at Princeton in 2014. The author is especially grateful to Prof.\ Don B. Zagier (MPIM, Bonn) for his encouragements on this series of works. The author appreciates the suggestions from   Dr.~Qingtao Chen (ETH\ Z\"urich) and an anonymous referee  on improving the organization of this paper.

\noindent{\textit{Erratum/Addendum}} In the published version of this work, we claimed, shortly before concluding the proof of Lemma \ref{lm:G_2_Hecke234_int_repn}, that $\mathfrak L_\nu(\alpha) $ (defined in Eq.~\ref{eq:frakL_nu_defn} for $ \alpha\in(\mathbb C\smallsetminus\mathbb R)\cup(0,1)$ and $ \nu\in\{-1/4,-1/3,-1/2\}$) should extend to  a harmonic function for $ \alpha\in\mathbb C\smallsetminus\{0,1\}$. To support our claim, in the paragraph following  Eq.~\ref{eq:frakL_nu_defn}, we pointed out the continuous extension  $\mathfrak L_\nu(\alpha+i0^{+})=\mathfrak L_\nu(\alpha-i0^{+}) $ for $ \alpha\in(-\infty,0)\cup(1,+\infty)$. Such an argument would appear insufficient from an analyst's point of view: one still needs the continuity of normal derivatives, \textit{i.e.} $ \lim_{h\to0^+}\partial \mathfrak L_\nu(\alpha+ih)/\partial h=-\lim_{h\to0^+}\partial\mathfrak L_\nu(\alpha-ih)/\partial h\ $ for $ \alpha\in(-\infty,0)\cup(1,+\infty)$ to guarantee that the \textit{continuous} extension is indeed a \textit{harmonic} extension.

Fortunately, the continuity of normal derivatives can be verified, \textit{a fortiori}, by symmetries of automorphic functions, as we explain below. In fact, in the next two paragraphs, we will show that both sides of Eq.~\ref{eq:G2RN_via_frakL} satisfy the homogenous Neumann boundary condition (vanishing normal derivatives) for non-elliptic points $z$ on $\mathfrak H\cap\partial \mathfrak D_N $ (where $\alpha_N(z)\in(-\infty,0)\cup(1,+\infty) $ for  $ N=4\sin^2(\nu\pi)\in\{2,3,4\}$), and so does $ \mathfrak L_\nu(\alpha_N(z))$.

The reflection symmetry $ G_2^{\overline{\varGamma}_0(N)}(z)=G_2^{\overline{\varGamma}_0(N)}(-\overline{z})$ and the modular invariance $ G_2^{\overline{\varGamma}_0(N)}(z)=G_2^{\overline{\varGamma}_0(N)}(z+1)$ together enforce the vanishing normal derivatives for $ \left.\frac{\partial}{\partial x}\right|_{x=\pm1/2} G_2^{\overline{\varGamma}_0(N)}(x+iy)=0, $ where $y>0$ and $x+iy$ is not an elliptic point. For other points on $\mathfrak H\cap\partial \mathfrak D_N $ (cf.~\cite[][Fig.~1(c)--(e)]{AGF_PartI}), one may exploit the  formula $ G_2^{\overline{\varGamma}_0(N)}(z)-G_2^{\overline{\varGamma}_0(N)}(-1/(Nz))=2\log\left|\frac{\alpha_N'(z)\eta^{4}(-1/(Nz))}{\eta^{4}(z)\alpha'_N(-1/(Nz))}\right|$  (where $ \alpha'_N(w):=\partial \alpha_N(w)/\partial w$ is expressible as $ \eta^4(w)$ times certain powers of $ \alpha_N(w)$ and $ 1-\alpha_N(w)$  \cite[][Eq.~2.1.8]{AGF_PartI}) to establish the homogenous Neumann boundary condition.

Now that the normal derivative for the  left-hand side of  Eq.~\ref{eq:G2RN_via_frakL} vanishes  for every  $z\in\mathfrak H\cap\partial \mathfrak D_N $ (excluding elliptic points), it would suffice to prove the same for the first term on the right-hand side of  Eq.~\ref{eq:G2RN_via_frakL}, before reaching our goal concerning  $ \mathfrak L_\nu(\alpha_N(z))$. As a minor variation on \cite[][Eq.~2.1.1$'$]{AGF_PartI}, we may use an Eichler integral identity (cf.~Eq.~\ref{eq:f-g}) \begin{align}&
-\frac{\sin^{2}(\nu\pi)}{2\pi^2}\frac{z^{2}\left[\frac{\mathfrak Q_{\nu}({\alpha_N(z))}}{P_\nu(1-2\alpha_{N}(z))}+\frac{\mathfrak Q_{\nu}({1-\alpha_N(z))}}{P_\nu(2\alpha_{N}(z)-1)}\right]}{P_\nu(1-2\alpha_{N}(z))P_\nu(2\alpha_{N}(z)-1)}\notag\\\equiv{}&\int_{z}^{i\infty} \frac{\alpha_{N}(z)[1-\alpha_N(z)][\alpha_N'(\zeta)]^{2}}{\alpha_N(\zeta)[1-\alpha_N(\zeta)]\alpha_{N}'(z)}\frac{(z-\zeta )^{2}\D \zeta}{\alpha_N(z)-\alpha_N(\zeta)}-\int_{0}^{i\infty} \frac{\alpha_{N}(z)[1-\alpha_N(z)][\alpha_N'(\zeta)]^{2}}{\alpha_N(\zeta)[1-\alpha_N(\zeta)]\alpha_{N}'(z)}\frac{\zeta^2\D \zeta}{\alpha_N(z)-\alpha_N(\zeta)}\notag\\{}&+\frac{2\pi z}{iN}+\pi i z^2\pmod{2\pi iz^2\mathbb Z}
\end{align}to rewrite the first term on the right-hand side of  Eq.~\ref{eq:G2RN_via_frakL}, and extend its domain of definition to all the non-elliptic points $z$. Such an extension enables us to show that   \begin{align*}
-\frac{\sin^{2}(\nu\pi)}{6\pi^2}\R\left\{(\I z)^{2}
\left(\frac{\partial}{\partial\I z}\frac{1}{\I z}\right)^2\frac{z^{2}\left[\frac{\mathfrak Q_{\nu}({\alpha_N(z))}}{P_\nu(1-2\alpha_{N}(z))}+\frac{\mathfrak Q_{\nu}({1-\alpha_N(z))}}{P_\nu(2\alpha_{N}(z)-1)}\right]}{P_\nu(1-2\alpha_{N}(z))P_\nu(2\alpha_{N}(z)-1)}\right\}
\end{align*}remains invariant as one trades $z$ for $z+1$. Subsequently, we can combine this with reflection symmetry $ z\mapsto-\overline z$  and the Fricke involution $ z\mapsto-1/(N z)$ (as in the last paragraph) to establish  the homogenous Neumann boundary condition for     $ G_2^{\overline{\varGamma}_0(N)}(z)- \mathfrak L_\nu(\alpha_N(z))$, where the normal derivatives are evaluated at non-elliptic points $z$ on  $\mathfrak H\cap\partial \mathfrak D_N $.

The analysis above establishes $C^1$-smoothness of the function  $\mathfrak L_\nu(\alpha) $  for $ \alpha\in\mathbb C\smallsetminus\{0,1\}$ and $ \nu\in\{-1/4,-1/3,-1/2\}$, thereby filling a minor gap in our published arguments for Lemma \ref{lm:G_2_Hecke234_int_repn}. Likewise, in  Lemma \ref{lm:G_2_PSL2Z_int_repn}, the statement about the ``continuous extension'' of  Eq.~\ref{eq:G2Hecke4_G3PSL2Z_corr} should also have read ``$C^1$ (hence harmonic) extension''.
All the conclusions in our original paper thus remain unscathed.

\begin{thebibliography}{10}

\bibitem{AAR}
George~E. Andrews, Richard Askey, and Ranjan Roy.
\newblock {\em Special Functions}, volume~71 of {\em Encyclopedia of
  Mathematics and Its Applications}.
\newblock Cambridge University Press, Cambridge, UK, 1999.

\bibitem{Appell1881}
P.~Appell.
\newblock \selectlanguage{french}{M}\'emoire sur les \'equations
  diff\'erentielles lin\'eaires\selectlanguage{english}.
\newblock {\em Ann. sci. \'Ec. Norm. Sup\'er.}, 10:391--424, 1881.

\bibitem{RN3}
Bruce~C. Berndt.
\newblock {\em Ramanujan's Notebooks (Part III)}.
\newblock Springer, New York, NY, 1991.

\bibitem{ByrdFriedman}
Paul~F. Byrd and Morris~D. Friedman.
\newblock {\em Handbook of Elliptic Integrals for Engineers and Scientists},
  volume~67 of {\em \selectlanguage{german}{G}rundlehren der mathematischen
  Wissenschaften\selectlanguage{english}}.
\newblock Springer, Berlin, Germany, 2nd edition, 1971.

\bibitem{Enneper}
Alfred Enneper.
\newblock {\em \selectlanguage{german}Elliptische Functionen. Theorie und
  Geschichte\selectlanguage{english}}.
\newblock Verlag von Louis Nebert, Halle an der Saale, Germany, 2nd edition,
  1890.
\newblock (edited and published by Felix M\"uller).

\bibitem{Erdelyi1937b}
A.~Erd\'elyi.
\newblock Integraldarstellungen hypergeometrischer {F}unktionen.
\newblock {\em Quart.\ J.\ Math.}, 8:267--277, 1937.

\bibitem{GrossZagierII}
B.~Gross, W.~Kohnen, and D.~Zagier.
\newblock Heegner points and derivatives of ${L}$-series. {II}.
\newblock {\em Math.\ Ann.}, 278:497--562, 1987.

\bibitem{GrossZagier1985}
Benedict~H. Gross and Don~B. Zagier.
\newblock On singular moduli.
\newblock {\em J.\ Reine Angew.\ Math.}, 355:191--220, 1985.

\bibitem{GrossZagierI}
Benedict~H. Gross and Don~B. Zagier.
\newblock Heegner points and derivatives of ${L}$-series.
\newblock {\em Invent.\ Math.}, 84:225--320, 1986.

\bibitem{KontsevichZagier}
Maxim Kontsevich and Don Zagier.
\newblock Periods.
\newblock In Bj{\"o}rn Enquist and Wilfried Schmid, editors, {\em Mathematics
  Unlimited --- 2001 and Beyond}, pages 771--808. Springer, Berlin, Germany,
  2001.

\bibitem{LegendreTomeI}
A.~M. Legendre.
\newblock {\em \selectlanguage{french}Trait\'e des fonctions elliptiques. Tome
  I\selectlanguage{english}}.
\newblock Huzard-Courcier, Paris, France, 1825.

\bibitem{QFT}
Michael~E. Peskin and Daniel~V. Schroeder.
\newblock {\em An Introduction to Quantum Field Theory}.
\newblock Addison-Wesley, Reading, MA, 1995.

\bibitem{Slater}
Lucy~Joan Slater.
\newblock {\em Generalized Hypergeometric Functions}.
\newblock Cambridge University Press, Cambridge, UK, 1966.

\bibitem{WhittakerWatson1927}
E.~T. Whittaker and G.~N. Watson.
\newblock {\em A Course of Modern Analysis}.
\newblock Cambridge University Press, Cambridge, UK, 4th edition, 1927.

\bibitem{Zagier1998}
Don Zagier.
\newblock A modular identity arising from mirror symmetry.
\newblock In M.-H. Saito, Y.~Shimizu, and K.~Ueno, editors, {\em Integrable
  Systems and Algebraic Geometry (Proceedings of the Taniguchi Symposium
  1997)}, pages 477--480. World Scientific, Singapore, 1998.

\bibitem{ZagierKyushuJ1}
Don Zagier.
\newblock {\em
  \begin{CJK}{UTF8}{min}\emph{保型形式論の話題から}\end{CJK}
  Hokei-keishiki-ron no wadai kara \emph{=\!= Topics in the Theory of
  Automorphic Forms}}.
\newblock \begin{CJK}{UTF8}{min}日本福岡県福岡市
  九州大学理学部数学教室 講義録刊行会\end{CJK} Mathematical
  Lecture Note Series, Kyushu University, Fukuoka, Fukuoka, Japan,
  \begin{CJK}{UTF8}{min}平成四年\end{CJK} 1992.
\newblock (Lecture notes, in Japanese, by
  \begin{CJK}{UTF8}{min}金子昌信\end{CJK} Masanobu Kaneko).

\bibitem{SWZhang1997}
Shouwu Zhang.
\newblock Heights of {H}eegner cycles and derivatives of $ {L}$-series.
\newblock {\em Invent.\ Math.}, 130:99--152, 1997.

\bibitem{Zhou2013Pnu}
Yajun Zhou.
\newblock Legendre functions, spherical rotations, and multiple elliptic
  integrals.
\newblock {\em Ramanujan J.}, 34:373--428, 2014.

\bibitem{AGF_PartI}
Yajun Zhou.
\newblock {K}ontsevich--{Z}agier integrals for automorphic {G}reen's functions.
  {I}.
\newblock {\em Ramanujan J.}, 38:227--329, 2015.

\bibitem{EZF}
Yajun Zhou.
\newblock Ramanujan series for {E}pstein zeta functions.
\newblock {\em Ramanujan J.}, 40:367--388, 2016.

\bibitem{Zhou2016Int4Pnu}
Yajun Zhou.
\newblock Two definite integrals involving products of four {L}egendre
  functions.
\newblock \texttt{arXiv:1603.03547v1} [math.CA], 2016.

\end{thebibliography}


\end{document}